\def\@tocline#1#2#3#4#5#6#7{\relax
  \ifnum #1>\c@tocdepth 
  \else
    \par \addpenalty\@secpenalty\addvspace{#2}%
    \begingroup \hyphenpenalty\@M
    \@ifempty{#4}{%
      \@tempdima\csname r@tocindent\number#1\endcsname\relax
    }{%
      \@tempdima#4\relax
    }%
    \parindent\z@ \leftskip#3\relax \advance\leftskip\@tempdima\relax
    \rightskip\@pnumwidth plus4em \parfillskip-\@pnumwidth
    #5\leavevmode\hskip-\@tempdima #6\nobreak\relax
    \dotfill\hbox to\@pnumwidth{\@tocpagenum{#7}}\par 
    \nobreak
    \endgroup
  \fi}
\theoremstyle{plain}
\newtheorem{lemma}{Lemma}[section]
\newtheorem{theorem}[lemma]{Theorem}
\newtheorem{corollary}[lemma]{Corollary}
\newtheorem{proposition}[lemma]{Proposition}
\newtheorem{definition}[lemma]{Definition}
\theoremstyle{remark}
\newtheorem{remark}[lemma]{Remark}
\newtheorem{example}[lemma]{Example}
\newtheorem{observation}[lemma]{Observation}
\def\Z{\mathbb{Z}}
\def\Q{\mathbb{Q}}
\def\eps{\varepsilon}
\def\Eps{\mbox{\LARGE $\varepsilon$}}
\def\Si{\bm{\sigma}}
\def\Mu{\bm{\mu}}
\def\si{\sigma}
\def\Sn{\mathfrak{S}}
\def\vv{\mathbf{v}}
\def\xx{\mathbf{x}}
\def\yy{\mathbf{y}}
\def\XX{\mathbf{X}}
\def\UU{\mathbf{U}}
\def\ll{\bm{\ell}}
\def\R{\mathbb{R}}
\def\Q{\mathbb{Q}}
\def\CCC{\mathcal{C}}
\def\proba{\mathbb{P}}
\def\esper{\mathbb{E}}
\DeclareMathOperator{\Var}{Var}
\def\Exc{e}
\def\patterntree{t_0}
\def\One{\bm{1}}
\DeclareMathOperator{\id}{id}
\DeclareMathOperator{\occ}{\widetilde{occ}}
\DeclareMathOperator{\perm}{perm}
\DeclareMathOperator{\Perm}{Perm}
\DeclareMathOperator{\Tree}{Tree}
\DeclareMathOperator{\pat}{pat}
\DeclareMathOperator{\Part}{Part}
\DeclareMathOperator{\dfs}{dfs}
\DeclareMathOperator{\ProbTree}{Pr^{Tree}}
\DeclareMathOperator{\ProbPerm}{Pr^{Perm}}
\DeclareMathOperator{\Cat}{Cat}
\DeclareMathOperator{\Omin}{Omin}
\DeclareMathOperator{\Pos}{Pos}
\DeclareMathOperator{\PPos}{ {\bf Pos}}
\DeclareMathOperator{\Val}{Val}
\DeclareMathOperator{\VVal}{ {\bf Val}}
\DeclareMathOperator{\gap}{gap}
\DeclareMathOperator{\argmin}{argmin}
\newcommand{\Blue}[1]{\textcolor{blue}{#1}}
\newcommand{\Green}[1]{\textcolor{green!60!black}{#1}}
\newcommand{\Red}[1]{\textcolor{red}{#1}}
\def\N{N}
\title[The Brownian limit of separable permutations]{The Brownian limit of separable permutations}
\author{Frédérique Bassino}
       \address{Université Paris 13, Sorbonne Paris Cité, LIPN, CNRS UMR 7030, F-93430 Villetaneuse, France}
       \email{bassino@lipn.univ-paris13.fr}
 \author{Mathilde Bouvel}
       \address{Institut für Mathematik, Universität Zürich, Winterthurerstr. 190, CH-8057 Zürich, Switzerland}
       \email{mathilde.bouvel@math.uzh.ch}
 \author{Valentin Féray}
       \address{Institut für Mathematik, Universität Zürich, Winterthurerstr. 190, CH-8057 Zürich, Switzerland}
       \email{valentin.feray@math.uzh.ch}
 \author{Lucas Gerin}
       \address{CMAP, \'Ecole Polytechnique, CNRS, Route de Saclay, 91128 Palaiseau Cedex, France}
       \email{gerin@cmap.polytechnique.fr}
 \author{Adeline Pierrot}
 \address{LRI, Université Paris-Sud, Bat. 650 Ada Lovelace, 91405 Orsay Cedex, France}
       \email{adeline.pierrot@lri.fr}
\keywords{permutation patterns, Brownian excursion, permutons}
\subjclass[2010]{60C05,05A05}
\begin{document}

\begin{abstract}
We study uniform random permutations in an important class of pattern-avoiding permutations:
the separable permutations. 
We describe
the asymptotics of the number of occurrences of any fixed given pattern
in such a random permutation in terms of the Brownian excursion.
In the recent terminology of permutons, our work can be interpreted as the convergence
of  uniform random separable permutations towards a "Brownian separable permuton". 
\vspace{-8mm}
\end{abstract}

\maketitle

\begin{figure}[ht]
\begin{center}
\includegraphics[scale=0.3]{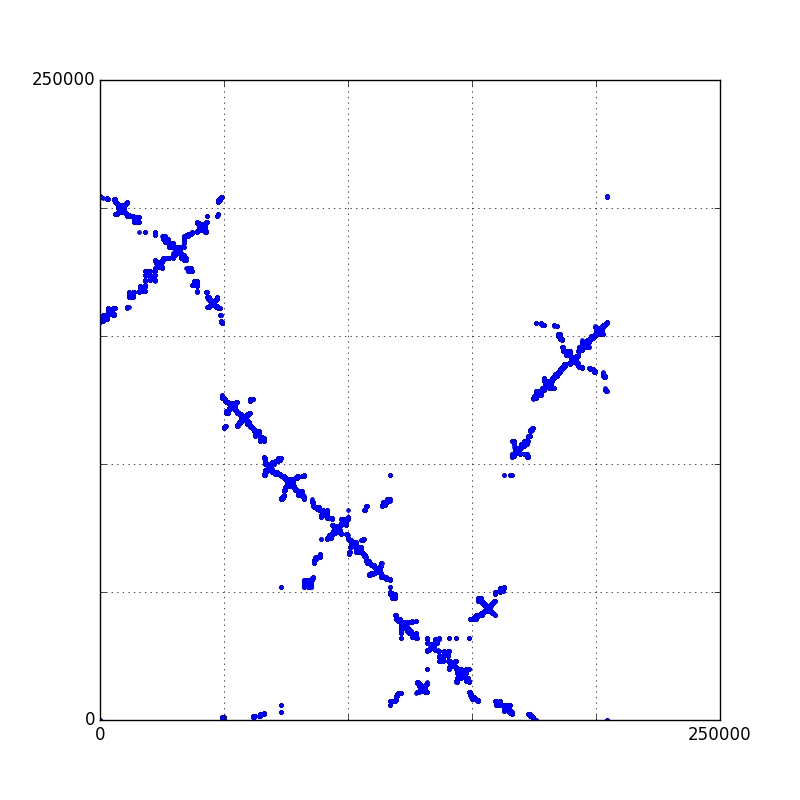} \qquad \includegraphics[scale=0.3]{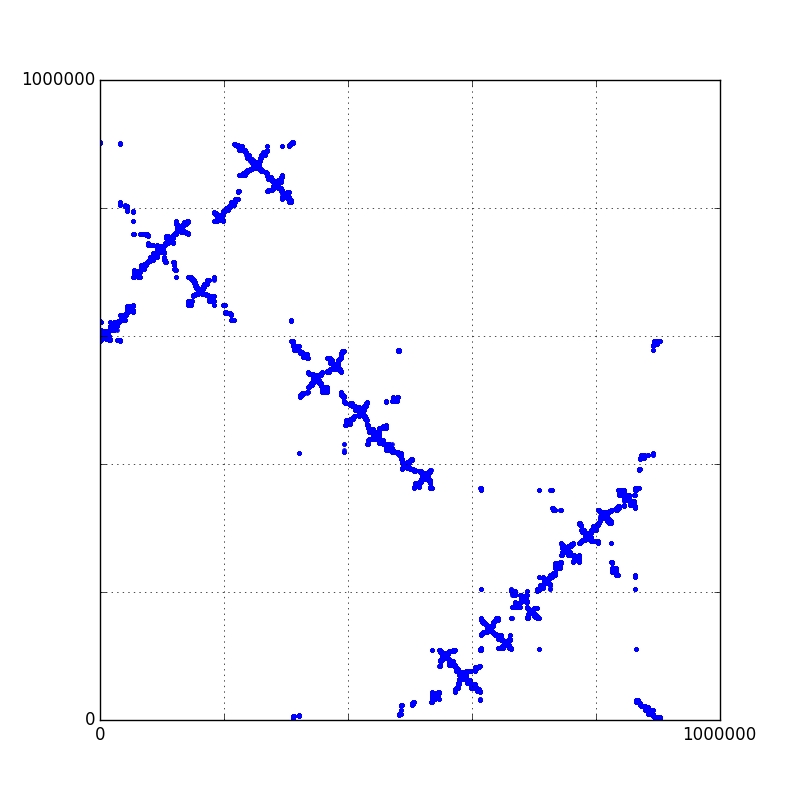}
\end{center}
\caption{Two uniform random separable permutations of sizes respectively $n=204\, 523$ and $n=903\, 073$ (a permutation $\sigma$ is represented here with its \emph{diagram}: for every $i \leq n$, there is a dot at coordinates $(i,\sigma_i)$).}

\label{fig:simu_LargeSeparable}
\end{figure}

\tableofcontents

\section{Introduction}

The aim of this article is to study the asymptotic properties of an important class of pattern-avoiding permutations: the separable permutations. 
Our main result is the description of the asymptotics in $n$
of the number of occurrences of any fixed given pattern in a uniform separable permutation of $n$ elements.

\subsection{Pattern-avoiding permutations}
We first give some definitions. 
For any $n$, the set of permutations of $[n]:= \{1,2,\ldots, n\}$ is denoted by $\Sn_n$. 
We write permutations of $\Sn_n$ in one-line notation as $\sigma = \sigma_1 \sigma_2 \dots \sigma_n$. 
For a permutation $\sigma$ in $\Sn_n$, the \emph{size} $n$ of $\sigma$ is denoted by $|\sigma|$. 
For $\sigma\in \Sn_n$, and $I\subset [n]$ of cardinality $k$, let $\pat_I(\sigma)$ be the permutation of $\Sn_k$ induced by $\{\sigma_i,\ i\in I\}$. 
For example for $\sigma=65831247$ and $I=\{2,5,7\}$ we have
$$
\pat_{\{2,5,7\}}\left(6\overline{5}83\overline{1}2\overline{4}7\right)=312
$$
since the values in the subsequence $\sigma_2 \sigma_5 \sigma_7=514$ are in the same relative order as in the permutation $312$. 
A permutation $\pi = \pat_I(\sigma)$ is a \emph{pattern} involved in $\sigma$, 
and the subsequence $(\sigma_i)_{i \in I}$ is an \emph{occurrence} of $\pi$ in $\sigma$. 

All along this paper, we use letter $\sigma$ for a (large) permutation of size $n$, and letter $\pi$ for a pattern of size $k\leq n$.
We denote by $\occ(\pi,\sigma)$ the proportion of occurrences of a pattern $\pi$ in $\sigma$.
More formally
\[\occ(\pi,\sigma) = \frac{1}{\binom{n}{k}} \, \mathrm{card}\{I \subset [n] \text{ of cardinality }k \text{ such that } \pat_I(\sigma)=\pi\}.\]
Equivalently $\occ(\pi,\sigma)$ is the probability to have $\pat_I(\sigma)=\pi$ 
if $I$ is randomly and uniformly chosen among the $\binom{n}{k}$ subsets of $[n]$ with $k$ elements.
If $|\pi|>|\sigma|$, we set conveniently $\occ(\pi,\sigma)=0$.

We say that $\sigma$ \emph{avoids} $\tau$ if there is no occurrence of $\tau$ in $\sigma$, \emph{i.e.}, $\occ(\tau,\sigma)=0$. 
For any (finite or infinite) set of patterns $\tau_1,\tau_2, \dots$, we denote by $\mathrm{Av}_n(\tau_1,\tau_2, \dots)$ the set of permutations of size $n$ that avoid all the $\tau_i$'s. 
Then, $\mathrm{Av}(\tau_1,\tau_2, \dots) = \cup_{n} \mathrm{Av}_n(\tau_1,\tau_2, \dots)$ is called a \emph{class} of (pattern-avoiding) permutations.
Equivalently\footnote{
This statement is folklore in the literature on permutation patterns. 
A proof can be found in \cite[Paragraph 5.1.2]{BonaBook} for instance. 
See also \cite[Paragraph 7.2.3]{BonaBook} for a proof that an infinite set of excluded patterns is sometimes necessary to describe a class.}, a class of permutations is a set $\mathcal{C}$ of permutations such that, 
for any $\sigma \in \mathcal{C}$ and any pattern $\pi$ of $\sigma$, it holds that $\pi \in \mathcal{C}$. 

Classes of permutations have been intensively studied for their combinatorial and algorithmic properties over the last 50 years. 
An account of the past and current research on these classes can be found in~\cite{BonaBook,Kitaev,Vatter}. 
Finding the enumeration of specific classes, defined by the avoidance of a small number of small patterns, 
has been one of the first problems studied in this field. 
It started with the proof that $\mathrm{Av}(\tau)$ is counted by the Catalan numbers, for any $\tau \in \Sn_3$, 
and the research on this topic still continues, as witnessed by the summary~\cite{Wikipedia}. 
The combinatorics of classes of permutations has however expanded in several other directions, 
including a general approach to the study of classes of permutations based on various notions of structure, 
like the \emph{substitution decomposition} (\cite[Proposition 2]{AA05} or \cite[Section 3.2]{Vatter}) to which we shall return later in this introduction. 

The probabilistic study of classes of permutations is much more recent 
and, just like their combinatorial study at its beginning, it focuses on the study of specific classes with small excluded patterns. 
More precisely, the probabilistic counterpart of the study of specific classes
is centered on the following interesting question: 
Given a fixed pattern $\tau$, what can we say about a typical $\sigma$ in $\mathrm{Av}_n(\tau)$ (for large $n$)? 
Recently, many authors have considered this problem
for different choices of small patterns $\tau$. We mention a few of them. 
\begin{itemize}
  \item The question was initiated in a paper of Madras and Liu
    \cite{MadrasLiu} in relation with a Monte-Carlo algorithm
    to approximate growth rates 
     of permutation classes.
    In subsequent papers, Atapour and Madras \cite{AtapourMadras}
    and Madras and Pehlivan \cite{MadrasPehlivan} started
    the study of uniform permutations in $\mathrm{Av}_n(\tau)$
    for small patterns $\tau$.
\item In parallel, Miner and Pak \cite{MinerPak}
  described very precisely the asymptotic shape of a uniform element in $\mathrm{Av}_n(\tau)$ for the $6$ patterns $\tau$ in $\Sn_3$. 
  Dokos and Pak \cite{DokosPak} have then obtained similar results for random doubly alternating Baxter permutations.
Note also that Miner and Pak discuss at the end of their paper a possible connection with the Brownian excursion.
\item Such a connection between $\mathrm{Av}_n(\tau)$ for $\tau\in \Sn_3$ and the Brownian excursion is explained by Hoffman, Rizzolo and Slivken \cite{HoffmanBrownian1}. 
Many combinatorial consequences are given, in particular a precise description
of fixed points of such permutations \cite{HoffmanBrownian2}.
\item In a parallel line of research,
  B\'ona \cite{Bona1,Bona2} investigates the behaviour of $\mathbb{E}[\occ(\pi,\sigma)]$ for $\sigma$ uniform in $\mathrm{Av}_n(132)$, and several fixed $\pi$'s. Similar results for other permutation classes and 
  various patterns $\pi$ have then been obtained by Homberger \cite{Homberger},
  Chang, Eu and Fu \cite{ChenEuFu} and Rudolf \cite{Rudolph}.
\item The question of finding limiting distributions for $\occ(\pi,\sigma)$ for $\sigma$ uniform in $\mathrm{Av}_n(\tau)$,
  rather than studying only its expectation, was raised
  by Janson, Nakamura and Zeilberger in \cite{JansonNakamuraZeilberger}: the authors gave some algorithms to find limits of moments 
  for small $\pi$ and $\tau$.
A bit later,
  Janson \cite{JansonPermutations} has given for every $\pi$ the asymptotic behaviour of the random variable $\occ(\pi,\sigma)$ for $\sigma$ uniform in $\mathrm{Av}_n(132)$. For instance, he expresses
in terms of the Brownian excursion area the asymptotic behaviour of $\occ(12,\sigma)$.
\item In his recent thesis, Bevan describes the limit shape
  of permutations in so-called {\em connected monotone grid classes}
  \cite[Chapter 6]{BevanPhD}.
  This result is the first that deals with an infinite
  family of permutation classes.
\item Even if it does not involve strictly speaking
  {\em pattern-avoiding permutations}, we mention the recent work
  of Kenyon, Kral', Radin and Winkler \cite{EntropyPermutation}.
  They prove a large deviation theorem for permutations
  seen as probability measures on the square.
  Their result yields limit shapes of random permutations
  with fixed densities of some fixed pattern $\pi_1,\cdots,\pi_r$.
  This parallels similar results on graphons, which are well-studied objects
  in random graph theory.
\end{itemize}

In the current paper,
we are specifically interested in the class of \emph{separable} permutations. 

\begin{definition}
A permutation $\sigma$ is \emph{separable} if $\sigma$ avoids both $2413$ and $3142$.
\end{definition}

We obtain results similar to those of Janson \cite{JansonPermutations} for $\mathrm{Av}(132)$,
namely we study occurrences of any pattern $\pi$
in uniform random separable permutations. 
There is however an important difference between our work and all classes studied so far: 
random permutations in any of these previously studied classes have a limit which is \emph{deterministic} at first order, 
whereas random separable permutations have a limit which is \emph{non-deterministic} at first order. 
The limit of random separable permutations will be discussed in \cref{subsec:Intro_Permutons}, 
and the proof that it is not deterministic will be given in \cref{ssec:variance}. 
This is also visible on \cref{fig:simu_LargeSeparable}, which shows two large typical separable permutations
obtained using a Boltzmann random sampler. 

There are several reasons that motivate our choice of studying the class of separable permutations, in addition to it being one of the most studied classes after $\mathrm{Av}(\tau)$ for $\tau$ of size $3$. 
Separable permutations have a very nice and robust combinatorial structure: they can be completely decomposed using direct sums and skew sums, 
and therefore can be represented as signed Schr\"oder trees (see \cref{sec:PermutationsAndSchroderTrees}). 
This encoding with trees is essential in proving a variety of results about separable permutations in different fields, for instance: 
\begin{itemize}
 \item the algorithmic problem of \textsc{Permutation Pattern Matching} is NP-hard in general, but polynomial on separable permutations~\cite{BBL98}; 
 \item from an enumerative combinatorics point of view, in addition to being simple to count, separable permutations display remarkable {\em equipopularity} properties, see~\cite{equipop}. 
\end{itemize}
Besides, separable permutations appear naturally in several problems,
at first sight independent from permutation pattern theory:
\begin{itemize}
  \item as the permutations sortable by certain sorting devices (pop-stacks in series) \cite{Avis}; 
  \item as space-filling permutation matrices in bootstrap percolation \cite{Bootstrap};
  \item as possible polynomial interchanges (i.e. studying in which possible ways
     the relative order of the values of a family of polynomials can be modified
     when crossing a common zero) \cite{Ghys}.
\end{itemize}
Finally, the class of separable permutations is the simplest case of a non-trivial \emph{substitution-closed} (also called wreath-closed~\cite{AA05}) class, 
and we believe that the results obtained here
might be extended to any substitution-closed class;
see the discussion on universality in \cref{subsec:perspectives}.

\subsection{Overview of our results}
Throughout this paper, let us denote by $\Si_n$ a uniform separable permutation of size $n$. 
Our goal is to describe the limit of $\Si_n$ when $n$ goes to infinity. 
Our main result gives,
for any $\pi$, the asymptotics of the distribution of $\occ(\pi,\Si_n)$ when $n$ tends to infinity.
We will see in \cref{subsec:Intro_Permutons} an equivalent formulation
in terms of weak convergence of probability measures on the square.

Our main theorem is the following:

\begin{theorem}\label{thm:main}
Let $\Si_n$ be a uniform separable permutation of size $n$.
There exists a collection of random variables $(\Lambda_\pi)$, $\pi$ ranging over all permutations, defined on the same probability space, 
such that for all $\pi$, $0\leq \Lambda_\pi\leq 1$ and:
\begin{enumerate}
\item \label{item:main_variance} If $\pi$ is a separable permutation of size at least $2$, $\Lambda_\pi$ is a non-deterministic random variable. \\
(If $\pi$ is the permutation of size $1$, $\Lambda_\pi=1$.  
If $\pi$ is not separable, $\Lambda_\pi=0$.)
\item \label{item:main_distrib} For all $\pi$, when $n\to+\infty$,
$$
\occ(\pi,\Si_n) \stackrel{d}{\to} \Lambda_\pi,
$$
where $\stackrel{d}{\to}$ denotes the convergence in distribution.
\item \label{item:main_joint} Moreover the convergence holds jointly, that is: for any finite sequence of permutations $(\pi_1,\dots,\pi_r)$,
$$
\left(\occ(\pi_1,\Si_n), \dots,\occ(\pi_r,\Si_n)\right) \stackrel{d}{\to} \left(\Lambda_{\pi_1},\dots,\Lambda_{\pi_r}\right).
$$
(On the right-hand side, the $\Lambda_{\pi_i}$'s are not independent.) 
\end{enumerate}
\end{theorem}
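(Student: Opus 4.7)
The plan is to leverage the bijection (set up in \cref{sec:PermutationsAndSchroderTrees}) between separable permutations and canonical signed Schröder trees, turning the uniform random $\Si_n$ into a uniform random canonical tree $T_n$ with $n$ leaves. The key observation is that, given the tree, the pattern produced by any $k$ chosen leaves is completely determined by the shape of the subtree they induce together with the signs at their pairwise lowest common ancestors; consequently, $\occ(\pi,\Si_n)$ rewrites as the probability, for $k=|\pi|$ leaves picked uniformly at random in $T_n$, that this ``tree-to-pattern'' rule yields $\pi$. The two trivial cases in item~(i) are immediate: if $|\pi|=1$ then $\occ(\pi,\sigma)=1$ for every $\sigma$, and if $\pi$ is not separable then $\pi$ cannot occur in any separable permutation (the class $\mathrm{Av}(2413,3142)$ being closed under pattern extraction), so $\occ(\pi,\Si_n)=0$ deterministically.

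For the distributional convergences~(ii) and~(iii), I would use the method of moments. For each fixed sequence of patterns $(\pi_1,\ldots,\pi_r)$ and each multi-index $(m_1,\ldots,m_r)$, the quantity
$$
\esper\!\left[\prod_{j=1}^r \occ(\pi_j,\Si_n)^{m_j}\right]
$$
rewrites as a normalized sum, over configurations of $\sum_j m_j|\pi_j|$ marked leaves in $T_n$, of indicators of compatible shape and sign constraints. These sums are amenable to singularity analysis of algebraic generating functions of Schröder type (square-root singularity at the dominant singularity), and the asymptotics of successive moments determine a limiting variable $\Lambda_\pi$. In parallel, after suitable rescaling, $T_n$ should converge to the continuum random tree coded by a normalized Brownian excursion $\Exc$; uniformly chosen leaves converge jointly to independent uniform points $U_1,\ldots,U_k$ in $[0,1]$, and the signs at their pairwise common ancestors --- which in the canonical encoding must alternate along each root-to-leaf path --- decorrelate in the limit to become i.i.d.\ uniform $\{\oplus,\ominus\}$ random variables, because the common ancestors of finitely many random leaves are separated by macroscopic subtrees. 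This identifies $\Lambda_\pi$ as the conditional probability, given $\Exc$ and the sign family, that the pattern read off equals $\pi$. The joint convergence~(iii) follows from the product-moment computation via the same argument; non-independence of the $\Lambda_{\pi_j}$ is automatic since all of them are measurable with respect to the single excursion $\Exc$ and its attached signs.

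The non-degeneracy claim in item~(i) for separable $\pi$ of size at least $2$ is handled separately by directly bounding $\Var(\Lambda_\pi)$ away from zero; this can be done by exhibiting two explicit events on $\Exc$ yielding distinct values of the conditional probability defining $\Lambda_\pi$, and the paper defers this to \cref{ssec:variance}. The main obstacle, in my view, is handling the alternating-sign constraint of the canonical tree encoding: proving rigorously that the signs at common ancestors of random leaves become i.i.d.\ uniform in the limit requires quantitative control of the depths and path lengths separating these ancestors inside $T_n$, which is precisely where the Brownian excursion limit of the Schröder tree contour enters. Technically, the singularity analysis of the joint generating functions --- while conceptually standard for algebraic singularities of square-root type --- involves tracking several leaf marks, shape constraints, and sign constraints simultaneously, and this bookkeeping is likely to be the most delicate part of the argument.
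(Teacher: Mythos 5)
Your outline agrees with the paper's architecture on the points that matter most: encoding $\Si_n$ by a uniform Schr\"oder tree with alternating signs, using the convergence of the normalized contour to the Brownian excursion (\cref{prop:ConvergenceContour}) together with the asymptotic uniformity of leaf positions (\cref{PropLeavesUniform}), proving that the signs at the common ancestors of $k$ random leaves become balanced and independent because those ancestors are separated by heights going to infinity (this is exactly \cref{lem:balanced_heights,prop:balanced_signs_discrete}, proved by a subtree exchangeability argument), and concluding by the method of moments using boundedness in $[0,1]$. You also correctly isolate the sign-decorrelation step as the main obstacle, and your treatment of the trivial cases and of the joint convergence is the paper's.

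Where you diverge is in how the moments are actually computed, and this is the weak point of your plan. You propose to evaluate each joint moment $\esper[\prod_j \occ(\pi_j,\Si_n)^{m_j}]$ by singularity analysis of multivariate generating functions tracking leaf marks, subtree shapes \emph{and} sign constraints. The paper does nothing of the sort, and for good reason: the alternating-sign constraint couples the marks through the parities of ancestor heights, and extracting the limit constants (let alone identifying them with expectations of products of the Brownian functionals $\Lambda_{\pi_j}$) from such a generating function would be a substantial undertaking that your sketch does not address. The paper instead observes (\cref{prop:moments_in_terms_of_expectation_limit_case,prop:moments_in_terms_of_expectation_discrete_case,cor:expectation_is_enough}) that $\prod_i \occ(\pi_i,\si)$ is, up to $\mathcal{O}(1/n)$, a fixed linear combination $\sum_\rho c^\rho_{\pi_1,\dots,\pi_r}\occ(\rho,\si)$ over patterns $\rho$ of size $\sum_i|\pi_i|$, and that the identical linear relation holds among the $\Lambda$'s; hence only the convergence of \emph{expectations} $\esper[\occ(\pi,\Si_n)]\to\esper[\Lambda_\pi]$ needs to be proved, and this is done purely probabilistically from the three convergence ingredients above. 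Without this linearization your plan either needs the generating-function machinery to actually work (unsubstantiated) or needs the probabilistic argument to be run directly on products of indicators, which amounts to rediscovering the same reduction. Finally, on non-degeneracy: the paper explicitly could \emph{not} bound $\Var(\Lambda_\pi)$ away from zero from the moment formula; instead \cref{lem:IdentiteATousLesCoups,prop:not-equal} exhibit a single positive-probability event on $(\Exc,S)$ forcing $\Lambda_\pi\le k^2/N<\esper[\Lambda_\pi]$, which is close to, but slightly sharper than, your ``two explicit events'' suggestion.
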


Theorem~\ref{thm:main} is not just an existential result: in this
paper we give for any pattern $\pi$ a construction of
$\Lambda_\pi$ (\cref{dfn:Lambda_pi} p.\pageref{dfn:Lambda_pi}) that
can be briefly explained as follows.
There is a natural way (reviewed in \cref{Section:Extracting}) to extract a (signed) tree with $|\pi|$ leaves from a realization of the (signed) Brownian excursion. 
The variable $\Lambda_\pi$ is the probability that this tree is one of the \emph{separation trees} of $\pi$ (see \cref{sec:PermutationsAndSchroderTrees} for the definition).
 
Statement~\ref{item:main_variance}  of the theorem is proved in \cref{ssec:variance}, 
while Statement~\ref{item:main_joint} (Statement \ref{item:main_distrib} being just a special case) is proved in Sections~\ref{sec:moments} to~\ref{sec:main_proof}, 
following the proof schema detailed in \cref{sec:OutlineProof}. 

Theorem~\ref{thm:main} shows in particular that, for every separable pattern $\pi$ of size $k$,
the number of occurrences of $\pi$ in $\Si_n$ must be renormalized
by $n^k$ to have a non-trivial limit in distribution.
This is in contrast with the result of Janson \cite[Theorem 2.1]{JansonPermutations} for $\sigma$ uniform in $\mathrm{Av}_n(132)$.
Indeed, in his result, the numbers of occurrences of different patterns
of the same size are normalized by different powers of $n$
to have non-trivial limits in distribution.

\medskip

In addition to the convergence in distribution,
we also prove the convergence of all joint moments
(in fact, we first prove the convergence of joint moments,
and then deduce the convergence in distribution).
This is especially interesting since 
the joint moments in the limit can be computed explicitly.

More precisely, all joint moments can be expressed in the limit from expectations of $\Lambda_\pi$'s (see \cref{prop:computing_moments}), 
and the expectation of $\Lambda_\pi$ can be expressed in terms
of the number $N_\pi$ of signed binary trees associated with the permutation $\pi$
(these are also sometimes called {\em separation trees} of $\pi$; see \cref{sec:PermutationsAndSchroderTrees} for their definition). 
The latter is proved in \cref{prop:expectation_Lamdba_pi}, which reads as follows: 
\begin{theorem}\label{thm:mainEsperance}
For any permutation $\pi$ of size $k$,
$$
\mathbb{E}\left[\occ(\pi,\Si_n)\right]\stackrel{n\to +\infty}{\longrightarrow} \frac{N_\pi}{2^{k-1} \Cat_{k-1}},
$$
where we denote by $\Cat_{k} := \frac{1}{k+1}\binom{2k}{k}$ the $k$-th Catalan number.
\end{theorem}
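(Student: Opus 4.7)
The plan is to deduce Theorem~\ref{thm:mainEsperance} from Theorem~\ref{thm:main} combined with an explicit computation of $\mathbb{E}[\Lambda_\pi]$. Since $0\le \occ(\pi,\Si_n)\le 1$ uniformly in $n$, the convergence in distribution $\occ(\pi,\Si_n)\stackrel{(d)}{\to}\Lambda_\pi$ provided by statement~\ref{item:main_distrib} of Theorem~\ref{thm:main} automatically upgrades, via bounded convergence, to convergence of expectations. Consequently, it suffices to prove that
$$\mathbb{E}[\Lambda_\pi]\;=\;\frac{N_\pi}{2^{k-1}\Cat_{k-1}}.$$

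To compute $\mathbb{E}[\Lambda_\pi]$, I would unfold the construction of $\Lambda_\pi$ announced right after Theorem~\ref{thm:main}. By definition, given the signed Brownian excursion, $\Lambda_\pi$ is the conditional probability that the signed binary tree $T$ obtained by extracting the reduced subtree spanned by $k$ i.i.d.\ uniform leaves of the (signed) excursion is a separation tree of~$\pi$. Taking an outer expectation over the signed Brownian excursion and swapping conditional and unconditional probabilities, one obtains
$$\mathbb{E}[\Lambda_\pi]\;=\;\mathbb{P}\bigl(T\text{ is a separation tree of }\pi\bigr).$$

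The heart of the argument is then to show that $T$ is uniformly distributed on the set of signed plane binary trees with $k$ leaves, a finite set of cardinality $2^{k-1}\Cat_{k-1}$ (there are $\Cat_{k-1}$ plane binary trees with $k$ leaves, and $k-1$ internal nodes each carrying a sign in $\{+,-\}$). This decomposes into two classical facts. First, by Aldous's description of the reduced trees of the Brownian continuum random tree, the shape of the tree spanned by $k$ i.i.d.\ uniform leaves of a Brownian excursion is uniform on plane binary trees with $k$ leaves. Second, by the very construction of the \emph{signed} Brownian excursion used in Section~\ref{Section:Extracting}, the signs decorating the internal nodes of $T$ are i.i.d.\ uniform in $\{+,-\}$ and independent of the tree shape. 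Putting these together yields the claimed uniform law on signed plane binary trees. Since, by definition of $N_\pi$, exactly $N_\pi$ among these $2^{k-1}\Cat_{k-1}$ trees are separation trees of~$\pi$, the formula follows.

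The routine step is the bounded-convergence passage from distribution to expectation; the conceptual content sits in the uniformity of $T$. The main obstacle is the second ingredient above: one must check that the coupling defining the signed Brownian excursion really produces signs on the extracted tree's internal nodes which are i.i.d.\ and jointly independent of the shape. This is not a statement about a single sign, but about the joint law of the $k-1$ signs associated to the $k-1$ branching points selected by the $k$ uniform leaves, and it must be justified carefully, presumably by a limiting argument from the uniform-signs distribution on internal nodes of large uniform signed Schröder trees established elsewhere in the paper.
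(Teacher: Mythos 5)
Your proposal is correct and follows essentially the same route as the paper: \cref{prop:expectation_Lamdba_pi} gets the convergence of expectations from \cref{thm:ConvExp} (equivalently, from \cref{thm:main}.\ref{item:main_distrib} by bounded convergence, as you do) and then computes $\esper[\Lambda_\pi]$ exactly as you describe, summing over the $N_\pi$ signed binary trees of $\pi$, using that each sign costs a factor $\tfrac12$ (\cref{lem:expectation_tree_to_perm_continuous}) and that $\Tree(\Exc,\XX)$ is uniform on the $\Cat_{k-1}$ binary trees with $k$ leaves (\cref{Lemma:ArbreBinaireUniforme}). The only adjustment concerns your perceived ``main obstacle'': no limiting argument from discrete signed Schröder trees is needed to justify that the $k-1$ signs are i.i.d.\ balanced and independent of the shape, because the signed Brownian excursion is \emph{defined} (\cref{dfn:Brownian_exc}) by placing independent balanced signs on the local minima of $\Exc$; the only thing to verify is that the $k-1$ branching points selected by the $k$ uniform leaves correspond almost surely to $k-1$ \emph{distinct} local minima, which follows from \cref{Cor:OneSided,Lemma:MinimumSameLevel} (the discrete exchangeability argument of \cref{prop:balanced_signs_discrete} is needed only on the finite-$n$ side, i.e.\ inside the proof of \cref{thm:ConvExp}, which you are taking as given).
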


\begin{remark}
Our proof of \cref{thm:mainEsperance} involves the random variables $\Lambda_\pi$,
which are constructed from the Brownian excursion.
We do not know if there exists a proof using only discrete arguments.
\end{remark}

In addition to being defined combinatorially, the numbers $N_\pi$ are easy to compute; see \cref{obs:N_pi}. 
This gives for example
\begin{align*}
  \lim_{n \to +\infty} \mathbb{E}\left[\occ(12,\Si_n)\right]&=
  \lim_{n \to +\infty} \mathbb{E}\left[\occ(21,\Si_n)\right] = \frac12; \\
  \lim_{n \to +\infty} \mathbb{E}\left[\occ(123,\Si_n)\right]&=
  \lim_{n \to +\infty} \mathbb{E}\left[\occ(321,\Si_n)\right] = \frac14; \\
\lim_{n \to +\infty} \mathbb{E}\left[\occ(132,\Si_n)\right]&=
  \lim_{n \to +\infty} \mathbb{E}\left[\occ(213,\Si_n)\right] = \frac18\\
  \lim_{n \to +\infty} \mathbb{E}\left[\occ(231,\Si_n)\right] &= 
  \lim_{n \to +\infty} \mathbb{E}\left[\occ(312,\Si_n)\right] = \frac18.
\end{align*}
Limits of higher (joint) moments can also be computed explicitly, as shown in \cref{prop:computing_moments}.
For example, we obtain
\begin{align*}
  \lim_{n \to +\infty} &\Var \left[\occ(12,\Si_n)\right]= \frac{1}{30};\qquad
  \lim_{n \to +\infty} \Var\left[\occ(132,\Si_n)\right] = \frac{3}{560} ; \\
  \lim_{n \to +\infty} &\mathbb{E}\left[ \occ(12,\Si_n) \cdot \occ(123,\Si_n) \right]= \frac{43}{280}.
\end{align*}

These values have been computed automatically with a Sage program \cite{sage}
that the authors can make available on request. 
We refer to \cref{ssec:Exp_LPi} for a discussion on the computation of joint moments,
both from the theoretical and algorithmic points of view.

For the curious reader, we give the first few moments of $\Lambda_{12}$
(that is the limits of the first few moments of $\occ(12,\Si_n)$):
$$
\mathbb{E}[\Lambda_{12}]=\frac{1}{2},\quad
\mathbb{E}[\Lambda_{12}^2]=\frac{17}{60},\quad
\mathbb{E}[\Lambda_{12}^3]=\frac{7}{40},\quad
\mathbb{E}[\Lambda_{12}^4]=\frac{6361}{55440},\quad
\mathbb{E}[\Lambda_{12}^5]=\frac{1741}{22176}.
$$
We did not recognize the first moments of any "usual" distribution on $[0,1]$.

\subsection{Interpretation of our main result in terms of permutons}
\label{subsec:Intro_Permutons}

We recall the notion of {\em permutons} introduced in \cite{Permutons}.
Note that, in \cite{Permutons},
permutons are called {\em limit permutations} and 
have two equivalent definitions (see \cite[Section 2.3]{Permutons});
the name {\em permuton} was proposed in \cite{GraphonPermuton} 
in analogy with the graph analogue {\em graphon}.
\begin{definition}\label{dfn:permuton}
  A {\em permuton} is a probability measure $\mu$ on the square $[0,1]^2$
  with uniform marginals,
  \emph{i.e.}, for any $a$ and $b$ with $0 \le a \le b \le 1$,
  \[\mu([a,b] \times [0,1]) = \mu([0,1] \times [a,b]) = b-a.\]
\end{definition}
One can associate a permuton $\mu_\sigma$ to a permutation $\sigma$ of size $n$.
We define $\mu_\sigma$ as having density $n$ on each square $[(i-1)/n,i/n] \times [(\sigma(i)-1)/n,\sigma(i)/n]$
(for $1 \le i \le n$) and density $0$ elsewhere;
that is, each of these squares has total weight $1/n$,
uniformly distributed in it.
This is a natural way to encode and rescale a permutation,
very close to the graphical representation that we use on
\cref{fig:simu_closed_or_not,fig:simu_LargeSeparable}. 

Since permutons are measures, it is natural to speak about weak convergence of permutons.
We will see in \cref{sec:Permutons} that our main result,
combined with previous results on permutons, implies the following:
\begin{theorem}
  Let $\Si_n$ be a uniform random separable permutation of size $n$.
  There exists a random permuton $\Mu$ such that
  $\mu_{\Si_n}$ tends to $\Mu$ in distribution 
  in the weak convergence topology.
  Moreover, $\Mu$ is not deterministic.
  \label{thm:Main_Permuton}
\end{theorem}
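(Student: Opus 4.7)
The plan is to deduce this theorem directly from Theorem~\ref{thm:main} by invoking the standard dictionary between permutons and pattern densities, established in \cite{Permutons,GraphonPermuton}. The key facts I will use from that theory are: (a) the space $\Permuton$ of permutons is compact in the topology of weak convergence of measures; (b) a sequence of deterministic permutons $(\mu_n)$ converges weakly to $\mu \in \Permuton$ if and only if, for every pattern $\pi$, the density $\occ(\pi,\mu_n)$ converges to $\occ(\pi,\mu)$, where $\occ(\pi,\mu)$ is the probability that $k$ i.i.d.\ points sampled from $\mu$ induce the pattern $\pi$; and (c) the embedding $\sigma \mapsto \mu_\sigma$ satisfies $\occ(\pi,\mu_\sigma) = \occ(\pi,\sigma) + o(1)$ as $|\sigma|\to\infty$ with $\pi$ fixed.

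With these tools, the argument proceeds as follows. First, compactness of $\Permuton$ immediately gives tightness of the sequence of random permutons $(\mu_{\Si_n})$ in the space of probability measures on $\Permuton$, so that any subsequence admits a further weakly convergent subsequence, with limit $\Mu$ a random permuton. Second, for each fixed $\pi$ the functional $\mu \mapsto \occ(\pi,\mu)$ is continuous on $\Permuton$, so by the continuous mapping theorem applied coordinate-wise and using (c), convergence along a subsequence $\mu_{\Si_{n_j}} \to \Mu$ yields
\[
\bigl(\occ(\pi_1,\Si_{n_j}),\dots,\occ(\pi_r,\Si_{n_j})\bigr)
\stackrel{(d)}{\longrightarrow} \bigl(\occ(\pi_1,\Mu),\dots,\occ(\pi_r,\Mu)\bigr)
\]
for any finite family $\pi_1,\dots,\pi_r$. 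Comparing with Theorem~\ref{thm:main}\ref{item:main_joint}, we obtain that, jointly in $\pi$, $\occ(\pi,\Mu) \stackrel{(d)}{=} \Lambda_\pi$.

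Third, I would invoke the fact that the law of a random permuton is determined by the joint law of all its pattern densities. This is the distributional analogue of (b): the Borel $\sigma$-algebra on $\Permuton$ (for the weak topology) is generated by the family of maps $\mu \mapsto \occ(\pi,\mu)$, so two random permutons with the same joint pattern-density distribution have the same law. Consequently every subsequential limit has the same distribution, and the tight sequence $(\mu_{\Si_n})$ converges in distribution to a unique random permuton $\Mu$. Finally, non-determinism of $\Mu$ follows at once: $\occ(\pi,\Mu) = \Lambda_\pi$ in distribution, which has positive variance for any separable $\pi$ of size $\geq 2$ by Theorem~\ref{thm:main}\ref{item:main_variance}, whereas a deterministic permuton would have all pattern densities deterministic.

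The main obstacle is the bookkeeping needed to make point (b) rigorous at the distributional level, i.e.\ to prove that the joint law of $(\occ(\pi,\Mu))_\pi$ determines the law of $\Mu$. This can be done either by producing a metric on $\Permuton$ whose balls are measurable with respect to the $\sigma$-algebra generated by the pattern-density functionals, or by using separability of $\Permuton$ together with a countable determining family; this essentially appears already in \cite{Permutons,GraphonPermuton}, so the work here reduces to invoking the correct statement. Apart from this point, the proof is a direct transfer of Theorem~\ref{thm:main} through the permuton/pattern-density correspondence.
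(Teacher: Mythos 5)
Your proof is correct, but it follows a genuinely different route from the paper. You argue by compactness: tightness of $(\mu_{\Si_n})$ is automatic since $\Permuton$ is compact, subsequential limits exist, their joint pattern-density laws are identified with $(\Lambda_\pi)_\pi$ via continuity of $\mu \mapsto t(\pi,\mu)$ and \cref{thm:main}.\ref{item:main_joint}, and uniqueness of the subsequential limit follows once one knows that the law of a random permuton is determined by the joint law of its pattern densities. The paper instead avoids that last determining-property lemma entirely: it applies Skorohod's representation theorem to the product-topology convergence $(\occ(\pi,\Si_n))_\pi \to (\Lambda_\pi)_\pi$, reconstructs on the new probability space actual random permutations $\Si'_n$ realizing the coupled occurrence vectors, and then invokes the \emph{deterministic} statement of \cite{Permutons} (Theorem 1.6(i)) pointwise in $\omega$ to produce $\Mu(\omega)$ as a sure limit. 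The trade-off is clear: the paper's route needs the slightly delicate step of recovering $\Si'_n$ from the vector $(O_{n,\pi})_\pi$ (done by matching image sets), while yours needs the measure-theoretic lemma you flag at the end. That lemma does hold and is provable exactly as you suggest: the countable family $\{t(\pi,\cdot)\}_\pi$ consists of continuous functions on the compact metrizable space $\Permuton$ (continuity uses the portmanteau theorem plus the fact that uniform marginals rule out ties almost surely) and separates points by the uniqueness part of \cite{Permutons}, so the induced map into $[0,1]^{\mathbb{N}}$ is a homeomorphism onto its image and the family generates the Borel $\sigma$-algebra; hence equal joint pattern-density laws force equal laws. Your approach buys a statement that is arguably more robust (it is the standard ``portmanteau for random permutons'' used in later universality work), at the cost of one extra lemma; the paper's approach is shorter given that \cite{Permutons} already supplies the deterministic convergence criterion. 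The non-determinism argument is the same in both.
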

The distribution of this permuton $\Mu$ is unique,
since it is defined as a limit in distribution.
The proof of the existence of $\Mu$ is not constructive, 
but involves the variables $\Lambda_\pi$,
which are themselves built using a realization of the Brownian excursion.
We therefore call $\Mu$ the {\em Brownian separable permuton}.

\begin{remark}
There are many examples of convergence of large combinatorial structures
towards continuum objects built from Brownian motion (or related processes: the Brownian bridge and excursion).
One can mention 
\begin{itemize}
\item Random mappings and Brownian bridges;
\item Random trees and Brownian trees;
\item Random graphs and Brownian motion.
\end{itemize}
A very good reference for these topics is \cite{Pitman}.
\cref{thm:Main_Permuton} fits naturally in this body of literature.

Note that the fact that separable permutations have a Brownian limit
in some sense should not come as a surprise.
Indeed, separable permutations of size $n$
can be encoded by (signed) \emph{Schr\"oder trees} with $n$ leaves
(see \cref{sec:PermutationsAndSchroderTrees}).
Like for many families of trees, 
the limit of Schr\"oder trees with a fixed number of leaves 
(leaving signs aside)
is related to the Brownian excursion: 
more precisely,
Pitman and Rizzolo \cite{PitmanRizzolo} and Kortchemski \cite{Igor} proved that the \emph{contour} of a uniform Schröder tree with $n$ leaves tends to the Brownian excursion. 
This result is essential in our approach. 
\end{remark}

\subsection{Perspectives}
\label{subsec:perspectives}
We think that the Brownian separable permuton $\Mu$ is an interesting object
and is worth being studied.
In particular, we would like to address the following questions.
\begin{description}
  \item[Construction of $\Mu$]
    At the moment, $\Mu$ is defined in a indirect way,
    as limit of discrete objects.
    Is there a way to define/construct the random measure $\Mu$
    directly in the continuum, {\em e.g.} as a function of the (signed)
    Brownian excursion?
  \item[Properties of $\Mu$] 
    It would be interesting to find some almost-sure properties of $\Mu$.
    Is it absolutely continuous or singular with respect to Lebesgue measure on the square? 
    (Note that, since its marginals are uniform, it cannot have atoms.) 
    One also expects that $\Mu$ is fractal in some sense,
    because of the link with the Brownian excursion
    and the visual aspect of the simulations in \cref{fig:simu_LargeSeparable}. 
    This raises the following question:
    what is the Hausdorff dimension of its support?
\end{description}

\begin{description}
  \item[Universality of $\Mu$] 
    We believe that $\Mu$
    is the limit (in the sense of permutons)
    of many other classes of random pattern-avoiding permutations.
Recall indeed that the class of separable permutations is the first non-trivial case of a substitution-closed class of permutations. 
Such classes are those whose structure is well-understood using the substitution decomposition mentioned earlier in this introduction, 
even more so when they contain a finite number of so-called \emph{simple} permutations (see~\cite[Section 3.2]{Vatter}). 
For any substitution-closed class, the permutations it contains may be represented by trees, called \emph{(substitution) decomposition trees}. 
These generalize signed Schröder trees by introducing other types of vertices, labeled by the simple permutations in the class. 
As the encoding of separable permutations by Schröder trees is crucial in this work,
our results might extend to all substitution-closed classes containing finitely many simple permutations. 
Figure~\ref{fig:simu_closed_or_not} shows two typical permutations: one in a substitution-closed class, one in a class that is not closed under substitution. 
On these examples, it seems clear that the first one looks similar to the separable case, whereas the second one does not. 
\end{description}

\begin{figure}[ht]
\begin{center}
\includegraphics[scale=0.3]{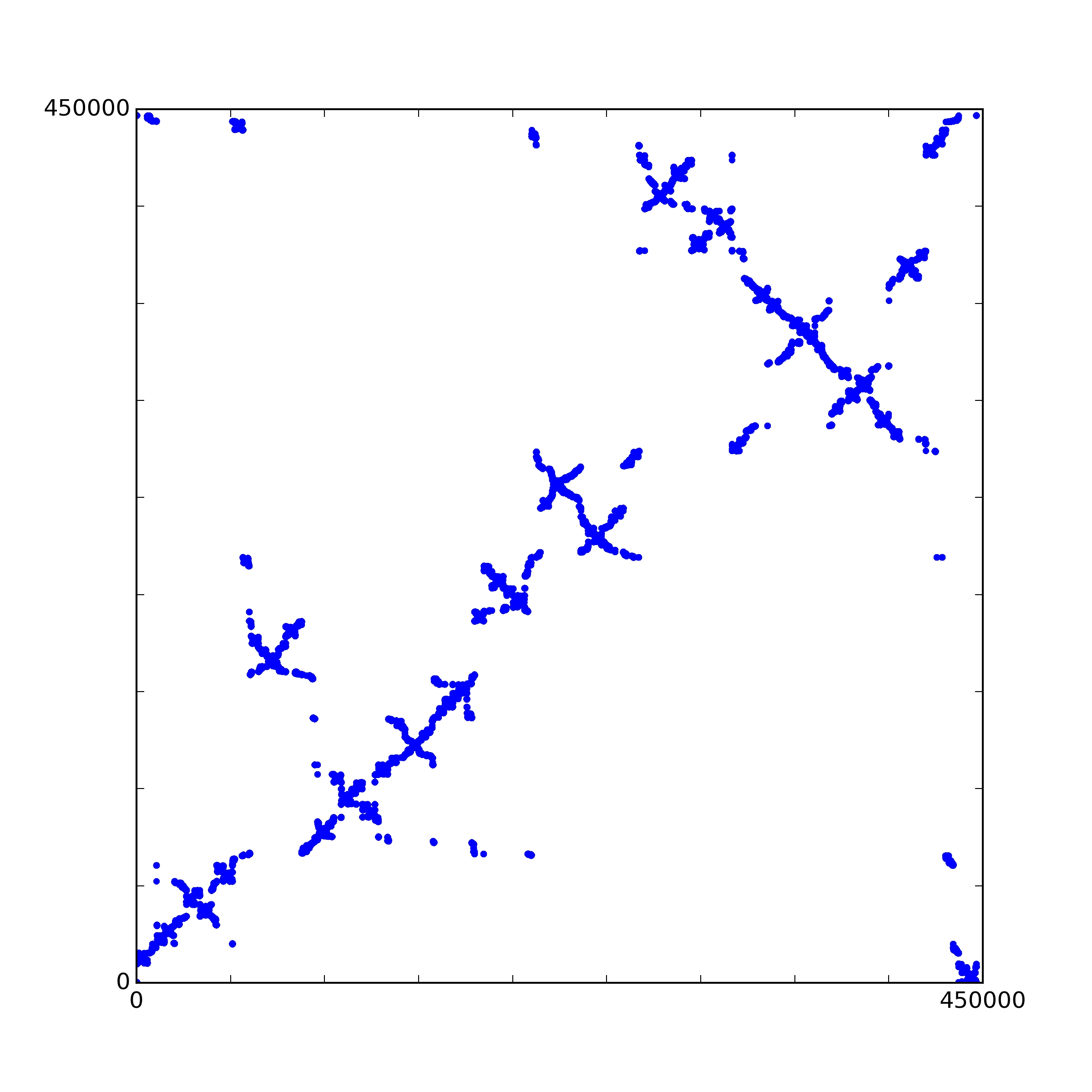} \qquad \includegraphics[scale=0.3]{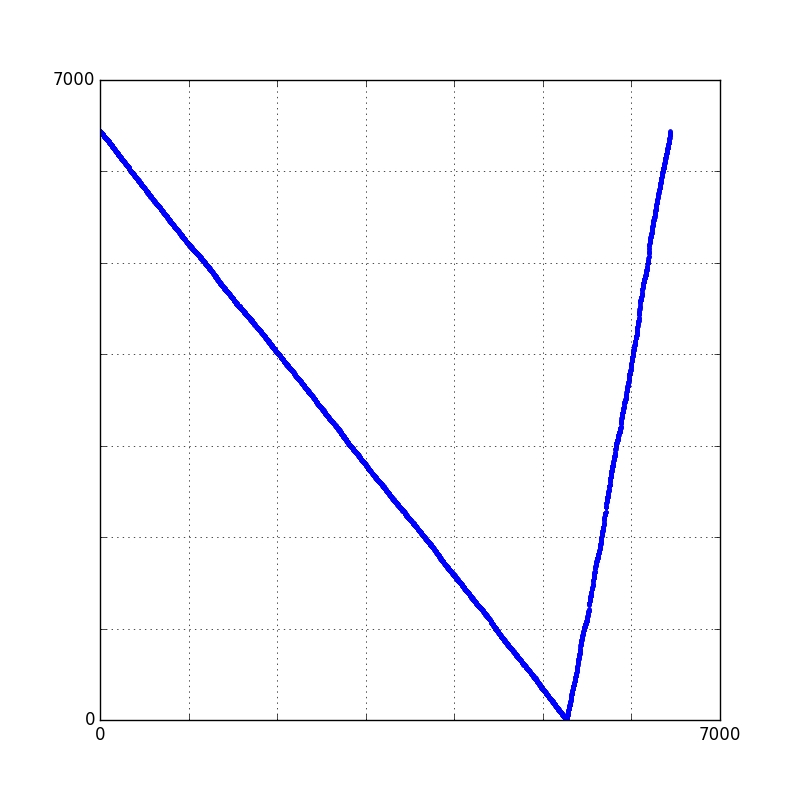}
\end{center}
\caption{On the left, a typical permutation of size $446\, 699$ in the substitution-closed class whose simple permutations are $2413$, $3142$ and $24153$. 
On the right, a typical permutation of size $6\, 441$ in the non-substitution-closed class $\mathrm{Av}(2413, 1243, 2341, 531642, 41352)$.}
\label{fig:simu_closed_or_not}
\end{figure}

\subsection{Outline of the paper}
Our paper is organized as follows.
\begin{itemize}
\item \cref{sec:preliminaries} gives all the preliminaries needed to define
the limit random variables $\Lambda_\pi$
and records a lot of easy useful facts about permutations and trees. 
\item Section~\ref{sec:main_result} defines the variables $\Lambda_\pi$ and presents the structure of the proof of Theorem~\ref{thm:main}.\ref{item:main_joint}.
\item Sections~\ref{sec:moments} to~\ref{sec:signs} go through the several steps of this proof (the outline of the proof itself is given in \cref{sec:OutlineProof}).
\item We gather all the arguments and conclude the proof of Theorem~\ref{thm:main}.\ref{item:main_joint} in Section~\ref{sec:main_proof}. 
\item \cref{sec:Permutons} contains the proof of the permuton interpretation of our main result:
  \cref{thm:Main_Permuton}.
\item Section~\ref{sec:properties_of_moments} studies some properties of $\Lambda_\pi$: 
combinatorial formulas for the moments in \cref{ssec:Exp_LPi} and proof of \cref{thm:main}.\ref{item:main_variance} ($\Lambda_\pi$ is not deterministic) in \cref{ssec:variance}. 
\item We collect several useful properties of the Brownian excursion in \cref{appendix}.
\end{itemize}

\section{Permutations, trees and excursions}
\label{sec:preliminaries}
\subsection{Basics on trees}

\begin{definition}\label{dfn:trees}
A \emph{Schröder tree} is either a leaf, or consists of a root vertex 
with an ordered $k$-tuple of subtrees attached to the root ($k \geq 2$),
which are themselves Schröder trees. 

Non-leaf vertices of a tree are called \emph{internal vertices}. 

We consider only \emph{finite} Schröder trees, \emph{i.e.}, those having finitely many leaves and internal vertices. 

In our context, the \emph{size} of a tree $t$ is its number of leaves. 
It is denoted $|t|$, 
whereas $\#t$ denotes the number of vertices of $t$ (including both leaves and internal vertices). 
\end{definition}

Because every internal vertex of a Schröder tree has at least $2$ children, it follows immediately (by induction) that:
\begin{observation}\label{obs:size_Schroeder_tree}
For every Schr\"oder tree $t$, $2|t|\geq \#t +1$.
\end{observation}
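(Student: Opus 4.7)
The plan is to proceed by structural induction on the Schröder tree $t$, following the recursive definition given in Definition~\ref{dfn:trees}.

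For the base case, I would take $t$ to be a single leaf, so that $|t| = 1$ and $\#t = 1$; then $2|t| = 2 = \#t + 1$, giving equality. This already tells us that the bound in the observation is sharp, attained for instance at leaves.

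For the inductive step, suppose $t$ is not a leaf. By the definition of a Schröder tree, $t$ consists of a root together with an ordered $k$-tuple $(t_1, \ldots, t_k)$ of Schröder subtrees, where $k \geq 2$ (this is where the ``at least $2$ children'' property enters crucially). Counting leaves and all vertices gives
\[
|t| = \sum_{i=1}^{k} |t_i|, \qquad \#t = 1 + \sum_{i=1}^{k} \#t_i.
\]
Applying the induction hypothesis $2|t_i| \geq \#t_i + 1$ to each subtree and summing yields
\[
2|t| \;=\; \sum_{i=1}^{k} 2|t_i| \;\geq\; \sum_{i=1}^{k} (\#t_i + 1) \;=\; (\#t - 1) + k \;=\; \#t + (k-1),
\]
and since $k \geq 2$ this gives $2|t| \geq \#t + 1$, as required.

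There is really no obstacle here: the statement is essentially the observation that in any rooted tree whose internal vertices have at least two children, the number of leaves is at least one more than the number of internal vertices (equivalently, at least half of all vertices plus one half). The only subtle point to flag is that the bound $k \geq 2$ is exactly what is used in the last inequality, so the proof would break for ordered trees allowing unary internal nodes. I would write the argument in three or four lines, pointing out that equality $2|t| = \#t + 1$ holds precisely when $t$ is a binary Schröder tree (each internal vertex having exactly $k = 2$ children), since this case will matter later in the paper.
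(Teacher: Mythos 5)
Your proof is correct and is exactly the argument the paper has in mind: the paper states only that the bound "follows immediately (by induction)" from the fact that every internal vertex has at least $2$ children, and your structural induction is the standard way to make that precise. Your added remark that equality holds exactly for binary trees is also correct and consistent with how the bound is used later.
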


A \emph{binary tree} is a Schröder tree where there are exactly $2$ subtrees attached to every internal vertex. 

In this article, we consider unlabeled trees.
Nevertheless, since we work with \emph{plane} trees (\emph{i.e.}, trees in which the subtrees attached to a vertex are ordered),
there is a canonical way to label the {\em leaves} of a tree (from left to right).
Then a subset of the set of leaves of a tree $t$ is canonically represented
by a subset $I$ of $[|t|]$.

\begin{definition}\label{dfn:common_ancestor}
Let $t$ be a Schröder tree and $u$ and $v$ be two vertices of $t$. 
Denote by $r$ the root of $t$.
The \emph{(first) common ancestor} of $u$ and $v$ is the vertex furthest away from $r$ that appears 
on both paths from $r$ to $u$ and from $r$ to $v$ in $t$. 
\end{definition}

\begin{definition}\label{dfn:induced_subtree}
Let $t$ be a Schröder tree. 
Any subset $I$ of the leaves of $t$ induces a subtree $t_I$ of $t$, which is also a Schröder tree, defined as follows: 
\begin{itemize}
 \item the leaves of $t_I$ are the elements of $I$; 
 \item the internal vertices of $t_I$ are the vertices of $t$ that are common ancestors of two leaves in $I$; 
 \item the ancestor-descendant relation in $t_I$ is inherited from the one in $t$; 
 \item the order between the children of an internal vertex of $t_I$ is inherited from $t$. 
\end{itemize}
Note that if $t$ is a binary tree, then so is $t_I$.
\end{definition}

\begin{definition}\label{dfn:signed_trees}
A \emph{signed} Schröder tree is a pair $(t,\eps)$, where $t$ is a Schröder tree and $\eps$ a function 
from the set of internal vertices of $t$ to $\{+,-\}$.
\end{definition}

\begin{definition}\label{dfn:induced_signed_subtree}
Let $(t,\eps)$ be a signed Schröder tree. 
Any subset $I$ of the leaves of $t$ induces a signed subtree $(t_I,\eps_I)$ of $(t,\eps)$, 
where $t_I$ is as in Definition~\ref{dfn:induced_subtree} and $\eps_I$ is the restriction of $\eps$ on the set of internal vertices of $t_I$. 
\end{definition}

\begin{example}\label{ex:SchroderTree}
  Consider the signed Schröder tree on the left-hand side of \cref{fig:Subtree}.
  To ease the presentation, we have indicated the canonical labeling of its leaves from left to right.
  We take the subset of leaves $I=\{2,4,7\}$ (circled on the picture).
  Then the signed subtree $(t_I,\eps_I)$ is represented on the right-hand side of \cref{fig:Subtree}.
  (Again labels on the leaves are here to simplify the presentation, our objects are in essence not labeled.)

\begin{figure}[htbp]
    \begin{center}
      \includegraphics{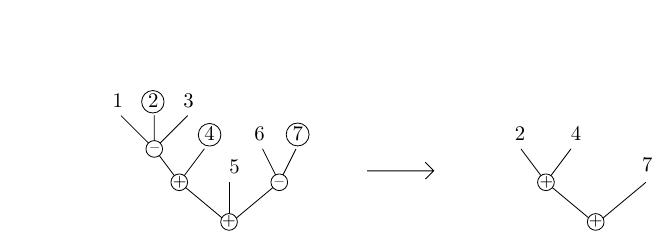}
    \end{center}
    \caption{Subtree of a signed Schröder tree. The labels on the leaves are here only to witness the embedding of the right tree in the left one.}
    \label{fig:Subtree}
\end{figure}
\end{example}  
\subsection{Separable permutations and Schröder trees}\label{sec:PermutationsAndSchroderTrees}

\begin{definition} \label{dfn:Tree2Perm}
Let $\pi$ and $\sigma$ be two permutations of respective sizes $k$ and $\ell$. 
Their \emph{direct sum} and \emph{skew sum} are the permutations of size $k + \ell$ defined by 
\begin{align*}
\oplus[\pi,\sigma] = \pi \oplus \sigma = & \pi_1 \ldots \pi_k (\sigma_1 +k) \ldots (\sigma_\ell +k) \textrm{ and } \\
\ominus[\pi,\sigma] = \pi \ominus \sigma = & (\pi_1+\ell) \ldots (\pi_k+\ell) \sigma_1 \ldots \sigma_\ell. 
\end{align*}
\end{definition}
\noindent The operators $\oplus$ and $\ominus$ being associative,
direct sums and skew sums with $r\geq 2$ components are defined in the obvious way. 
We will use the notation $\oplus[\pi^1,\ldots,\pi^r]$ 
instead of $\pi^1 \oplus \ldots \oplus \pi^r$ (and similarly for $\ominus$).
Examples with $k=3,\ell=2$ are provided in \cref{fig:sum_and_skew},
which also illustrates the graphical interpretation of these operations on permutation diagrams.

\begin{figure}[ht]
    \begin{center}
      \includegraphics[width=7cm]{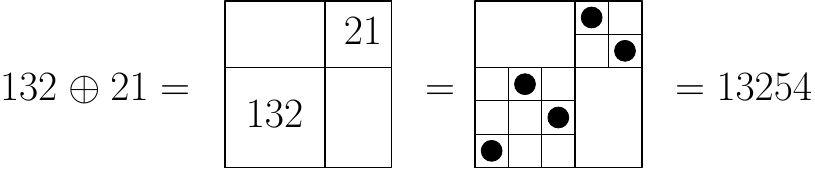} \qquad 
      \includegraphics[width=7cm]{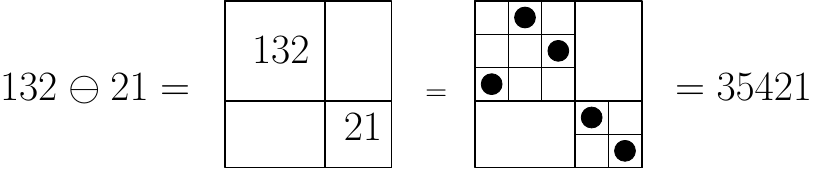}
    \end{center}
\caption{Direct sum and skew sum of permutations on their diagrams. 
Recall that the diagram of a permutation $\sigma$ of size $n$ is the set of points of the Cartesian plane at coordinates $(i,\sigma_i)$, for $1\leq i \leq n$. \label{fig:sum_and_skew}}
\end{figure}

\begin{definition}
Let $(t,\eps)$ be a signed Schröder tree. 
The permutation associated with $(t,\eps)$, denoted $\perm(t,\eps)$, is inductively defined by:
\begin{itemize}
 \item if $t$ is a leaf, then $\perm(t,\eps) =1$; 
 \item otherwise, denoting by $t_1, \ldots, t_r$ the children of the root of $t$ from left to right, 
 and $\eps_i$ the restriction of $\eps$ to the vertices of $t_i$, 
 \[
 \perm(t,\eps) = \begin{cases}
   \oplus[\perm(t_1,\eps_1), \ldots, \perm(t_r,\eps_r)] & \text{ if the root of $t$ has sign $+$;}\\
   \ominus[\perm(t_1,\eps_1), \ldots, \perm(t_r,\eps_r)] & \text{ if the root of $t$ has sign $-$.}
 \end{cases}\]
\end{itemize}
If $\perm(t,\eps)=\sigma$, we say that $(t,\eps)$ is {\em a signed tree} of $\sigma$. 
\label{dfn:separating_tree}
\end{definition}
Note that the $i$-th leaf of $(t,\eps)$ (from left to right) corresponds to the $i$-th element of $\perm(t,\eps)$.
\emph{Binary} signed trees of a permutation $\sigma$ are sometimes called {\em separation trees} of $\si$.

For example, let $(t,\eps)$ be the tree on the left-hand side of \cref{fig:Subtree} of \cref{ex:SchroderTree}.
  Then
  \[
   \perm(t,\eps)=\oplus \bigg[ \oplus\Big[ \ominus[1,1,1],1 \Big] ,1,\ominus[1,1] \bigg]
 = \oplus \Big[ \oplus[321,1],1 ,21 \Big]= \oplus[3214,1,21]=3214576.
  \]
 On the other hand, if we consider the tree $(t_I,\eps_I)$ on the right-hand side of \cref{fig:Subtree}, we have 
 \[ \perm(t_I,\eps_I)= \oplus \Big[ \oplus[1,1],1 \Big] = 123.\]
 In other words, $(t,\eps)$ and $(t_I,\eps_I)$ are signed trees of $3214576$ and $123$, respectively.

\begin{observation}
There is an important consequence of the definition of the signed tree $(t,\eps)$ of a permutation $\sigma$, which is easily seen on our figures. For $i<j$, then $\sigma_i<\sigma_j$ (resp. $\sigma_i>\sigma_j$)  if and only if the common ancestor of leaves $i,j$ has sign $+$ (resp. $-$).
\end{observation}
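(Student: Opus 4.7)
The plan is to prove the observation by a straightforward induction on $n=|\sigma|=|t|$, mirroring the inductive structure of Definition~\ref{dfn:separating_tree}. The base case $n=1$ is vacuous since $t$ is a leaf and there is no pair $i<j$ to consider.

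For the inductive step, suppose $(t,\eps)$ has $n\ge 2$ leaves, with root bearing a sign $\eps_0\in\{+,-\}$, and let $t_1,\dots,t_r$ be its children carrying the restricted signings $\eps_1,\dots,\eps_r$. Write $n_a=|t_a|$ and let $L_a\subseteq [n]$ be the set of indices of the leaves descending from $t_a$ (under the canonical left-to-right labeling). By the plane embedding, $L_1,L_2,\dots,L_r$ are consecutive intervals that partition $[n]$. Fix two leaves $i<j$ and distinguish two cases according to whether they fall in the same block.

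If both $i$ and $j$ lie in some $L_a$, then their common ancestor in $t$ belongs to $t_a$ and coincides with their common ancestor in $t_a$. From Definition~\ref{dfn:Tree2Perm}, the relative order of $\sigma$ restricted to the positions in $L_a$ is exactly $\perm(t_a,\eps_a)$, so $\sigma_i<\sigma_j$ is equivalent to the corresponding inequality in $\perm(t_a,\eps_a)$. The induction hypothesis applied to $(t_a,\eps_a)$ then yields the desired equivalence with the sign of the common ancestor. If instead $i\in L_a$ and $j\in L_b$ with $a<b$, the common ancestor of $i$ and $j$ in $t$ is the root itself. In the case $\eps_0=+$, Definition~\ref{dfn:Tree2Perm} gives $\perm(t,\eps)=\oplus[\perm(t_1,\eps_1),\dots,\perm(t_r,\eps_r)]$, so every value taken by $\sigma$ on $L_a$ is strictly smaller than every value taken on $L_b$, whence $\sigma_i<\sigma_j$; the case $\eps_0=-$ is perfectly symmetric and produces $\sigma_i>\sigma_j$. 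Combining the two directions (and the fact that $\sigma_i\neq \sigma_j$ and that the common ancestor bears exactly one sign) gives both implications of the ``if and only if''.

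There is no real obstacle here: the whole argument is essentially a direct reading of the recursive definitions. The only point requiring a modicum of care is the bookkeeping of the additive shifts appearing in Definition~\ref{dfn:Tree2Perm}, needed to verify that in a direct sum every value over $L_a$ is smaller than every value over $L_b$ when $a<b$ (and the dual statement for the skew sum).
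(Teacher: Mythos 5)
Your induction is correct and is exactly the formalization of what the paper leaves implicit: the Observation is stated without proof ("easily seen on our figures"), and the intended justification is precisely this unwinding of Definitions~\ref{dfn:Tree2Perm} and~\ref{dfn:separating_tree}, splitting on whether $i$ and $j$ lie in the same child subtree. No gaps; the case analysis and the handling of the value shifts in the direct and skew sums are all as they should be.
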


A consequence of this observation is that the map $\perm$ is compatible with taking substructures:

\begin{observation}\label{obs:patterns_and_subtrees}
Let $(t,\eps)$ be a signed Schröder tree. 
Let $I$ be a subset of $[|t|]$ and $(t_I,\eps_I)$ be the signed subtree of $(t,\eps)$ induced by $I$. 
Then it holds that $\pat_I(\perm(t,\eps)) = \perm(t_I,\eps_I)$. 
\end{observation}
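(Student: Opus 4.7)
The plan is to reduce the statement to the preceding observation, which characterizes the relative order of two entries $\sigma_i, \sigma_j$ (for $i<j$) in $\sigma = \perm(t,\eps)$ in terms of the sign of their common ancestor in $t$. The key point is that two permutations of the same size are equal if and only if they induce the same relative order on every pair of positions, so it is enough to check this pairwise criterion on both sides of the claimed equality.

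First, I would note that both $\pat_I(\perm(t,\eps))$ and $\perm(t_I,\eps_I)$ are permutations of size $|I|$, with positions indexed canonically by $\{1,\dots,|I|\}$ corresponding to the elements of $I$ listed in increasing order; this same correspondence matches the canonical left-to-right labelling of the leaves of $t_I$ (since taking an induced subtree preserves the left-to-right order of leaves). Fix two elements $i<j$ of $I$, write $i',j'\in[|I|]$ for their ranks in $I$, and let $v$ denote the common ancestor of the leaves $i$ and $j$ in $t$. I would then apply the previous observation twice.

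On the left-hand side, $(\pat_I(\perm(t,\eps)))_{i'}$ and $(\pat_I(\perm(t,\eps)))_{j'}$ have the same relative order as $\sigma_i$ and $\sigma_j$, which by the previous observation is determined by $\eps(v)$. On the right-hand side, by \cref{dfn:induced_subtree}, $v$ is an internal vertex of $t_I$ (it is the common ancestor of two leaves of $I$) and the ancestor-descendant relation is inherited, so $v$ is in fact the common ancestor of leaves $i'$ and $j'$ in $t_I$; moreover $\eps_I(v) = \eps(v)$ by \cref{dfn:induced_signed_subtree}. Applying the previous observation inside $(t_I,\eps_I)$, the entries $\perm(t_I,\eps_I)_{i'}$ and $\perm(t_I,\eps_I)_{j'}$ have the same relative order, again governed by $\eps(v)$. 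Since this holds for every pair $i<j$ in $I$, the two permutations coincide.

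There is no serious obstacle here: the whole argument rests on the fact that the induced-subtree construction preserves common ancestors together with their signs, which is immediate from \cref{dfn:induced_subtree,dfn:induced_signed_subtree}. An alternative route would be a direct induction on $\#t$ using \cref{dfn:separating_tree}, splitting on whether $I$ meets one or several of the subtrees attached to the root and unravelling $\oplus$ and $\ominus$; this works but is more cumbersome than the pairwise argument above.
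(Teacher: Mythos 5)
Your proof is correct and follows essentially the same route as the paper, which presents the statement as a direct consequence of the preceding observation (relative order of $\sigma_i,\sigma_j$ is governed by the sign of the common ancestor of leaves $i,j$); you have simply spelled out the pairwise argument and the fact that the induced subtree preserves common ancestors and their signs. No gap.
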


  We continue \cref{ex:SchroderTree}.
  Taking $I=\{2,4,7\}$, we have seen that $\perm(t,\eps)=3214576$ and $\perm(t_I,\eps_I)=123$.
  Thus   \cref{obs:patterns_and_subtrees} asserts that $\pat_{\{2,4,7\}}(3214576)=123$, which is indeed the case.
  This is illustrated by \cref{fig:Schroder_Permutation_AvecMotif}. 

\begin{figure}[ht]
   \begin{center}
      \includegraphics[width=10cm]{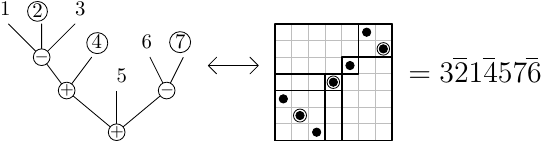}
    \end{center}
\caption{One of the signed trees associated with $\sigma=3214576$. The elements of the pattern $\pat_{\{2,4,7\}}(\sigma)$ are circled/overlined. \label{fig:Schroder_Permutation_AvecMotif}}
\end{figure}

In the introduction, separable permutations have been defined
as avoiding specific patterns.
In fact, they can also been characterized using signed Schröder trees.

\begin{proposition}\label{prop:BBL}
Separable permutations are exactly 
those that can be obtained as $\perm(t,\eps)$ for a signed binary tree $(t,\eps)$,
or equivalently,
those that can be obtained as $\perm(t,\eps)$ for a signed Schröder tree $(t,\eps)$.
\end{proposition}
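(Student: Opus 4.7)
The plan is to prove the two equivalences of the proposition by establishing the chain of implications: separable $\Rightarrow$ signed binary tree representation $\Rightarrow$ signed Schröder tree representation $\Rightarrow$ separable. The middle implication is immediate because a binary tree is a particular case of a Schröder tree, so only the other two implications require work. The passage between the binary and general Schröder settings is then completed by observing that, thanks to associativity of $\oplus$ and $\ominus$, any internal node of arity $r\geq 3$ carrying sign $s$ in a signed Schröder tree can be replaced by a right-comb of $r-1$ binary internal nodes all carrying sign $s$ without changing $\perm(t,\eps)$.

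For the implication ``signed Schröder tree $\Rightarrow$ separable'', I would induct on $|t|$. The base case $|t|=1$ gives $\perm(t,\eps)=1$, which trivially avoids $2413$ and $3142$. For the inductive step, with children $t_1,\dots,t_r$ of the root giving separable $\sigma_i:=\perm(t_i,\eps_i)$, one must check that $\oplus[\sigma_1,\dots,\sigma_r]$ and $\ominus[\sigma_1,\dots,\sigma_r]$ remain separable. Since the blocks of a direct sum have the property that all positions (and values) of block $\sigma_i$ lie below those of $\sigma_{i+1}$, any four positions realising $2413$ or $3142$ and spread across at least two blocks would yield prefix values that are either all less than or all greater than the suffix values; a short case analysis (against the value-profiles $(2,4,1,3)$ and $(3,1,4,2)$) shows this is impossible for every way of splitting the four positions. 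Hence any forbidden pattern must lie inside a single $\sigma_i$, which is separable by induction. The skew-sum case is symmetric.

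For the implication ``separable $\Rightarrow$ signed binary tree'', I would induct on $|\sigma|$. The base case $|\sigma|=1$ is handled by a single leaf. For $|\sigma|\geq 2$, the key lemma is that every separable permutation of size at least $2$ is either $\oplus$-decomposable or $\ominus$-decomposable; granting this, write $\sigma = \alpha\oplus\beta$ (resp.\ $\alpha\ominus\beta$) with $\alpha,\beta$ nonempty, apply induction to obtain signed binary trees for $\alpha$ and $\beta$, and join them under a new binary root signed $+$ (resp.\ $-$); note that $\alpha$ and $\beta$ are themselves separable, since the class of separable permutations is closed under taking patterns. The key lemma itself is the main obstacle: the standard argument invokes the substitution decomposition, writing $\sigma = \theta[\alpha_1,\dots,\alpha_r]$ with $\theta$ either $12$, $21$, or simple of size $\geq 4$. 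The first two cases are exactly $\oplus$- and $\ominus$-decomposability. The third case is excluded by the classical theorem of Schmerl--Trotter that every simple permutation of size at least $4$ contains $2413$ or $3142$ as a pattern, together with the obvious fact that any pattern of $\theta$ appears as a pattern of $\sigma = \theta[\alpha_1,\dots,\alpha_r]$, which would contradict separability. This delicate combinatorial input about simple permutations is where all the content lies; the remainder of the argument is essentially bookkeeping.
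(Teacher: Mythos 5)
Your proof is correct, but it is organized quite differently from the paper's. The paper does not reprove the equivalence between separability and representability by a signed \emph{binary} tree: it simply cites \cite{BBL98} for that, and the only argument it actually gives is the easy passage from Schr\"oder trees to binary trees (replacing each internal vertex of arity $k>2$ and sign $\delta$ by any binary tree on $k$ leaves with all internal vertices signed $\delta$, which is legitimate by associativity of $\oplus$ and $\ominus$ --- exactly your right-comb observation, except that the paper allows an arbitrary binary expansion rather than a comb). You instead supply a self-contained proof of the cited equivalence: the direction ``signed tree $\Rightarrow$ separable'' by the (correct) check that neither $2413$ nor $3142$ can straddle a block boundary of a direct or skew sum, and the direction ``separable $\Rightarrow$ signed binary tree'' via the existence part of the substitution decomposition together with the Schmerl--Trotter consequence that every simple permutation of size at least $4$ contains $2413$ or $3142$. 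Both external inputs you invoke are standard (the substitution decomposition is \cite[Proposition 2]{AA05}, already cited in the paper), and your case analyses are complete, so the argument goes through. What your approach buys is independence from \cite{BBL98}; what it costs is reliance on the theory of simple permutations, which is heavier machinery than the proposition itself really needs --- one can also prove the key lemma (every $\{2413,3142\}$-avoider of size at least $2$ is $\oplus$- or $\ominus$-decomposable) by a direct elementary argument, which is essentially what \cite{BBL98} does.
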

\begin{proof}
  The first part (with binary trees) has been established in \cite{BBL98}.

  Assume now that a permutation $\si$ can be obtained as $\perm(t,\eps)$ for a signed Schröder tree $(t,\eps)$; we need to prove that $\si$ can also be obtained
  as $\perm(t',\eps')$ for a signed binary tree $(t',\eps')$.
  To obtain $(t',\eps')$ from $(t,\eps)$,
  it is enough to replace every vertex with a label $\delta$ 
  and $k > 2$ subtrees $t_1, \dots, t_k$
  by a binary tree with $k$ leaves on which $t_1, \dots, t_k$ are pending and with all internal vertices labeled $\delta$.
    
  \cref{fig:Form_100euros} (p.\pageref{fig:Form_100euros})
  shows {\em all} binary trees that can be obtained by the
  above construction from a specific signed Schröder tree.
\end{proof}

The advantage of Schröder trees over binary trees is that 
the correspondence tree-permutation can be made one-to-one in more natural way:
\begin{proposition}\label{prop:bij_separables_trees}
The map $\perm$ is a size-preserving bijection between separable permutations 
and signed Schröder trees in which the signs alternate (\emph{i.e.},
all internal vertices at even (resp. odd) distance from the root have the same sign as the root (resp. the opposite sign)).
\end{proposition}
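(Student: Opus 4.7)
The plan is to establish the bijection by restricting Proposition~\ref{prop:BBL} and showing that in each fiber of $\perm$ over a separable permutation there is exactly one signed Schr\"oder tree with alternating signs.

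For surjectivity onto separable permutations, I would start from any $(t,\eps)$ with $\perm(t,\eps)=\sigma$, which exists by \cref{prop:BBL}, and normalize it. As long as some internal vertex $v$ with sign $\delta$ has a child $w$ that is also internal with sign $\delta$, I contract the edge $vw$: suppress $w$ and insert its children in $w$'s former position among the children of $v$. By associativity of $\oplus$ and $\ominus$, this contraction does not change $\perm$. It strictly decreases the number of internal vertices, so it terminates, and by construction the resulting tree is alternating.

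For injectivity, I would argue by induction on $|\sigma|$. If $|\sigma|=1$, the tree must be a leaf. If $|\sigma|\geq 2$ and $(t,\eps)$ is an alternating tree with $\perm(t,\eps)=\sigma$, then the root is internal with some sign $\delta$ and children $t_1,\ldots,t_r$, giving
\[ \sigma = \oplus[\pi_1,\ldots,\pi_r] \quad\text{or}\quad \ominus[\pi_1,\ldots,\pi_r], \]
according to $\delta$, where $\pi_i = \perm(t_i,\eps_i)$. Two points make this decomposition unique:
\begin{enumerate}
\item The sign $\delta$ is forced by $\sigma$: comparing $\sigma_1$ and $\sigma_{|\sigma|}$ shows immediately that a permutation of size $\geq 2$ cannot be simultaneously $\oplus$-decomposable and $\ominus$-decomposable.
\item If $\delta=+$, alternation forces that no $\pi_i$ is itself $\oplus$-decomposable, so $\sigma=\pi_1\oplus\cdots\oplus\pi_r$ must coincide with the \emph{finest} $\oplus$-decomposition of $\sigma$; symmetrically if $\delta=-$.
\end{enumerate}
This determines $r$ and $\pi_1,\ldots,\pi_r$, and the induction hypothesis applied to each $\pi_i$ determines $(t_i,\eps_i)$.

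The main technical input is the existence and uniqueness of a finest $\oplus$-decomposition (and symmetrically $\ominus$-decomposition), which I expect to be the only non-immediate step. I would prove it by observing that the set of \emph{cut points} $\{k : \sigma_{|\{1,\ldots,k\}}$ and $\sigma_{|\{k+1,\ldots,n\}}$ witness a $\oplus$-decomposition$\}$ is totally ordered and gives a canonical refinement; the finest decomposition corresponds to taking all such cuts. Once this is in place, the two points above close the induction, and combined with the normalization argument above, yield the bijection. Size-preservation is automatic since $|\perm(t,\eps)|$ equals the number of leaves of $t$ by construction.
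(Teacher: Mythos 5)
Your proof is correct, but it is genuinely different from what the paper does: the paper gives no self-contained argument and instead derives \cref{prop:bij_separables_trees} as a special case of the general substitution decomposition theorem of Albert and Atkinson \cite[Proposition 2]{AA05}, whereas you prove it from scratch. Your two ingredients are sound. For surjectivity, contracting an edge between an internal vertex and a same-signed internal child preserves $\perm$ by associativity of $\oplus$ and $\ominus$, strictly decreases the number of internal vertices, and terminates exactly at a tree in which every internal non-root vertex carries the opposite sign of its parent, which is the paper's alternation condition; the contracted tree is still a Schr\"oder tree since arities only increase. For injectivity, your two observations are exactly the needed facts: comparing $\sigma_1$ and $\sigma_n$ shows a permutation of size at least $2$ cannot be both $\oplus$- and $\ominus$-decomposable, and alternation forces the children of a $+$ root to be $\oplus$-indecomposable, so the root's block structure must be the finest $\oplus$-decomposition, whose existence and uniqueness follow from the cut-point argument ($k$ is a cut point iff $\max_{j\le k}\sigma_j=k$, every $\oplus$-decomposition cuts only at cut points, and the finest one cuts at all of them). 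The induction then closes because each child subtree is itself alternating with forced root sign. What each approach buys: the citation to \cite{AA05} is shorter and places the statement in the general framework of simple permutations and substitution decomposition, which the authors invoke again when discussing substitution-closed classes in their perspectives; your argument is elementary, self-contained, and makes explicit the mechanism (contraction of same-signed edges) by which an arbitrary signed tree of $\sigma$ collapses onto the canonical decomposition tree, which is also implicitly the content of \cref{obs:N_pi} later in the paper.
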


Proposition~\ref{prop:bij_separables_trees} is a consequence of 
the more general substitution decomposition theorem for permutations~\cite[Proposition 2]{AA05}. 
In this context, the unique signed Schröder tree $(t,\eps)$ in which the signs alternate such that $\perm(t,\eps) = \pi$ 
is called the \emph{(substitution) decomposition tree} of $\pi$. 

Let us return to \cref{ex:SchroderTree}. 
The permutation $3214576$ which is induced by the tree on the left-hand side of \cref{fig:Subtree} is separable and the Schröder tree in which signs alternate that corresponds to it is shown in \cref{fig:SchroderAlterne_Permutation}.

\begin{figure}[ht]
   \begin{center}
      \includegraphics[width=10cm]{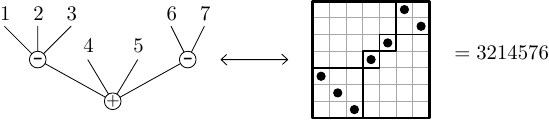}
    \end{center}
\caption{The decomposition tree of a separable permutation. \label{fig:SchroderAlterne_Permutation}}
\end{figure}
    
\medskip

Recall that $\Si_n$ denotes a uniform random separable permutation of size $n$. 
Throughout the paper,
we denote by $T_n$ a uniform random Schröder tree with $n$ leaves. 
From Proposition~\ref{prop:bij_separables_trees}, it follows that: 

\begin{corollary}\label{cor:same_distibution_separables_trees}
Let $B$ be a balanced Bernoulli variable with values in $\{+,-\}$, independent from $T_n$. 
Let $\Eps_n$ be the sign function on the internal vertices of $T_n$,
such that the signs alternate and the root of $T_n$ has sign $B$. 
Then $\Si_n$ has the same distribution as $\perm(T_n,\Eps_n)$. 
\end{corollary}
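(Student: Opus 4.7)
The plan is to deduce the equality in distribution directly from the bijection stated in \cref{prop:bij_separables_trees}, combined with a simple uniformity argument on alternating signed trees.

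First, I would unpack what an alternating signed Schröder tree on $n$ leaves is: such an object is entirely determined by the pair consisting of (a) an underlying unsigned Schröder tree $t$ with $n$ leaves, and (b) the sign of the root of $t$, since the alternation rule then forces the signs on all other internal vertices. Conversely, any such pair (unsigned Schröder tree, root sign in $\{+,-\}$) yields a unique alternating signed Schröder tree. So there is a canonical bijection
\[
\{\text{Schröder trees with } n \text{ leaves}\} \times \{+,-\} \;\longleftrightarrow\; \{\text{alternating signed Schröder trees with } n \text{ leaves}\}.
\]

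Next, I would observe that the pair $(T_n, B)$, with $T_n$ uniform on Schröder trees with $n$ leaves and $B$ an independent balanced Bernoulli variable on $\{+,-\}$, is uniformly distributed on the left-hand side above. Pushing this uniform distribution through the bijection, the pair $(T_n, \Eps_n)$ is uniformly distributed on the set of alternating signed Schröder trees with $n$ leaves.

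Finally, I would invoke \cref{prop:bij_separables_trees}: the map $\perm$ is a size-preserving bijection from alternating signed Schröder trees with $n$ leaves to separable permutations of size $n$. Therefore $\perm(T_n, \Eps_n)$, being the image of a uniform random variable under a bijection onto a finite set, is uniform on separable permutations of size $n$, which is by definition the distribution of $\Si_n$. This proves the claimed equality in distribution. There is no real obstacle here; the whole content of the corollary is in the cardinality-preserving bijection provided by \cref{prop:bij_separables_trees}, and the only thing to check carefully is that the sampling scheme for $(T_n, \Eps_n)$ indeed produces the uniform measure on alternating signed trees, which follows from the independence of $B$ and $T_n$.
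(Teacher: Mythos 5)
Your proof is correct and is exactly the argument the paper intends: the paper states the corollary without proof as an immediate consequence of \cref{prop:bij_separables_trees}, and your write-up simply fills in the natural details (the factorization of an alternating signed tree into its unsigned shape plus root sign, and the transport of the uniform measure through the two bijections). The only negligible caveat is the degenerate case $n=1$, where there is no root sign to speak of, but the conclusion is trivial there.
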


Pitman and Rizzolo observed (see \cite[Theorem 1]{PitmanRizzolo}) that $T_n$ behaves like a Galton-Watson tree conditioned to have $n$ leaves (we refer to \cite[Section 3]{Remco} for basics of Galton-Watson trees):

\begin{proposition}\label{prop:Schroeder_are_Galton-Watson}
Let $\nu$ be the probability distribution defined by:
\begin{equation*}
\nu(0)=2-\sqrt{2},\qquad \nu(1)=0,\qquad \nu(i)=\left(1-\tfrac{\sqrt{2}}{2}\right)^{i-1}  (\text{ for all }i\geq 2).
\end{equation*}
Then
\begin{itemize}
\item $\nu$ has mean $1$ and variance $4(\sqrt{2}-1)$;
\item $T_n$ has the same distribution as a Galton-Watson tree with offspring distribution $\nu$, conditioned to have $n$ leaves.
\end{itemize}
\end{proposition}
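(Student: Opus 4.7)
The plan is to verify the two bullet points separately, both by direct computation, with the key identity coming from a simple edge count in Schröder trees.

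For the first bullet, I set $q = 1 - \sqrt{2}/2 = (2-\sqrt{2})/2$, so that $\nu(0) = 2q$ and $\nu(i) = q^{i-1}$ for $i \geq 2$. I first check that $\nu$ is a probability distribution: $\sum_{i \geq 2} q^{i-1} = q/(1-q)$, and with $1-q = \sqrt{2}/2$ this equals $\sqrt{2}-1$, so the total mass is $(2-\sqrt{2}) + (\sqrt{2}-1) = 1$. The mean is $\sum_{i \geq 2} i q^{i-1} = (1-q)^{-2} - 1 = 4/2 - 1 = 1$. For the variance I would compute $\sum_{i \geq 2} i(i-1) q^{i-1} = q \cdot 2/(1-q)^3 - 0$ and combine to get $\Var = 4(\sqrt{2}-1)$. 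These are all routine geometric series manipulations.

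For the second bullet, the plan is to compute the probability that a $\nu$-Galton-Watson tree $T$ equals any fixed Schröder tree $t$ with $n$ leaves, and show this probability depends only on $n$. Writing $m$ for the number of internal vertices of $t$ and $c_1,\dots,c_m$ for their numbers of children (with every $c_j \geq 2$ since $t$ is a Schröder tree, consistent with $\nu(1)=0$), the standard formula for Galton-Watson probabilities gives
\[
\mathbb{P}(T = t) \;=\; \nu(0)^n \prod_{j=1}^m \nu(c_j) \;=\; (2-\sqrt{2})^n \prod_{j=1}^m (1-\tfrac{\sqrt{2}}{2})^{c_j - 1}.
\]
The key identity is $\sum_{j=1}^m (c_j - 1) = n - 1$: the sum $\sum_j c_j$ counts all non-root vertices of $t$ (one per edge), which equals $m + n - 1$, and subtracting $m$ yields $n-1$. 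Therefore
\[
\mathbb{P}(T = t) \;=\; (2-\sqrt{2})^n \, (1-\tfrac{\sqrt{2}}{2})^{n-1},
\]
an expression that depends only on $n$ and not on the combinatorial shape of $t$.

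To conclude, since $\nu$ is a critical non-degenerate offspring distribution, the Galton-Watson tree is finite almost surely, and the event $\{|T| = n\}$ has positive probability (it is attained, e.g., by any particular Schröder tree with $n$ leaves). Conditioning on this event and using that $\mathbb{P}(T=t)$ is constant over Schröder trees with $n$ leaves, one obtains the uniform distribution on the finite set of Schröder trees with $n$ leaves, which is precisely the law of $T_n$. There is no real obstacle here; the only mildly delicate point is to notice that the exponents in the offspring distribution $\nu$ have been \emph{designed} so that the product $\prod (1-\sqrt{2}/2)^{c_j-1}$ telescopes through the edge-counting identity, making the Galton-Watson measure uniform on Schröder trees of a given number of leaves.
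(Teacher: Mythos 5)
Your proof is correct. Note that the paper does not actually prove this proposition: it simply cites Pitman and Rizzolo \cite[Theorem 1]{PitmanRizzolo}, so your argument is a self-contained verification of what the paper takes as imported. Both halves check out. The moment computations are routine and the values $1$ and $4(\sqrt{2}-1)$ come out as you say (with $q=1-\sqrt{2}/2$, one has $1-q=\sqrt{2}/2$, so $\sum_{i\ge 2} i q^{i-1}=(1-q)^{-2}-1=1$ and $\sum_{i\ge 2}i(i-1)q^{i-1}=2q/(1-q)^3=4\sqrt 2-4$, giving the stated variance). For the second bullet, the identity $\sum_j(c_j-1)=n-1$ is exactly the right observation: it makes $\mathbb{P}(T=t)=(2-\sqrt 2)^n(1-\sqrt 2/2)^{n-1}$ depend only on the number of leaves, so that conditioning on $\{|T|=n\}$ (an event of positive probability, since each individual Schröder tree with $n$ leaves has positive probability and there are finitely many of them by \cref{obs:size_Schroeder_tree}) yields the uniform law on Schröder trees with $n$ leaves. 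The one thing worth making explicit, which you only implicitly use, is that the support of the conditioned tree is \emph{exactly} the set of Schröder trees with $n$ leaves: this is where $\nu(1)=0$ and $\nu(i)>0$ for all $i\ne 1$ enter, guaranteeing that no tree with a unary vertex can occur and that every Schröder tree does occur. With that remark added, the argument is complete, and it has the advantage over the paper's citation of exhibiting why the exponents in $\nu$ are tuned precisely so that the product telescopes.
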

Note that there are actually infinitely many probability distributions $\nu$ 
such that $T_n$ has the same distribution as a Galton-Watson tree with offspring distribution $\nu$, conditioned to have $n$ leaves. 
The one chosen in Proposition~\ref{prop:Schroeder_are_Galton-Watson} is such that $\nu$ has mean $1$. 
The Galton-Watson tree model is then {\em critical},
and most convergence results
in the literature are established in this case
(see, {\em e.g.} \cref{PropLeavesUniform,prop:ConvergenceContour} below).


\subsection{Contours and excursions}

\begin{definition}
An \emph{excursion} is a continuous function $f:[0,1] \to [0, +\infty)$ with $f(0)=f(1)=0$.
\end{definition}
Note that with our convention we allow $f(t)=0$ for $t\notin \{0,1\}$.
We can canonically associate with a tree an excursion, called its \emph{contour}. 

\begin{definition}[Contour $C_t$]\label{dfn:contour}
Let $t$ be a (binary or Schröder) tree. Recall that $\#t$ denotes the total number of vertices of $t$, and denote by $V$ the set of vertices of $t$. 

Consider the function $\dfs_t: \{0,1, \ldots, 2 \#t-2\} \to V$ ($\dfs$ stands for {\em depth first search}) defined by: 
\begin{itemize}
 \item $\dfs_t(0)$ is the root of $t$; 
 \item if $\dfs_t(i) =v$, then $\dfs_t(i+1)$ 
 is the leftmost child of $v$ that has not yet been visited, if it exists,
 and the parent of $v$ otherwise. 
\end{itemize}
The \emph{contour} of $t$ is the function $C_t:[0,2 \#t-2] \to [0;+\infty)$ such that 
for all integers $i$ in $[0,2 \#t-2]$, $C_t(i)$ is the distance from the root of $t$ to the vertex $\dfs_t(i)$, 
and $C_t$ is linear between those points. 
\end{definition}

\begin{example}
  An example of a tree $t$ and its contour $C_t$ is given on \cref{fig:Contour}
  (the reader should disregard the signs for the moment).
  The tiny numbers beside each vertex $v$ indicate
  the times $i$ in $[0,2 \#t-2]$ such that $\dfs_t(i)=v$.
  \begin{figure}[ht]
    \begin{center}
      \includegraphics[width=\linewidth]{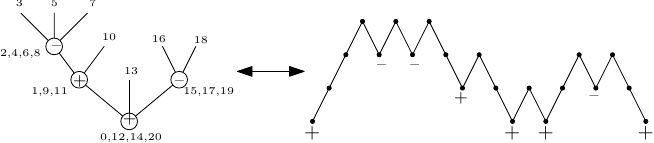}
    \end{center}
    \caption{A (signed) tree and its (signed) contour.}
    \label{fig:Contour}
  \end{figure}
\end{example}

\begin{definition}[Normalized contour $\widetilde{C_t}$]\label{dfn:normalized_contour}
Let $t$ be a Schröder tree. 
The \emph{normalized contour} of $t$ is the function $\widetilde{C_t} : [0,1] \to [0, +\infty)$ defined as follows: 
\begin{equation*}
    \textrm{for all } u \in [0,1], \quad \widetilde{C_t}(u)=\frac{1}{\sqrt{|t|}} C_t\big( (2\#t-2)\, u \big).
\end{equation*}
\end{definition}

By definition, both $C_t$ and $\widetilde{C_t}$ are continuous, so that $\widetilde{C_t}$ is an excursion. 

Some properties of the local maxima and local minima of $\widetilde{C_t}$ follow from its definition, 
and will be essential for our purpose. 

\begin{observation}
\label{obs:peaks=leaves}
Let $t$ be a tree with $|t|>1$.
If there is a {\em local maximum} of $\widetilde{C_t}$ at $u$,
then $u = \frac{i}{(2\#t-2)}$ for some integer $i$ and $\dfs_t(i)$ is a {\em leaf} of $t$.
This defines a bijection between the leaves of $t$ and the local maxima of $\widetilde{C_t}$. 
\end{observation}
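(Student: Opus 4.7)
The plan is to exploit the piecewise linear structure of $C_t$: between consecutive integer times $i$ and $i+1$ the dfs moves from $\dfs_t(i)$ to $\dfs_t(i+1)$, and since it only moves between a vertex and its parent or a vertex and one of its children, the distance to the root changes by exactly $\pm 1$. Hence $C_t$ is piecewise linear with slopes of modulus $1$, so $\widetilde{C_t}$ is piecewise linear with slopes of modulus $1/\sqrt{|t|}$ after rescaling. Any local maximum must therefore occur at a corner $u = i/(2\#t-2)$ where the slope switches from $+$ to $-$, and such local maxima are automatically strict and isolated.

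First I would characterize such a switch: a local maximum at time $i$ (with $1 \le i \le 2\#t-3$) means $C_t(i-1) < C_t(i) > C_t(i+1)$. Since consecutive dfs steps are parent/child moves, the first inequality forces $\dfs_t(i-1)$ to be the parent of $v := \dfs_t(i)$, while the second forces $\dfs_t(i+1)$ to also be the parent of $v$. By the defining rule of dfs, from $v$ one always moves to the leftmost unvisited child whenever one exists, so $v$ must have no children, \emph{i.e.}, $v$ is a leaf. Conversely, if $v$ is a leaf and $\dfs_t(i) = v$, then $v$ has no child to visit, so the dfs necessarily arrives at $v$ from its parent and immediately returns to the parent, yielding $C_t(i-1) < C_t(i) > C_t(i+1)$, hence a local maximum at $i/(2\#t-2)$.

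Then I would dispose of the two endpoints of $[0,1]$: since $|t|>1$ the root is internal (recall every internal vertex of a Schröder tree has arity $\ge 2$), so $C_t(0)=C_t(2\#t-2)=0$ is the global minimum of $\widetilde{C_t}$, and the endpoints contribute no local maximum. For the bijection, I would invoke the standard fact (easy induction on $\#t$) that in the dfs of a plane tree each vertex $v$ with $k$ children is visited exactly $k+1$ times; in particular each leaf is visited exactly once. The map sending a leaf to its unique visit time, and then to the corresponding rescaled local maximum of $\widetilde{C_t}$, is therefore well-defined, and by the preceding characterization it is both injective and surjective onto the set of local maxima.

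The only real subtlety is handling boundary cases cleanly (the endpoints $0,1$ and the distinction between strict and non-strict local maxima), but since slopes are $\pm 1/\sqrt{|t|}$ the contour has no flat pieces, so local maxima are automatically isolated peaks and the analysis reduces to the discrete inequality $C_t(i-1) < C_t(i) > C_t(i+1)$. I do not foresee any serious obstacle beyond bookkeeping.
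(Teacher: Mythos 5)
Your proof is correct and is exactly the elaboration the paper has in mind: the statement appears there as an Observation with no proof, presented as immediate from \cref{dfn:contour,dfn:normalized_contour}, and your up-step/down-step analysis at the corner $C_t(i-1)<C_t(i)>C_t(i+1)$ together with the ``each leaf is visited exactly once'' fact is the intended justification. The only slip is cosmetic: after rescaling the slopes of $\widetilde{C_t}$ have modulus $(2\#t-2)/\sqrt{|t|}$ rather than $1/\sqrt{|t|}$, which is immaterial since all that matters is that they are nonzero and the contour has no flat pieces.
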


This allows us to identify the leaves of $t$ with the $x$-coordinates of the local maxima of $\widetilde{C_t}$. 
We will often do so in the sequel, and we introduce the following notation:

\begin{definition}[$\ll_t$]
\label{dfn:leaves_coordinates}
For any tree $t$ with $|t|>1$, we define $\ell_1 <\dots<\ell_{|t|}$ to be the $x$-coordinates of the local maxima of $\widetilde{C_t}$ 
(which correspond to the leaves of $t$), 
and we set $\ll_t=\{\ell_1,\dots,\ell_{|t|}\} \subset [0,1]$.
\end{definition}

\begin{observation}
\label{obs:valleys=internal_nodes}
Let $t$ be a tree with $|t|>1$.
If there is a {\em local minimum} of $\widetilde{C_t}$ in $u$, 
then $u = \frac{i}{(2\#t-2)}$ for some integer $i$ and $\dfs_t(i)$ is an {\em internal vertex} of $t$. 
Note however that a single internal vertex can correspond to several local minima of $\widetilde{C_t}$.
\end{observation}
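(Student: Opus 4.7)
The plan is to exploit the piecewise-linear structure of the (un-normalized) contour $C_t$. First, I would observe that for each integer $i \in \{0,1,\ldots,2\#t-3\}$, the vertices $\dfs_t(i)$ and $\dfs_t(i+1)$ are adjacent in $t$ (one is the parent of the other by the very definition of $\dfs_t$), so their distances to the root differ by exactly $1$. Hence $C_t$ is affine of slope $\pm 1$ on each unit interval $[i,i+1]$, and by rescaling $\widetilde{C_t}$ is affine of constant nonzero slope on each interval $\bigl[i/(2\#t-2),(i+1)/(2\#t-2)\bigr]$. In particular $\widetilde{C_t}$ is strictly monotone between consecutive breakpoints, so any local extremum is automatically attained at one of the breakpoints $u = i/(2\#t-2)$ for some integer $i$.

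Next I would analyse which breakpoints actually yield a local minimum. For an interior breakpoint ($0<i<2\#t-2$), the only slope configuration producing a local minimum is ``$-1$ on the left and $+1$ on the right'', that is, $C_t(i-1) > C_t(i)$ and $C_t(i+1) > C_t(i)$. By the definition of $\dfs_t$ this forces both $\dfs_t(i-1)$ and $\dfs_t(i+1)$ to be children of $v:=\dfs_t(i)$; in particular $v$ has at least one child, so $v$ is internal. For the boundary cases $i=0$ and $i=2\#t-2$, one has $\dfs_t(i)$ equal to the root of $t$, which is an internal vertex because $|t|>1$. This already covers both assertions of the observation.

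Finally, the parenthetical remark that one internal vertex may correspond to several local minima follows from the fact that $\dfs_t$ revisits an internal vertex $v$ several times during the search: once upon arrival from its parent (or at time $0$ if $v$ is the root) and once upon returning from each of its children. Each ``valley'' of $\widetilde{C_t}$ between two consecutive child-subtree excursions of $v$ thus produces a distinct local minimum located at the same vertex $v$. There is no real obstacle here: the whole statement amounts to unpacking the definitions of $\dfs_t$ and of the contour, and the only point requiring (mild) care is the boundary case $i\in\{0,2\#t-2\}$, which is precisely why the hypothesis $|t|>1$ is needed.
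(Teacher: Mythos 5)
Your proof is correct and matches what the paper leaves implicit: the paper states this as an Observation with no written proof, regarding it as a direct consequence of the definitions of $\dfs_t$ and the contour, and your argument (unit slopes $\pm 1$ between breakpoints, the $-1/+1$ configuration forcing $\dfs_t(i\pm1)$ to be children of $\dfs_t(i)$, and the root handling the boundary cases since $|t|>1$) is precisely that unpacking. The multiplicity remark is also correctly justified by the repeated visits of an internal vertex between its child-subtree excursions.
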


Using the description of Schröder trees as Galton-Watson trees (see \cref{prop:Schroeder_are_Galton-Watson}), 
Pitman and Rizzolo \cite{PitmanRizzolo} and Kortchemski \cite{Igor} have proved that 
the normalized contour of a uniform Schröder tree converges in distribution to 
a multiple of the Brownian excursion\footnote{ 
For the readers who need background on the Brownian excursion, 
we have collected in \cref{appendix} (one of) its definition(s), 
together with some useful properties, and bibliographical references.}.
Throughout the paper, we denote by $\Exc$ the Brownian excursion.
\begin{proposition}
\label{prop:ConvergenceContour}
The following convergence holds in distribution in the space $\CCC[0,1]$ of real-valued continuous functions on $[0,1]$:
\[
\left(\widetilde{C_{T_n}}(u)\right)_{u\in[0,1]} \stackrel{n\to+\infty}{\longrightarrow} \left(\lambda \cdot \Exc(u)\right)_{u\in[0,1]},
\]
where $\lambda=\sqrt{2+3/\sqrt{2}}$.
\end{proposition}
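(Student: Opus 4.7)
The plan is to deduce this statement from the scaling limit results of Pitman--Rizzolo \cite{PitmanRizzolo} and Kortchemski \cite{Igor} for Galton--Watson trees conditioned by their number of leaves, applied to the Galton--Watson representation of $T_n$ provided by \cref{prop:Schroeder_are_Galton-Watson}, and then to reconcile the normalization conventions.

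First, I would record the precise form of the invariance principle I intend to apply. If $\tau_n$ is a critical Galton--Watson tree with offspring distribution $\nu$ of finite variance $\sigma^2$, conditioned to have $n$ leaves, then the work of \cite{Igor,PitmanRizzolo} provides, as $n \to \infty$, the convergence in $\CCC[0,1]$
\[
\Bigl(\tfrac{1}{\sqrt{\#\tau_n}}\, C_{\tau_n}\bigl(2(\#\tau_n-1)\,u\bigr)\Bigr)_{u\in[0,1]} \stackrel{(d)}{\longrightarrow} \Bigl(\tfrac{2}{\sigma}\,\Exc(u)\Bigr)_{u\in[0,1]}.
\]
By \cref{prop:Schroeder_are_Galton-Watson}, $T_n$ has precisely this distribution for the specific $\nu$ with mean $1$ and variance $\sigma^2 = 4(\sqrt{2}-1)$.

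Second, I would translate between the two normalizations. Writing
\[
\widetilde{C_{T_n}}(u) \;=\; \sqrt{\tfrac{\#T_n}{n}}\cdot\tfrac{1}{\sqrt{\#T_n}}\, C_{T_n}\bigl((2\#T_n-2)\,u\bigr),
\]
the invariance principle handles the second factor. For the first factor, the number of leaves of $\tau_n$ is an additive functional of the tree, so a standard law-of-large-numbers argument for GW trees conditioned by their number of leaves gives $\#T_n/n \to 1/\nu(0)$ in probability. An application of Slutsky's lemma then yields convergence of $\widetilde{C_{T_n}}$ to $\lambda \cdot \Exc$ with
\[
\lambda \;=\; \frac{2}{\sigma\sqrt{\nu(0)}}.
\]

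Finally, I would plug in the numerical data: $\sigma^2 = 4(\sqrt{2}-1)$ and $\nu(0) = 2-\sqrt{2} = \sqrt{2}(\sqrt{2}-1)$. A short algebraic computation gives
\[
\lambda^2 \;=\; \frac{4}{\sigma^2 \nu(0)} \;=\; \frac{1}{3\sqrt{2}-4} \;=\; \frac{3\sqrt{2}+4}{2} \;=\; 2 + \frac{3}{\sqrt{2}},
\]
as required. The main obstacle I anticipate is not substantive mathematics but careful bookkeeping: one must ensure that the time and space rescalings adopted here (contour parametrized on $[0,2\#t-2]$, divided by $\sqrt{|t|}$) are matched precisely with those in \cite{PitmanRizzolo,Igor} (where the scaling is naturally indexed by the total number of vertices rather than the number of leaves), so that the constant $\lambda$ comes out exactly as stated rather than off by stray factors of $\sqrt{\nu(0)}$ or $2$.
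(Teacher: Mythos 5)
Your proposal is correct and follows essentially the same route as the paper, which states this proposition as a direct consequence of the invariance principles of Pitman--Rizzolo and Kortchemski for Galton--Watson trees conditioned on their number of leaves, applied via \cref{prop:Schroeder_are_Galton-Watson}. Your bookkeeping for the constant is right: $\#T_n/n \to 1/\nu(0)$ (consistent with the concentration estimate \eqref{Eq:ConcentrationTaille} proved later in the paper), and $\lambda^2 = 4/(\sigma^2\nu(0)) = 1/(3\sqrt{2}-4) = 2 + 3/\sqrt{2}$ as claimed.
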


\bigskip

We now define signed analogues of contours and excursions. 

\begin{definition}
A {\em signed excursion} is a pair $(f,s)$ where $f$ is an excursion and $s$ a function
from the set of the local minima of $f$ to $\{+,-\}$. 
\end{definition}

From Observation~\ref{obs:valleys=internal_nodes}, we may define the signed contour of a signed tree as follows: 

\begin{definition}
\label{dfn:signed_contour}
Let $(t,\eps)$ be a signed Schröder tree. 
The \emph{signed contour} of $(t,\eps)$ is the pair $(\widetilde{C_t},s)$, 
where $s$ associates to every local minimum of $\widetilde{C_t}$ reached in $u = \frac{i}{(2\#t-2)}$ 
the sign of the internal vertex $\dfs_t(i)$ of $t$. 
\end{definition}

\begin{example}
  The reader is now invited to look again at \cref{fig:Contour},
  taking the signs into consideration.
\end{example}

We may also define the signed Brownian excursion by: 
\begin{definition}
\label{dfn:Brownian_exc}
The signed Brownian excursion is the pair $(\Exc,S)$
where $\Exc$ is the Brownian excursion
and $S$ is the function assigning balanced independent signs on the local minima of $\Exc$.
\end{definition}
The probability space and the $\sigma$-algebra on which this object is constructed
will be introduced in \cref{sec:measurability}.

\begin{remark}
  Some readers may have seen the term \emph{``signed Brownian excursion''}
  refer to an excursion of Brownian motion whose length has been normalized to 1,
  but that may be
  either positive on $(0, 1)$ or negative on $(0, 1)$.
  The meaning of this expression in our paper is different.
\end{remark}
\begin{remark}\label{rem:cv_to_signedBrownianExc}
Consider a uniform random Schröder tree $T_n$ of size $n$ and 
a random sign function $\Eps_n$ on its internal vertices
defined as in \cref{cor:same_distibution_separables_trees}.
If we use \cref{dfn:signed_contour} on $(T_n,\Eps_n)$,
we get a random signed excursion $(\widetilde{C_{T_n}},S_n)$.
Since $\widetilde{C_{T_n}}$ converges towards 
a multiple of the Brownian excursion $\Exc$,
a natural question is the convergence of $(\widetilde{C_{T_n}},S_n)$
towards the signed Brownian excursion.
A first difficulty in proving such a result would be to find a good topology on signed excursions.
Another, maybe deeper, obstacle is that the signs assigned 
to the local minima in $(\widetilde{C_{T_n}},S_n)$ are far from independent:
recall that on the corresponding tree, signs on internal vertices alternate.
Our work shows the convergence of $(\widetilde{C_{T_n}},S_n)$ to $(\Exc,S)$ in a weaker sense. 
Indeed, the next subsection explains how to extract a permutation of any fixed size from a signed excursion, 
and Theorem~\ref{thm:main}.\ref{item:main_joint} proves
the joint convergence in distribution of the permutations extracted from $(\widetilde{C_{T_n}},S_n)$ to those extracted from $(\Exc,S)$.
\end{remark}

\subsection{Extracting trees and permutations from excursions}\label{Section:Extracting}

Given $k$ points in an excursion,
there is a natural way to associate a tree with this data.
We now explain this construction.
This is classical in the random tree literature and can for example be found
in Le Gall's survey \cite[Section 2.5]{LeGall} 
(except that he considers geometric trees, \emph{i.e.}, with edge-lengths,
while we are only interested in combinatorial trees).
\begin{definition}[$\Tree$]
\label{dfn:Tree}
Let $f$ be an excursion and $\xx=\{x_1,\dots,x_k\}$ be a set of $k$ points in $[0,1]$.
Without loss of generality, we assume $x_1 < \dots < x_k$. 
For all $1 \leq i \leq k-1$, let $m_i$ be the minimum value of $f$ on $[x_i,x_{i+1}]$.

We associate to $(f,\xx)$ a Schröder tree $\Tree(f,\xx)$, defined recursively as follows. 

If $k=1$ then $\Tree(f,\xx)$ is a leaf. 

Otherwise, let $m = \min_i \{m_i\}$ and denote by $i_1 < i_2 <  \ldots <i_p$ all the indices $i_j$ such that $m_{i_j} =m$. 
For notational convention, let $i_0 = 0$ and $i_{p+1} =k$.
For all $0\leq j \leq p$, define $\xx^{(j)}=\{x_{i_{j}+1},\cdots,x_{i_{j+1}}\}$.
Then $\Tree(f,\xx)$ is the tree whose root has arity $p+1$ and whose children are $\Tree(f,\xx^{(0)}), \ldots, \Tree(f,\xx^{(p)})$.
\end{definition}

\begin{observation}
\label{obs:distinctMinImpliesBinary}
If all $m_i$ are distinct, then $\Tree(f,\xx)$ is a binary tree. 
\end{observation}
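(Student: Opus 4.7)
The plan is to proceed by straightforward induction on $k = |\xx|$, directly unpacking the recursive definition of $\Tree(f,\xx)$. The base case $k=1$ is immediate, since the tree is then a leaf, which is (vacuously) binary.

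For the inductive step, the first key point I would establish is that the distinctness hypothesis forces $p=1$ in the recursive construction. Indeed, if all the values $m_1,\dots,m_{k-1}$ are distinct, then the minimum $m = \min_i m_i$ is attained at a unique index $i^*$, so the sequence $i_1<\dots<i_p$ reduces to a single term and $p=1$. Hence the root of $\Tree(f,\xx)$ has arity $p+1 = 2$, as required for a binary root.

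Next, I would verify that the inductive hypothesis applies to the two recursive calls on $\xx^{(0)}=\{x_1,\dots,x_{i^*}\}$ and $\xx^{(1)}=\{x_{i^*+1},\dots,x_k\}$. Both sets have strictly fewer elements than $\xx$ (since $1\le i^*\le k-1$), so induction is available on their sizes. The only mild subtlety, and the step I would treat most carefully, is checking that the $m$-values arising in these recursive calls are still pairwise distinct. This holds because the consecutive pairs of points in $\xx^{(0)}$ (resp.\ $\xx^{(1)}$) are exactly consecutive pairs in $\xx$, so the minima of $f$ on the new sub-intervals are precisely the original $m_1,\dots,m_{i^*-1}$ (resp.\ $m_{i^*+1},\dots,m_{k-1}$); these are subsets of the distinct collection $\{m_1,\dots,m_{k-1}\}$ and therefore remain distinct. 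By induction, both $\Tree(f,\xx^{(0)})$ and $\Tree(f,\xx^{(1)})$ are binary, and combined with the arity-$2$ root this yields a binary tree.

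I do not anticipate any real obstacle: the observation is essentially a tautological unpacking of \cref{dfn:Tree}. The only thing one must take care of is precisely this last point, namely that restricting to the subsets $\xx^{(j)}$ does not introduce new minimum values but only removes some of the original $m_i$, so distinctness is preserved along the recursion.
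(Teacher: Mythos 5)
Your proof is correct, and the paper itself offers no argument for this Observation — it is treated as immediate from Definition~\ref{dfn:Tree} — so your induction simply makes explicit exactly what the authors leave implicit (uniqueness of the minimizing index forces arity $2$, and distinctness passes to the prefix/suffix subsets since their consecutive pairs are consecutive pairs of $\xx$).
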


\begin{example} 
  An example of the construction presented in Definition~\ref{dfn:Tree} is given on \cref{fig:extraction}
  (the reader should disregard the signs for the moment).
  \begin{figure}[ht]
    \begin{center}
      \includegraphics{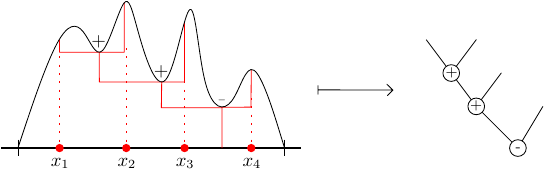}
    \end{center}
    \caption{Extracting a (signed) tree from a (signed) excursion.}
    \label{fig:extraction}
  \end{figure}
\end{example}

\begin{observation}
By construction, the leaves of $\Tree(f,\xx)$ are in one-to-one correspondence with the $x_i$'s. 
Moreover, to each $m_i$ corresponds an internal vertex of $\Tree(f,\xx)$;
note however that several $m_i$'s may correspond to the same vertex of $\Tree(f,\xx)$ 
(this happens exactly for $m_i$ and $m_j$ having the same value such that no $m_k$ for $i < k < j$ is smaller). 
In addition, the common ancestor of the leaves corresponding to $x_i$ and $x_{i+1}$ in $\Tree(f,\xx)$ 
is the internal vertex that corresponds to $m_i$. 
More generally, the common ancestor of the leaves corresponding to $x_i$ and $x_{j}$, with $i<j$, 
is the internal vertex that corresponds to $ \min_{i \leq h <j} \{m_h\}$. 
\end{observation}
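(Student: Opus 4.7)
The plan is to prove all four claims together by induction on $k=|\xx|$, directly unwinding the recursive construction given in \cref{dfn:Tree}. The base case $k=1$ is vacuous: $\Tree(f,\xx)$ is a single leaf, there are no $m_i$'s to consider, and the leaf-to-point correspondence is trivial. For the inductive step, suppose $k \geq 2$ and set $m=\min_i m_i$, with indices $i_1 < \dots < i_p$ realizing this minimum, so that by definition the root has arity $p+1$ with children $\Tree(f,\xx^{(0)}),\dots,\Tree(f,\xx^{(p)})$ where the blocks $\xx^{(j)}$ all have strictly fewer than $k$ elements.

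First I would establish the bijection between leaves and $x_i$'s: by induction each subtree $\Tree(f,\xx^{(j)})$ has its leaves in bijection with $\xx^{(j)}$, and concatenating these bijections left to right gives the claim for $\xx$. For the assignment $m_i \mapsto$ internal vertex, the $p$ indices $i_1,\dots,i_p$ all correspond to the root (they are precisely the separations between consecutive blocks $\xx^{(j)}$), while every other index $i$ satisfies $m_i > m$ and is strictly interior to some block $\xx^{(\ell)}$; applying the induction hypothesis to $\Tree(f,\xx^{(\ell)})$ produces a well-defined internal vertex of that subtree, which is also an internal vertex of $\Tree(f,\xx)$.

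For the characterization of when $m_i$ and $m_j$ (with $i<j$) correspond to the same vertex, I would split on whether $\min_{i \le h < j} m_h$ equals $m$ or is strictly larger. In the first case both indices can be identified with the root exactly when $i,j \in \{i_1,\dots,i_p\}$, which is equivalent to $m_i=m_j=m$ with no intermediate $m_h$ strictly smaller than $m$. In the second case both indices lie in a single block $\xx^{(\ell)}$, and the induction hypothesis inside that block furnishes the characterization. Combining both cases yields the claimed equivalence.

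Finally, for the common-ancestor statement for general $i<j$, the recursive structure shows that the leaves for $x_i$ and $x_j$ have the root of $\Tree(f,\xx)$ as their common ancestor iff they belong to distinct blocks, which is equivalent to $\min_{i \le h < j} m_h = m$; in this case the common ancestor is the root, which is also the vertex associated with this minimum. Otherwise both leaves lie in the same block $\xx^{(\ell)}$, and induction inside this block gives the result (the minimum over $[i,j-1]$ is the same in the block as in $\xx$). The consecutive case $j=i+1$ is just the special instance $\min_{i\le h<i+1} m_h = m_i$. The only real obstacle here is bookkeeping — carefully matching indices with internal vertices across the recursion — and no idea beyond a faithful rereading of \cref{dfn:Tree} is required.
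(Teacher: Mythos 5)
Your induction on $k$ is correct and is exactly the intended justification: the paper states this as an Observation with no written proof, treating it as immediate "by construction" from \cref{dfn:Tree}, and your argument is a faithful formalization of that recursive unwinding (the case analysis on whether the relevant indices lie in the same block $\xx^{(\ell)}$ or are separated by a global-minimum index is the right bookkeeping). No gaps.
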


An interesting special case consists
in considering the normalized contour $\widetilde{C_t}$ of a tree $t$, 
and choosing $\xx$ to be a subset of $\ll_t$ (defined in Definition~\ref{dfn:leaves_coordinates}).

\begin{observation}
\label{obs:ExtractedTree=Subtree}
Let $t$ be a Schröder tree, and $I$ be a subset of its set of leaves. 
Let $\xx$ be the subset of $\ll_t$ corresponding to the $x$-coordinates of these leaves. 
Then $\Tree(\widetilde{C_t},\xx) = t_I$, that is to say 
the tree extracted from $\xx$ in the contour of $t$ 
is the same as the subtree of $t$ induced by $I$. 
\end{observation}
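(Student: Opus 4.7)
The plan is to argue by induction on $|I|$, using the classical fact that the contour encodes common ancestors as minima. Concretely, I would first establish the auxiliary statement: if $\ell_i < \ell_j$ are elements of $\ll_t$ corresponding to leaves $v_i, v_j$ of $t$, then
\[
  \min_{u \in [\ell_i,\ell_j]} \widetilde{C_t}(u) = \frac{1}{\sqrt{|t|}} \cdot \mathrm{depth}_t(a),
\]
where $a$ is the common ancestor of $v_i$ and $v_j$ in $t$. This is immediate from the depth-first-search description of $C_t$ in Definition~\ref{dfn:contour}: to travel from $v_i$ to $v_j$ the walker $\dfs_t$ must climb to $a$ before descending, no vertex visited strictly in between has depth smaller than $\mathrm{depth}_t(a)$, and the minimum is attained precisely at the times $i/(2\#t-2)$ for which $\dfs_t(i) = a$.

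The base case $|I|=1$ of the induction is trivial since both sides reduce to a single leaf. For $|I| \geq 2$, let $r$ be the common ancestor in $t$ of all leaves corresponding to $\xx$; by Definition~\ref{dfn:induced_subtree}, $r$ is the root of $t_I$. Applied to consecutive leaves in $\xx$, the auxiliary fact identifies each $m_i$ of Definition~\ref{dfn:Tree} with $\mathrm{depth}_t/\sqrt{|t|}$ of the common ancestor of the corresponding pair. The overall minimum $m$ is therefore reached exactly at the indices $i$ where the two consecutive leaves of $\xx$ lie in different child subtrees of $r$ (so that their common ancestor is $r$ itself), and is strictly larger at all other indices. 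Consequently, the partition $\xx = \xx^{(0)} \sqcup \dots \sqcup \xx^{(p)}$ produced by Definition~\ref{dfn:Tree} groups the points of $\xx$ precisely according to which child subtree of $r$ they belong to, and $p+1$ equals the number of such children of $r$ that contain a leaf of $\xx$, listed in left-to-right order.

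Denoting by $I^{(j)}$ the subset of $I$ lying in the $j$-th such child subtree, the induction hypothesis applied to the smaller set $\xx^{(j)}$ gives $\Tree(\widetilde{C_t}, \xx^{(j)}) = t_{I^{(j)}}$. On the other hand, the $j$-th child subtree of the root of $t_I$ is exactly $t_{I^{(j)}}$, since the induced-subtree operation commutes with restriction to the leaves below a fixed vertex of $t$. Reassembling these subtrees under a common root therefore yields $\Tree(\widetilde{C_t}, \xx) = t_I$, completing the induction. I do not expect a serious obstacle; the only mildly delicate piece of bookkeeping is checking that the blocks $\xx^{(j)}$ coincide with the partition of $\xx$ by child subtrees of $r$, which follows from the auxiliary fact together with the observation that for two leaves of $\xx$ lying in a common child subtree of $r$ the contour stays strictly above $\mathrm{depth}_t(r)/\sqrt{|t|}$ throughout the open interval between them.
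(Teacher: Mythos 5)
Your proof is correct, and it is essentially the argument the paper has in mind: the statement is given there as an Observation with no written proof, resting on the (unnumbered) observation following \cref{dfn:Tree} that the minimum of the contour between two leaves identifies their common ancestor --- which is exactly your auxiliary fact. Your induction on $|I|$, matching the recursive block decomposition of \cref{dfn:Tree} with the child subtrees of the root of $t_I$, is the natural formalization of the details the paper leaves implicit.
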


A similar statement holds when extracting trees in excursions that are not necessarily contours; 
this observation (or rather its signed analogue) will be useful in Section~\ref{sec:moments}.

\begin{observation}
\label{obs:ExtractedTree=Subtree_generalCase}
Let $f$ be an excursion and $\xx=\{x_1,\dots,x_k\}$ be a set of $k$ points in $[0,1]$. 
Let $t = \Tree(f,\xx)$. 
Let $\yy$ be a subset of $\xx$, and $I$ be the corresponding subset of the set of leaves of $t$. 
Then the tree extracted from $\yy$ in $f$ is the substree of $t$ induced by $I$, \emph{i.e.}, $\Tree(f,\yy)=t_I$.
\end{observation}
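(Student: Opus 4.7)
The plan is to prove the statement by induction on $k' := |\yy|$, exploiting the recursive nature of the $\Tree$ construction in \cref{dfn:Tree}. The base case $k'=1$ is immediate: $\Tree(f,\yy)$ is a single leaf, and the subtree of $t$ induced by a single leaf $I$ is also that leaf.

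For the inductive step, write $\yy = \{x_{j_1}, \ldots, x_{j_{k'}}\}$ with $j_1 < \cdots < j_{k'}$, and set $m'_s := \min_{[x_{j_s}, x_{j_{s+1}}]} f$ for $1 \le s \le k'-1$. The key identity I would establish first is
\[ m'_s \;=\; \min\bigl\{ m_{j_s}, m_{j_s+1}, \ldots, m_{j_{s+1}-1} \bigr\},\]
which simply follows from the fact that $[x_{j_s},x_{j_{s+1}}]$ is the concatenation of the intervals $[x_h,x_{h+1}]$ for $j_s \le h < j_{s+1}$. Combining this with the characterization (recalled in the observation just before \cref{obs:ExtractedTree=Subtree}) that the common ancestor in $t$ of the leaves labeled $x_i$ and $x_j$ ($i<j$) is the internal vertex attached to $\min_{i \le h < j} m_h$, I deduce that the internal vertex $v_s$ of $t$ corresponding to $m'_s$ is exactly the common ancestor in $t$ of the leaves $x_{j_s}$ and $x_{j_{s+1}}$ of $I$.

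Next, let $m' := \min_s m'_s$ and let $s_1 < \dots < s_p$ be the indices with $m'_{s_q} = m'$. By the key identity, all these $m'_{s_q}$ correspond to the same internal vertex $v$ of $t$, which is exactly the common ancestor of all leaves of $I$, hence the root of $t_I$. The partition $\yy^{(0)}, \dots, \yy^{(p)}$ driving the recursion of $\Tree(f,\yy)$ then coincides, via the bijection between leaves of $t_I$ and elements of $I$, with the partition of the leaves of $t_I$ according to which child of its root they descend from; moreover both trees have the same arity $p+1$ at their root. Applying the induction hypothesis to each $(f,\yy^{(j)})$ (noting $|\yy^{(j)}|<k'$) and to the corresponding subset $I^{(j)} \subset I$ of leaves below the $j$-th child of $v$ in $t$ yields $\Tree(f,\yy^{(j)}) = t_{I^{(j)}}$, and assembling the children gives $\Tree(f,\yy) = t_I$.

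The only subtle point is the bookkeeping of multiplicities: a single internal vertex of $t$ may be realized as the minimum of $f$ on several consecutive intervals $[x_h,x_{h+1}]$, and the same may happen for the intervals associated with $\yy$. The identity for $m'_s$ ensures that the number of consecutive intervals of $\yy$ realizing $m'$ matches the number of children of $v$ in $t_I$ that contain leaves of $I$, so the recursion propagates the arity correctly. No other difficulty arises.
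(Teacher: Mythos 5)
Your proof is correct. Note that the paper itself offers no proof of this statement: it is labelled an \emph{observation} and the authors treat it as immediate from \cref{dfn:Tree,dfn:induced_subtree} together with the unnumbered observation that follows \cref{dfn:Tree} (the one identifying the common ancestor of the leaves $x_i,x_j$ with the internal vertex corresponding to $\min_{i\le h<j} m_h$). Your induction on $|\yy|$ is a complete and accurate verification of what they leave implicit: the identity $m'_s=\min\{m_{j_s},\dots,m_{j_{s+1}-1}\}$ is the right pivot, and you correctly handle the only genuinely delicate point, namely that all the indices $s$ achieving $m'$ point to a single internal vertex $v$ of $t$ (this uses, in addition to the key identity, the paper's criterion that $m_h$ and $m_{h'}$ yield the same vertex when they are equal and no intermediate $m_k$ is smaller --- which holds here because $m'$ is the global minimum over $[x_{j_1},x_{j_{k'}}]$) and that the resulting arity $p+1$ agrees with the number of children of $v$ in $t_I$. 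Nothing is missing.
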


We now discuss the signed analogue of the extraction of trees from excursions.
\begin{definition}[$\Tree_{\pm}$]
\label{dfn:Tree_pm}
Let $(f,s)$ be a signed excursion, and $\xx=\{x_1,\dots,x_k\}$ be a set of $k$ points in $[0,1]$, assumed to satisfy $x_1 < \dots < x_k$. 
As in Definition~\ref{dfn:Tree}, for all $1 \leq i \leq k-1$, denote by $m_i$ the minimum value of $f$ on $[x_i,x_{i+1}]$. 

We assume that the following condition holds: 
\begin{equation}
\begin{cases}
  \bullet \ m_i \textrm{ is reached in the interior of } [x_i,x_{i+1}]\textrm{, \emph{i.e.},} \\
  \textrm{for all } i, \textrm{ there exists } y \in (x_i,x_{i+1}) \textrm{ such that } f(y)=m_i. \\
\bullet \ \textrm{For all } i\textrm{, all the local minima with value } m_i \textrm{ on } (x_i,x_{i+1}) \\ 
\textrm{are given the same sign by }s \textrm{, abusively denoted } s(m_i). \\
\bullet \ \textrm{If } m_i = m_j \textrm{ and there is no } i < k <j \textrm{ such that } m_k < m_i, \\
\textrm{then } s(m_i) = s(m_j).
\tag{$\mathcal{C}$} \label{condition:unique_signs_on_minima}
\end{cases}
\end{equation}
Then $\Tree_{\pm}(f,s,\xx)$ is defined like  $\Tree(f,\xx)$, 
except that, at each stage of the construction,
we associate with the root the sign of the corresponding local minimum (or minima) of $f$.
Doing so, at the end of the construction,
every internal vertex of $\Tree_{\pm}(f,s,\xx)$ has a sign.
\end{definition}

Condition~\eqref{condition:unique_signs_on_minima} ensures that the signs are well-defined. 
Indeed, the first condition guarantees that $f$ has a minimum on $(x_i,x_{i+1})$, which is therefore a local minimum of $f$ (of value $m_i$), 
and consequently has a sign.
The second condition implies that,
for all $i$, all minima of $f$ on $(x_i,x_{i+1})$ have the same sign, 
and the third one ensures that (with the notation of Definition~\ref{dfn:Tree}) all $m_{i_j}$ 
have the same sign, and recursively so. 
\medskip

Because a separable permutation is associated with each signed Schröder tree 
(see \cref{dfn:separating_tree} and \cref{prop:BBL}), 
we can extract a separable permutation from a signed excursion in which $k$ points are chosen. 

\begin{definition}
\label{dfn:Perm}
Let $(f,s)$ be a signed excursion and $\xx=\{x_1,\dots,x_k\}$ be a set of $k$ points in $[0,1]$. 
Assume $(f,s,\xx)$ satisfies Condition~\eqref{condition:unique_signs_on_minima}. 
The separable permutation associated with $(f,s,\xx)$ is 
\[\Perm(f,s,\xx) := \perm \big( \Tree_{\pm}(f,s,\xx) \big). \]
\end{definition}

\begin{example}
  The reader is now invited to look again at \cref{fig:extraction},
  taking the signs into consideration.
  The permutation associated to the signed tree on the right is $2341$,
  so that in this case, $\Perm(f,s,\xx)=2341$.
\end{example}

\begin{observation}\label{obs:Perm_is_defined}
When considering a signed tree $(t,\eps)$ and its signed contour $(\widetilde{C_t},s)$, 
and choosing $\xx$ as a subset of $\ll_t$, then Condition~\eqref{condition:unique_signs_on_minima} 
is always satisfied. Indeed, all local minima of $\widetilde{C_t}$ that are required to have the same sign 
correspond to the same internal vertex of $t$. 
\end{observation}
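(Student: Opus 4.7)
The plan is to verify the three bullets of Condition~\eqref{condition:unique_signs_on_minima} in turn, using the dictionary between the contour $\widetilde{C_t}$ and the tree $t$ that has already been set up: local maxima correspond to leaves (\cref{obs:peaks=leaves}), local minima correspond to internal vertices (\cref{obs:valleys=internal_nodes}), and the minimum value of $\widetilde{C_t}$ between two leaves is (the rescaled version of) the height of their common ancestor. Throughout, I write $v_i$ for the common ancestor in $t$ of the two leaves of $t$ whose $x$-coordinates are $x_i$ and $x_{i+1}$.

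For the first bullet, I would observe that since $x_i$ and $x_{i+1}$ are local maxima of $\widetilde{C_t}$, the contour strictly decreases just after $x_i$ and just before $x_{i+1}$, so the minimum value $m_i$ on $[x_i,x_{i+1}]$ is strictly less than $\widetilde{C_t}(x_i)$ and $\widetilde{C_t}(x_{i+1})$, hence is attained in the open interval. For the second bullet, any local minimum of $\widetilde{C_t}$ at value $m_i$ inside $(x_i,x_{i+1})$ corresponds via $\dfs_t$ to an internal vertex at the height of $v_i$; since the DFS on $(x_i,x_{i+1})$ is confined to the subtree rooted at $v_i$, the only ancestor visited at this height is $v_i$ itself, so all such local minima correspond to the single internal vertex $v_i$ and thus receive the common sign $\eps(v_i)$ through $s$.

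For the third bullet, assume $m_i = m_j$ with $i<j$ and that no intermediate $m_k$ is strictly smaller, and let $w$ be the common ancestor in $t$ of all leaves with $x$-coordinates $x_i,\dots,x_{j+1}$. The key step is to show $v_i = v_j = w$: the height of $w$ equals $\min_{i\leq h\leq j} m_h = m_i$, because each $v_h$ is a descendant of $w$; on the other hand $v_i$ is a descendant of $w$ of the same height, so $v_i = w$, and the same argument gives $v_j = w$. Hence the local minima associated with $m_i$ and $m_j$ correspond to the same internal vertex and carry the same sign.

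There is no real obstacle here: the statement is essentially a repackaging of the tree/contour correspondence. The only point that requires some care is the uniqueness argument in the third bullet, namely that two vertices which are related by ancestry and share the same height must coincide — an immediate consequence of the fact that heights in a tree strictly increase along any descending path.
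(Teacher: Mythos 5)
Your verification is correct and follows exactly the route the paper intends: the paper's entire justification is the one-line remark that all local minima forced to share a sign correspond to the same internal vertex of $t$, and your three-bullet check (interior attainment because the $x_i$ are strict local maxima, identification of all minima at value $m_i$ on $(x_i,x_{i+1})$ with the single common ancestor $v_i$, and the ancestry-plus-equal-height argument giving $v_i=v_j$ in the third bullet) is just the careful spelling-out of that remark. Nothing is missing.
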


Moreover, we have the signed equivalents of
Observations~\ref{obs:ExtractedTree=Subtree} and~\ref{obs:ExtractedTree=Subtree_generalCase}.

\begin{observation}
\label{obs:Tree_pm}
Let $(t,\eps)$ be a signed Schröder tree and $(\widetilde{C_t},s)$ be its signed contour. 
Let $I$ be a subset of the set of leaves of $t$,  
and $\xx$ be the corresponding subset of $\ll_t$. 
Then $\Tree_\pm(\widetilde{C_t},s,\xx) = (t_I,\eps_I)$, \emph{i.e.},
the signed tree extracted from $\xx$ in the signed contour of $(t,\eps)$ is defined and 
is the same as the signed subtree of $(t,\eps)$ induced by $I$. 
\end{observation}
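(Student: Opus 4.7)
The plan is to reduce the signed statement to two ingredients: the unsigned analogue (Observation~\ref{obs:ExtractedTree=Subtree}) and a careful identification of internal vertices of $t$ with the local minima of $\widetilde{C_t}$ attained between consecutive elements of $\xx$. First, Observation~\ref{obs:ExtractedTree=Subtree} already gives $\Tree(\widetilde{C_t},\xx)=t_I$ as unsigned Schröder trees, and Observation~\ref{obs:Perm_is_defined} guarantees that Condition~\eqref{condition:unique_signs_on_minima} is satisfied for $(\widetilde{C_t},s,\xx)$, so $\Tree_\pm(\widetilde{C_t},s,\xx)$ is well-defined. What remains is to check that the signs produced by the recursive procedure in Definition~\ref{dfn:Tree_pm} coincide with the restriction $\eps_I$ of $\eps$ to the internal vertices of $t_I$.

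I would carry this out by induction on $|\xx|=|I|$, following the recursion of Definition~\ref{dfn:Tree_pm}. The key geometric fact to establish at the top of the recursion is that, when $\xx\subseteq\ll_t$, the minimum value $m_i$ of $\widetilde{C_t}$ on $[x_i,x_{i+1}]$ equals $1/\sqrt{|t|}$ times the depth in $t$ of the common ancestor of the two leaves indexed by $x_i$ and $x_{i+1}$, and moreover any time $j\in[0,2\#t-2]$ at which $\widetilde{C_t}(j/(2\#t-2))=m_i$ satisfies $\dfs_t(j)=$ this common ancestor. This is a standard property of the depth-first-search contour: between two leaves the walk first climbs down to their common ancestor and then climbs back up. Granting this, the global minimum value $m$ selected at the top of the recursion corresponds, under $s$, to the sign of the common ancestor $v$ in $t$ of all leaves of $I$; by Definition~\ref{dfn:induced_subtree}, $v$ is exactly the root of $t_I$, and by Definition~\ref{dfn:induced_signed_subtree}, $\eps_I(v)=\eps(v)$. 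Hence the root of $\Tree_\pm(\widetilde{C_t},s,\xx)$ receives the correct sign.

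For the recursive step, each child subtree $\Tree_\pm(\widetilde{C_t},s,\xx^{(j)})$ is extracted from a contiguous block of leaves $\xx^{(j)}\subseteq\ll_t$ that all lie below a single child of $v$ in $t$. The restriction of $\widetilde{C_t}$ to the corresponding interval of the time axis, after the appropriate affine reparametrization, is the (normalized) contour of the subtree of $t$ rooted at that child; since the unsigned $\Tree$ construction and the sign-decoration recursion are both invariant under such reparametrization, the induction hypothesis applies to each child and yields the signed subtrees of $(t,\eps)$ induced by the $\xx^{(j)}$. Gluing these signed subtrees under the correctly signed root produces $(t_I,\eps_I)$. The only nontrivial step is the geometric identification of interval minima of the contour with common ancestors in $t$; once this is in place, everything else is an essentially mechanical unwinding of the definitions and of Observations~\ref{obs:ExtractedTree=Subtree} and~\ref{obs:Perm_is_defined}.
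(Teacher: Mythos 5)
Your argument is correct and is exactly the justification the paper intends: the paper states this as an observation without proof, relying on the combination of Observation~\ref{obs:ExtractedTree=Subtree} (unsigned case), Observation~\ref{obs:Perm_is_defined} (Condition~\eqref{condition:unique_signs_on_minima} holds), and the fact — recorded in the unnumbered observation following Definition~\ref{dfn:Tree} and built into Definition~\ref{dfn:signed_contour} — that every time at which the contour attains its minimum between two chosen leaves is mapped by $\dfs_t$ to their common ancestor, so $s$ assigns that minimum the sign $\eps$ gives that ancestor. Your inductive unwinding fleshes this out faithfully; the only cosmetic inaccuracy is the phrase ``first climbs down to their common ancestor and then climbs back up'' (the contour may oscillate through intermediate subtrees), but your actual stated claim about where the minimum is attained is the correct one.
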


\begin{observation}
\label{obs:Tree_pm_generalCase}
Let $(f,s)$ be a signed excursion and $\xx=\{x_1,\dots,x_k\}$ be a set of $k$ points in $[0,1]$. 
Assume that $\Tree_\pm(f,s,\xx)$ is defined and let $(t,\eps) = \Tree_\pm(f,s,\xx)$. 
Let $\yy$ be a subset of $\xx$, and let $I$ be the corresponding subset of the set of leaves of $(t,\eps)$. 
Then the signed tree extracted from $\yy$ in $(f,s)$ is defined and is the substree of $(t,\eps)$ induced by $I$, \emph{i.e.}, $\Tree_\pm(f,s,\yy)=(t_I,\eps_I)$.\vspace{-2mm}
\end{observation}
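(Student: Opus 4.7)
The plan is to deduce this from the unsigned analogue Observation~\ref{obs:ExtractedTree=Subtree_generalCase} by proving two things: (i) that Condition~\eqref{condition:unique_signs_on_minima} carries over from $(f,s,\xx)$ to $(f,s,\yy)$, so that $\Tree_\pm(f,s,\yy)$ is actually defined; and (ii) that the signs attached by the recursive construction of $\Tree_\pm(f,s,\yy)$ match the restriction of $\eps$ to the internal vertices of $t_I$.

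The key underlying identity on minima is the following. Write $\yy = \{x_{j_1},\dots,x_{j_l}\}$ with $j_1 < \dots < j_l$, and let $m'_p$ denote the minimum of $f$ on $[x_{j_p}, x_{j_{p+1}}]$. Then $m'_p = \min_{j_p \leq i < j_{p+1}} m_i$. This identity is exactly the one that drives the recursive construction of induced subtrees; together with Observation~\ref{obs:ExtractedTree=Subtree_generalCase} applied to $f$ alone, it already yields the unsigned equality $\Tree(f,\yy) = t_I$.

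Next, I would verify Condition~\eqref{condition:unique_signs_on_minima} for $(f,s,\yy)$ bullet by bullet, using the identity above. The first bullet (each $m'_p$ is attained in the interior of $[x_{j_p},x_{j_{p+1}}]$) follows because $m'_p = m_i$ for some $j_p \leq i < j_{p+1}$, and by hypothesis $m_i$ is attained in the interior of $[x_i, x_{i+1}] \subseteq [x_{j_p}, x_{j_{p+1}}]$. For the second bullet, the local minima of $f$ with value $m'_p$ on $(x_{j_p}, x_{j_{p+1}})$ are partitioned according to which subinterval $(x_i, x_{i+1})$ (with $m_i = m'_p$) they lie in; within one such subinterval they share a sign by the second bullet applied to $\xx$, and across two such subintervals they share a sign by the third bullet applied to $\xx$, since between them no $m_k$ can be strictly smaller (otherwise $m'_p$ would be smaller). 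The third bullet for $\yy$ is obtained analogously by concatenating the corresponding runs of indices.

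Finally, I would check sign agreement by induction on $|\yy|$ (the case $|\yy|=1$ is trivial). At the top of the recursion, $\Tree_\pm(f,s,\yy)$ assigns to its root the sign $s(m'_p)$ for any $p$ where the global minimum on $\yy$ is attained; by the minimum identity, $m'_p$ is one of the $m_i$'s realizing the corresponding global minimum in the construction of $\Tree_\pm(f,s,\xx)$. Hence the two roots carry the same sign, and the subtrees hanging off each root are themselves of the form $\Tree_\pm(f,s,\yy^{(j)})$ with $\yy^{(j)} \subseteq \xx^{(j)}$, so the induction hypothesis applies. The main (yet still routine) point of care is in the verification of the third bullet of~\eqref{condition:unique_signs_on_minima}, where one must track carefully which runs of equal-valued $m_i$'s in $\xx$ are amalgamated into a single run at the level of $\yy$; the rest is a direct unpacking of the recursive definitions.
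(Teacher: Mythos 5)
Your proposal is correct, and it takes the only natural route: the paper records this statement as an \emph{Observation} with no proof at all, treating it as immediate from the definitions, and your argument (the identity $m'_p=\min_{j_p\le i<j_{p+1}}m_i$, the bullet-by-bullet inheritance of Condition~\eqref{condition:unique_signs_on_minima}, and the recursive matching of signs on top of the unsigned \cref{obs:ExtractedTree=Subtree_generalCase}) is precisely the unpacking the authors leave implicit. The only slightly loose spot is the induction step in the case where the minimum over the $\yy$-range is strictly larger than over the $\xx$-range, so that the root of $t_I$ is a proper descendant of the root of $t$ and the splits of $\yy$ do not align with those of $\xx$; but this is routine and does not affect the validity of your argument.
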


\begin{example}\label{ex:sous-arbre-extrait}
  For $(f,s,\xx)$ as in \cref{fig:extraction} and $I = \{1,3\}$, then $\Tree_\pm(f,s,\{x_1,x_3\})=(t_I,\eps_I) = \begin{array}{c} \tikz{
\begin{scope}[scale=.1,rotate=180,level distance=4cm,sibling distance=3cm]
\node[circle, draw, inner sep=0pt] {\tiny $+$} 
 child {node[inner sep=0pt] {~}}
 child {node[inner sep=0pt] {~}};
\end{scope}}\end{array}$.\vspace{-2mm}
\end{example}

Getting to permutations, we can combine Observations~\ref{obs:patterns_and_subtrees} and~\ref{obs:Tree_pm} to obtain: 

\begin{observation}
\label{obs:functionPerm=patterns}
Let $(t,\eps)$ be a signed Schröder tree, and $(f,s)$ be its signed contour. 
Consider a subset $I$ of the set of leaves of $t$, and 
denote by $\xx$ the corresponding subset of $\ll_t$. We have:
\[
\Perm(f,s,\xx) = \pat_I(\perm(t,\eps)).  
\]
\end{observation}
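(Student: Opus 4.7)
The statement is really a direct corollary of two earlier observations chained together, with Observation~\ref{obs:Perm_is_defined} guaranteeing that the objects involved are well-defined. My plan is to unfold the definitions and then invoke the two relevant observations in order.

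First, I would note that because $(f,s)$ is the signed contour of $(t,\eps)$ and $\xx \subset \ll_t$, Observation~\ref{obs:Perm_is_defined} applies and Condition~\eqref{condition:unique_signs_on_minima} is satisfied, so $\Tree_{\pm}(f,s,\xx)$ and hence $\Perm(f,s,\xx)$ are well-defined. From Definition~\ref{dfn:Perm} one has
\[
\Perm(f,s,\xx) = \perm\bigl(\Tree_{\pm}(f,s,\xx)\bigr).
\]

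Next, I would apply Observation~\ref{obs:Tree_pm}: since $\xx$ is exactly the subset of $\ll_t$ corresponding to the subset $I$ of leaves of $t$, the signed tree extracted from the signed contour is the induced signed subtree, i.e. $\Tree_{\pm}(f,s,\xx) = (t_I,\eps_I)$. Substituting this into the previous equation yields
\[
\Perm(f,s,\xx) = \perm(t_I,\eps_I).
\]

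Finally, I would apply Observation~\ref{obs:patterns_and_subtrees}, which states precisely that $\perm$ commutes with taking induced substructures: $\pat_I(\perm(t,\eps)) = \perm(t_I,\eps_I)$. Combining the two equalities gives the desired identity $\Perm(f,s,\xx) = \pat_I(\perm(t,\eps))$. There is no genuine obstacle here — all the real content (the compatibility of contour extraction with subtrees, and of $\perm$ with induced subtrees) has already been established in the two invoked observations, and the present statement is merely their composition.
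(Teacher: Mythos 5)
Your proposal is correct and matches the paper exactly: the paper states this observation as the combination of Observations~\ref{obs:patterns_and_subtrees} and~\ref{obs:Tree_pm}, which is precisely the chain $\Perm(f,s,\xx) = \perm(\Tree_\pm(f,s,\xx)) = \perm(t_I,\eps_I) = \pat_I(\perm(t,\eps))$ you wrote out. Your additional appeal to Observation~\ref{obs:Perm_is_defined} for well-definedness is a sensible (and implicit in the paper) touch.
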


\begin{example}
  We consider the signed Scröder tree $(t,\eps)$ from \cref{ex:SchroderTree}
  and the set $I=\{2,4,7\}$.
  We have in this case $\pat_I(\perm(t,\eps))=123$.
  On the other hand, the construction of $\Perm(f,s,\xx)$
  is illustrated on \cref{fig:Extraction_From_Signed_Contour} and
  also yields the permutation $123$.
\end{example}
\begin{figure}[ht]
  \begin{center}
    \includegraphics{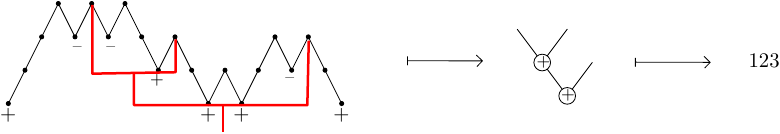}
  \end{center}
  \caption{Extracting a signed tree from the signed contour of a bigger tree
  and a subset of its leaves.}
  \label{fig:Extraction_From_Signed_Contour}
\end{figure}

Another situation where we will extract a permutation from a signed excursion is the following.
\begin{observation}
\label{obs:Brownian_satisfies_C}
 Let $(f,s)$ be a realization of the signed Brownian excursion $(\Exc,S)$ and $\xx$ be a set of $k$ points taken uniformly and independently at random in $[0,1]$. Then 
Condition~\eqref{condition:unique_signs_on_minima} is satisfied with probability $1$ for $(\Exc,S,\xx)$,
and  the extracted tree is binary (see \cref{Cor:OneSided,Lemma:MinimumSameLevel} in Appendix).
\end{observation}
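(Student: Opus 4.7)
The plan is to reduce the statement to two almost-sure properties of the Brownian excursion stated in the appendix, and then use a Fubini-type argument to transfer them to our setting where the $x_i$'s are themselves random.

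First I would observe that, since the $x_i$'s are drawn uniformly and independently on $[0,1]$, they are almost surely pairwise distinct, so we may reorder them as $x_1<\dots<x_k$. The remainder of the proof consists of checking the three bullets of Condition~\eqref{condition:unique_signs_on_minima} and the binarity of $\Tree_\pm(\Exc,S,\xx)$, all of which will follow from the following two ``deterministic'' properties of $\Exc$:
\begin{enumerate}
\item[(P1)] For any fixed $0\le a<b\le 1$, with probability $1$ the minimum of $\Exc$ on $[a,b]$ is attained at a unique point, which lies in the open interval $(a,b)$.
\item[(P2)] For any fixed pairwise disjoint closed subintervals $[a_1,b_1],\dots,[a_{k-1},b_{k-1}]$ of $[0,1]$, with probability $1$ the minimum values of $\Exc$ on these intervals are pairwise distinct.
\end{enumerate}
Both statements are essentially consequences of \cref{Cor:OneSided,Lemma:MinimumSameLevel}, and together with the independence between $(\Exc,S)$ and $\xx$, a routine Fubini argument (integrating over the law of the ordered tuple $(x_1,\dots,x_k)$) lets us conclude that these properties also hold almost surely when the $a_i,b_i$ are replaced by the random points $x_i, x_{i+1}$.

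Now I can verify Condition~\eqref{condition:unique_signs_on_minima} and binarity. By (P1) applied to each pair $(x_i,x_{i+1})$, each $m_i$ is attained at a unique point $y_i\in(x_i,x_{i+1})$, which is necessarily a local minimum of $\Exc$; this gives the first bullet, and makes the second bullet trivial since there is only one local minimum of value $m_i$ inside $(x_i,x_{i+1})$, whose sign is well-defined under $S$. By (P2) applied to the intervals $[x_i,x_{i+1}]$, the values $m_1,\dots,m_{k-1}$ are almost surely pairwise distinct; the third bullet of \eqref{condition:unique_signs_on_minima} is then vacuous, and by \cref{obs:distinctMinImpliesBinary} the extracted tree $\Tree_\pm(\Exc,S,\xx)$ is binary.

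The main obstacle I anticipate is only notational: formulating the Fubini step cleanly, given that the appendix statements are phrased for the excursion alone while we also carry along the sign function $S$. Since $S$ is a measurable function of $(\Exc,\xx)$ only through the local minima that are already pinned down by (P1)--(P2), this causes no real difficulty, and the above outline should give a complete proof once the two appendix results are invoked.
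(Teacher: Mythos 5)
Your proof is correct and takes essentially the same route as the paper: the paper treats this as an observation whose justification (written out explicitly in the proof of \cref{Lem:Exc_DMP}) is precisely the combination of \cref{Cor:OneSided} and \cref{Lemma:MinimumSameLevel} with a Fubini argument exploiting the independence of $\xx$ from $(\Exc,S)$ — you simply condition on $\xx$ first rather than on $\Exc$. The only cosmetic overstatement is (P1) as phrased ``for any fixed $0\le a<b\le 1$'', which fails when $a=0$ or $b=1$ (the excursion attains its minimum at that endpoint); your Fubini step only needs the property for Lebesgue-almost-every pair, which is exactly what \cref{Cor:OneSided} provides.
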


\subsection{Measurability issues}
\label{sec:measurability}
In this section, we construct a suitable probability space and $\sigma$-algebra
for the signed Brownian excursion.
A possibility would be to take a Brownian excursion $\Exc$ and 
a family of i.i.d. signs $(s_x)_{x \in [0,1]}$,
independent from the excursion (and then consider only those indexed by minima of $\Exc$).
With this construction,
the function $(f,s,\xx) \mapsto \Tree_\pm(f,s,\xx)$ would however not be measurable.
We therefore give a slightly more subtle construction.

We fix an enumeration of the rational intervals in $[0,1]$,
i.e. a bijection
$$\Psi: \Z_{>0} \to \{[p,q] \in \Q^2,\, 0 \le p <q \le 1\}.$$
Denote $\Psi(i)=[p_i,q_i]$.
Let $b$ be the position of a local minimum in an excursion $f$.
We say that $b$ is associated with $[p_i,q_i]$ if
$b=\argmin_{[p_i,q_i]} f$ and if {\em $i$ is minimal with this property}.
Clearly, such an $i$ always exists.

To construct the signed Brownian excursion $(\Exc,S)$,
we now take a Brownian excursion $\Exc$
and an i.i.d. sequence of balanced signs $(S_i)_{i \ge 1}$, independent from $\Exc$.
Consider a (strict) local minimum $b$ of $\Exc$
and let $i$ be such that $b$ is associated with $[p_i,q_i]$.
We then think as $S_i$ as being the sign of $b$.

A given interval $[p_i,q_i]$ can be associated with at most one $b$
(the position of the minimum of $\Exc$ on $[p_i,q_i]$;
by \cref{Lemma:MinimumSameLevel}, almost surely,
this minimum is unique),
which guarantees the independence of the signs of the different local minima.
Note that some interval $[p_i,q_i]$ may be associated with no
local minimum $b$ (when the position of the minimum of $\Exc$ on $[p_i,q_i]$
is associated with $[p_j,q_j]$, for some $j<i$),
which does not create any problem.

We claim that, on this probability space,
the function $(f,s,\xx) \mapsto \Tree_\pm(f,s,\xx)$ is measurable.
The unsigned version $(f,\xx) \mapsto \Tree(f,\xx)$ is a classical
object in the literature \cite[Section 2.5]{LeGall}
and it is easy to prove its measurability.
The difficulty comes from the sign, and hence
we focus on the case $|\xx|=2$, the general one following easily.
We have
\begin{multline*}
\left\{ \left( f,s,x_1,x_2 \right):
\Tree_\pm(f,s,x_1,x_2) = \begin{array}{c} \tikz{
\begin{scope}[scale=.1,rotate=180,level distance=5cm,sibling distance=3cm]
\node[circle, draw, inner sep=0.2pt] {+} 
 child {node[inner sep=0pt] {~}}
 child {node[inner sep=0pt] {~}};
\end{scope}}\end{array} \right\}\\
= 
\bigcup_{i \ge 1}\left\{ 
\left( f,s,x_1,x_2 \right):
\argmin_{[x_1;x_2]} f \text{ is associated with }[p_i,q_i]
\text{ and }s_i=+ \right\}.
\end{multline*}
It is easy to check that 
this is a countable union of measurable sets, hence a measurable set itself.
Therefore our measurability claim is proved.

The function $\Perm$ is then also measurable since
$\Perm=\perm \circ \Tree_\pm$.
These are the only two functions of the signed Brownian excursion
of interest for this paper.

\section{The variables $\Lambda_\pi$: definition, properties, and proof schema of the main result}
\label{sec:main_result}
\subsection{Description of the limit variables $\Lambda_\pi$}

Recall from the introduction that $\occ(\pi,\Si_n)$ denotes the proportion of occurrences of $\pi$ in a uniform random separable permutation of size $n$. 
Our main result (see Theorem~\ref{thm:main}.\ref{item:main_distrib} and \ref{item:main_joint} p.\pageref{thm:main}) is the (joint) convergence in distribution of $\occ(\pi,\Si_n)$ 
towards a random variable $\Lambda_\pi$ that we now define. 

We denote by $\One[A]$ the characteristic function of an event $A$.

\begin{definition}
\label{dfn:Lambda_pi}
Let $\pi$ be a pattern and $(\Exc,S)$ be the signed Brownian excursion. 
Let also $k=|\pi|$. Let $X_1, X_2,  \ldots X_k$ be $k$ uniform and independent points in $[0,1]$,
independent from $(\Exc,S)$ and set $\XX=\{X_1, X_2,  \ldots X_k\}$. 
The random variable $\Lambda_\pi$ is defined by
\begin{equation}
\Lambda_\pi=\mathbb{P}\left(\ \Perm(\Exc,S,\XX)= \pi\ {\huge |}\ \Exc,S \right).
  \label{eq:def_Lambda_pi}
\end{equation}
Note that this may be rephrased as $\Lambda_\pi=\esper \left( \One[\Perm(\Exc,S,\XX)= \pi]\ |\ \Exc,S \right)$.

More precisely, 
we define the infinite-dimensional random vector $(\Lambda_\pi)_{\pi}$,
indexed by {\em all patterns $\pi$ of all sizes} as follows:
for each $\pi$, let $\XX^{(\pi)}$ be a set of $|\pi|$ independent uniform random
variables in $[0,1]$, taken independently for different patterns $\pi$,
then we set
\begin{equation}
  (\Lambda_{\pi})_\pi=\esper \bigg( \left( \One[\Perm(\Exc,S,\XX^{(\pi)})= \pi]\ \right)_\pi \ \bigg|\ \Exc,S  \bigg).
  \label{eq:def_Lambda_pi_vecteur}
\end{equation}
In this definition, and throughout the paper,
the event ``$\ \Perm(\Exc,S,\XX)=\pi$''
should be understood as 
``$\Perm(\Exc,S,\XX)\text{ is defined and is equal to }\pi$'' .
\end{definition}

Since the finite-dimensional marginals of $(\Lambda_{\pi})_\pi$ appear
in our main theorem, rather than the whole vector, let us write down explicitly
the definition of these finite-dimensional laws:
for any patterns $\pi_1$, \ldots, $\pi_r$ we have
\begin{equation}
(\Lambda_{\pi_1},\dots, \Lambda_{\pi_r})=\esper\bigg(\ \big(
\One[\Perm(\Exc,S,\XX^{(\pi_1)})= \pi_1],\, \dots, \, \One[\Perm(\Exc,S,\XX^{(\pi_r)})= \pi_r]\big)\, \ \bigg|\ \Exc,S \bigg).
  \label{eq:def_Lambda_pi_finidim}
\end{equation}

Note that in the definition of $\Lambda_\pi$, we condition on the {\em random variable} $(\Exc,S)$,
so that $\Lambda_\pi$ is a random variable itself.
Moreover, in \cref{eq:def_Lambda_pi_vecteur,eq:def_Lambda_pi_finidim},
all coordinates are defined by conditioning on the {\em same realization}
$(\Exc,S)$. 
Thus, the variables $\Lambda_\pi$ corresponding to different patterns $\pi$
are {\em not independent}.

\medskip

The reader less familiar with probability theory
might be more comfortable with the following equivalent description of 
$\Lambda_\pi$. 
For simplicity, we only discuss below the definition of the one-dimensional
random variable $\Lambda_\pi$ (for a fixed pattern $\pi$),
and not of the full infinite-dimensional vector $(\Lambda_\pi)_{\pi}$.

Let us define a function $\Psi_\pi$ on signed excursions as follows:
\[\Psi_\pi(f,s) = \mathbb{P}\big(\, \Perm(f,s,\XX)=\pi \, \big),\]
where  $\XX$ is a set of $|\pi|$ uniform and independent points in $[0,1]$.
Then we set \hbox{$\Lambda_\pi:=\Psi_\pi(\Exc,S)$} to be the image of the signed Brownian excursion by $\Psi_\pi$.

In other words, in \cref{eq:def_Lambda_pi},
the probability is taken with respect to $\XX$, 
while we consider a realization of $(\Exc,S)$.
In such situations, we will sometimes use a superscript on $\proba$
to record the source of randomness: namely we write \cref{eq:def_Lambda_pi} as
\[\Lambda_\pi=\mathbb{P}^{\XX}\left(\ \Perm(\Exc,S,\XX)=\pi\ \right).\]
  Similarly we use superscripts on expectation symbols $\esper$ to indicate
the source of randomness. 

\begin{observation}
\label{obs:expectation_of_expectation}
  With this notation, we have the obvious compatibility relation:
  \[ \esper^{\Exc,S} \Big[ \esper^{\XX} \big( g(\Exc,S,\XX) \big) \Big] = \esper^{\Exc,S,\XX} \big[ g(\Exc,S,\XX) \big],\]
  for any function $g$ such that these quantities are defined.
  If $g$ is the indicator function of an event $A$, this can be rewritten as:
  \[ \esper^{\Exc,S} \Big[ \proba^{\XX} \big( A \big) \Big] = \proba^{\,\Exc,S,\XX} \big( A  \big).\]
\end{observation}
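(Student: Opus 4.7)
The plan is to recognize this Observation as a direct instance of Fubini's theorem. First I would note that, by construction in Definition~\ref{dfn:Lambda_pi}, the set $\XX$ of $k$ uniform random points in $[0,1]$ is sampled independently of the signed Brownian excursion $(\Exc,S)$; hence the joint law of $(\Exc,S,\XX)$ is the product of the law of $(\Exc,S)$ on the space of continuous signed excursions and the uniform law on $[0,1]^k$. Second, for any measurable function $g$ of $(\Exc,S,\XX)$ that is either nonnegative or integrable with respect to this product law, the Fubini--Tonelli theorem yields
\[
  \esper^{\,\Exc,S,\XX} \big[ g(\Exc,S,\XX) \big] \;=\; \esper^{\Exc,S} \Big[ \esper^{\XX} \big( g(\Exc,S,\XX) \big) \Big],
\]
which is precisely the first identity. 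The second identity is then the special case $g = \One[A]$, using that $\esper[\One[A]] = \proba(A)$.

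The only substantive point is a mild measurability check: the quantities to which this Observation will be applied later, namely indicators of the form $\One[\Perm(\Exc,S,\XX)=\pi]$, must be jointly measurable in $(\Exc,S)$ and $\XX$. This is routine, because the map $(f,s,\xx)\mapsto \Perm(f,s,\xx)$ is built from minima of $f$ on the subintervals determined by $\xx$ and from the signs $s$ at these minima, all of which are measurable functionals of the signed excursion and of the sample points; on the almost sure event that Condition~\eqref{condition:unique_signs_on_minima} holds (cf. Observation~\ref{obs:Brownian_satisfies_C}), these choices are unambiguous. I do not anticipate any real obstacle: the label \emph{observation} together with the qualifier ``obvious'' in the statement confirm that this is a routine invocation of Fubini, recorded here only to fix the notational conventions $\esper^{\XX}$ and $\esper^{\Exc,S}$ for later use.
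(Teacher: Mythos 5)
Your proof is correct and matches the paper's intent: the paper states this Observation without proof, treating it as an obvious instance of Fubini/the tower property for the product law of the independent pair $(\Exc,S)$ and $\XX$, which is exactly the argument you give. The measurability remark about $\One[\Perm(\Exc,S,\XX)=\pi]$ is a reasonable (and correct) extra precaution that the paper leaves implicit.
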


To finish this section, we discuss trivial cases of our main theorem,
when $\pi$ is not separable 
and when $\pi$ is the permutation of size $1$.

\begin{remark}\label{rem:nonseparable}
Observe that if $\pi$ is not a separable permutation,
from \cref{prop:BBL} it cannot be obtained as $\Perm(\Exc,S,\XX)$ and thus $\Lambda_\pi$
is identically equal to $0$ in this case.
Clearly, $\occ(\pi,\Si_n)$ is also identically equal to $0$ in this case
since a separable permutation cannot have a non-separable pattern
(permutation classes are by definition downward-closed 
for the pattern relation).
\end{remark}

\begin{remark}\label{rem:motif1}
If $\pi =1$ is the only permutation of size $1$, 
then regardless of $\XX = (X_1)$ we have that $\Perm(\Exc,S,\XX)$ is the permutation of size $1$. 
Consequently, $\Lambda_\pi$ is identically equal to $1$ in this case.
Similarly, $\occ(\pi,\Si_n)$ is also identically equal to $1$, 
since every element of a permutation is an occurrence of the pattern $\pi=1$. 
\end{remark}

Thus our main theorem is vacuous in the special cases $\pi$ not separable or $\pi =1$.

\subsection{The leaf distribution function of a tree} 
\label{ssec:distribution_function}
Before going further,
we need a detour through distribution functions,
to encode the positions of the leaves 
in the renormalized contour of a tree $t$.
\medskip

A distribution function $F$ is a right-continuous non-decreasing function from $\R$ to $[0,1]$ with
$ \lim\limits_{x \to - \infty} F(x)=0$ and $ \lim\limits_{x \to +\infty} F(x)=1$.
A real-valued random variable $X$ has distribution function $F$ if,
for all $x$, one has: $F(x)=P(X \le x)$.
In the sequel, we only consider distribution functions $F$ such that $F(0)=0$ and $F(1)=1$ 
(equivalently, the associated random variables have values in $[0,1]$); 
and we identify $F$ with its restriction on the domain $[0,1]$. 

For such a distribution function $F$, the pseudo-inverse $F^*$ of $F$ is
defined as follows: for $u$ in $[0,1]$, we set
$F^*(u)=
\inf \{x \in [0,1] : F(x)\geq u\}$.
One can check 
that for all $u\in [0,1]$ and $\eps>0$, 
$$
F(F^*(u))\geq u\geq F(F^*(u)-\eps).
$$
Moreover, if $U$ is a uniform random variable on $[0,1]$,
then $F^*(U)$ has distribution function $F$.

\medskip

The following distribution functions will be of particular interest in this work.
\begin{itemize}
    \item For the uniform distribution on $[0,1]$, we have
       $F_U(x) = \min(x,1) \One[x \ge 0]$.
      \item With any tree $t$ with set of leaves $\ll_t=\{\ell_1,\cdots,\ell_{|t|}\}$
        ($|t|>1$), we associate its \emph{leaf distribution function} $F_t$ defined by
\begin{equation}
    F_t(x)= \frac{1}{|t|} \sum_{i=1}^{|t|} \One[\ell_i \le x].
    \label{eq:EqDFLeaves}
\end{equation}
By definition, taking a random variable with distribution $F_t$ corresponds to choosing a leaf of $t$ uniformly at random
(more precisely the $x$-coordinate $\ell_i$ of the corresponding leaf in the normalized contour of $t$). 
\cref{fig:leaf_distrib_function} shows the leaf distribution function associated with the (unsigned) tree of \cref{fig:Contour} p.\pageref{fig:Contour}. 
\end{itemize}

\begin{figure}[ht]
\begin{center}
\begin{tikzpicture}[scale=4]
\draw[->] (-0.1,0) -- (2.1,0);
\draw[->] (0,-0.1) -- (0,1.1);
\node at (-0.08,-0.08) {\footnotesize $0$};
\node at (0.3,-0.15) {$\frac{3}{20}$};
\node at (0.5,-0.15) {$\frac{5}{20}$};
\node at (0.7,-0.15) {$\frac{7}{20}$};
\node at (1,-0.15) {$\frac{10}{20}$};
\node at (1.3,-0.15) {$\frac{13}{20}$};
\node at (1.6,-0.15) {$\frac{16}{20}$};
\node at (1.8,-0.15) {$\frac{18}{20}$};
\node at (2,-0.15) {\footnotesize $1$};
\draw (0.3,-0.03) -- (0.3,0.03);
\draw (0.5,-0.03) -- (0.5,0.03);
\draw (0.7,-0.03) -- (0.7,0.03);
\draw (1,-0.03) -- (1,0.03);
\draw (1.3,-0.03) -- (1.3,0.03);
\draw (1.6,-0.03) -- (1.6,0.03);
\draw (1.8,-0.03) -- (1.8,0.03);
\draw (2,-0.03) -- (2,0.03);
\node at (-0.1,0.143) {\footnotesize $1/7$};
\node at (-0.1,0.286) {\footnotesize $2/7$};
\node at (-0.1,0.429) {\footnotesize $3/7$};
\node at (-0.1,0.571) {\footnotesize $4/7$};
\node at (-0.1,0.714) {\footnotesize $5/7$};
\node at (-0.1,0.857) {\footnotesize $6/7$};
\node at (-0.1,1) {\footnotesize $1$};
\draw (-0.03,0.143) -- (0.03,0.143);
\draw (-0.03,0.286) -- (0.03,0.286);
\draw (-0.03,0.429) -- (0.03,0.429);
\draw (-0.03,0.571) -- (0.03,0.571);
\draw (-0.03,0.714) -- (0.03,0.714);
\draw (-0.03,0.857) -- (0.03,0.857);
\draw (-0.03,1) -- (0.03,1);
\draw[very thick, -<] (0,0) -- (0.32,0);
\draw[very thick, -<] (0.3,0.143) -- (0.52,0.143);
\draw[very thick, -<] (0.5,0.286) -- (0.72,0.286);
\draw[very thick, -<] (0.7,0.429) -- (1.02,0.429);
\draw[very thick, -<] (1,0.571) -- (1.32,0.571);
\draw[very thick, -<] (1.3,0.714) -- (1.62,0.714);
\draw[very thick, -<] (1.6,0.857) -- (1.82,0.857);
\draw[very thick] (1.8,1) -- (2,1);
\node at (0,0) {\tiny $\bullet$};
\node at (0.3,0.143) {\tiny $\bullet$};
\node at (0.5,0.286) {\tiny $\bullet$};
\node at (0.7,0.429) {\tiny $\bullet$};
\node at (1,0.571) {\tiny $\bullet$};
\node at (1.3,0.714) {\tiny $\bullet$};
\node at (1.6,0.857) {\tiny $\bullet$};
\node at (1.8,1) {\tiny $\bullet$};
\node at (2,1) {\tiny $\bullet$};
\draw[dotted] (0.3,0.143) -- (0.3,0);
\draw[dotted] (0.5,0.286) -- (0.5,0);
\draw[dotted] (0.7,0.429) -- (0.7,0);
\draw[dotted] (1,0.571) -- (1,0);
\draw[dotted] (1.3,0.714) -- (1.3,0);
\draw[dotted] (1.6,0.857) -- (1.6,0);
\draw[dotted] (1.8,1) -- (1.8,0);
\draw[dotted] (2,1) -- (2,0);
\draw[dotted] (0.3,0.143) -- (0,0.143);
\draw[dotted] (0.5,0.286) -- (0,0.286);
\draw[dotted] (0.7,0.429) -- (0,0.429);
\draw[dotted] (1,0.571) -- (0,0.571);
\draw[dotted] (1.3,0.714) -- (0,0.714);
\draw[dotted] (1.6,0.857) -- (0,0.857);
\draw[dotted] (1.8,1) -- (0,1);
\end{tikzpicture}
\end{center}
\caption{The leaf distribution function $F_t$ for the tree $t$ shown in \cref{fig:Contour}. 
Note that $F_t$ is piecewise constant and all gaps are of height $\frac{1}{|t|}$ (here $\frac{1}{7}$), 
but pieces may have different widths (here, $\frac{2}{20}$ or $\frac{3}{20}$).
Informally, \cref{PropLeavesUniform} states that these widths
are asymptotically close to each other.}
\label{fig:leaf_distrib_function}
\end{figure}
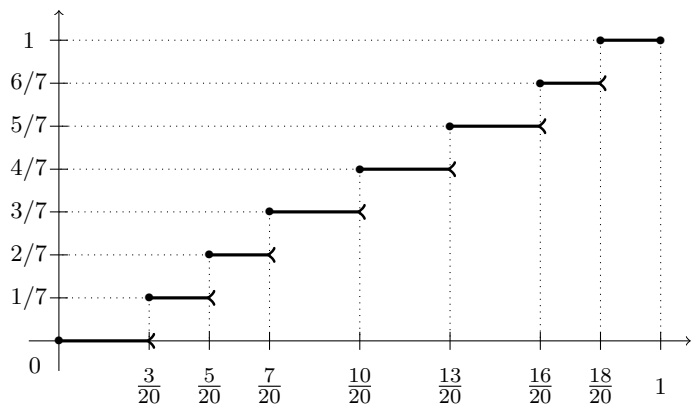

The following statement was essentially proved by Pitman and Rizzolo 
in \cite{PitmanRizzolo}.
\begin{proposition}
  Let $T_n$ be a uniform random Schr\"oder tree with $n$ leaves.
    Defining the random distribution function $F_{T_n}$ as above,
    we have the following convergence in probability:
    \[ ||F_{T_n} - F_U||_\infty \to 0. \vspace{-6mm}\]
    \label{PropLeavesUniform}
\end{proposition}
Informally, choosing uniformly at random a leaf in $T_n$ 
(or rather the corresponding $x$-coordinate in the normalized contour $\widetilde{C_{T_n}}$) 
amounts in the limit to choosing a uniform point in $[0,1]$. 
\begin{proof}
  Recall from \cref{prop:Schroeder_are_Galton-Watson},
  that $T_n$ can be seen as a Galton-Watson tree with a specific offspring
  distribution, conditioned to have $n$ leaves.

  In the proof of \cite[Theorem 8]{PitmanRizzolo}, 
  Pitman and Rizzolo established for such models
  the convergence of the empirical distribution of
  the location of leaves (that they denote $\nu^n$) to the Lebesgue measure on $[0,1]$.
  This convergence holds in probability for the weak topology.

  Weak convergence is equivalent to the convergence of distribution functions
  at continuity points of the limit,
  so that their statement correspond to the convergence of $F_{T_n}$ to $F_U$
  in probability for the pointwise convergence topology.
  Moreover, since distribution functions are non-decreasing and 
  since the limit $F_U$ is continuous, it is well-known and easy to see that 
  pointwise convergence implies uniform convergence, so that
  the proposition is proved.
\end{proof}

We finish by a simple lemma, that is used in \cref{sec:signs,sec:continuity}.
\begin{lemma}
    \label{lem:FF*Pareil}
    If $F$ is a distribution function,
    then $||F^*-F^*_U||_\infty \le ||F - F_U||_\infty$.
\end{lemma}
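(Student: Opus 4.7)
The plan is to unfold the definitions and use the two inequalities bounding $F^*$ that are recorded just before the lemma.

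First I observe that $F_U^*$ is nothing but the identity on $[0,1]$: since $F_U(x)=x$ for $x\in[0,1]$, the pseudo-inverse $F_U^*(u)=\inf\{x\in[0,1]:x\ge u\}=u$. So the statement to prove reduces to
\[ \sup_{u\in[0,1]} |F^*(u)-u| \;\le\; \sup_{x\in[0,1]} |F(x)-x|.\]
Let me write $\delta:=\|F-F_U\|_\infty$, so that $|F(x)-x|\le\delta$ for every $x\in[0,1]$.

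Next, fix $u\in[0,1]$ and set $y:=F^*(u)\in[0,1]$. The key tool is the pair of inequalities quoted just above the lemma:
\[ F(y)\ge u \ge F(y-\eps) \qquad \text{for all } \eps>0.\]
From $F(y)\ge u$ and $|F(y)-y|\le\delta$, I get $y\ge F(y)-\delta\ge u-\delta$. For the reverse bound, pick any $\eps>0$ (small enough that $y-\eps\in[0,1]$; if $y=0$ then $u$ must be $0$ and the bound is trivial). Then $F(y-\eps)\le u$ combined with $|F(y-\eps)-(y-\eps)|\le\delta$ yields $y-\eps\le F(y-\eps)+\delta\le u+\delta$, and letting $\eps\to 0^+$ gives $y\le u+\delta$. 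Hence $|F^*(u)-u|\le\delta$ for every $u\in[0,1]$, and taking the supremum concludes the proof.

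There is no real obstacle here: the argument is a direct manipulation of the two defining inequalities for $F^*$. The only point that needs a moment of care is the edge case $y=0$, which can only happen when $u=0$ (because $F(0)=0$ and $F$ is right-continuous, so $F(x)<u$ for $x$ small when $u>0$), and in that case the inequality holds trivially.
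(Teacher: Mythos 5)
Your proof is correct and follows essentially the same route as the paper's: both rest on the two inequalities $F(F^*(u))\ge u\ge F(F^*(u)-\eps)$ combined with $|F(x)-x|\le \|F-F_U\|_\infty$, and a limit $\eps\to 0$. The only cosmetic difference is that the paper splits into the cases $u\ge F^*(u)$ and $u<F^*(u)$, whereas you derive the two one-sided bounds unconditionally (and handle the $F^*(u)=0$ edge case explicitly, which the paper's case split makes automatic).
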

Note that on $[0,1]$, $F^*_U=F_U = \id_{[0,1]}$ .
\begin{proof}
Choose $u\in[0,1]$. If $u\geq F^*(u)$ then
\begin{align*}
|F^*(u)-F_U(u)|=u-F^*(u)&\leq F(F^*(u))-F^*(u)\\
&\leq  \sup_{t\in [0,1]} |F(t)-t|=||F - F_U||_\infty.
\end{align*}
If on the contrary $u< F^*(u)$ then write for a small $\eps>0$
\begin{align*}
|F^*(u)-F_U(u)|=F^*(u)-u &\leq 
 F^*(u)-\eps-F(F^*(u)-\eps)+\eps\\
&\leq \sup_{t \in [-\eps,1-\eps]} |t-F(t)|+\eps\\
&\leq  \eps + \sup_{t \in [0,1-\eps]} |t-F(t)|+\eps \leq ||F - F_U||_\infty+2\eps,
\end{align*}
and let $\eps\to 0$.
\end{proof}

\subsection{Reformulation of the main theorem}

To prove the convergence of $\occ(\pi,\Si_n)$ towards $\Lambda_\pi$, 
it is useful to describe these two random variables in a similar manner. 
More precisely, we define a function $\ProbPerm$ in such a way that both $\occ(\pi,\Si_n)$ and $\Lambda_\pi$ 
have a natural expression using $\ProbPerm$. 
Along the way, we also define a tree analogue $\ProbTree$ of $\ProbPerm$, 
which we shall use in the proof of Theorem~\ref{thm:main}.\ref{item:main_joint}. 

\begin{definition}[$\ProbTree$]
Let $\patterntree$ be a tree with $k$ leaves, $f$ be an excursion and $F$ be a distribution function with $F(0)=0$ and $F(1)=1$.
Let $X_1,\dots,X_k$ be independent random variables with distribution function $F$ and $\XX$ be the set $\{X_1,\dots,X_k\}$.
Then $\ProbTree(\patterntree;f,F)$ is defined as the probability that 
$\Tree(f,\XX)=\patterntree$.
\label{dfn:ProbTree}
\end{definition}

\begin{definition}[$\ProbPerm$]
Let $\pi$ be a permutation of size $k$, $(f,s)$ be a signed excursion and $F$ be a distribution function with $F(0)=0$ and $F(1)=1$.
Let $X_1,\dots,X_k$ be independent random variables with distribution function $F$ and $\XX$ be the set $\{X_1,\dots,X_k\}$.
Then $\ProbPerm(\pi;f,s,F)$ is defined as the probability
that $\Perm(f,s,\XX)=\pi$.
\label{dfn:ProbPerm}
\end{definition}

Note that in the above two definitions, if the event $\Tree(f,\XX)=\patterntree$ (resp. $\Perm(f,s,\XX)=\pi$)
is realized, then there is no repetition in $X_1,\dots,X_k$.

\begin{observation}\label{obs:lamba_from_PrPerm}
By definition, it follows that $\Lambda_\pi = \ProbPerm(\pi;\Exc,S,F_U)$. 
\end{observation}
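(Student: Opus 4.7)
The statement is essentially a matter of unpacking definitions, so the plan is short and direct, with no genuine obstacle.

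First I would fix a realization $(f,s)$ of the signed Brownian excursion $(\Exc,S)$ and evaluate both sides at $(f,s)$. On the right-hand side, by \cref{dfn:ProbPerm}, $\ProbPerm(\pi;f,s,F_U)$ is the probability that $\Perm(f,s,\XX)=\pi$, where $\XX=\{X_1,\dots,X_k\}$ with the $X_i$'s i.i.d.\ of distribution function $F_U$. But $F_U$ is precisely the distribution function of the uniform law on $[0,1]$ (as noted just after \cref{eq:pseudoInverse}), so the $X_i$'s are i.i.d.\ uniform on $[0,1]$.

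Next I would match this with the left-hand side. By \cref{dfn:Lambda_pi}, evaluated at the realization $(f,s)$, we have
\[
\Lambda_\pi\big|_{(\Exc,S)=(f,s)} \;=\; \mathbb{P}^{\XX}\bigl(\Perm(f,s,\XX)=\pi\bigr),
\]
with the same family $\XX$ of $k$ i.i.d.\ uniform variables on $[0,1]$, independent of $(\Exc,S)$ (so that conditioning on $(\Exc,S)=(f,s)$ leaves their joint law unchanged). This is exactly the quantity computed as $\ProbPerm(\pi;f,s,F_U)$ in the previous step.

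Finally, since this equality holds for every realization $(f,s)$ of $(\Exc,S)$, the two random variables $\Lambda_\pi$ and $\ProbPerm(\pi;\Exc,S,F_U)$ coincide almost surely, which is the claim. No step is delicate here; the only thing to keep track of is the bookkeeping of the two independent sources of randomness $(\Exc,S)$ and $\XX$, made transparent by the superscript notation introduced after \cref{dfn:Lambda_pi}.
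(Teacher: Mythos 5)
Your proposal is correct and matches the paper exactly: the paper simply asserts the identity ``by definition,'' and your unpacking of \cref{dfn:Lambda_pi} and \cref{dfn:ProbPerm} (fixing a realization $(f,s)$, noting that $F_U$ is the distribution function of the uniform law, and identifying the two sources of randomness) is precisely the reasoning the paper leaves implicit. Nothing to add.
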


\begin{lemma}
Let $(t,\eps)$ be a signed tree and $\sigma=\perm(t,\eps)$ be the corresponding permutation.
Let $(\widetilde{C_t},s)$ be the signed contour of $(t,\eps)$
and  $F_t$ be the distribution function associated with $t$.
It holds that:
\[ \ProbPerm(\pi;\widetilde{C_t},s,F_t) = \frac{(|t|)_{k}}{|t|^k}\, \occ(\pi, \sigma),\]
where $(x)_k$ denotes the falling factorial $x(x-1) \cdots (x-k+1)$.
\label{Lem:OurProcess=PropPatterns}
\end{lemma}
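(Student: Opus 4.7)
The plan is to decouple the proof into two independent parts: first, compute the probability that the $k$ i.i.d.\ samples $X_1,\dots,X_k$ all land on distinct leaves; second, identify the conditional distribution of $\XX$ as a uniform $k$-subset of the leaves, and then use Observation \ref{obs:functionPerm=patterns} to transport the question from extracted permutations to induced patterns.

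First, I would unpack the definition \eqref{eq:EqDFLeaves} of $F_t$: since $F_t$ is the piecewise-constant distribution function that places mass $1/|t|$ on each $\ell_i \in \ll_t$, sampling a single variable $X$ from $F_t$ amounts to picking an element of $\ll_t$ uniformly at random. Hence $X_1,\dots,X_k$ are $k$ i.i.d.\ uniform samples from the $|t|$-point set $\ll_t$. Observe next that if two of the $X_i$'s coincide, then the set $\XX=\{X_1,\dots,X_k\}$ has cardinality strictly less than $k$, so $\Perm(\widetilde{C_t},s,\XX)$ is a permutation of size $<k$ and cannot equal $\pi$. Therefore the event $\{\Perm(\widetilde{C_t},s,\XX)=\pi\}$ is contained in the event $A:=\{X_1,\dots,X_k\text{ pairwise distinct}\}$, whose probability is the usual
\[
\proba(A) = \frac{|t|(|t|-1)\cdots(|t|-k+1)}{|t|^k} = \frac{(|t|)_k}{|t|^k}.
\]

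Conditionally on $A$, the random set $\XX$ is uniformly distributed over the $\binom{|t|}{k}$ subsets of $\ll_t$ of size $k$. Writing $\XX=\{\ell_i : i \in I\}$ for the corresponding index set $I \subset [|t|]$, the random set $I$ is uniform on $\binom{[|t|]}{k}$. By Observation \ref{obs:functionPerm=patterns} applied to the signed tree $(t,\eps)$ and its signed contour $(\widetilde{C_t},s)$, we have $\Perm(\widetilde{C_t},s,\XX)=\pat_I(\perm(t,\eps))=\pat_I(\sigma)$. Consequently
\[
\proba\bigl(\Perm(\widetilde{C_t},s,\XX)=\pi \,\big|\, A\bigr) = \proba(\pat_I(\sigma)=\pi) = \occ(\pi,\sigma),
\]
the last equality being exactly the probabilistic rephrasing of $\occ$ given after its definition in the introduction. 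Multiplying the conditional probability by $\proba(A)$ yields the claim.

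I expect no real obstacle; the only bookkeeping point is the degenerate case $k>|t|$, where $(|t|)_k=0$ and both sides vanish (indeed $\occ(\pi,\sigma)=0$ by convention when $|\pi|>|\sigma|$, and forced repetitions among the $X_i$ make $\ProbPerm=0$ directly), so the identity holds trivially.
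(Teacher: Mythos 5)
Your proof is correct and follows essentially the same route as the paper's: sampling from $F_t$ is identified with uniform leaf selection, the factor $(|t|)_k/|t|^k$ is the probability of no repetition, and conditionally on that event Observation~\ref{obs:functionPerm=patterns} reduces the question to $\pat_I(\sigma)$ for a uniform $k$-subset $I$. The only additions — noting explicitly that a repetition forces $\Perm(\widetilde{C_t},s,\XX)\neq\pi$, and checking the degenerate case $k>|t|$ — are points the paper handles by a remark immediately preceding the lemma.
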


\begin{proof}
Let $X_1,\dots,X_k$ be independent random variables with distribution function $F_t$ 
(in this case, $\Perm(\widetilde{C_t},s,\XX)$ is always defined -- see \cref{obs:Perm_is_defined}).
In other terms, we take $k$ leaves of $t$ independently uniformly at random.
The probability that they are different is clearly $(|t|)_{k}/|t|^k$.
Conditioned to that event, $\XX=\{X_1,\dots,X_k\}$ is a uniform subset of $k$ leaves of $t$.
From \cref{obs:functionPerm=patterns}, we have
\[\Perm(\widetilde{C_t},s,\XX)=\pat_I(\sigma), \]
where $I$ is a uniform random $k$-element subset of $[|t|]$. 
The probability that the right hand-side is equal to $\pi$ is by definition $\occ(\pi, \sigma)$.
Thus 
\[\proba\Big( \Perm(\widetilde{C_t},s,\XX)=\pi \big| \XX\text{ is repetition-free} \Big) = \occ(\pi, \sigma),\]
which ends the proof.
\end{proof}

\begin{corollary}\label{cor:occ_from_PrPerm}
Recall that $T_n$ denotes a uniform random Schröder tree with $n$ leaves, and 
$\Eps_n$ the sign function on the internal vertices of $T_n$,
such that the signs alternate and the root of $T_n$ has a balanced sign. 
Denote by $(\widetilde{C_{T_n}},S_n)$ the signed contour of $(T_n,\Eps_n)$. 
Then 
\[ \occ(\pi, \Si_n) = \ProbPerm(\pi;\widetilde{C_{T_n}},S_n,F_{T_n})\left(1+ \mathcal{O}\left(\frac{1}{n}\right)\right).\]
\end{corollary}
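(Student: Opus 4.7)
The plan is essentially to combine the two previous results (\cref{cor:same_distibution_separables_trees} and \cref{Lem:OurProcess=PropPatterns}) and to estimate the factor $(n)_k/n^k$.

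First, I would use \cref{cor:same_distibution_separables_trees}, which says that $\Si_n$ and $\perm(T_n,\Eps_n)$ have the same distribution. Since the statement only involves the distribution of $\occ(\pi,\Si_n)$, I may assume without loss of generality that $\Si_n = \perm(T_n,\Eps_n)$. Then $\occ(\pi,\Si_n) = \occ(\pi,\perm(T_n,\Eps_n))$.

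Next, I would apply \cref{Lem:OurProcess=PropPatterns} to the (random) signed tree $(T_n,\Eps_n)$, whose signed contour is $(\widetilde{C_{T_n}},S_n)$, whose associated leaf distribution function is $F_{T_n}$, and whose size is $n$. With $k=|\pi|$, this gives the exact identity
\[
\ProbPerm(\pi;\widetilde{C_{T_n}},S_n,F_{T_n}) = \frac{(n)_k}{n^k}\, \occ(\pi,\Si_n).
\]
Solving for $\occ(\pi,\Si_n)$ yields $\occ(\pi,\Si_n) = \frac{n^k}{(n)_k}\, \ProbPerm(\pi;\widetilde{C_{T_n}},S_n,F_{T_n})$.

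Finally, since $k=|\pi|$ is fixed while $n\to\infty$,
\[
\frac{n^k}{(n)_k} = \prod_{i=0}^{k-1}\frac{1}{1-i/n} = 1 + \mathcal{O}\!\left(\frac{1}{n}\right),
\]
which gives the stated formula. There is no real obstacle here: the only minor point to justify is the identification of $\Si_n$ with $\perm(T_n,\Eps_n)$ in distribution, which is exactly the content of \cref{cor:same_distibution_separables_trees}; everything else is a direct rewriting.
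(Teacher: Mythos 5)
Your proposal is correct and follows exactly the route the paper intends: the corollary is stated as an immediate consequence of \cref{Lem:OurProcess=PropPatterns} applied to $(T_n,\Eps_n)$, combined with the identification of $\Si_n$ with $\perm(T_n,\Eps_n)$ from \cref{cor:same_distibution_separables_trees} and the elementary estimate $n^k/(n)_k = 1+\mathcal{O}(1/n)$ for fixed $k$. Nothing is missing.
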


Using \cref{obs:lamba_from_PrPerm} and \cref{cor:occ_from_PrPerm},
our main result --- {\em i.e.} the joint convergence in distribution of $\occ(\pi, \Si_n)$ to $\Lambda_\pi$, see Theorem~\ref{thm:main}.\ref{item:main_joint} ---
can now be written as the joint convergence 
(for any finite family of patterns $\pi$):
\begin{equation}
  \ProbPerm(\pi;\widetilde{C_{T_n}},S_n,F_{T_n}) \stackrel{d}{\to} 
\ProbPerm(\pi;\Exc,S,F_U),
\label{eq:MainResult_Reformulation}
\end{equation}

\subsection{Outline of the proof}\label{sec:OutlineProof}

The proof of \eqref{eq:MainResult_Reformulation} consists in several steps,
as follows.
Recall that the normalized contour $\widetilde{C_{T_n}}$ and the leaf
distribution function $F_{T_n}$ of $T_n$ converge to the Brownian excursion $\Exc$
and the uniform distribution function $F_U$, respectively.

A natural way to proceed would be to prove the convergence of $(\widetilde{C_{T_n}},S_n)$ to the signed Brownian excursion, 
and the continuity of $\ProbPerm(\pi;\cdot)$. 
As discussed in \cref{rem:cv_to_signedBrownianExc}, a major difficulty when trying to prove that $(\widetilde{C_{T_n}},S_n)$ converges to $(\Exc,S)$ 
is that the signs on the local minima of $\widetilde{C_{T_n}}$ are far from independent. 
Instead of attacking this difficulty head-on, we have developed a proof along a different line. 
Recall that our goal is to prove the convergence in distribution of $\occ(\pi, \Si_n)$ to $\Lambda_\pi$.

The main part of our proof is to show the convergence of $\occ(\pi, \Si_n)$ to $\Lambda_\pi$ \emph{in expectation}.
In Section~\ref{sec:moments}, we prove that this is enough to have convergence in distribution. 
Indeed we shall see that all moments of $\Lambda_\pi$ are determined by expectations of $\Lambda_\rho$,
for larger permutations $\rho$. 
Moreover we shall see that a similar statement holds in the limit for $\occ(\pi, \Si_n)$. 
Therefore, convergence of expectations implies convergence of all (joint) moments. 
And since the random variables are bounded by $1$, 
convergence of (joint) moments implies (multi-dimensional) convergence in distribution.

The goal of Section~\ref{sec:continuity} is to prove the convergence in distribution 
of the \emph{unsigned} trees extracted from the normalized contour of a uniform Schröder tree 
to the \emph{unsigned} trees extracted from the Brownian excursion. 
This is achieved by proving the continuity of $\ProbTree(\patterntree;\cdot)$, 
and using the convergence of $\widetilde{C_{T_n}}$ to $\Exc$ and the convergence of $F_{T_n}$ to $F_U$.
Since this step does not involve signs, there is no major difficulty here.

In Section~\ref{sec:signs}, we return to signed objects, proving that the signs on extracted trees are balanced and independent. 
More precisely, we examine the signs of the internal vertices of a signed tree $(\patterntree,\eps)$ extracted from $(\widetilde{C_{T_n}},S_n)$, 
and we prove that, when $n$ goes to infinity, 
these signs are balanced and independent. 
Note that a similar statement for $(\Exc,S)$ is obvious, because the signs are chosen balanced and independent in $(\Exc,S)$.
The proof for $(\widetilde{C_{T_n}},S_n)$ involves the fact that 
the relative distances between the points of $\widetilde{C_{T_n}}$ corresponding to the vertices of $\patterntree$ tend to infinity, 
and a subtree exchangeability argument.

Finally, we put all these results together in Section~\ref{sec:main_proof}, 
to conclude the proof of Theorem~\ref{thm:main}.\ref{item:main_joint}.
More precisely, using Sections~\ref{sec:continuity} and \ref{sec:signs}
we show the convergence of $\occ(\pi, \Si_n)$ to $\Lambda_\pi$ in expectation,
which implies Theorem~\ref{thm:main}.\ref{item:main_joint} using Section~\ref{sec:moments}.

\section{Expectations determine joint moments}
\label{sec:moments}

Recall that, for each $n$, $\Si_n$ denotes a uniform random separable permutation of size $n$, 
and that we want to prove the convergence in distribution of the random variables $\occ(\pi,\Si_n)$ to $\Lambda_\pi$. 
Since 
the random variables $\occ(\pi,\Si_n)$ and the candidate limit $\Lambda_\pi$ are bounded by $1$, 
(multi-dimensional) convergence in distribution is equivalent to convergence of (joint) moments.
(This can be seen, {\em e.g.}, as a consequence of Stone-Weierstrass theorem,
which ensures that polynomials are dense in the set of continuous functions from $[0,1]^r$ to $\R$).
Concretely, Theorem~\ref{thm:main}.\ref{item:main_joint} (p.\pageref{thm:main})
is equivalent to the following statement:

\begin{theorem}
\label{thm:ConvJointMoments}
For any list of patterns $\pi_1,\dots,\pi_r$ (possibly with repetitions),
\begin{equation*}
  \esper\left[ \prod_{i=1}^r \occ(\pi_i,\Si_n) \right] 
  \longrightarrow \esper\left[ \prod_{i=1}^r \Lambda_{\pi_i} \right]. 
\end{equation*}
\end{theorem}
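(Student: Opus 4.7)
The plan is to deduce the convergence of joint moments from the single-pattern expectation convergence $\esper[\occ(\rho, \Si_n)] \to \esper[\Lambda_\rho]$, which, as announced in \cref{sec:OutlineProof}, is the central content of \cref{sec:LeavesAreUniform}--\cref{sec:signs}. The first step is to rewrite both sides of the desired limit as joint sampling probabilities. By the very definition of $\occ$, the product $\prod_{i=1}^r \occ(\pi_i, \Si_n)$ is the conditional probability, given $\Si_n$, that independent uniform random subsets $I_1, \ldots, I_r \subset [n]$ with $|I_i|=|\pi_i|$ satisfy $\pat_{I_i}(\Si_n) = \pi_i$ for every $i$. Analogously, the mutual independence (conditional on $(\Exc, S)$) of the $\XX^{(\pi_i)}$'s built into \cref{dfn:Lambda_pi} gives that $\prod_{i=1}^r \Lambda_{\pi_i}$ is the conditional probability, given $(\Exc, S)$, that $\Perm(\Exc, S, \XX^{(\pi_i)}) = \pi_i$ for every $i$. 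Taking expectations, the theorem becomes equivalent to
\[
\proba\bigl(\pat_{I_i}(\Si_n) = \pi_i \text{ for all } i\bigr) \longrightarrow \proba\bigl(\Perm(\Exc, S, \XX^{(\pi_i)}) = \pi_i \text{ for all } i\bigr).
\]

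The second step expands both probabilities in the basis of single-pattern expectations. Set $k := \sum_{i=1}^r |\pi_i|$, and call a \emph{decoration} of $[k]$ any ordered partition $(B_1, \ldots, B_r)$ of $[k]$ with $|B_i| = |\pi_i|$. For $\rho \in \Sn_k$, let $q(\rho)$ denote the fraction of decorations such that $\pat_{B_i}(\rho) = \pi_i$ for every $i$. On the discrete side, the $I_i$'s are pairwise disjoint with probability $1 - O(1/n)$; conditional on disjointness, the union $I := I_1 \sqcup \cdots \sqcup I_r$ is uniform in $\binom{[n]}{k}$ and, independently, its induced ordered partition is a uniform decoration. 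Combined with the identity $\esper[\occ(\rho, \Si_n)] = \proba(\pat_J(\Si_n) = \rho)$ for a uniform $k$-subset $J$, this yields
\[
\esper\left[\prod_{i=1}^r \occ(\pi_i, \Si_n)\right] = \sum_{\rho \in \Sn_k} q(\rho)\, \esper[\occ(\rho, \Si_n)] + O(1/n).
\]
On the continuous side, the points in $\XX^{(\pi_1)} \cup \cdots \cup \XX^{(\pi_r)}$ are almost surely distinct, and by \cref{obs:Tree_pm_generalCase} each $\Perm(\Exc, S, \XX^{(\pi_i)})$ is exactly $\pat_{B_i}(\Perm(\Exc, S, \XX))$, where $\XX$ is the union and $B_i$ is the index block of $\XX^{(\pi_i)}$ in the sorted union. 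The analogous decomposition therefore holds exactly, with no error term:
\[
\esper\left[\prod_{i=1}^r \Lambda_{\pi_i}\right] = \sum_{\rho \in \Sn_k} q(\rho)\, \esper[\Lambda_\rho].
\]
Crucially, the combinatorial coefficients $q(\rho)$ are the same on both sides, since they come from the same counting of uniform labelings of $k$ positions.

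Since $\Sn_k$ is finite, pointwise convergence $\esper[\occ(\rho, \Si_n)] \to \esper[\Lambda_\rho]$ for every $\rho \in \Sn_k$ then implies, by linearity, the desired convergence of joint moments.

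The main obstacle lies outside the present reduction, in the single-pattern convergence $\esper[\occ(\rho, \Si_n)] \to \esper[\Lambda_\rho]$ itself, which mobilises all of \cref{sec:LeavesAreUniform}--\cref{sec:signs} (contour convergence, control of the leaf distribution, and the subtree-exchangeability argument for signs). Within the present reduction, the only genuinely delicate point is the clean matching of the combinatorial weights $q(\rho)$ between the discrete and continuous models, together with the bookkeeping of the $O(1/n)$ overlap contribution; both are elementary, relying on the standard facts that $r$ uniform subsets of $[n]$ of bounded sizes are pairwise disjoint with probability $1 - O(1/n)$, and that $r$ independent finite sets of continuous i.i.d. uniform points on $[0,1]$ are almost surely pairwise disjoint.
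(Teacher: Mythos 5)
Your proposal is correct and follows essentially the same route as the paper: your coefficients $q(\rho)$ are exactly the paper's $c^\rho_{\pi_1,\ldots,\pi_r}$, your discrete and continuous expansions are its two propositions expressing $\prod_i \occ(\pi_i,\si)$ and $\prod_i \Lambda_{\pi_i}$ as linear combinations of $\occ(\rho,\si)$ and $\Lambda_\rho$ over $\rho\in\Sn_K$, and the final linearity step is its corollary reducing joint moments to single-pattern expectations, with the single-pattern convergence deferred to the later sections just as you do. The only (immaterial) difference is that the paper proves both decomposition identities at the level of random variables rather than only after taking expectations.
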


Instead of proving a convergence in distribution of random variables,
we can therefore limit ourselves to proving a convergence of real numbers, which is a lot more tractable. 
To make our task even simpler, we show in \cref{cor:expectation_is_enough} below
that Theorem~\ref{thm:ConvJointMoments} follows if we prove, 
for all $\pi$, the convergence of $\esper[\occ(\pi,\Si_n)]$ to $\esper[\Lambda_{\pi}]$. 
The key point is that $\prod_{i=1}^r \occ(\pi_i,\si)$ can be expressed combinatorially as a linear combination of $\occ(\rho,\si)$ for larger patterns $\rho$, 
and that the same relation holds between $\prod_{i=1}^r \Lambda_{\pi_i}$ and the $\Lambda_\rho$'s. 

\begin{definition}
\label{dfn:ordered_set-partition}
Let $\rho$ be a permutation of size $K$ and $\pi_1,\cdots,\pi_r$ be permutations such that $\sum_{i=1}^r |\pi_i| = K$.
An ordered set-partition $(I_i)_{1 \leq i \leq r}$ of $\{1, \dots K\}$ is \emph{compatible} with $\rho, \pi_1,\cdots,\pi_r$
if for all $i$, $\pat_{I_i}(\rho) = \pi_i$.

Let $d_{\pi_1,\ldots,\pi_r}^\rho$ be the number of ordered set-partitions compatible with $\rho, \pi_1,\cdots,\pi_r$.
We denote by
$$
c_{\pi_1,\ldots,\pi_r}^\rho = \frac{d_{\pi_1,\ldots,\pi_r}^\rho}{\binom{K}{|\pi_1|,\ldots,|\pi_r|}}
$$
the proportion of ordered set-partitions of $\{1, \dots K\}$ which are compatible with $\rho, \pi_1,\cdots,\pi_r$.
\end{definition}
\begin{example}
Let $\rho = 1342$, $\pi_1 = 21$ and $\pi_2 = 12$.
There are $6$ ordered set-partitions of $\{1, \dots 4\}$ which are
{\small $(\{1,2\},\{3,4\})$, $(\{1,3\},\{2,4\})$, $(\{1,4\},\{2,3\})$, $(\{2,3\},\{1,4\})$, $(\{2,4\},\{1,3\})$, $(\{3,4\},\{1,2\})$}.
Let ($I_1$,$I_2$) be one of the first four ordered pairs,
then $\pat_{I_1}(\rho) = 12 \neq \pi_1$
thus ($I_1$,$I_2$) is not compatible with $\rho, \pi_1,\pi_2$.
On the contrary, the last two pairs are compatible with $\rho, \pi_1,\pi_2$.
Then $d_{21,12}^{1342} = 2$ and  $c_{21,12}^{1342} = 1/3$.
\end{example}

\begin{proposition}
Fix a list of patterns $\pi_1,\cdots,\pi_r$, 
and denote by $K$ the sum of their sizes.
Then
\[\prod_{i=1}^r \Lambda_{\pi_i} =  \sum_{\rho \in \Sn_K} c_{\pi_1,\ldots,\pi_r}^\rho \Lambda_\rho. \]
\label{prop:moments_in_terms_of_expectation_limit_case}
\end{proposition}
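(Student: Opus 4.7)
The plan is to unfold the definitions and use exchangeability of independent uniform samples. Recall from \cref{eq:def_Lambda_pi_vecteur} that, conditionally on $(\Exc,S)$, the coordinates $\Lambda_{\pi_i}$ are defined using \emph{independent} samples $\XX^{(\pi_i)}$, each consisting of $|\pi_i|$ i.i.d. uniform points in $[0,1]$. Using this independence and \cref{obs:expectation_of_expectation}, I can rewrite
\[\prod_{i=1}^r \Lambda_{\pi_i} = \proba^{\XX^{(\pi_1)},\dots,\XX^{(\pi_r)}}\!\left(\,\bigcap_{i=1}^r \{\Perm(\Exc,S,\XX^{(\pi_i)})=\pi_i\}\,\middle|\,\Exc,S\right).\]

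Next, I would merge the groups: set $\YY=\XX^{(\pi_1)}\cup\cdots\cup\XX^{(\pi_r)}$. Almost surely, $\YY$ consists of $K$ distinct points in $[0,1]$, and its law (unconditionally on the grouping) is that of $K$ i.i.d. uniform variables in $[0,1]$. Label the points of $\YY$ by $1,\dots,K$ from left to right (increasing order), and let $I_i\subset [K]$ be the set of labels of the points coming from $\XX^{(\pi_i)}$; then $(I_1,\dots,I_r)$ is an ordered set-partition of $[K]$ into parts of sizes $|\pi_1|,\dots,|\pi_r|$. Setting $\rho:=\Perm(\Exc,S,\YY)$, \cref{obs:Tree_pm_generalCase} combined with \cref{obs:functionPerm=patterns} gives $\Perm(\Exc,S,\XX^{(\pi_i)})=\pat_{I_i}(\rho)$ for each $i$. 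Hence the intersection of events above becomes $\bigcap_i\{\pat_{I_i}(\rho)=\pi_i\}$, i.e.\ ``$(I_1,\dots,I_r)$ is compatible with $\rho,\pi_1,\dots,\pi_r$'' in the sense of \cref{dfn:ordered_set-partition}.

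The key probabilistic step is then exchangeability. Conditionally on $(\Exc,S)$ and on $\YY$ (as an unordered set of $K$ points), the labeling of the points by their originating group is uniform over all ordered set-partitions of $[K]$ with block sizes $(|\pi_1|,\dots,|\pi_r|)$. This is because $(\XX^{(\pi_i)})_i$ can be constructed by first drawing $K$ i.i.d.\ uniforms and then distributing them uniformly at random into groups of the prescribed sizes (the two joint laws coincide by symmetry). Since $\rho$ is determined by $(\Exc,S,\YY)$, it follows that
\[\proba\!\left(\bigcap_i\{\pat_{I_i}(\rho)=\pi_i\}\,\middle|\,\Exc,S,\YY\right)=\frac{d_{\pi_1,\dots,\pi_r}^{\rho}}{\binom{K}{|\pi_1|,\dots,|\pi_r|}}=c_{\pi_1,\dots,\pi_r}^{\rho}.\]

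Finally, I would sum over the possible values of $\rho$. Since $\rho=\Perm(\Exc,S,\YY)$ with $\YY$ a set of $K$ i.i.d.\ uniforms, \cref{obs:lamba_from_PrPerm} gives $\proba(\rho=\rho_0\mid \Exc,S)=\Lambda_{\rho_0}$ for each $\rho_0\in\Sn_K$. Integrating the previous display against this law yields
\[\prod_{i=1}^r \Lambda_{\pi_i}=\sum_{\rho\in\Sn_K} c_{\pi_1,\dots,\pi_r}^{\rho}\,\Lambda_\rho,\]
as desired. The only minor delicate point is the exchangeability argument: one must carefully justify that the ordered set-partition induced by the grouping is uniform and independent of $\YY$ and of $(\Exc,S)$; everything else follows mechanically from the definitions and from the compatibility of $\Perm$ with subsets of points (\cref{obs:Tree_pm_generalCase,obs:functionPerm=patterns}).
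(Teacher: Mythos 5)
Your proof is correct and follows essentially the same route as the paper's: independence of the samples turns the product into a joint probability, the compatibility observations reduce the event to a condition on the induced ordered set-partition, and exchangeability shows that this partition is uniform (and independent of $(\Exc,S,\YY)$), yielding the coefficient $c_{\pi_1,\ldots,\pi_r}^{\rho}$. The only cosmetic difference is the order of conditioning (you condition on $\YY$ first, the paper on $\rho=\Perm(\Exc,S,\XX)$ first) and that the relevant pattern-compatibility statement is \cref{obs:patterns_and_subtrees} rather than \cref{obs:functionPerm=patterns}.
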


\begin{proof}
We fix a realization $(\Exc,S)$ of the Brownian excursion all along the proof.
The main idea to prove the equality in \cref{prop:moments_in_terms_of_expectation_limit_case} is to show that both parts represent the probability of the same event.
First we describe the event.

We fix a list of patterns $\pi_1,\cdots,\pi_r$ 
and we denote by $K$ the sum of their sizes.
Let us take $K$ independent uniform random variables in $[0,1]$.
We denote the $|\pi_1|$ first ones (in the order of sampling) by $X_1^1, \dots, X_{|\pi_1|}^1$,
the following ones  $X_1^2, \dots, X_{|\pi_2|}^2$,
and so on until the $|\pi_r|$ last ones which are denoted $X_1^r, \dots, X_{|\pi_r|}^r$. 
Let $\XX^{(i)} = \{X_j^i \mid 1\leq j \leq |\pi_i|\}$ for all $i$ and $\XX = \cup_{i=1}^r \XX^{(i)}$. 
The event we are considering is then: for each $i$, $\Perm(\Exc,S,\XX^{(i)}) = \pi_i$.
Since $(\Exc,S)$ is a fixed realization of the Brownian excursion,
all the probabilities below should be understood with respect to
the random variables $\XX$, as indicated by the notation $\proba^\XX$.

Let $\mathcal{P}^{\pi_1,\cdots,\pi_r} = \proba^\XX\big(\forall i, \Perm(\Exc,S,\XX^{(i)}) = \pi_i \big)$. 
Since the $X^i_j$ are independent, we have :
\[\mathcal{P}^{\pi_1,\cdots,\pi_r} = \prod_{i=1}^r \proba^\XX\big(\Perm(\Exc,S,\XX^{(i)}) = \pi_i \big) = \prod_{i=1}^r \Lambda_{\pi_i}.\]

It remains to prove that $\mathcal{P}^{\pi_1,\cdots,\pi_r} =  \sum_{\rho \in \Sn_K} c_{\pi_1,\ldots,\pi_r}^\rho \Lambda_\rho$.
We compute this probability by conditioning on the value of $\Perm(\Exc,S,\XX)$.
Since $(\Exc,S,\XX)$ satisfies Condition~\eqref{condition:unique_signs_on_minima} (p.\pageref{condition:unique_signs_on_minima})
with probability~$1$ (see Observation~\ref{obs:Brownian_satisfies_C} p.\pageref{obs:Brownian_satisfies_C}), 
the permutation $\Perm(\Exc,S,\XX)$ is almost surely defined and has size~$K$.
Thus
\begin{align*}
\mathcal{P}^{\pi_1,\cdots,\pi_r} &= 
\sum_{\rho \in \Sn_K}  \mathcal{P}_\rho \times \proba^\XX\big( \Perm(\Exc,S,\XX)=\rho \big)  \\
\text{ where }\mathcal{P}_\rho &= \proba^\XX\big(\forall i, \Perm(\Exc,S,\XX^{(i)}) = \pi_i \mid \Perm(\Exc,S,\XX)=\rho \big).
\end{align*}

By definition, $\proba^\XX\big( \Perm(\Exc,S,\XX)=\rho \big)=\Lambda_\rho$,
so we just need to prove that $\mathcal{P}_\rho = c_{\pi_1,\ldots,\pi_r}^\rho$
to finish the proof.

Consider a realization of the random variables $\XX$.
We say that $X_j^i$ has rank $k$ if it is the $k$-th smallest value 
among all variables of $\XX$.
Then we can associate with $\XX$ an ordered set-partition of $\{1, \dots, K\}$ that we denote abusively $\Part(\XX)$ and that is defined as follows: 
the $i$-th subset of $\Part(\XX)$ is obtained from $\XX^{(i)}$
by replacing each $X_j^i$ by its rank. For example, 
\[\Part(\{\Blue{0.7}, \Blue{0.9}, \Blue{0.2}\}, \{\Green{0.5}, \Green{0.8}\}, \{\Red{0.3}\}) =
(\{\Blue1,\Blue4,\Blue6\}, \{\Green3,\Green5\}, \{\Red2\})\]
since $\Blue{0.2}<\Red{0.3}<\Green{0.5}<\Blue{0.7}<\Green{0.8}<\Blue{0.9}$.

Let $\mathcal{I}$ be the set of all ordered set-partitions of $\{1, \dots, K\}$ in $r$ subsets such that the $i$-th one has $|\pi_i|$ elements.
Then, by conditioning on the value of $\Part(\XX)$, we have
\begin{multline*}
    \mathcal{P}_\rho =
\sum_{(I_i) \in \mathcal{I}} \proba^\XX\big(\Part(\XX) = (I_i) \big) \times \\
\proba^\XX\big(\forall i, \Perm(\Exc,S,\XX^{(i)}) = \pi_i \mid \Perm(\Exc,S,\XX)=\rho \text{ and } \Part(\XX) = (I_i) \big).
\end{multline*}
Recall that, by construction, $\Perm=\perm \circ \Tree_{\pm}$.
From Observations~\ref{obs:Tree_pm_generalCase} (p.\pageref{obs:Tree_pm_generalCase}) and~\ref{obs:patterns_and_subtrees} (p.\pageref{obs:patterns_and_subtrees}), 
\begin{multline*}
    \proba^\XX\big(\forall i, \Perm(\Exc,S,\XX^{(i)}) = \pi_i \mid \Perm(\Exc,S,\XX) =\rho \text{ and } \Part(\XX) = (I_i) \big) 
    \\=\begin{cases}
        1 &\text{ if $(I_i)$ is compatible with $\rho,\pi_1,\ldots,\pi_r$;}\\
        0 & \text{otherwise.}
    \end{cases}
\end{multline*}

In addition, we claim that $\Part(\XX)$ is uniformly distributed in $\mathcal{I}$,
that is, for each $(I_i)$ in $\mathcal{I}$, we have
\hbox{$\proba^\XX\big(\Part(\XX) = (I_i) \big) = 1/\binom{K}{|\pi_1|,\ldots,|\pi_r|}$}.
Indeed each possible relative order of the $(X_j^i)$ occurs with the same probability.
Moreover, the block sizes of ordered set-partitions in $\mathcal{I}$
are prescribed: $\mathrm{card}(I_i)=|\pi_i|$.
Therefore each ordered set-partition in $\mathcal{I}$ corresponds 
to the same number of relative orders of the variables $(X_j^i)$; 
and this proves our claim.

Finally, we obtain that $\mathcal{P}_\rho = d_{\pi_1,\ldots,\pi_r}^\rho / \binom{K}{|\pi_1|,\ldots,|\pi_r|} = c_{\pi_1,\ldots,\pi_r}^\rho$.
Thus $\mathcal{P}^{\pi_1,\cdots,\pi_r} =  \sum_{\rho \in \Sn_K} c_{\pi_1,\ldots,\pi_r}^\rho \Lambda_\rho$, 
which concludes the proof.
\end{proof}

\begin{proposition}
Fix a list of patterns $\pi_1,\cdots,\pi_r$, 
and denote by $K$ the sum of their sizes.
Then for any permutation $\si$ of size $n$, 
\[ \prod_{i=1}^r \occ(\pi_i, \si) = \sum_{\rho \in \Sn_K} c_{\pi_1,\ldots,\pi_r}^\rho \occ(\rho, \si)\ +\ \mathcal{O}\left(\frac1n\right).\]
The constant hidden in the $\mathcal{O}$ symbol does depend on $\pi_1,\ldots,\pi_r$,
but is uniform on $\sigma$.
\label{prop:moments_in_terms_of_expectation_discrete_case}
\end{proposition}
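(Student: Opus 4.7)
The plan is to mirror the argument of Proposition~\ref{prop:moments_in_terms_of_expectation_limit_case} in the discrete setting, replacing the uniform variables in $[0,1]$ by uniform random subsets of $[n]$ and treating the event of an overlap between the sampled subsets as a lower-order error term. By the very definition of $\occ$, the quantity
$$\prod_{i=1}^{r} \binom{n}{|\pi_i|} \cdot \prod_{i=1}^{r} \occ(\pi_i,\sigma)$$
equals the number $N$ of $r$-tuples $(I_1,\ldots,I_r)$ with $I_i \subset [n]$, $|I_i| = |\pi_i|$, and $\pat_{I_i}(\sigma) = \pi_i$ for every $i$.

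I would then decompose $N = N_{\mathrm{disj}} + N_{\mathrm{col}}$ according to whether the $I_i$ are pairwise disjoint. For $N_{\mathrm{disj}}$, associate to each such tuple its union $I = I_1 \sqcup \cdots \sqcup I_r$, which is a $K$-element subset of $[n]$. Conversely, given any $K$-subset $I$ of $[n]$, the ordered partitions of $I$ into blocks of sizes $|\pi_1|, \ldots, |\pi_r|$ correspond, via the order-preserving bijection $I \cong [K]$, to ordered set-partitions $(I'_1,\ldots,I'_r)$ of $[K]$ with those block sizes. Since pattern extraction is transitive, i.e., $\pat_{I_i}(\sigma) = \pat_{I'_i}(\pat_I(\sigma))$, the condition $\pat_{I_i}(\sigma) = \pi_i$ for every $i$ translates exactly into the condition that $(I'_1,\ldots,I'_r)$ is compatible with $(\rho,\pi_1,\ldots,\pi_r)$, where $\rho := \pat_I(\sigma)$. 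Summing over $I$, grouped by the value of $\rho$, yields
$$N_{\mathrm{disj}} = \binom{n}{K} \sum_{\rho \in \Sn_K} d^\rho_{\pi_1,\ldots,\pi_r}\, \occ(\rho,\sigma).$$
For the remainder $N_{\mathrm{col}}$, a crude union bound over pairs $i<j$ forced to have a common element in $I_i \cap I_j$ shows that $N_{\mathrm{col}} = \mathcal{O}(n^{K-1})$; the implicit constant depends only on $(\pi_1,\ldots,\pi_r)$ and not on $\sigma$ (indeed $N_{\mathrm{col}}$ is bounded by a count that does not depend on $\sigma$ at all).

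To conclude, I would divide by $\prod_i \binom{n}{|\pi_i|} = \Theta(n^K)$ and use the elementary expansion
$$\frac{\binom{n}{K}}{\prod_{i=1}^{r} \binom{n}{|\pi_i|}} = \frac{1}{\binom{K}{|\pi_1|,\ldots,|\pi_r|}}\Bigl(1 + \mathcal{O}(1/n)\Bigr),$$
together with $c^\rho_{\pi_1,\ldots,\pi_r} = d^\rho_{\pi_1,\ldots,\pi_r}/\binom{K}{|\pi_1|,\ldots,|\pi_r|}$, to obtain
$$\prod_{i=1}^{r}\occ(\pi_i,\sigma) = \Bigl(\sum_{\rho \in \Sn_K} c^\rho_{\pi_1,\ldots,\pi_r}\occ(\rho,\sigma)\Bigr)\bigl(1+\mathcal{O}(1/n)\bigr) + \mathcal{O}(1/n).$$
Since $\occ(\rho,\sigma) \le 1$ and $\Sn_K$ is finite, the leading factor $1 + \mathcal{O}(1/n)$ can be absorbed into the additive $\mathcal{O}(1/n)$, giving the claim with a constant uniform in $\sigma$. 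Everything here is routine combinatorial bookkeeping; the only step meriting a moment's care is the transitivity of $\pat$ that makes the compatibility condition match, but this is immediate from the definition of $\pat$, so I do not expect any genuine obstacle.
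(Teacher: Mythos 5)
Your proposal is correct and follows essentially the same route as the paper's proof: both decompose the set of $r$-tuples of occurrences according to whether the index sets are pairwise disjoint, count the disjoint tuples via $d^{\rho}_{\pi_1,\ldots,\pi_r}$ using the transitivity of pattern extraction, bound the colliding tuples by $\mathcal{O}(n^{K-1})$ uniformly in $\sigma$, and absorb the multiplicative ratio of binomial coefficients into the additive error. The only difference is cosmetic (you make the order-preserving identification $I\cong[K]$ explicit where the paper phrases it as the count of ordered partitions being independent of the chosen occurrence of $\rho$).
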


\begin{proof}
Let $k_i$ be the size of $\pi_i$.
We denote by $S_i$ the set of $k_i$-element subsets, $I$, of $[n]$ such that $\pat_I(\sigma) = \pi_i$. 
Then
\[ \prod_{i=1}^r \occ(\pi_i, \si) = \prod_{i=1}^r \frac{\mathrm{card}(S_i)}{\binom{n}{k_i}} = \frac{\prod_{i=1}^r \mathrm{card}(S_i)}{\prod_{i=1}^r \binom{n}{k_i}}. \]
We set $S_\rho = \{ (s_1, \dots, s_r) \in S_1 \times S_2 \times \dots \times S_r \text{ such that } \pat_I(\sigma)=\rho \text{ for } I = \cup_{i=1}^r s_i\}$.
Then
\[ \prod_{i=1}^r \mathrm{card}(S_i) = \mathrm{card}(S_1 \times S_2 \times \dots \times S_r)
= \mathrm{card}( \cup_{\rho \in \Sn_K} S_\rho) + \mathrm{card}(S_1 \times S_2 \times \dots \times S_r \setminus \cup_{\rho \in \Sn_K} S_\rho ). \]
Note that the sets $S_\rho$ are disjoints: indeed $S_\rho \cap S_{\rho'} \neq \emptyset \Rightarrow \rho=\pat_I(\sigma)=\rho'$.
Therefore, denoting $R = S_1 \times S_2 \times \dots \times S_r \setminus \cup_{\rho \in \Sn_K} S_\rho$, we have:
\[ \prod_{i=1}^r \mathrm{card}(S_i) \ = \sum_{\rho \in \Sn_K} \mathrm{card}(S_\rho) \quad + \quad \mathrm{card}(R). \]

We first study $R$.
Let $(s_1, \dots, s_r) \in S_1 \times S_2 \times \dots \times S_r$, and define $I = \cup_{i=1}^r s_i$. 
Note that $\mathrm{card}(I) \leq K$, and that the inequality is strict if and only if $(s_1, \dots, s_r) \in R$. 
Consequently, it holds that $R = \{ (s_1, \dots, s_r) \in S_1 \times S_2 \times \dots \times S_r \text{ such that }\mathrm{card}(\cup_{i=1}^r s_i) < K \}$. 
It follows that $\mathrm{card}(R) = \mathcal{O}(n^{K-1})$, 
where the constant hidden in the $\mathcal{O}$ symbol depends on the $k_i$'s but not on~$n$. 
Since $\prod_{i=1}^r \binom{n}{k_i} \sim \text{cst} \ n^K$ for some constant \text{cst} that depends on the $k_i$'s only, 
we obtain that $\frac{\mathrm{card}(R)}{\prod_{i=1}^r \binom{n}{k_i}} = \mathcal{O}\left(\frac1n\right)$, 
where the constant hidden in the $\mathcal{O}$ symbol depends on the $k_i$'s only.

We now study $S_\rho$.
Each element of $S_\rho$ corresponds to an occurrence of $\rho$ in $\sigma$.
Conversely, a given occurrence of $\rho$ in $\sigma$ may lead to one, several or no element(s) of $S_\rho$:
this depends on the number of ways to partition the set of indices of $\sigma$ corresponding to the occurrence of $\rho$ 
into an ordered sequence $(s_1, \dots, s_r)$ of $r$ sets such that each $s_i$ induces the pattern $\pi_i$. 
The number of ways to do this ordered partition does not depend on the occurrence of $\rho$ that is considered, 
and there are by definition $d_{\pi_1,\ldots,\pi_r}^\rho$ ways to do this partition.
Thus 
\begin{align*}
\mathrm{card}(S_\rho) &= d_{\pi_1,\ldots,\pi_r}^\rho \times \text{ number of occurrences of } \rho \text{ in } \sigma \\
&= \binom{K}{k_1, \dots, k_r} c_{\pi_1,\ldots,\pi_r}^\rho \times  \binom{n}{K} \occ(\rho, \sigma).
\end{align*}

Putting things together,
\begin{align*}
\prod_{i=1}^r \occ(\pi_i, \si) &= 
\sum_{\rho \in \Sn_K} \frac{\mathrm{card}(S_\rho)}{\prod_{i=1}^r \binom{n}{k_i}} \quad + \quad \frac{\mathrm{card}(R)}{\prod_{i=1}^r \binom{n}{k_i}}\\
&= \sum_{\rho \in \Sn_K} c_{\pi_1,\ldots,\pi_r}^\rho \occ(\rho, \sigma)
\frac{\binom{K}{k_1, \dots, k_r}\binom{n}{K}}{\prod_{i=1}^r \binom{n}{k_i}}\ +\ \mathcal{O}\left(\frac1n\right). 
\end{align*}
By simple computations, we have $\frac{\binom{K}{k_1, \dots, k_r}\binom{n}{K}}{\prod_{i=1}^r \binom{n}{k_i}}
=\frac{n!}{K!(n-K)!}\ \frac{K!}{k_1!\ \dots\ k_r!}\ \frac{k_1! (n-k_1)!}{n!} ... \frac{k_r! (n-k_r)!}{n!} = 1 + \mathcal{O}\left(\frac1n\right)$, 
where again the constant hidden in the $\mathcal{O}$ symbol depends on the $k_i$'s only. 
To conclude, we need to remark that the sum $\sum_{\rho \in \Sn_K} c_{\pi_1,\ldots,\pi_r}^\rho \occ(\rho, \sigma)$ is bounded independently of $n$; 
indeed, each term $c_{\pi_1,\ldots,\pi_r}^\rho \occ(\rho, \sigma)$ is bounded by $1$, and there are $K!$ terms. 
It then follows that 
\[
\prod_{i=1}^r \occ(\pi_i, \si) = \sum_{\rho \in \Sn_K} c_{\pi_1,\ldots,\pi_r}^\rho \occ(\rho, \sigma)\ +\ \mathcal{O}\left(\frac1n\right)
\]
where the constant hidden in the $\mathcal{O}$ symbol does depend on $\pi_1,\ldots,\pi_r$,
but is uniform on $\sigma$.
\end{proof}

\begin{corollary}\label{cor:expectation_is_enough}
Theorem~\ref{thm:ConvJointMoments} (and therefore Theorem~\ref{thm:main}.\ref{item:main_joint}) is equivalent to the following statement: 
\begin{equation}
  \text{For any pattern }\pi, \ \esper\left[ \occ(\pi,\Si_n) \right] \longrightarrow \esper\left[ \Lambda_{\pi} \right]. 
  \label{eq:cv_exp}
\end{equation}
\label{cor:Exp_is_enough}
\end{corollary}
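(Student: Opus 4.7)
The strategy is immediate once Propositions~\ref{prop:moments_in_terms_of_expectation_limit_case} and~\ref{prop:moments_in_terms_of_expectation_discrete_case} are in hand: both the joint moment on the discrete side and the joint moment on the continuous side admit a \emph{linear} expansion in single-permutation quantities indexed by patterns $\rho$ of size $K = \sum_i |\pi_i|$. So I would prove the equivalence by proving the two implications separately, with all the real work done already by those two propositions.

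For the easy direction, assume Theorem~\ref{thm:ConvJointMoments} holds. Specializing to $r=1$ and a single pattern $\pi_1 = \pi$ gives precisely $\esper[\occ(\pi,\Si_n)] \to \esper[\Lambda_\pi]$, so \eqref{eq:cv_exp} follows. (There is nothing to fear about interchanging limit and expectation here because $r=1$ is the base case of the joint moment statement itself.)

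For the main direction, assume \eqref{eq:cv_exp} and fix a list of patterns $\pi_1,\dots,\pi_r$. Taking expectations in Proposition~\ref{prop:moments_in_terms_of_expectation_discrete_case} applied to $\sigma = \Si_n$ yields
\[
\esper\!\left[\prod_{i=1}^r \occ(\pi_i,\Si_n)\right] \;=\; \sum_{\rho \in \Sn_K} c_{\pi_1,\ldots,\pi_r}^\rho\, \esper\!\left[\occ(\rho,\Si_n)\right] \;+\; \mathcal{O}\!\left(\tfrac{1}{n}\right),
\]
where we used that the error term in Proposition~\ref{prop:moments_in_terms_of_expectation_discrete_case} is uniform in $\sigma$ (so its expectation is still $\mathcal{O}(1/n)$). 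Similarly, taking expectations in Proposition~\ref{prop:moments_in_terms_of_expectation_limit_case} gives
\[
\esper\!\left[\prod_{i=1}^r \Lambda_{\pi_i}\right] \;=\; \sum_{\rho \in \Sn_K} c_{\pi_1,\ldots,\pi_r}^\rho\, \esper[\Lambda_\rho].
\]
Since the sum is finite (it has $K!$ terms), and since by hypothesis $\esper[\occ(\rho,\Si_n)] \to \esper[\Lambda_\rho]$ for every $\rho \in \Sn_K$, passing to the limit $n \to +\infty$ in the first identity yields $\esper[\prod_i \occ(\pi_i,\Si_n)] \to \esper[\prod_i \Lambda_{\pi_i}]$, which is Theorem~\ref{thm:ConvJointMoments}.

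There is no real obstacle here, provided Propositions~\ref{prop:moments_in_terms_of_expectation_limit_case} and~\ref{prop:moments_in_terms_of_expectation_discrete_case} are available; the only small point worth double-checking is that the $\mathcal{O}(1/n)$ in Proposition~\ref{prop:moments_in_terms_of_expectation_discrete_case} survives taking expectations, which is fine precisely because the constant there depends only on $\pi_1,\dots,\pi_r$ and is uniform in $\sigma$. Combined with the proof of equivalence between Theorem~\ref{thm:ConvJointMoments} and Theorem~\ref{thm:main}.\ref{item:main_joint} given just after the statement of Theorem~\ref{thm:ConvJointMoments}, the present corollary reduces the main theorem to the one-dimensional moment convergence~\eqref{eq:cv_exp}, which is then the target of the subsequent sections.
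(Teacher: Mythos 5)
Your proof is correct and follows essentially the same route as the paper: both directions rest on taking expectations in Propositions~\ref{prop:moments_in_terms_of_expectation_limit_case} and~\ref{prop:moments_in_terms_of_expectation_discrete_case}, using the uniformity in $\sigma$ of the $\mathcal{O}(1/n)$ term and the finiteness of the sum over $\rho \in \Sn_K$ to pass to the limit. The only cosmetic difference is that you spell out the trivial $r=1$ direction, which the paper leaves implicit.
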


\begin{proof}
  We assume that Eq.~\eqref{eq:cv_exp} holds. 
Let $\pi_1,\ldots,\pi_r$ be a list of patterns, and denote by $K$ the sum of their sizes.
Then, from \cref{prop:moments_in_terms_of_expectation_limit_case,prop:moments_in_terms_of_expectation_discrete_case},
we have:
\begin{multline*}
  \esper\left[ \prod_{i=1}^r \occ(\pi_i,\Si_n) \right]
  = \sum_{\rho \in \Sn_K} c_{\pi_1,\ldots,\pi_r}^\rho \esper \left[ \occ(\rho, \Si_n)\right] +o(1)
  = \sum_{\rho \in \Sn_K} c_{\pi_1,\ldots,\pi_r}^\rho \left(\esper \left[ \Lambda_\rho \right] +o(1)\right) +o(1)\\
  = \sum_{\rho \in \Sn_K} c_{\pi_1,\ldots,\pi_r}^\rho \esper \left[ \Lambda_\rho \right] +o(1)
  = \esper\left[ \prod_{i=1}^r \Lambda_{\pi_i} \right] +o(1).
\end{multline*}
Note that to get the second line, we used that
$\sum_{\rho \in \Sn_K} c_{\pi_1,\ldots,\pi_r}^\rho$ does not depend on $n$. 
The above computations show that Eq.~\eqref{eq:cv_exp} implies Theorem~\ref{thm:ConvJointMoments}.
\end{proof}

The next three sections focus on proving Eq.~\eqref{eq:cv_exp}; 
the proof will be completed in~\cref{thm:ConvExp}.

\section{Continuity of $\ProbTree$}
\label{sec:continuity}

In this section, we prove the convergence in distribution 
of the trees extracted from the normalized contour of a uniform Schröder tree 
to the trees extracted from the Brownian excursion (in the unsigned case). 
This result is stated in \cref{cor:PrTree_cv_in_distribution}.
It follows easily from earlier results and a continuity property of $\ProbTree$ (defined in \cref{dfn:ProbTree} p.\pageref{dfn:ProbTree})
that we establish in \cref{lem:continuityProbTree} below. 
Let us first set up some notation.

Fix a tree $\patterntree$ with $k$ leaves.
We consider $\ProbTree(\patterntree;\dots)$ that is the map
$(f,F) \mapsto \ProbTree(\patterntree;f,F)$.
We use the uniform topology (that is, the topology induced by the supremum norm)
both on the set of excursions and on the set of distribution functions.

Throughout the section, if $f$ is an excursion and $x \le y$ in $[0,1]$,
we denote $m(f;x,y)=\min_{[x,y]} f$. 
Let $X_1 \le X_2 \cdots \le X_k$ be the reordering of $k$ uniform i.i.d. random variables in $[0,1]$.
We say that an excursion $f$ has the \emph{distinct minima property} 
if $m(f,X_1,X_2)$, \ldots, $m(f,X_{k-1},X_k)$ are distinct with probability $1$ (the probability here is taken with respect to $X_1,\cdots,X_k$).

\begin{lemma}
  Let $\Exc$ be the Brownian excursion, 
  then a.s. $\Exc$ has the distinct minima property.
  \label{Lem:Exc_DMP}
\end{lemma}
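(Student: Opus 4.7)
The plan is to reduce the lemma to two structural almost-sure properties of the Brownian excursion $\Exc$ which are collected in the Appendix (cf. \cref{Cor:OneSided,Lemma:MinimumSameLevel}, already invoked in \cref{obs:Brownian_satisfies_C}): namely that, almost surely, (i) the set of local minima of $\Exc$ is countable, and (ii) any two distinct local minima of $\Exc$ take distinct values.

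I would then condition on a realisation of $\Exc$ satisfying both (i) and (ii); this is an event of probability $1$. Given such an $\Exc$, property (i) combined with the fact that the uniform variables $X_1,\ldots,X_k$ are continuously distributed implies that, with probability $1$ with respect to $X_1,\ldots,X_k$, no $X_i$ coincides with a local minimum of $\Exc$ and the $X_i$'s are pairwise distinct. On this further full-measure event, I claim that for each $i=1,\ldots,k-1$, the minimum of $\Exc$ on $[X_i,X_{i+1}]$ is attained at a point $t_i^\star$ lying in the open interval $(X_i,X_{i+1})$, which is therefore automatically a local minimum of $\Exc$. Indeed, since $X_i$ is not a local minimum, there are points arbitrarily close to $X_i$ on its right at which $\Exc$ takes values strictly smaller than $\Exc(X_i)$, so that $m(\Exc;X_i,X_{i+1}) < \Exc(X_i)$; by the symmetric argument at $X_{i+1}$, we also have $m(\Exc;X_i,X_{i+1}) < \Exc(X_{i+1})$, hence the minimum is not attained at the endpoints.

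Since the open intervals $(X_i,X_{i+1})$ are pairwise disjoint, the points $t_1^\star,\ldots,t_{k-1}^\star$ constructed above are $k-1$ distinct local minima of $\Exc$; by property (ii), their values $m(\Exc;X_1,X_2),\ldots,m(\Exc;X_{k-1},X_k)$ are then pairwise distinct. This gives exactly the distinct minima property, and shows that it holds for almost every realisation of $\Exc$.

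The only step that requires genuine probabilistic input is the verification of (i) and (ii) for $\Exc$; the remainder of the proof is a purely deterministic continuity argument. I expect this verification to be exactly what the Appendix lemmas (already used in \cref{obs:Brownian_satisfies_C} to guarantee that $\Tree_\pm(\Exc,S,\XX)$ is a.s.\ binary) provide, so no substantial new estimate should be needed here.
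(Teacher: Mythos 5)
There is a genuine gap in the deterministic part of your argument, at the step ``since $X_i$ is not a local minimum, there are points arbitrarily close to $X_i$ on its right at which $\Exc$ takes values strictly smaller than $\Exc(X_i)$.'' Not being a (two-sided) local minimum only guarantees that every neighbourhood of $X_i$ contains a point with a strictly smaller value; that point may lie to the \emph{left} of $X_i$. A point can fail to be a local minimum while still satisfying $\Exc(X_i)=\min_{[X_i,X_i+\eps]}\Exc$ (a \emph{one-sided} local minimum from the right), in which case the minimum of $\Exc$ on $[X_i,X_{i+1}]$ may well be attained at the endpoint $X_i$ and need not be a local minimum of $\Exc$ at all — so you cannot invoke the distinctness of local-minimum values for it. To rule this out you must know that a.s.\ no $X_i$ is a \emph{one-sided} local minimum, which is exactly what \cref{Cor:OneSided} provides: the set of one-sided local minima of $\Exc$ has Lebesgue measure zero. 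This is a genuinely probabilistic input (proved in the appendix by a Fubini argument over the Brownian motion), and it is \emph{not} a consequence of your property (i): the set of two-sided local minima being countable (which indeed follows from your property (ii)) says nothing about the set of one-sided local minima, which for a general continuous function can be uncountable and of positive measure.

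Apart from this, your argument coincides with the paper's proof: once one knows that a.s.\ no $X_i$ is a one-sided local minimum, each $m(\Exc;X_i,X_{i+1})$ is attained in the open interval $(X_i,X_{i+1})$, hence at a genuine local minimum of $\Exc$; these $k-1$ local minima are distinct points (as you note, the open intervals are disjoint), and \cref{Lemma:MinimumSameLevel} then gives distinct values. So the fix is simply to replace your property (i) by the statement of \cref{Cor:OneSided} and to phrase the endpoint argument in terms of one-sided local minima.
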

\begin{proof}
  This can be seen as a consequence of \cref{lem:tout_a_une_densite},
  but since this lemma uses a deep result of \cite{LeGall},
  let us give a more elementary proof.

If $\Exc$ is the Brownian excursion, and $X_1 < \cdots <X_k$
is the reordering of $k$ i.i.d. uniform random variables in $[0,1]$,
then from \cref{Cor:OneSided} with probability $1$ no $X_i$ is a one-sided minimum.
When this is the case,
for each $i$, $m(f,X_i,X_{i+1})$ cannot be reached on the extremities
of the interval $[X_i,X_{i+1}]$ and is therefore a local minimum.
Consequently, using \cref{Lemma:MinimumSameLevel},
the values $m(f,X_1,X_2)$, \ldots, $m(f,X_{k-1},X_k)$ are distinct, almost surely.
\end{proof}

\begin{lemma}
Let $\patterntree$ be a fixed tree with $k$ leaves.
If $f$ is an excursion with the distinct minima property
then
$\ProbTree(\patterntree;\dots)$ is continuous in $(f,F_U)$
with respect to the uniform topology.
\label{lem:continuityProbTree}
\end{lemma}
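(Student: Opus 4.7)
My plan is to use a coupling argument and then apply dominated convergence. On a common probability space, take $U_1, \ldots, U_k$ to be i.i.d.\ uniform on $[0,1]$, and set $\UU = \{U_1, \ldots, U_k\}$. For any distribution function $F_n$ with $F_n(0)=0$, $F_n(1)=1$, set $X_i^{(n)} := F_n^*(U_i)$ and $\XX^{(n)} = \{X_1^{(n)}, \ldots, X_k^{(n)}\}$, so each $X_i^{(n)}$ has distribution $F_n$. Since $F_U^* = \id$ on $[0,1]$, \cref{lem:FF*Pareil} yields
$$\max_{1 \le i \le k}|X_i^{(n)} - U_i| \le \|F_n^* - F_U^*\|_\infty \le \|F_n - F_U\|_\infty,$$
so if $(f_n, F_n) \to (f, F_U)$ in the supremum norm, then almost surely $X_i^{(n)} \to U_i$ for every $i$. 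With this coupling,
$$\ProbTree(\patterntree; f_n, F_n) = \esper\,\One[\Tree(f_n, \XX^{(n)}) = \patterntree],$$
and it suffices to show that, almost surely, $\Tree(f_n, \XX^{(n)}) = \Tree(f, \UU)$ for $n$ large enough; the indicators being bounded by $1$, dominated convergence then delivers the continuity of $\ProbTree(\patterntree;\cdot)$ at $(f, F_U)$.

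The almost-sure equality of extracted trees is proved by induction on $k = |\patterntree|$. Let $U_{(1)} < \cdots < U_{(k)}$ denote the reordering of the $U_i$'s and set $m_i := m(f; U_{(i)}, U_{(i+1)})$. Condition on the full-measure event where the $U_i$ are pairwise distinct, the $m_i$ are pairwise distinct (which is precisely the distinct-minima hypothesis on $f$), and no $U_{(i)}$ is a one-sided minimum of $f$ (which follows from \cref{Cor:OneSided}), so each $m_i$ is attained strictly inside the open interval $(U_{(i)}, U_{(i+1)})$. Writing $m_i^{(n)} := m(f_n; X_{(i)}^{(n)}, X_{(i+1)}^{(n)})$, the triangle inequality
$$|m_i^{(n)} - m_i| \le \|f_n - f\|_\infty + \bigl|\min\nolimits_{[X_{(i)}^{(n)}, X_{(i+1)}^{(n)}]} f - \min\nolimits_{[U_{(i)}, U_{(i+1)}]} f\bigr|,$$
together with the uniform continuity of $f$ and the pointwise convergence $X_i^{(n)} \to U_i$, gives $m_i^{(n)} \to m_i$ for each $i$. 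Since the $m_i$ are distinct, for $n$ large the $m_i^{(n)}$ are also distinct and share the same relative ordering, so the first recursive step of \cref{dfn:Tree} produces, both for $(f_n,\XX^{(n)})$ and for $(f,\UU)$, a root of arity $2$ splitting at the same index $i^\star$. We then apply the induction hypothesis to each of the two resulting sub-problems, noting that distinctness of the global $m_i$'s descends to each sub-interval and that the coupling restricts cleanly to sub-samples.

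The main obstacle I expect is the inductive step on the sub-intervals: one must verify that on each sub-interval the restriction of $f$ still satisfies the distinct-minima property against the corresponding sub-sample, and that the supremum-norm convergence of $f_n$ and the coupling statement both survive restriction. Both points are immediate from the global hypotheses, but the bookkeeping must be set up carefully to preserve the shape of the induction. A minor ancillary issue is that $F_n^*$ may fail to be injective, so a priori the $X_i^{(n)}$ need not be pairwise distinct; however, on the event we conditioned on, the limits $U_i$ are pairwise distinct, so for $n$ large the $X_i^{(n)}$ are automatically distinct as well, and the point dissolves.
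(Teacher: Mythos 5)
Your argument is essentially the paper's: the same coupling $X_i^{(n)}=F_n^*(U_i)$ controlled by \cref{lem:FF*Pareil}, convergence of the inter-point minima via $\|f_n-f\|_\infty$ and the modulus of continuity of $f$, and the observation that distinctness of the $m_i$ forces the $m_i^{(n)}$ to have the same relative order for $n$ large, hence the same extracted tree; the paper phrases the conclusion quantitatively through the random variable $\gap(f,\xx)$ rather than through almost-sure eventual equality plus dominated convergence, and it skips your induction by noting directly that $\Tree(f,\xx)$ depends only on the relative order of the $m_i$'s, but these are organizational differences only. One caveat: your conditioning on ``no $U_{(i)}$ is a one-sided minimum of $f$'' is both unnecessary for this unsigned lemma and not actually justified here, since \cref{Cor:OneSided} concerns the Brownian excursion, not an arbitrary deterministic excursion with the distinct minima property; simply delete that part of the conditioning and the proof stands.
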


\begin{proof}
  We prove that $\ProbTree(\patterntree;g,G) \rightarrow \ProbTree(\patterntree;f,F_U)$
  as soon as $g \rightarrow f$ and $G \rightarrow F_U$.
  Fix $\eps,\delta>0$ and consider an excursion $g$ and a distribution function $G$ with
  $||f-g||_\infty \leq \eps$ and $ ||F_U-G||_\infty\leq \delta$.

  Recall from \cref{lem:FF*Pareil} (p.\pageref{lem:FF*Pareil})
  that $||F_U^\star-G^\star||_\infty \leq ||F_U-G||_\infty\leq \delta$
  and that $F_U^\star$ is the identity on $[0,1]$.
 Let $X_1<\dots <X_k$ be the reordering of $k$ independent uniform random variables, 
 and let $\xx=(X_1,\dots,X_k)$. 
 Furthermore, set $Y_i=G^\star(X_i)$. 
 Note that $G^\star$ is non-decreasing, so that $Y_1 \leq \dots\leq Y_k$. We write $\yy=(Y_1, \dots, Y_k)$. 
By construction we have that:
\begin{itemize}
  \item $\yy$ has the distribution of the reordering of $k$ i.i.d. random variables of distribution $G$;
\item and, for each $i$, one has $|Y_i - X_i|\le \delta$.
\end{itemize}
It follows from $||f-g||_\infty \leq \eps$ that for all $i \le k-1$, 
$|m(g,Y_i,Y_{i+1}) - m(f,Y_i,Y_{i+1})| \le \eps$.
This implies that, for each $i \le k-1$,
\begin{equation}
 |m(g,Y_i,Y_{i+1}) - m(f,X_i,X_{i+1})| \le \eps + \omega(f,\delta),
  \label{eq:Conv_Val_And_Min}
\end{equation}
where $\omega$ is the modulus of continuity of $f$ defined by
\begin{equation}
\omega(f,\delta)= \sup_{|r-s|\leq \delta}\big|f(r)-f(s)\big|.
  \label{eq:def_modulus_continuity}
\end{equation}
Recall that $\Tree(f,\xx)$ is the tree extracted from the set of points $\xx$ on the excursion $f$. We have
\begin{align}
\left|\ProbTree(\patterntree;f,F_U)-\ProbTree(\patterntree;g,G)\right|&=\left|\mathbb{P}^\xx\left(\Tree(f,\xx)=\patterntree\right)-\mathbb{P}^{\xx}\left(\Tree(g,{\yy})=\patterntree\right)\right| \nonumber\\
&\leq \mathbb{P}^{\xx}(\Tree(f,\xx)\neq \Tree(g,{\yy})). 
\label{eq:Prob_Diff_Trees}
\end{align}
Above and in what follows, the randomness is only given by $\xx$, 
the variables $\yy$ are functions of the $\xx$'s and hence random as well,
while $f$ is non-random.
We emphasize this by using the notation $\mathbb{P}^\xx$.

By construction (see \cref{dfn:Tree} p.\pageref{dfn:Tree}), the tree $\Tree(f,\xx)$ only depends on the relative 
order of
  $m(f,X_1,X_2),\ \ldots,\ m(f,X_{k-1},X_k)$.
We denote $\gap(f,\xx)$ the minimal difference between any two of these values.
Since $f$ is assumed to have the distinct minima property,
$\gap(f,\xx)$ is non-zero with probability $1$.

From \cref{eq:Conv_Val_And_Min}, we see that the numbers
$m(g,Y_1,Y_2), \ldots, m(g,Y_{k-1},Y_k)$
are in the same relative order as $m(f,X_1,X_2)$, $\ldots$, $m(f,X_{k-1},X_k)$ 
as soon as $\eps + \omega(f,\delta) < \gap(f,\xx)/2$.
If this is the case, then $\Tree(f,\xx)= \Tree(g,{\yy})$.
Therefore,
\begin{align*}
  \mathbb{P}^{\xx}(\Tree(f,\xx)\neq \Tree(g,{\yy}))
& \le \mathbb{P}^{\xx}\big(\gap(f,\xx)/2 \leq \eps + \omega(f,\delta)\big)\\
& \!\! \!\stackrel{\eps,\delta\to 0}{\longrightarrow} \ \mathbb{P}^{\xx}\big( \gap(f,\xx) =0 \big)=0.
\end{align*}
With \cref{eq:Prob_Diff_Trees}, this completes the proof.
\end{proof}

Recall that the \emph{(substitution) decomposition tree} of $\sigma$ is 
the unique signed Schröder tree $(t,\eps)$ in which the signs alternate such that $\perm(t,\eps) = \sigma$. 
By \cref{cor:same_distibution_separables_trees} (p.\pageref{cor:same_distibution_separables_trees}),
in the case where $\Si_n$ is a uniform random separable permutation of size $n$, 
then its (unsigned) decomposition tree is a uniform Schr\"oder tree with $n$ leaves, denoted $T_n$ all along the article.

\begin{proposition}
    Let $\Si_n$ be a uniform random separable permutation of size $n$
    and $(\widetilde{C_{T_n}})$ be the normalized contour of its decomposition tree.
    We also define $F_{T_n}$ as usual (see \cref{eq:EqDFLeaves} p.\pageref{eq:EqDFLeaves}).
    Let $\Exc$ be the Brownian excursion and $\lambda=\sqrt{2+3/\sqrt{2}}$.
    Then,  we have
    \[ (\widetilde{C_{T_n}},F_{T_n}) \stackrel{d}{\to} (\lambda\cdot\Exc,F_U)\]
   in distribution, with respect to the uniform topology.
    \label{PropCvCF}
\end{proposition}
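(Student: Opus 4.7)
The plan is to combine two convergence results that have already been established in the paper: Proposition \ref{prop:ConvergenceContour}, which gives $\widetilde{C_{T_n}} \stackrel{d}{\to} \lambda \cdot \Exc$ in $\CCC[0,1]$ with the uniform topology, and Proposition \ref{PropLeavesUniform}, which gives $\|F_{T_n} - F_U\|_\infty \to 0$ almost surely. The crucial observation is that the limit $F_U$ of the second convergence is \emph{deterministic}. This places us in the classical Slutsky-type setting: if $X_n \stackrel{d}{\to} X$ in a metric space $S_1$ and $Y_n \to c$ in probability in a metric space $S_2$ for some deterministic $c$, then the joint convergence $(X_n, Y_n) \stackrel{d}{\to} (X, c)$ holds in $S_1 \times S_2$. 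Here $S_1 = \CCC[0,1]$ and $S_2$ is the space of distribution functions on $[0,1]$ with the uniform topology.

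To apply this principle cleanly, I would proceed as follows. First, note that $\widetilde{C_{T_n}}$ and $F_{T_n}$ are both defined on the same probability space (both are measurable functions of $T_n$), so the two modes of convergence can be combined. Second, verify that the almost sure convergence $F_{T_n} \to F_U$ given by Proposition \ref{PropLeavesUniform} implies convergence in probability (this is immediate). Third, apply the following standard argument: for any bounded continuous function $g : \CCC[0,1] \times S_2 \to \R$, write
\[
\esper\big[g(\widetilde{C_{T_n}}, F_{T_n})\big] - \esper\big[g(\lambda \cdot \Exc, F_U)\big] = A_n + B_n,
\]
where $A_n = \esper[g(\widetilde{C_{T_n}}, F_{T_n})] - \esper[g(\widetilde{C_{T_n}}, F_U)]$ and $B_n = \esper[g(\widetilde{C_{T_n}}, F_U)] - \esper[g(\lambda \cdot \Exc, F_U)]$. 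The term $B_n$ tends to $0$ because $x \mapsto g(x, F_U)$ is a bounded continuous function on $\CCC[0,1]$ and Proposition \ref{prop:ConvergenceContour} applies. The term $A_n$ tends to $0$ because $g$ is bounded and $F_{T_n} \to F_U$ in probability, using uniform continuity of $g$ on compact neighborhoods of $F_U$ together with tightness of $(\widetilde{C_{T_n}})$ (which follows from its convergence in distribution).

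There is no genuine obstacle in this proof; everything reduces to a standard probabilistic fact once the two input convergences are in hand. The only mild subtlety is to check that the metric structure on the product space $\CCC[0,1] \times \{\text{distribution functions with } \|\cdot\|_\infty\}$ is compatible with this Slutsky-type argument, which it is since both factors are Polish (or can be embedded in Polish spaces) and the product topology agrees with the topology induced by the sum (or max) of the two uniform norms.
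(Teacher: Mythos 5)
Your proof is correct in substance and follows the same strategy as the paper's: both reduce the statement to a Slutsky-type principle, exploiting that the limit $F_U$ of the second coordinate is deterministic. The difference lies in how that principle is justified, and this is exactly where your one inaccuracy sits. The paper invokes Billingsley's joint-convergence theorem (\cite[Theorem 3.9]{Billingsley}), whose hypotheses include separability of \emph{both} state spaces; since the space of right-continuous functions with left limits is \emph{not} separable for the uniform topology, the paper has to restrict the $F_{T_n}$ to a smaller separable space (c\`adl\`ag functions with finitely many jumps, all located at rational points). You instead prove the Slutsky lemma by hand via the decomposition into $A_n$ and $B_n$, which is perfectly legitimate and in fact bypasses the separability hypothesis: $B_n\to 0$ by \cref{prop:ConvergenceContour}; $A_n\to 0$ by tightness of $(\widetilde{C_{T_n}})$ in the Polish space $\CCC[0,1]$, uniform continuity of $g$ on a set of the form $K\times\{G:\|G-F_U\|_\infty\le\delta\}$ with $K$ compact, and the convergence $\|F_{T_n}-F_U\|_\infty\to 0$ in probability from \cref{PropLeavesUniform}; and measurability of $g(\widetilde{C_{T_n}},F_{T_n})$ --- the usual worry in non-separable spaces --- is automatic because $(\widetilde{C_{T_n}},F_{T_n})$ takes only finitely many values for each $n$. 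What you should correct is the closing remark: the space of distribution functions equipped with the supremum norm is \emph{not} Polish and cannot be embedded in a Polish space with the same topology, since the functions $x\mapsto \mathbf{1}_{[a \le x]}$, $a\in(0,1)$, form an uncountable family at pairwise uniform distance $1$. So one cannot appeal to Polishness of the second factor to legitimize the product-space argument; one must either restrict to a separable subspace as the paper does, or rely on the direct $A_n+B_n$ argument you sketched, which does not need it.
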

\begin{proof}
From Propositions \ref{prop:ConvergenceContour} (p.\pageref{prop:ConvergenceContour}) and \ref{PropLeavesUniform} (p.\pageref{PropLeavesUniform}),
we know that $\widetilde{C_{T_n}} \stackrel{d}{\to} \lambda\cdot\Exc$
and $F_{T_n} \stackrel{d}{\to} F_U$.
It remains to prove the joint convergence.
Note that the limit $F_U$ of $F_{T_n}$ is deterministic.

Thus we want to use a theorem of Billingsley \cite[Theorem 3.9]{Billingsley} that asserts that
if $X_n' \stackrel{d}{\to} X'$ and $X_n'' \stackrel{d}{\to} a''$
(where $X_n'$ and $X'$ are random variables with values in a metric space $S'$,
$X_n''$ are random variables with values in a metric space $S''$ and $a''$
is a deterministic element of $S''$),
then $(X_n',X_n'') \stackrel{d}{\to} (x',a'')$, 
{\em provided that $T = S' \times S''$ is separable}.

The hypothesis that $T$ is separable is in fact only needed
to ensure that the Borel $\sigma$-algebra on $T$
is the product of the Borel $\sigma$-algebras on $S'$ and $S''$.
For this to be the case, it is sufficient that
{\em either $S'$ or $S''$} is separable; see \cite[Lemma 6.4.2]{Bogachev}.

In our case,
the functions $\widetilde{C_{T_n}}$ and the limit $\lambda\cdot\Exc$ are random elements
in the space $S'=\CCC[0,1]$ of continuous functions on $[0,1]$,
which is known to be separable (see \cite[Example 1.3]{Billingsley}).
We can therefore use Billingsley's theorem and the proposition is proved.
\end{proof}
%

\begin{corollary}\label{cor:PrTree_cv_in_distribution}
    With the above notation, for any fixed $\patterntree$, we have the convergence in distribution 
    \[\ProbTree(\patterntree;\widetilde{C_{T_n}},F_{T_n})  \stackrel{d}{\to} \ProbTree(\patterntree;\Exc,F_U).\]
    In particular, since these random variables are bounded, we have
    \[ \esper^{\Si_n} \big[\ProbTree(\patterntree;\widetilde{C_{T_n}},F_{T_n})\big]
    \to 
    \esper^\Exc \big[\ProbTree(\patterntree;\Exc,F_U) \big].\]
\end{corollary}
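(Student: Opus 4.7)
The plan is to read this off as a direct combination of Proposition~\ref{PropCvCF}, Lemma~\ref{lem:continuityProbTree}, and Lemma~\ref{Lem:Exc_DMP} via the continuous mapping theorem, with two small clean-up observations at the end.

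First, I would invoke Proposition~\ref{PropCvCF}, which gives the joint convergence in distribution
\[
(\widetilde{C_{T_n}}, F_{T_n}) \stackrel{d}{\to} (\lambda \cdot \Exc, F_U)
\]
in the product of $\CCC[0,1]$ with the appropriate space of distribution functions. The candidate continuous functional is then $(f,F) \mapsto \ProbTree(\patterntree; f,F)$ for the fixed tree $\patterntree$. By Lemma~\ref{lem:continuityProbTree}, this map is continuous at every pair $(f, F_U)$ such that $f$ has the distinct minima property. By Lemma~\ref{Lem:Exc_DMP}, the Brownian excursion $\Exc$ almost surely has the distinct minima property; and since the property depends only on the relative order of the values of $f$ at finitely many points, multiplying $f$ by the positive constant $\lambda$ preserves it, so $\lambda \cdot \Exc$ almost surely has the distinct minima property as well. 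Hence the set of continuity points of $\ProbTree(\patterntree;\cdot,\cdot)$ carries full mass under the law of $(\lambda \cdot \Exc, F_U)$, and the continuous mapping theorem yields
\[
\ProbTree(\patterntree;\widetilde{C_{T_n}}, F_{T_n}) \stackrel{d}{\to} \ProbTree(\patterntree; \lambda \cdot \Exc, F_U).
\]

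Next I would remove the factor $\lambda$ from the limit. Inspecting Definition~\ref{dfn:ProbTree} and Definition~\ref{dfn:Tree}, the extracted tree $\Tree(f,\XX)$ depends on $f$ only through the relative order of the values $m(f;X_i,X_{i+1})$; multiplying $f$ by a positive constant preserves all these inequalities, so $\Tree(\lambda \cdot \Exc, \XX) = \Tree(\Exc, \XX)$ almost surely, and therefore
\[
\ProbTree(\patterntree; \lambda \cdot \Exc, F_U) = \ProbTree(\patterntree; \Exc, F_U).
\]
This gives the announced convergence in distribution.

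Finally, for the convergence of expectations, observe that $\ProbTree(\patterntree; f,F)$ is a probability and is therefore bounded by $1$. The random variables $\ProbTree(\patterntree;\widetilde{C_{T_n}}, F_{T_n})$ are thus uniformly bounded, and in particular uniformly integrable, so their convergence in distribution implies convergence of expectations (for instance by \cite[Theorem 25.12]{BillingsleyProbMeasure}), which is exactly the second claim of the corollary. There is no real obstacle here; the only slightly delicate point to check is that the set of continuity points of $\ProbTree(\patterntree;\cdot,\cdot)$ has full measure under the limit law, which is handled by noting that dilating the excursion by $\lambda$ preserves the distinct minima property.
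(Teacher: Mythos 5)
Your proposal is correct and follows essentially the same route as the paper: joint convergence from Proposition~\ref{PropCvCF}, continuity of $\ProbTree(\patterntree;\cdot,\cdot)$ at points where the excursion has the distinct minima property (Lemma~\ref{lem:continuityProbTree} combined with Lemma~\ref{Lem:Exc_DMP}), the continuous mapping theorem, scale-invariance of $\ProbTree$ to drop the factor $\lambda$, and boundedness to pass from convergence in distribution to convergence of expectations. Your explicit remark that dilation by $\lambda$ preserves the distinct minima property is a small point the paper states without elaboration, but there is no substantive difference.
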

\begin{proof}
  This corollary is a simple application of the mapping theorem
  \cite[Theorem 2.7]{Billingsley} because of the following facts:
\begin{itemize}
\item $(\widetilde{C_{T_n}},F_{T_n}) \stackrel{d}{\to} (\lambda\cdot\Exc,F_U)$, which is proved in \cref{PropCvCF};
\item $\ProbTree(\patterntree;\ldots)$ is continuous at $(\Exc,F_U)$ for almost  every $\Exc$, which follows from Lemmas~\ref{Lem:Exc_DMP} and~\ref{lem:continuityProbTree}.\qedhere
\end{itemize}


\end{proof}

\section{Signs are asymptotically balanced and independent}
\label{sec:signs}
We now return to the study of signed trees. The purpose of this section is to justify that 
asymptotically (when $n$ goes large) the $k-1$ signs in the tree $\Tree_{\pm}(\widetilde{C_{T_n}},S_n,\ll)$ are independent and balanced, 
when $\ll$ is a $k$-element subset of the set of leaves of $T_n$ chosen uniformly at random:

\begin{proposition} \label{prop:balanced_signs_discrete}
    Let $\Si_n$ be a uniform random separable permutation of size $n$ and $(\patterntree,\eps)$
    be a signed binary tree of size $k$.
    As usual, let $(\widetilde{C_{T_n}},S_n)$ be the signed contour of the decomposition tree of $\Si_n$.
    We also consider a uniform random $k$-element subset $\ll$ of leaves of $T_n$. Then
    \begin{equation}                                                                                      
        \proba^{\Si_n,\ll}\big( \Tree_{\pm}(\widetilde{C_{T_n}},S_n,\ll) = (\patterntree,\eps) \big) = 
        \frac{1}{2^{k-1}} \proba^{\Si_n,\ll}\big( \Tree(\widetilde{C_{T_n}},\ll)=\patterntree \big)+o(1).
        \label{EqKeyFini}                                                                                     
    \end{equation}
\end{proposition}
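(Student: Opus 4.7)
The plan is to condition on the unsigned extracted tree being $\patterntree$ and show that the conditional distribution of the $k-1$ signs converges to the uniform distribution on $\{+,-\}^{k-1}$. The entry point is \cref{cor:same_distibution_separables_trees}: we may realize $(T_n,\Eps_n)$ by first sampling $T_n$ uniformly, then choosing an independent balanced sign $B \in \{+,-\}$ at the root and propagating via alternation, so that $\Eps_n(v) = B \cdot (-1)^{\mathrm{depth}_{T_n}(v)}$ at every internal vertex $v$. Writing $v_1,\dots,v_{k-1}$ for the $k-1$ internal vertices of the extracted tree, viewed simultaneously as distinct internal vertices of $T_n$ and ordered so that $v_1$ is the root, the extracted sign vector is then $(B(-1)^{d_i})_{i=1}^{k-1}$ with $d_i=\mathrm{depth}_{T_n}(v_i)$.

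Since $B$ is balanced and independent of $T_n$ and of $\ll$, the root sign $B(-1)^{d_1}$ is itself balanced and independent of everything else. It therefore suffices to prove that the $k-2$ relative signs $\Eps_n(v_i)/\Eps_n(v_{p(i)}) = (-1)^{\delta_i}$ become asymptotically independent uniform $\pm 1$ variables, where $v_{p(i)}$ denotes the parent of $v_i$ in $\patterntree$ and $\delta_i = d_i - d_{p(i)} \ge 1$ is the distance between them in $T_n$. Equivalently, conditionally on $\Tree(\widetilde{C_{T_n}},\ll) = \patterntree$, the parities $(\delta_i \bmod 2)_{i=2,\dots,k-1}$ should become asymptotically independent and uniform in $\{0,1\}^{k-2}$.

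To establish this, I would exploit the Galton-Watson description of $T_n$ (\cref{prop:Schroeder_are_Galton-Watson}) together with the decomposition of $T_n$ into the skeleton spanning $\ll$ and the side-subtrees hanging off it. Conditionally on $\patterntree$, the $k-2$ path lengths $\delta_i$ correspond to disjoint segments of the Lukasiewicz path of $T_n$, and their joint distribution can be described in terms of excursions of the underlying conditioned random walk. Concentration estimates in the style of \cref{sec:LeavesAreUniform} show that each $\delta_i$ diverges in probability as $n \to \infty$. The main obstacle, which is the real content of the proposition, is to upgrade this to asymptotic uniformity mod $2$ for each $\delta_i$, jointly across the $k-2$ paths. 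The cleanest route appears to be a near-measure-preserving involution on conditioned Schröder trees that flips the parity of a chosen interstitial length (for instance by inserting or contracting a vertex on one segment, with a matching modification elsewhere to preserve the total number of leaves), combined with a subtree exchangeability argument upgrading single-coordinate uniformity to joint independence. The conditioning on the total number of leaves is what creates long-range correlations and makes this step delicate. Once it is carried out, combining the independent balanced root sign with the $k-2$ asymptotically independent uniform relative signs yields the conditional probability $1/2^{k-1}$, and multiplying by $\proba(\Tree(\widetilde{C_{T_n}},\ll)=\patterntree)$ gives \eqref{EqKeyFini}.
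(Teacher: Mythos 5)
Your overall strategy---conditioning on the extracted shape $\patterntree$ and reducing the statement to the asymptotic uniformity of certain parities---matches the paper's, and your observation that the balanced root sign $B$ of \cref{cor:same_distibution_separables_trees} handles one of the $k-1$ signs for free is correct (the paper instead conditions on the root sign and proves uniformity of all $k-1$ height parities; both reductions are fine). However, the heart of the proposition is precisely the step you leave open: proving that the parity of the distance in $T_n$ between two consecutive branching points of the skeleton is asymptotically balanced, jointly over the remaining branching points. Your candidate mechanism---a ``near-measure-preserving involution'' that inserts or contracts a vertex on one segment with a compensating modification elsewhere---does not work as stated: a Schr\"oder tree has no unary vertices, so one cannot insert a vertex on a path without attaching at least one new subtree, and any such local surgery changes the leaf count and distorts the uniform measure in a way you would then have to control. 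This is a genuine gap, not a routine verification.

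The paper's mechanism (\cref{lem:balanced_heights}) is an exact exchangeability bijection that you almost name but do not deploy for this purpose: along the chain of $T_n$ joining the images $G(v_p)$ and $G(v_c)$ of the parent and left child of an internal vertex $v$ of $\patterntree$, each chain vertex carries a pair of side-subtrees $(T^l_x,T^r_x)$, and permuting these pairs is a bijection of the set of marked trees with all other branching heights prescribed. Consequently the position of the branching point $G(v)$ on that chain is \emph{exactly} uniform given the chain length $d$, so its parity is balanced up to $O(1/d)$; joint independence follows by iterating this argument conditionally on the heights already determined. One then needs $d\to\infty$ in probability, which the paper obtains not from random-walk concentration as you suggest but from the convergence of the contour to the Brownian excursion (\cref{prop:ConvergenceContour}) together with the fact that the relevant differences of minima have a density (\cref{lem:tout_a_une_densite}, used in \cref{Lem:HauteurFeuille}). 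If you replace your involution by this side-subtree permutation argument, your proof closes.
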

The proof is given at the end of the present section. 
The core of this proof is \cref{lem:balanced_heights} below,
regarding heights of branching points of
marked leaves in uniform random Schröder trees.
We first introduce notation.

Recall that $T_n$ is a uniform Schr\"oder tree of size $n$. We take a set of $k$ leaves of $T_n$ uniformly at random.  
As in Definition~\ref{dfn:leaves_coordinates} (p.\pageref{dfn:leaves_coordinates}), denote their $x$-coordinates by $\ell_1 <\cdots< \ell_k$, and $\ll=(\ell_1,\dots,\ell_k)$ and let $H_i$ be the height of $\ell_i$ in $T_n$.
For $i=1,\dots,k-1$, denote by $M_i$ the height of the common ancestor of the leaves $\ell_i$ and $\ell_{i+1}$
(defining the height as the distance from the root).
\begin{lemma}\label{lem:balanced_heights}
When $n\to+\infty$, the parities of $M_1,\dots,M_{k-1}$ are asymptotically balanced, independent
from each other and from the subtree $\patterntree$ induced by the $k$ leaves.

More formally, we fix a binary tree $\patterntree$ with $k$ leaves and we condition on the event that the subtree of $T_n$ induced by $\ll$ is $\patterntree$. 
For all $\delta_1,\dots,\delta_{k-1}\in\{0,1\}$,
$$
\proba \left(M_1 \equiv\delta_1,\dots, M_{k-1} \equiv\delta_{k-1}\ |\ \Tree\big(\widetilde{C_{T_n}},\ll\big)=\patterntree \right)\stackrel{n\to +\infty}{\longrightarrow} \frac{1}{2^{k-1}},
$$
where $a\equiv b$ means $a=b \mod 2$.
\end{lemma}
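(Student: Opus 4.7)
My plan is to decompose the conditioned tree $T_n$ along the ``skeleton'' traced out by $\patterntree$, reduce the joint-parity statement to one about the parities of certain chain-lengths in that skeleton, and conclude via a local limit theorem for these chain-lengths.

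Step one: skeleton decomposition. Conditioned on $\Tree(\widetilde{C_{T_n}},\ll)=\patterntree$, every vertex of $T_n$ falls into one of three classes: it is (i) the image of some vertex of $\patterntree$, (ii) a ``pass-through'' vertex lying on the $T_n$-path between images of adjacent vertices of $\patterntree$, or (iii) a vertex of a ``side subtree'' hanging off the skeleton and containing no marked leaf. Writing $w_1,\ldots,w_{k-1}$ for the internal vertices of $\patterntree$ with $w_1$ the root, let $L_0\ge 0$ be the depth in $T_n$ of the image of $w_1$, and for $i\ge 2$ let $L_i\ge 1$ be the number of edges of $T_n$ on the chain from the image of $\mathrm{parent}_\patterntree(w_i)$ to that of $w_i$.

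Step two: reduction to chain parities. By construction, $M_i = L_0 + \sum_{j} L_j$, the sum running over indices $j\ge 2$ such that $w_j$ lies on the path from $w_1$ to $w_i$ in $\patterntree$. Working modulo $2$, this defines an $\mathbb{F}_2$-linear bijection $(L_0,L_2,\ldots,L_{k-1})\bmod 2 \mapsto (M_1,\ldots,M_{k-1})\bmod 2$ (invert by taking differences along edges of $\patterntree$). It therefore suffices to prove that, conditioned on $\Tree(\widetilde{C_{T_n}},\ll)=\patterntree$, the joint parity vector of $(L_0,L_2,\ldots,L_{k-1})$ converges to the uniform distribution on $\{0,1\}^{k-1}$.

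Step three: local limit theorem for chain lengths. Using the representation of $T_n$ as a Galton-Watson tree with offspring distribution $\nu$ conditioned on $n$ leaves (\cref{prop:Schroeder_are_Galton-Watson}) together with a spinal decomposition of $T_n$ along its skeleton, the joint conditional law of the chain lengths can be described by a collection of random-walk excursions whose lengths scale like $\sqrt{n}$. The target is a joint local limit theorem of the form
\[
  \proba\!\left(L_0=\ell_0,\,L_2=\ell_2,\,\ldots\;\big|\;\Tree(\widetilde{C_{T_n}},\ll)=\patterntree\right)\ \sim\ \frac{g\bigl(\ell_0/\sqrt{n},\ldots,\ell_{k-1}/\sqrt{n}\bigr)}{n^{(k-1)/2}},
\]
uniformly on compact sets in the rescaled variables, for a continuous density $g$ on $\R_{\ge 0}^{k-1}$. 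Given such an estimate, summing over tuples $(\ell_i)$ with fixed parities $(\delta_i)$ yields the limit $2^{-(k-1)}\int g = 2^{-(k-1)}$, concluding the proof.

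The main obstacle is step three: univariate local limit theorems for depths in conditioned Galton-Watson trees are classical, but we need a joint statement for the $k-1$ correlated depths associated with the branching points of $\patterntree$. The natural route is via the Kemperman / cycle-lemma encoding of the DFQP (already invoked in \cref{prop:EstimationMarcheR}), combined with a careful spinal decomposition expressing the joint law of chain lengths as first-passage times of $k-1$ jointly conditioned random walks. Adapting this machinery to the leaf-conditioned (rather than vertex-conditioned) Galton-Watson setting is where the technical effort is concentrated.
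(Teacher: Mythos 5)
Your steps one and two are fine: the passage from the heights $M_1,\dots,M_{k-1}$ of the branch points to the chain lengths $(L_0,L_2,\dots,L_{k-1})$ is a unipotent, hence invertible, $\mathbb{F}_2$-linear change of variables, so it would indeed suffice to show that the chain-length parities become jointly uniform on $\{0,1\}^{k-1}$. The problem is that step three --- which is where the entire content of the lemma lives --- is announced rather than proved. A \emph{joint} local limit theorem for the depths of the $k-1$ branch points of a leaf-conditioned Galton--Watson tree is substantially harder than anything invoked elsewhere in the paper: these depths are heights, not first-passage times of the Lukasiewicz walk (the DFQP and the height/contour processes are different encodings, and the cycle-lemma machinery of \cref{prop:EstimationMarcheR} controls the former, not the latter). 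Moreover, even granting an estimate of the form $\proba(L_0=\ell_0,\dots)\sim g(\ell/\sqrt{n})/n^{(k-1)/2}$ uniformly on compacts, you would still have to verify that the relevant lattice distribution has span $1$ before concluding that each parity class carries half the mass; a span-$2$ periodicity would make the limit $0$ or $2^{-(k-2)}$ instead of $2^{-(k-1)}$. None of this is addressed, so the proposal has a genuine gap at its central step.

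The paper gets around all of this with a much softer argument that you may want to compare with your plan. Fixing an internal vertex $v$ of $\patterntree$ and conditioning on the heights of all skeleton vertices outside the right subtree of $v$, one observes that permuting the pairs of subtrees hanging off the left and right of the chain joining $G(v_p)$ to $G(v_c)$ is a bijection of the conditioned set of marked trees onto itself; hence the position of $G(v)$ on that chain is \emph{exactly} uniform over the $d_t(v)-1$ available slots, and its parity is within $1/(d_t(v)-1)$ of balanced, deterministically. One then only needs $d_{T_n}(v)\to\infty$ in probability, which follows from the convergence of the normalized contour to the Brownian excursion (\cref{Lem:HauteurFeuille}, via \cref{prop:ConvergenceContour}); a telescoping conditioning over the internal vertices of $\patterntree$ gives the joint statement. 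This exchangeability route requires no local limit theorem and no aperiodicity discussion, which is precisely why the paper can keep this step elementary.
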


\begin{proof}
The proof essentially relies on a subtree exchangeability argument.
In order to give intuition we start by the case $k=2$.

    In this case, there is a unique binary tree with two leaves,
    so the conditioning is not relevant.
    We want to prove that the common ancestor of two leaves
   $\ell_1$ and $\ell_2$, chosen uniformly at random in $T_n$ among all $2$-element subsets of leaves,
    has an even height with probability asymptotically $1/2$.

Fix two integers $n,h\geq 1$. We consider the set $\mathcal{T}^h_{n}$ of Schröder trees $t$ with $n$ leaves,
with two marked leaves $\ell_1$ and $\ell_2$ ($\ell_1<\ell_2$), such that $\ell_1$ has height $h$. Such a tree $t$ can be canonically described as follows:
\begin{itemize}
\item Take a chain $v_0,\cdots,v_h=\ell_1$ going from the root of $t$ to the first marked leaf.
\item Then for each $x<h$, glue a tree $T^l_x$  and a tree $T^r_x$ respectively on the left and on the right of $v_x$.
\item One of the trees $T^r_{x}$, say $T^r_{x_0}$, contains the second marked leaf $\ell_2$.
Then the vertex $v_{x_0}$ is the common ancestor of $\ell_1$ and $\ell_2$ and has height $M_1=x_0$. 
\end{itemize}
For every permutation $\alpha$ of $\{0,1,\dots,h-1\}$, replacing each pair $(T^l_x,T^r_x)$ by $(T^l_{\alpha(x)},T^r_{\alpha(x)})$ in the above decomposition provides a bijection from $\mathcal{T}^h_{n}$ to itself. This bijection maps a tree with $M_1=x_0$ onto a tree with $M_1=\alpha(x_0)$.

Applying this construction to our random tree $T_n$ with its two marked leaves, we get that for each $h,x$ with $0\leq x\leq h-1$
\[
\mathbb{P}(M_1=x\ |\ H_1=h)=\frac{1}{h}.
\] 
Therefore, we obtain 
\begin{align*}
\mathbb{P}(M_1\text{ is odd}) & =\sum_{h\geq 1}\mathbb{P}(M_1\text{ is odd}\ |\ H_1=h)\mathbb{P}(H_1=h) \\
& =\sum_{h\geq 1}\frac{\lfloor h/2\rfloor}{h}\mathbb{P}(H_1=h) = \frac{1}{2} - \mathcal{O}\left(\esper\left(\frac{1}{H_1}\right)\right),
\end{align*}
which goes to $1/2$, as soon as $H_1$ goes to infinity in probability. This will be proved in \cref{Lem:HauteurFeuille} below.
\bigskip

We now prove \cref{lem:balanced_heights} in the general case $k\geq 2$. 
Since the proof uses a lot of notation, the reader is invited
to look regularly at \cref{Fig:SchemaPreuveEchangeabilite},
which illustrates the main definitions.
\begin{figure}[ht]
\[ \includegraphics{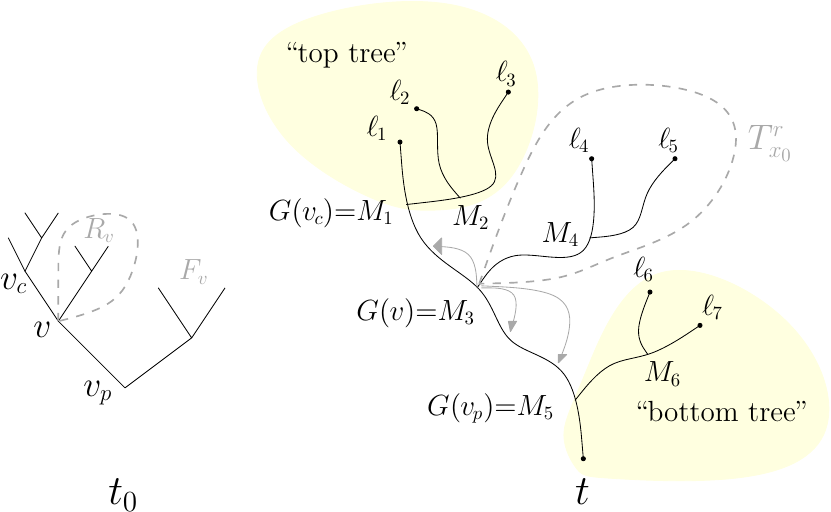} \]
\caption{The subtree exchangeability argument.}\label{Fig:SchemaPreuveEchangeabilite}
\end{figure}

We fix $k\geq 2$ and a binary tree $\patterntree$ with $k$ leaves. We denote by $\mathcal{T}_{n}(\patterntree)$ the set of Schröder trees of size $n$ with $k$ marked leaves $\ell_1<\dots<\ell_k$ which induce the subtree $\patterntree$. 
For such a tree $t$, each vertex $v$ in $\patterntree$ corresponds to a vertex $G(v)$ in $t$: 
$G(v)$ is either one of the marked leaves of $t$ (if $v$ is a leaf of $\patterntree$) or a common ancestor of two marked leaves of $t$ (if $v$ is an internal vertex of $\patterntree$).
We denote by $h_t(v)$ the height of $G(v)$ in $t$. 

We fix an internal vertex $v$ of $\patterntree$. 
Define $v_c$ as the left child of $v$ in $\patterntree$ and $v_p$ as the parent of $v$.
For $t$ in $\mathcal{T}_{n}(\patterntree)$, 
the distance between $G(v_c)$ and $G(v_p)$ is $d_t(v):=h_t(v_c)-h_t(v_p)$.
(If $v$ is the root, then $v_p$ is not defined
and we shall think of $G(v_p)$ as a virtual new root of $t$
that is an ancestor of every vertex in $t$ and has height $-1$;
in particular, we take $h_t(v_p)=-1$.)

The vertices of $\patterntree$ (both leaves and internal vertices, $v$ excluded) are partitioned in two sets:
the right subtree $R_v$ of $v$ and its complement set $F_v$.
For a sequence of heights $\mathbf{h}=(h_u)_{u\in F_v}$, we define 
$$
\mathcal{T}^{v,\mathbf{h}}_{n}(\patterntree)=\left\{
t\in \mathcal{T}_{n}(\patterntree),\, \text{for all }u\in F_v,\, h_t(u)=h_u
\right\}.
$$
Note that, for every $t$ in this set, $d_t(v)=h_{v_c}-h_{v_p}$;
we denote this common value by $d_{\mathbf{h},v}$. 
(If $v$ is the root of $\patterntree$, we set $h_{v_p}=-1$.)

Similarly to the decomposition in the case $k=2$, each tree in $\mathcal{T}^{v,\mathbf{h}}_{n}(\patterntree)$ can be decomposed as follows.
\begin{itemize}
\item A chain $w_0,\dots,w_{d_{\mathbf{h},v}}$ going from $G(v_p)$ to $G(v_c)$. 
\item For each $0<x< d_{\mathbf{h},v}$, a tree $T^l_x$  and a tree $T^r_x$ respectively on the left and on the right of $w_x$.
\item One of the trees $T^r_{x}$, say $T^r_{x_0}$, contains $G(R_v)$. Then the vertex $w_{x_0}$ is $G(v)$
 and has height $h_{v_p}+x_0$.
\item A "top tree" which consists of the offspring of $G(v_c)$.
\item All others vertices of $t$ form a connected subtree called the "bottom tree". (If $v$ is the root of $\patterntree$, there is no bottom tree.)
\end{itemize}
For every permutation $\alpha$ of $\{1,\dots,d_{\mathbf{h},v}-1\}$, replacing each pair $(T^l_x,T^r_x)$ by $(T^l_{\alpha(x)},T^r_{\alpha(x)})$ in the above decomposition provides a bijection from $\mathcal{T}^{v,\mathbf{h}}_{n}(\patterntree)$ to \hbox{itself}. Clearly, this bijection maps a tree with $h_t(v)=h_{v_p}+x_0$ onto a tree with \hbox{$h_t(v)=h_{v_p}+\alpha(x_0)$}.

We now work with a uniform Schröder tree $T_n$ of size $n$ with $k$ marked leaves, and we condition on the fact that these leaves induce the subtree $\patterntree$.
We use the notation $\proba_{\patterntree}(T_n\in A):=\proba(T_n\in A\, | \, T_n\in\mathcal{T}_n(t_0))$.
The above discussion implies that for every $h_{v_p}< x< h_{v_c}$ we have, 
\[
\proba_{\patterntree}\left(h_{T_n}(v)=x |\ T_n\in\mathcal{T}^{v,\mathbf{h}}_{n}(\patterntree)\right) =\frac{1}{h_{v_c}-h_{v_p}-1}
=\frac{1}{d_{\mathbf{h},v}-1}=\frac{1}{d_{T_n}(v)-1}.
\]
Therefore
$$
\Big|\proba_{\patterntree}\left(h_{T_n}(v)\text{ is even} |\ T_n\in\mathcal{T}^{v,\mathbf{h}}_{n}(\patterntree)\right) -\frac12\Big| \leq \frac{1}{d_{T_n}(v)-1}.
$$
We introduce a subset $E^v_n$ of $\mathcal{T}_{n}(\patterntree)$ of {\em well-behaved} trees
\[E^v_n := \big\{ t \in \mathcal{T}_{n}(\patterntree),
\, d_t(v) \ge n^{1/4} \big\}. \]
We will prove in \cref{Lem:HauteurFeuille} that for all $v$, $\proba_{\patterntree}(E^v_n)\to 1$.

Note that if $\mathbf{h}$ is such that $\mathcal{T}^{v,\mathbf{h}}_{n}(\patterntree)\subset E^v_n$, then
\[
\Big|\proba_{\patterntree}\left(h_{T_n}(v)\text{ is even} |\ T_n\in\mathcal{T}^{v,\mathbf{h}}_{n}(\patterntree)\right) -\frac12\Big| \leq \frac{1}{n^{1/4}-1}.
\]
As a consequence, we have 
\begin{equation}\label{Eq:PresqueUnDemi}
\proba_{\patterntree}\left(h_{T_n}(v)\text{ is even} |\ T_n\in\mathcal{T}^{v,\mathbf{h}}_{n}(\patterntree)\right)  = \frac12 + o(1),
\end{equation}
where the error term $o(1)$ is bounded independently of $\mathbf{h}$, provided that $\mathcal{T}^{v,\mathbf{h}}_{n}(\patterntree)\subset E^v_n$.

Internal vertices of $\patterntree$ can be canonically indexed by $\{1,\dots,k-1\}$:
$v_i$ is the common ancestor of $\ell_i$, $\ell_{i+1}$.
By definition, $M_i=h_t(v_i)$ and we denote \hbox{$E^i_n:=E^{v_i}_n$}.

Fix $i \ge 1$. For $j<i$ the vertex $v_j$ lies in $F_{v_i}$,
so that the fact that $T_n$ belongs to $\mathcal{T}^{v_i,\mathbf{h}}_{n}(\patterntree)$ 
determines the random variables $(M_j)_{j<i}$.
Clearly, this also determines whether $T_n$ belongs to $E^i_n$ or not.
Consequently for all $i\leq k-1$, the event
\[ \big\{M_1 \equiv \delta_1, \dots,M_{i-1} \equiv \delta_{i-1},T_n\in E^{i}_n\big\} \]
can be written as a disjoint union (on $\mathbf{h}$) of events of the form $T_n \in \mathcal{T}^{v_i,\mathbf{h}}_{n}(\patterntree)$.
\cref{Eq:PresqueUnDemi} yields
$$
\proba_{\patterntree}\left(M_{i}\equiv\delta_{i} |\ M_1 \equiv \delta_1, \dots,M_{i-1} \equiv \delta_{i-1},T_n\in E^{i}_n\right) =\frac12 +o(1).
$$
Furthermore, since $\proba_{\patterntree}(T_n\in E^{k-1}_n)\to 1$ from \cref{Lem:HauteurFeuille}, we have $\proba_{\patterntree}(A)=\proba_{\patterntree}(A,T_n\in E^{k-1}_n)+o(1)$ for any event $A$. Using this twice, we obtain
\begin{align*}
\proba_{\patterntree}\left( M_1 \equiv \delta_1, \dots,M_{k-1} \equiv \delta_{k-1}\right)\hspace{-3cm}&\\
&=\proba_{\patterntree}\left( M_1 \equiv \delta_1, \dots,M_{k-1} \equiv \delta_{k-1},T_n\in E_n^{k-1}\right)+o(1)\\
&=\proba_{\patterntree}\left(M_{k-1} \equiv \delta_{k-1}| M_1 \equiv \delta_1, \dots,M_{k-2} \equiv \delta_{k-2},T_n\in E_n^{k-1}\right)\\
&\qquad \times \big(\proba_{\patterntree}\left(M_1 \equiv \delta_1, \dots,M_{k-2} \equiv \delta_{k-2}\right)+o(1)\big)+o(1)\\
&= \tfrac{1}{2}\proba_{\patterntree}\left(M_1 \equiv \delta_1, \dots,M_{k-2} \equiv \delta_{k-2}\right)+o(1).
\end{align*}
Repeating this argument $k-2$ times concludes the proof.
\end{proof}

It remains to prove that with high probability a uniform Schröder tree $T_n$ is \emph{well-behaved}:
the heights $H_i$ of marked leaves and $M_j$ of their common ancestors and the distances $|M_i-M_j|$ and $|M_i-H_j|$ are all larger than $n^{1/4}$ (in fact larger than $\eps_{n} n^{\frac12}$ for all $\eps_n\to 0$).
\begin{lemma}\label{Lem:HauteurFeuille}
We re-use the notation of the proof of \cref{lem:balanced_heights}. 
In particular, $E^v_n$ denotes the set of trees $t$ in $\mathcal{T}_{n}(\patterntree)$ such that $d_t(v) \ge n^{1/4}$. 
For all internal vertices $v$ of $\patterntree$, 
$$
\proba(T_n\notin E^v_n\, | \, T_n\in\mathcal{T}_n(t_0))\to 0.
$$

In the special case where 
$\patterntree = \begin{array}{c} \tikz{
\begin{scope}[scale=.1,rotate=180,level distance=4cm,sibling distance=3cm]
\node[circle, draw, inner sep=1pt] {} 
 child {node[inner sep=0pt] {~}}
 child {node[inner sep=0pt] {~}};
\end{scope}}\end{array}$ (i.e. $k=2$), 
every pair $(\ell_1,\ell_2)$ of marked leaves of a Schröder tree with $n \geq 2$ leaves induces $\patterntree$, 
and the height of $\ell_1$ 
goes to infinity in probability when $n \to +\infty$.
\end{lemma}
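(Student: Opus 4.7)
The plan is to reduce this estimate to an almost-sure property of the Brownian excursion, exploiting the convergence $\widetilde{C_{T_n}}\to \lambda\cdot\Exc$ together with the asymptotic uniformity of the leaf positions from Proposition~\ref{PropCvCF}. First I would translate heights into functionals of the contour. For an internal vertex $w$ of $\patterntree$ let $a_w<b_w$ denote the indices of the leftmost and rightmost leaves of $\patterntree$ descended from $w$. By definition of the contour and of the induced subtree, on the event $B_n:=\{T_n\in\mathcal{T}_n(\patterntree)\}$ one has
$$\frac{h_{T_n}(w)}{\sqrt{n}} \;=\; \min_{u\in[\ell_{a_w},\,\ell_{b_w}]} \widetilde{C_{T_n}}(u),$$
and $h_{T_n}(w)/\sqrt{n}=\widetilde{C_{T_n}}(\ell_i)$ if $w$ is the $i$-th marked leaf. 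This lets me define a functional $D_n$ of $(\widetilde{C_{T_n}},\ell_1,\ldots,\ell_k)$ such that $D_n = d_{T_n}(v)/\sqrt{n}$ on $B_n$ (with the convention $h_{T_n}(v_p)=-1$ when $v$ is the root of $\patterntree$). Crucially, $D_n$ makes sense on the whole probability space.

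Second, I would invoke the representation $\ell_i=F_{T_n}^*(U_{(i)})$ where $U_{(1)}<\cdots<U_{(k)}$ is the order statistic of $k$ i.i.d.\ uniform random variables on $[0,1]$ independent of $T_n$. Lemma~\ref{lem:FF*Pareil} together with Proposition~\ref{PropCvCF} yields the joint convergence $(\widetilde{C_{T_n}},\ell_1,\ldots,\ell_k) \stackrel{d}{\to} (\lambda\cdot\Exc,U_{(1)},\ldots,U_{(k)})$. Since the functionals $(f,a,b)\mapsto \min_{[a,b]}f$ and $(f,x)\mapsto f(x)$ are continuous, the continuous mapping theorem gives $D_n\stackrel{d}{\to}\Delta$, where $\Delta$ is the analogous difference for $\lambda\cdot\Exc$ and the $U_{(i)}$.

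Third, on the event $B:=\{\Tree(\lambda\cdot\Exc,\XX)=\patterntree\}$ the limit $\Delta>0$ almost surely. Indeed, on $B$ the intervals satisfy $J_{v_c}\subsetneq J_{v_p}$, and the distinct internal vertices $v_c,v_p$ of the induced subtree correspond to distinct local minima of $\Exc$, which by Lemma~\ref{Lemma:MinimumSameLevel} take distinct values; since $U_{(i)}$ is a.s.\ not a one-sided minimum of $\Exc$ by Corollary~\ref{Cor:OneSided} and $\Exc>0$ on $(0,1)$, the value at a leaf also differs from the minimum of any interval containing it strictly. In all cases $\min_{J_{v_c}}\Exc > \min_{J_{v_p}}\Exc$ strictly, where the right-hand side is replaced by $-1$ in the root case. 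Applying the portmanteau theorem to the joint limit, one obtains, for every $\eps>0$ outside the countable set of atoms of $\Delta$,
$$\limsup_n \proba\bigl(D_n<\eps,\,B_n\bigr)\;\le\;\proba(\Delta<\eps,\,B) \;\xrightarrow[\eps\to 0]{}\;\proba(\Delta\le 0,\,B)\;=\;0.$$

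Finally, since $\proba(B_n)\to\proba(B)>0$ by Corollary~\ref{cor:PrTree_cv_in_distribution}, this yields
$$\proba\bigl(T_n\notin E^v_n\mid T_n\in\mathcal{T}_n(\patterntree)\bigr) \;=\; \frac{\proba(D_n<n^{-1/4},\,B_n)}{\proba(B_n)} \;\longrightarrow\; 0,$$
as required. The special case $k=2$ is immediate from Step~2 (no conditioning is needed since every pair of marked leaves induces the unique binary tree of size $2$): $H_1/\sqrt{n}=\widetilde{C_{T_n}}(\ell_1)\stackrel{d}{\to}\lambda\Exc(U)$ with $U$ uniform on $[0,1]$, and $\Exc(U)>0$ almost surely, hence $H_1\to+\infty$ in probability. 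The main obstacle will be rigorously justifying the application of portmanteau to the joint event $\{D_n<\eps\}\cap B_n$: one must show that the boundary of this event (corresponding to ties between minima of $\Exc$) is charged with probability zero by the limit law, which again reduces to the a.s.\ distinctness of local-minima values of the Brownian excursion.
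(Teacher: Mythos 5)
Your proposal is correct and follows the same overall strategy as the paper: rewrite the relevant heights as minima of the normalized contour over intervals determined by the marked leaves, transfer the problem to the Brownian excursion via the joint convergence $(\widetilde{C_{T_n}},\ell_1,\dots,\ell_k)\to(\lambda\Exc,U_{(1)},\dots,U_{(k)})$ (using $\ell_i=F_{T_n}^*(U_{(i)})$ and \cref{lem:FF*Pareil}), and conclude from the almost-sure strict separation of the limiting minima. The differences are in the bookkeeping. The paper first discards the conditioning outright: since $\proba(T_n\in\mathcal{T}_n(\patterntree))$ converges to a positive constant (\cref{cor:PrTree_cv_in_distribution} and \cref{Lemma:ArbreBinaireUniforme}), it suffices to bound the unconditioned probability; it then observes that $d_t(v)$ is always one of $H_i$, $M_j$, $|M_i-M_j|$, $|H_i-M_j|$, where the $M_j$ are minima over \emph{consecutive} pairs $[U_j,U_{j+1}]$, and each of these, rescaled by $\sqrt{n}$, converges to a random variable that is positive a.s.\ \emph{without any conditioning}, by \cref{lem:tout_a_une_densite}. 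Your version keeps the nested-interval quantity $D_n=\min_{J_{v_c}}-\min_{J_{v_p}}$, whose limit $\Delta$ is only a.s.\ positive on the event $B=\{\Tree(\lambda\Exc,\XX)=\patterntree\}$ (off $B$ it can vanish), which forces you into the joint portmanteau estimate for $\{D_n<\eps\}\cap B_n$ that you correctly flag as the delicate step. That step is salvageable -- $\One[B_n]$ converges jointly with $D_n$ because $(f,\xx)\mapsto\Tree(f,\xx)$ is a.s.\ continuous at the limit (this is the content of \cref{lem:continuityProbTree}), and the boundary is charged zero by \cref{Lemma:MinimumSameLevel} -- but the paper's decomposition avoids it entirely, which is what the consecutive-pair formulation buys. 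Conversely, your positivity argument via \cref{Lemma:MinimumSameLevel} and \cref{Cor:OneSided} is more elementary than the paper's appeal to \cref{lem:tout_a_une_densite} (which rests on Le Gall's description of the law of the extracted geometric tree), and it is essentially the argument of \cref{Lem:Exc_DMP}. Your treatment of the $k=2$ special case matches the paper's conclusion.
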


\begin{proof}
  First, \cref{cor:PrTree_cv_in_distribution} gives 
  the probability of the conditioning event in the limit: 
  \[\lim_{n \to +\infty} \proba(T_n\in\mathcal{T}_n(t_0)) = \proba(\Tree(\Exc,\UU)=\patterntree),\]
  where $\UU$ is a set of $k$ i.i.d uniform random variables in $[0,1]$. 
  From \cref{Lemma:ArbreBinaireUniforme}, we know that this quantity is strictly larger than $0$. 
Therefore it is enough to prove that \hbox{$\proba(T_n\notin E^v_n) \to 0$} without the conditioning.

By definition, for each internal vertex $v$ in $\patterntree$, $d_t(v)$ is one of the following:
\begin{itemize}
  \item the height $H_i$ of a marked leaf;
  \item the height $M_i$ of the common ancestor of two marked leaves;
  \item the difference of heights $|M_i-M_j|$ between two common ancestors;
  \item the difference of heights $|M_i-H_j|$ between a leaf and a common ancestor.
\end{itemize}
Thus the first statement of the lemma will follow if we show that
$$
\mathbb{P}
\begin{pmatrix}
\text{ There are }i\leq k, j\leq k-1\text{ such that }\\
H_i \leq n^{1/4}\text{ or }\  M_j \leq n^{1/4}\text{ or }\ \left|H_i-M_{j} \right|\leq n^{1/4}\\
 \text{ or }\left|M_i-M_j \right|\leq n^{1/4} (i \neq j) 
\end{pmatrix}
\to 0.
$$ 
Actually, we prove the stronger statement that for any two $i,j$, all sequences of random variables
$
\frac{1}{\sqrt{n}}H_i, \frac{1}{\sqrt{n}}M_j, \frac{1}{\sqrt{n}}|M_i-M_j|, \frac{1}{\sqrt{n}}|H_i-M_j|
$
converge in distribution to positive random variables. 

We will only write the details for the sequence $\left(\frac{1}{\sqrt{n}}|M_i-M_j|\right)$,
the proof being identical in the other cases.
Recall that the $x$-coordinate of $\ell_{i}$ has the same distribution as $F_{T_n}^\star(U_{i})$, where
\begin{itemize}
\item $U_{i}$ is the $i$-th smallest value among $k$ i.i.d. uniform random variables in $[0,1]$. 
\item $F_{T_n}^\star$ is the pseudo-inverse of $F_{T_n}$
  defined in \cref{eq:EqDFLeaves} (p.\pageref{eq:EqDFLeaves});
  combining \cref{PropLeavesUniform,lem:FF*Pareil} (p.\pageref{PropLeavesUniform}),
  we know that $F_{T_n}^\star(x)$ tends to $x$ when $n$ tends to infinity (uniformly in $x$).
\end{itemize}

By construction we have (in what follows $\widetilde{C_{T_n}}$ is the normalized contour function of $T_n$, and the $U_i$'s are independent from $\widetilde{C_{T_n}}$):  
\[
M_i = \min_{[F_{T_n}^\star(U_{i}),F_{T_n}^\star(U_{i+1})]}\sqrt{n}\times \widetilde{C_{T_n}}
\]
%
Since $\big(\widetilde{C_{T_n}}\big)_n$ converges to $\lambda \cdot \Exc$
(\cref{prop:ConvergenceContour}, p.\pageref{prop:ConvergenceContour})
and $F_{T_n}^\star(U_{i})$ tends to $U_i$,
 we have:
$$
\frac{1}{\sqrt{n}}|M_i-M_j|
\stackrel{(d)}{\to} \lambda \big|\min_{[U_{i},U_{i+1}]} \Exc
-\min_{[U_{j},U_{j+1}]} \Exc\big|,
$$
This completes the proof of the first statement of \cref{Lem:HauteurFeuille}. 

In particular, we have proved that $H_1$ goes to infinity in probability when $n \to +\infty$.
The second statement of the lemma is just a rephrasing of this, 
in the particular case where
$\patterntree = \begin{array}{c} \tikz{
\begin{scope}[scale=.1,rotate=180,level distance=4cm,sibling distance=3cm]
\node[circle, draw, inner sep=1pt] {} 
 child {node[inner sep=0pt] {~}}
 child {node[inner sep=0pt] {~}};
\end{scope}}\end{array}$.
\end{proof}
\begin{remark}
  The joint convergence of the $\tfrac{1}{\sqrt{n}}H_i$ and $\tfrac{1}{\sqrt{n}}M_i$
  is observed by Aldous in the proof of \cite[Theorem 20]{AldousCRT3},
  with the slight difference that he works with uniform random {\em vertices} in the tree,
  while we consider uniform random {\em leaves}.
\end{remark}

\begin{proof}[Proof of \cref{prop:balanced_signs_discrete}]
Let $\Si_n$ be a uniform separable permutation of size $n$.
From \cref{cor:same_distibution_separables_trees} (p.\pageref{cor:same_distibution_separables_trees}), it has the same distribution
as $\perm(T_n,\Eps_n)$ where $T_n$ is a uniform Schröder tree with $n$ leaves 
and $\Eps_n$ is the sign function on the internal vertices of $T_n$,
such that the signs alternate and such that the root $r$ of $T_n$ has a balanced sign:
$\Eps_n(r)=+$ with probability $1/2$.
Recall that $\widetilde{C_{T_n}}$ (resp. $(\widetilde{C_{T_n}},S_n)$) denotes the normalized contour of $T_n$ (resp. the signed contour of $(T_n,\Eps_n)$). 

As in the statement of the proposition, consider a uniform random $k$-element subset $\ll$ of leaves of $T_n$. 
Recall from Observations~\ref{obs:ExtractedTree=Subtree} and~\ref{obs:Tree_pm} that $\Tree(\widetilde{C_{T_n}},\ll)$ 
(resp. $\Tree_\pm(\widetilde{C_{T_n}},S_n,\ll)$)
is the subtree of $T_n$ 
(resp. the signed subtree of $(T_n,\Eps_n)$)
induced by these leaves.
We condition on the fact that $\Tree(\widetilde{C_{T_n}},\ll)=t_0$.
It is enough to prove that
the probability that the signs in $\Tree_\pm(\widetilde{C_{T_n}},S_n,\ll)$
coincide with a fixed sign function $\eps$ on the internal vertices of $\patterntree$ is $1/2^{k-1}+o(1)$.
Recall that the signs in $\Tree_\pm(\widetilde{C_{T_n}},S_n,\ll)$ correspond to signs
of the common ancestors of the marked leaves in $(T_n,\Eps_n)$.

We further condition on the fact that the root of $T_n$ has sign $+$.
Then a vertex in $T_n$ has sign $+$ if it has even height and sign $-$ if it has odd height.
Therefore the probability that the common ancestors have given signs
correspond to the probability that their heights have given parities.
This probability is $1/2^{k-1}+o(1)$ by \cref{lem:balanced_heights}.

The same holds if we had conditioned on the fact that the root of $T_n$ has sign $-$.
We can therefore conclude, that, conditioning only on the event $\Tree(\widetilde{C_{T_n}},\ll)=t_0$,
the probability that the common ancestors have given signs
is $1/2^{k-1}+o(1)$, which completes the proof of the proposition.
\end{proof}

\section{Conclusion of the proof}
\label{sec:main_proof}

In this section we prove that the convergence of the expectation for $\ProbTree$ implies the one of the 
expectation for  $\ProbPerm$. In order to do that, the expectation for $\ProbPerm$ is expressed in terms of the one 
for $\ProbTree$ (as a linear combination) in the continuous and discrete cases. This is possible since the signs are  balanced and independant as it has been proven in Proposition~\ref{prop:balanced_signs_discrete}.  

\begin{lemma} \label{lem:expectation_tree_to_perm_continuous}
    Let $\pi$ be a pattern of size $k$ and $(\Exc,S)$ be the signed Brownian excursion.
    \[\esper^{\Exc,S} \big[ \ProbPerm(\pi;e,S, F_{U}) \big] = \frac{1}{2^{k-1}} \sum_{(\patterntree,\eps)} \esper^\Exc \left[  \ProbTree(\patterntree;\Exc,F_U)\right] , \]
    where the sum runs over all signed binary trees $(\patterntree,\eps) $ of $\pi$.
\end{lemma}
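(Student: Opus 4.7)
The plan is to expand both sides by unpacking definitions, swap an expectation and a probability using Fubini/conditioning, and then use the fact that the signs in the signed Brownian excursion are i.i.d.\ balanced Bernoulli variables.

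First I would rewrite the left-hand side. Let $\XX=\{X_1,\dots,X_k\}$ be $k$ i.i.d.\ uniform variables in $[0,1]$, independent of $(\Exc,S)$. By the definition of $\ProbPerm$ and \cref{obs:expectation_of_expectation},
\[
\esper^{\Exc,S}\bigl[\ProbPerm(\pi;\Exc,S,F_U)\bigr]
= \proba^{\Exc,S,\XX}\bigl(\Perm(\Exc,S,\XX)=\pi\bigr).
\]
By \cref{obs:Brownian_satisfies_C}, almost surely $(\Exc,S,\XX)$ satisfies Condition~\eqref{condition:unique_signs_on_minima} and the extracted tree $\Tree_\pm(\Exc,S,\XX)$ is binary. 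Since $\Perm=\perm\circ\Tree_\pm$ and (by definition) the signed binary trees of $\pi$ are exactly those binary $(\patterntree,\eps)$ with $\perm(\patterntree,\eps)=\pi$, we can partition the event according to the value of $\Tree_\pm(\Exc,S,\XX)$:
\[
\proba^{\Exc,S,\XX}\bigl(\Perm(\Exc,S,\XX)=\pi\bigr)
= \sum_{(\patterntree,\eps)} \proba^{\Exc,S,\XX}\bigl(\Tree_\pm(\Exc,S,\XX)=(\patterntree,\eps)\bigr),
\]
where the sum runs over all signed binary trees of $\pi$.

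The core step is to show, for each fixed signed binary tree $(\patterntree,\eps)$ with $k$ leaves,
\[
\proba^{\Exc,S,\XX}\bigl(\Tree_\pm(\Exc,S,\XX)=(\patterntree,\eps)\bigr)
= \frac{1}{2^{k-1}}\,\proba^{\Exc,\XX}\bigl(\Tree(\Exc,\XX)=\patterntree\bigr).
\]
I would condition on $(\Exc,\XX)$. Given $(\Exc,\XX)$, the unsigned tree $\Tree(\Exc,\XX)$ is entirely determined, and on the almost-sure event that this tree is binary, so are the $k-1$ specific local minima of $\Exc$ that play the role of its internal vertices (by \cref{Lemma:MinimumSameLevel} these minima are distinct). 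On the event $\{\Tree(\Exc,\XX)=\patterntree\}$, the equality $\Tree_\pm(\Exc,S,\XX)=(\patterntree,\eps)$ holds if and only if the random function $S$ assigns to these $k-1$ distinguished local minima exactly the signs prescribed by $\eps$. Since $S$ is independent of $(\Exc,\XX)$ and, by \cref{dfn:Brownian_exc}, assigns independent balanced signs to the local minima of $\Exc$, this conditional probability equals $2^{-(k-1)}$ on the event $\{\Tree(\Exc,\XX)=\patterntree\}$ and $0$ otherwise. Taking expectation gives the displayed identity.

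Combining the two displays and recognising $\proba^{\Exc,\XX}(\Tree(\Exc,\XX)=\patterntree)=\esper^\Exc[\ProbTree(\patterntree;\Exc,F_U)]$ by the definition of $\ProbTree$ and \cref{obs:expectation_of_expectation} yields the claim. I do not expect any real obstacle here: the statement is essentially definitional once one argues cleanly that the randomness in $S$ decouples from the randomness in $(\Exc,\XX)$. The only mildly delicate point is to make sure that the $k-1$ local minima of $\Exc$ picked out by the construction of $\Tree_\pm$ are almost surely distinct, which is what makes the $2^{k-1}$ independent sign choices genuinely independent; this is exactly what \cref{Lemma:MinimumSameLevel} and \cref{Cor:OneSided} (together with the a.s.\ binarity noted in \cref{obs:Brownian_satisfies_C}) guarantee.
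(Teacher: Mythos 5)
Your proposal is correct and follows essentially the same route as the paper: rewrite the left-hand side as $\proba^{\Exc,S,\XX}(\Perm(\Exc,S,\XX)=\pi)$ via \cref{obs:expectation_of_expectation}, decompose over the signed binary trees of $\pi$ (discarding non-binary trees by the a.s.\ distinctness of local minima), and extract the factor $2^{-(k-1)}$ from the independence of $S$ from $(\Exc,\XX)$ and the balanced i.i.d.\ signs on the $k-1$ distinct local minima. Your explicit conditioning on $(\Exc,\XX)$ is just a slightly more detailed phrasing of the paper's one-line justification of that same factorization.
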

\begin{proof}
    By definition,
    \[ \ProbPerm(\pi;e,S, F_{U}) = \proba^{\XX}( \Perm(\Exc,S,\XX) =\pi ),\]
    where $\XX$ is a set of $k$ independent uniform variables in $[0,1]$.
    Thus, with Observation~\ref{obs:expectation_of_expectation} (p.\pageref{obs:expectation_of_expectation}), 
    \[ \esper^{\Exc,S}\big[ \ProbPerm(\pi;e,S, F_{U}) \big]  = \proba^{\,\Exc,S,\XX}(  \Perm(\Exc,S,\XX) =\pi ).\]
    Since $\Perm(\Exc,S,\XX) =\perm(\Tree_{\pm}(\Exc,S,\XX))$ this is exactly the probability that 
    $\Tree_{\pm}(\Exc,S,\XX)$ is equal to one of the signed trees that are pre-images of $\pi$ by $\perm$.
    In other terms,
    \[ \esper^{\Exc,S}\big[ \ProbPerm(\pi;e,S, F_{U}) \big] =
    \sum_{(\patterntree,\eps)} \proba^{\,\Exc,S,\XX}\big( \Tree_{\pm}(\Exc,S,\XX) =(\patterntree,\eps) \big),\]
    where the sum runs over all signed trees of $\pi$.
    From \cref{obs:distinctMinImpliesBinary} (p.\pageref{obs:distinctMinImpliesBinary}),
    if $t_0$ is not binary, then $\Tree(\Exc,S,\XX)=t_0$ has probability $0$,
    since  from \cref{Lemma:MinimumSameLevel} a.s. all minima of $\Exc$ have distinct values.
    Consequently, in this case $\Tree_{\pm}(\Exc,S,\XX) =(\patterntree,\eps)$ has also probability $0$, 
    and the sum above can be restricted to the
    signed {\em binary} trees of $\pi$.
    For each such signed binary tree $(\patterntree,\eps)$, we have
    \begin{align*}
        \proba^{\,\Exc,S,\XX}\big( \Tree_{\pm}(\Exc,S,\XX) = (\patterntree,\eps) \big) &=  \proba^{\,\Exc,S,\XX}\big( \Tree_{\pm}(\Exc,S,\XX) = (\patterntree,\eps) | \Tree(\Exc,\XX)=\patterntree \big) \\& \qquad  \times\proba^{\,\Exc,\XX}\big( \Tree(\Exc,\XX)=\patterntree \big)\\ &= \frac{1}{2^{k-1}} \proba^{\,\Exc,\XX}\big( \Tree(\Exc,\XX)=\patterntree \big).
        \label{EqKey}
    \end{align*}
    Indeed, signs are taken independently on local minima of $\Exc$ and thus on vertices of $\patterntree$.
    Finally, with Observation~\ref{obs:expectation_of_expectation} again, 
    the probability on the right hand-side is 
    \begin{equation}
    \proba^{\,\Exc,\XX}\big( \Tree(\Exc,\XX)=\patterntree )= \esper^{\Exc} \big[ \proba^{\XX}(\Tree(\Exc,\XX)=\patterntree )\big]= \esper^{\Exc} \big[ \ProbTree(\patterntree;\Exc,F_U)\big],
    \label{eq:ExpInProbForPrTree} 
    \end{equation}
    which ends the proof.
\end{proof}

Using Proposition~\ref{prop:balanced_signs_discrete}, we obtain a discrete analogue of this lemma:

\begin{lemma}
    Let $\pi$ be a pattern of size $k$ and $\Si_n$ be a uniform random separable permutation of size $n$.
    As usual, let $(\widetilde{C_{T_n}},S_n)$ be the signed contour of the decomposition tree of $\Si_n$.
    \[\esper^{\Si_n} \big[ \ProbPerm(\pi;\widetilde{C_{T_n}},S_n, F_{T_n}) \big]
    =\frac{1}{2^{k-1}} \sum_{(\patterntree,\eps)} \esper^{\Si_n} \left[  \ProbTree(\patterntree;\widetilde{C_{T_n}},F_{T_n})\right] + o(1),\]
where the sum runs over all signed binary trees $(\patterntree,\eps) $ of $\pi$. 
    \label{lem:expectation_tree_to_perm_discrete}
\end{lemma}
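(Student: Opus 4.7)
The plan is to follow the same steps as in the proof of Lemma~\ref{lem:expectation_tree_to_perm_continuous}, with two additional ingredients: a passage from i.i.d.\ sampling of leaves to uniform-subset sampling (which introduces an $O(1/n)$ error), and the use of Proposition~\ref{prop:balanced_signs_discrete} in place of the exact sign-independence available in the continuous case (which introduces an $o(1)$ error). First I would unfold the definition of $\ProbPerm$ via Observation~\ref{obs:expectation_of_expectation}: writing $\XX$ for a set of $k$ i.i.d.\ variables with distribution $F_{T_n}$ drawn independently of $\Si_n$,
\[\esper^{\Si_n} \bigl[ \ProbPerm(\pi;\widetilde{C_{T_n}},S_n, F_{T_n}) \bigr] = \proba^{\Si_n, \XX}\bigl(\Perm(\widetilde{C_{T_n}}, S_n, \XX) = \pi\bigr).\]
Since sampling from $F_{T_n}$ amounts to picking a leaf of $T_n$ uniformly at random, the probability that $\XX$ consists of $k$ distinct elements is $(n)_k/n^k = 1 - O(1/n)$, and conditioned on no repetition $\XX$ has the same distribution as a uniform $k$-element subset $\ll$ of the leaves of $T_n$. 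As the event $\Perm=\pi$ requires $|\XX|=k$, decomposing it according to the signed subtree it produces gives
\[\esper^{\Si_n}\bigl[\ProbPerm(\pi;\widetilde{C_{T_n}},S_n,F_{T_n})\bigr] = \sum_{(\patterntree,\eps)} \proba^{\Si_n, \ll}\bigl(\Tree_\pm(\widetilde{C_{T_n}}, S_n, \ll) = (\patterntree,\eps)\bigr) + O(1/n),\]
where the sum runs over all signed Schröder trees $(\patterntree,\eps)$ of $\pi$.

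Next I would split this sum according to whether $\patterntree$ is binary or not. For non-binary $\patterntree$, I would combine Observation~\ref{obs:distinctMinImpliesBinary} with \cref{Lemma:MinimumSameLevel}: almost every realization of the Brownian excursion has distinct values at its local minima, so $\Tree(\Exc,\XX)$ is a.s.\ binary, hence $\esper^\Exc[\ProbTree(\patterntree;\Exc,F_U)]=0$ whenever $\patterntree$ is non-binary. By Corollary~\ref{cor:PrTree_cv_in_distribution} this yields $\esper^{\Si_n}[\ProbTree(\patterntree;\widetilde{C_{T_n}},F_{T_n})] = o(1)$, and a second application of the i.i.d.-to-subset passage gives $\proba^{\Si_n,\ll}(\Tree(\widetilde{C_{T_n}}, \ll) = \patterntree) = o(1)$. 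Since $\Tree_\pm(\ldots) = (\patterntree,\eps)$ implies $\Tree(\ldots) = \patterntree$, and since the number of signed Schröder trees of $\pi$ depends only on $k$, the total contribution of non-binary $\patterntree$ to the displayed sum is $o(1)$.

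For the binary contributions, I would apply Proposition~\ref{prop:balanced_signs_discrete} to each signed binary tree $(\patterntree,\eps)$ of $\pi$, obtaining
\[\proba^{\Si_n,\ll}\bigl(\Tree_\pm(\widetilde{C_{T_n}}, S_n, \ll) = (\patterntree,\eps)\bigr) = \frac{1}{2^{k-1}} \proba^{\Si_n,\ll}\bigl(\Tree(\widetilde{C_{T_n}}, \ll) = \patterntree\bigr) + o(1).\]
Summing over the finitely many such $(\patterntree,\eps)$ and converting the resulting probabilities back via $\proba^{\Si_n,\ll}(\Tree(\widetilde{C_{T_n}}, \ll) = \patterntree) = \esper^{\Si_n}[\ProbTree(\patterntree;\widetilde{C_{T_n}},F_{T_n})] + O(1/n)$ produces the stated identity. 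The main point that requires care is the non-binary case: one must check that the continuum-level vanishing of $\esper^\Exc[\ProbTree(\patterntree;\Exc,F_U)]$ for non-binary $\patterntree$ propagates to the discrete setting, and this is precisely what Corollary~\ref{cor:PrTree_cv_in_distribution} provides.
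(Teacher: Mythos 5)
Your proposal is correct and follows essentially the same route as the paper's proof: decompose $\ProbPerm$ over the signed trees of $\pi$, discard the non-binary contributions via \cref{obs:distinctMinImpliesBinary}, \cref{Lemma:MinimumSameLevel} and \cref{cor:PrTree_cv_in_distribution}, and apply \cref{prop:balanced_signs_discrete} to the binary ones. The only cosmetic difference is that you convert explicitly between i.i.d.\ leaf sampling and uniform $k$-subset sampling with an $O(1/n)$ bookkeeping, whereas the paper keeps the i.i.d.\ variables $\XX$ throughout and conditions on ``no repetition'' only where \cref{prop:balanced_signs_discrete} is invoked; both are equivalent.
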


\begin{proof}
  Let $\XX$ be a set of $k$ independent variables taken with distribution $F_{T_n}$. 
  Remember that this amounts to choosing $k$ leaves of $T_n$ independently and uniformly at random. 
Using an argument similar to that in the previous proof, we have
\begin{equation}
  \esper^{\Si_n} \big[ \ProbPerm(\pi;\widetilde{C_{T_n}},S_n, F_{T_n}) \big]
=  \sum_{(\patterntree,\eps)} 
\proba^{\Si_n, \XX}\big( \Tree_{\pm}(\widetilde{C_{T_n}},S_n,\XX) = (\patterntree,\eps) \big), 
\label{Eq:esperProbPermFinite_SumTrees}
\end{equation} 
where the sum runs over all signed trees $(\patterntree,\eps) $ of $\pi$. 

It holds that  
\begin{equation*}
  \proba^{\Si_n, \XX}\big( \Tree_{\pm}(\widetilde{C_{T_n}},S_n,\XX) = (\patterntree,\eps) \big)
\le \proba^{\Si_n, \XX}\big( \Tree(\widetilde{C_{T_n}},\XX)=\patterntree\big) 
\end{equation*}
and 
\begin{multline*}
\proba^{\Si_n, \XX}\big( \Tree(\widetilde{C_{T_n}},\XX)=\patterntree\big) 
= \esper^{\Si_n} \big[ \proba^{\XX}\big( \Tree(\widetilde{C_{T_n}},\XX)=\patterntree\big)\big]
=\esper^{\Si_n} \big[ \ProbTree(t_0;\widetilde{C_{T_n}},F_{T_n})\big].
\end{multline*}
If $t_0$ is not binary, using \cref{cor:PrTree_cv_in_distribution} (p.\pageref{cor:PrTree_cv_in_distribution}), we further have
\begin{equation*}
\esper^{\Si_n} \big[ \ProbTree(t_0;\widetilde{C_{T_n}},F_{T_n})\big] \to 
\esper^e \big[  \ProbTree(t_0;e,F_U) \big]=0.
\end{equation*}
Recall indeed that, since a.s. all minima of $\Exc$ have distinct values from \cref{Lemma:MinimumSameLevel},
the trees extracted from $\Exc$ at uniformly distributed points are binary with probability $1$.

Therefore the sum in \cref{Eq:esperProbPermFinite_SumTrees} can be replaced by a sum
over signed {\em binary} trees $(\patterntree,\eps) $ of $\pi$,
making only an error of $o(1)$.
For signed binary trees $(\patterntree,\eps) $, 
we use Proposition~\ref{prop:balanced_signs_discrete} (p.\pageref{prop:balanced_signs_discrete}) to derive that  
\begin{align*}
\proba^{\Si_n,\XX}\big( &\Tree_{\pm}(\widetilde{C_{T_n}},S_n,\XX) = (\patterntree,\eps) \big) \\
& = \proba^{\Si_n,\XX}\big( \Tree_{\pm}(\widetilde{C_{T_n}},S_n,\XX) = (\patterntree,\eps) \mid \text{ there is no repetition in } \XX \big) +o(1) \\
& = \frac{1}{2^{k-1}} \proba^{\Si_n,\XX}\big( \Tree(\widetilde{C_{T_n}},\XX)=\patterntree  \mid \text{ there is no repetition in } \XX \big)+o(1) \\
& =\frac{1}{2^{k-1}} \proba^{\Si_n,\XX}\big( \Tree(\widetilde{C_{T_n}},\XX)=\patterntree \big)+o(1),
\end{align*}
Finally, since 
\[\proba^{\Si_n,\XX}\big( \Tree(\widetilde{C_{T_n}},\XX)=\patterntree \big)
= \esper^{\Si_n} \big[ \proba^{\XX}\big( \Tree(\widetilde{C_{T_n}},\XX)=\patterntree \big)\big]
= \esper^{\Si_n} \big[ \ProbTree(\patterntree;\widetilde{C_{T_n}},F_{T_n})\big],\]
this ends the proof.
\end{proof}

Using these expressions of the expectation for $\ProbPerm$ in terms of the one for $\ProbTree$ in the continuous and discrete cases, 
we can now prove our main result:

\begin{theorem}
For any pattern $\pi$,
\[ \esper\left[ \occ(\pi,\Si_n) \right] \longrightarrow \esper\left[ \Lambda_{\pi} \right]. \]
\label{thm:ConvExp}
\end{theorem}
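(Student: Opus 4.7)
The plan is to prove this by chaining together the preparatory results, using $\ProbPerm$ as the common language to connect the discrete quantity $\occ(\pi,\Si_n)$ to the continuous limit $\Lambda_\pi$.

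First, I would rewrite both sides in terms of $\ProbPerm$. On the discrete side, \cref{cor:occ_from_PrPerm} gives
\[
\esper\left[\occ(\pi,\Si_n)\right]
= \esper^{\Si_n}\!\left[\ProbPerm(\pi;\widetilde{C_{T_n}},S_n,F_{T_n})\right]\left(1+\mathcal{O}(1/n)\right),
\]
and since $\ProbPerm \le 1$, the $\mathcal{O}(1/n)$ factor contributes only $o(1)$. On the continuous side, \cref{obs:lamba_from_PrPerm} gives
\[
\esper[\Lambda_\pi] = \esper^{\Exc,S}\!\left[\ProbPerm(\pi;\Exc,S,F_U)\right].
\]

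Next, I would rewrite each $\ProbPerm$ expectation as a sum of $\ProbTree$ expectations over the signed binary trees $(\patterntree,\eps)$ of $\pi$. Applying \cref{lem:expectation_tree_to_perm_discrete} to the discrete side and \cref{lem:expectation_tree_to_perm_continuous} to the continuous side gives
\[
\esper^{\Si_n}\!\left[\ProbPerm(\pi;\widetilde{C_{T_n}},S_n,F_{T_n})\right]
= \frac{1}{2^{k-1}} \sum_{(\patterntree,\eps)} \esper^{\Si_n}\!\left[\ProbTree(\patterntree;\widetilde{C_{T_n}},F_{T_n})\right] + o(1),
\]
\[
\esper^{\Exc,S}\!\left[\ProbPerm(\pi;\Exc,S,F_U)\right]
= \frac{1}{2^{k-1}} \sum_{(\patterntree,\eps)} \esper^{\Exc}\!\left[\ProbTree(\patterntree;\Exc,F_U)\right],
\]
where both sums run over the same finite set of signed binary trees of $\pi$.

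Finally, I would invoke \cref{cor:PrTree_cv_in_distribution}, which asserts
\[
\esper^{\Si_n}\!\left[\ProbTree(\patterntree;\widetilde{C_{T_n}},F_{T_n})\right] \longrightarrow \esper^{\Exc}\!\left[\ProbTree(\patterntree;\Exc,F_U)\right]
\]
for each fixed binary tree $\patterntree$. Since the sum ranges over a finite set (the number of signed binary trees of $\pi$ depends only on $\pi$, not on $n$), the termwise convergence implies convergence of the whole sum. Combining everything yields $\esper[\occ(\pi,\Si_n)] \to \esper[\Lambda_\pi]$.

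There is no genuine obstacle remaining at this point of the paper, since all the hard work (the asymptotic uniformity of $F_{T_n}$, the continuity of $\ProbTree$, and the asymptotic independence and balancedness of signs that produces the $\frac{1}{2^{k-1}}$ factor in the discrete case) has been done in the previous sections. The only delicate point worth stating explicitly is that the set of signed binary trees of $\pi$ is finite, which is what allows the asymptotic comparison of the two sums to go through termwise without any uniformity argument.
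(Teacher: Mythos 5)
Your proposal is correct and follows essentially the same route as the paper's proof: it chains \cref{cor:occ_from_PrPerm} and \cref{obs:lamba_from_PrPerm} with \cref{lem:expectation_tree_to_perm_discrete,lem:expectation_tree_to_perm_continuous} and the termwise convergence from \cref{cor:PrTree_cv_in_distribution}. The only difference is presentational (you work from both ends toward the middle, while the paper starts from the $\ProbTree$ convergence), and your explicit remark about the finiteness of the set of signed binary trees is a harmless clarification.
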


\begin{proof}
    From \cref{cor:PrTree_cv_in_distribution}, we have that, for each tree $\patterntree$,
    \[ \esper^{\Si_n} \big[ \ProbTree(\patterntree;\widetilde{C_{T_n}},F_{T_n}) \big]  \to \esper^\Exc \big[ \ProbTree(\patterntree;\Exc,F_U) \big].\]
Combining this with \cref{lem:expectation_tree_to_perm_continuous,lem:expectation_tree_to_perm_discrete}, we get that for each pattern $\pi$,
\[ \esper^{\Si_n} \big[ \ProbPerm(\pi;\widetilde{C_{T_n}},S_n, F_{T_n}) \big]  \to \esper^{\Exc,S} \big[  \ProbPerm(\pi;e,S, F_{U}) \big].\]
From \cref{cor:occ_from_PrPerm} (p.\pageref{cor:occ_from_PrPerm}),  $\occ(\pi,\Si_n) = \ProbPerm(\pi;\widetilde{C_{T_n}},S_n, F_{T_n}) \left(1+ \mathcal{O}(\frac{1}{n})\right)$.
Together with Observation~\ref{obs:lamba_from_PrPerm} (stating that $\Lambda_{\pi} = \ProbPerm(\pi;e,S, F_{U})$), this ends the proof
of \cref{thm:ConvExp}.
\end{proof}

As shown by Corollary~\ref{cor:Exp_is_enough} (p.\pageref{cor:Exp_is_enough}), Theorems~\ref{thm:ConvJointMoments} and~\ref{thm:main}.\ref{item:main_joint} 
follow from Theorem~\ref{thm:ConvExp} above. 

\section{Permuton interpretation of our main result}
\label{sec:Permutons}
The goal of this section is to prove \cref{thm:Main_Permuton} (p.\pageref{thm:Main_Permuton}).
We first need additional material on permutons.
Recall from \cref{dfn:permuton} that permutons, which were introduced in \cite{Permutons},
are probability measures on $[0,1]^2$ with uniform marginals. 
Given a (deterministic) permuton $\mu$ and an integer $k$,
there is a natural way to define a random permutation 
$\Pi_k^\mu$ of size $k$.
  
\begin{definition}
  \label{dfn:cv_Permutons}
  Let $\mu$ be a permuton and $k$ be an integer.
  Take $k$ points in $[0,1]^2$
  independently according to $\mu$. 
  A.s. these $k$ points have distinct 
  $x$-coordinates and distinct $y$-coordinates 
  (since $\mu$ has uniform marginals). 
  Therefore we can order these points $(X_1,Y_1),\ldots, (X_k,Y_k)$
  such that $Y_1<Y_2<\cdots<Y_k$.
  Then $\Pi_k^\mu$ is the unique permutation
  such that $X_{\Pi_k^\mu(1)}<X_{\Pi_k^\mu(2)}<\cdots<X_{\Pi_k^\mu(k)}$.
  
  For a permutation $\pi$ of size $k$, 
  following the notation of \cite{Permutons}, 
  we then define $t(\pi,\mu)$ as the probability that $\Pi_k^\mu=\pi$.

  We say that a (deterministic) sequence of permutations $(\si_n)$ 
  with sizes going to infinity
  converges to the permuton $\mu$ if, for all $\pi$,
  $\occ(\pi,\si_n)$ tends to $t(\pi,\mu)$.
\end{definition}
It is noticed in \cite[Eq.(49)]{Permutons} that the convergence of $(\si_n)$
to a permuton $\mu$ is equivalent to the weak convergence of the associated permutons
$(\mu_{\sigma_n})$ to $\mu$.
In particular, this implies the uniqueness of limits of 
sequences of permutations. 
Moreover from \cite[Theorem 1.6 (i)]{Permutons} 
if $(\si_n)$ is a deterministic sequence of permutations 
such that $\occ(\pi,\si_n)$ has a limit for all $\pi$,
then there exists a (necessarily unique) permuton $\mu$ such that
$(\sigma_n)$ tends towards $\mu$.

We can now proceed to the proof of \cref{thm:Main_Permuton}. 
\begin{proof}[Proof of \cref{thm:Main_Permuton}]
  \cref{item:main_joint} in \cref{thm:main} asserts
  that the finite-dimensional laws of $(\occ(\pi,\Si_n))_\pi$
  converge to those of $(\Lambda_\pi)_\pi$ (here, vectors are 
  indexed by all permutations). 
%
%

  This is equivalent to the convergence in distribution of the infinite-dimensional
  vector $(\occ(\pi,\Si_n))_\pi$ towards $(\Lambda_\pi)_\pi$ in the product topology
  (from the definitions of the convergence in distribution and of the product topology).
  From Skorohod's representation theorem \cite[Theorem 3.30]{Kallenberg},
  there exists a probability space $\Omega$,
  random variables $(\bm{O}_n)_{n \ge 1}$ and $\bm{\Lambda}'$
  such that: 
  \begin{enumerate}
    \item for each $n\ge 1$, $\bm{O}_n$ is a random vector $(O_{n,\pi})_\pi$
      indexed by all permutations that has the same law as $(\occ(\pi,\Si_n))_\pi$;
    \item $\bm{\Lambda}'$ is a random vector $(\Lambda'_\pi)_\pi$
      indexed by all permutations that has the same law as $(\Lambda_\pi)_\pi$;
    \item \label{item:conv_ps} we have sure convergence (in the product topology)
      of $\bm{O}_n$ to $\bm{\Lambda}'$ when $n$ tends to
      infinity, \emph{i.e.}, for any $\omega \in \Omega$, we have                                                   
      \[\text{for any permutation }\pi,\, O_{n,\pi}(\omega)
          \to \Lambda'(\omega)_\pi.\]
  \end{enumerate}

We now aim at constructing random variables $\Si'_n$ such that $(O_{n,\pi})_\pi=(\occ(\pi,\Si'_n))_\pi$, 
so that we can apply Theorem 1.6 (i) of \cite{Permutons} in each $\omega \in \Omega$. 

Fix $n \ge 1$. Since $(O_{n,\pi})_\pi$ and $(\occ(\pi,\Si_n))_\pi$ have the same distribution and since this distribution is supported on a finite set,
one can assume that they have the same image set.
Thus, for any $\omega$ in $\Omega$, $(O_{n,\pi}(\omega))_\pi$ is equal to
$(\occ(\pi,\si))_\pi$ for some permutation $\si$ of size $n$ that we denote $\Si'_n(\omega)$.
This defines a sequence of random permutations $\Si'_n$ defined on the probability space $\Omega$
such that $(O_{n,\pi})_\pi=(\occ(\pi,\Si'_n))_\pi$.
Note that $\Si'_n$ is measurable since, for a permutation $\tau$ of size $n$,
we have $\{\omega: \Si'_n(\omega)=\tau\} = \{\omega: O_{n,\tau}(\omega)=1\}$.

Next, we claim that for any fixed $n$, $\Si'_n$ has the same distribution as $\Si_n$. 
Indeed, $n$ being fixed, considering the restriction of $(\occ(\pi,\Si_n))_\pi$ (resp. $(\occ(\pi,\Si'_n))_\pi$) 
to patterns $\pi$ of size $n$ gives the distribution of $\Si_n$ (resp. $\Si'_n$). 
The claim then follows since $(\occ(\pi,\Si_n))_\pi$ and $(\occ(\pi,\Si'_n))_\pi$ have the same distribution (both the same as that of $(O_{n,\pi})_\pi$). 

From \cref{item:conv_ps} above, and by definition of $\Si'_n$, for any $\omega \in \Omega$, the following holds:
  \[\text{for any permutation }\pi,\, \occ(\pi,\Si'_n(\omega)) 
  \to \Lambda'(\omega)_\pi.\]
  Using \cite[Theorem 1.6 (i)]{Permutons}, this implies that,
  for any $\omega \in \Omega$, there exists a (unique) permuton
  $\Mu(\omega)$ such that $\Si'_n(\omega)$ tends to $\Mu(\omega)$
  in the sense of \cref{dfn:cv_Permutons},
  or equivalently $\mu_{\Si'_n(\omega)}$ converges weakly to $\Mu(\omega)$.
  Since $\omega \mapsto \Mu(\omega)$ is the pointwise limit of the measurable functions
  $\omega \mapsto \mu_{\Si'_n(\omega)}$ in the {\em metrizable} weak topology,
  we know that $\Mu$ is measurable.

  Using that pointwise convergence implies convergence in distribution
  and that $\Si'_n$ and $\Si_n$ have the same distribution,
  we deduce that $\mu_{\Si_n}$ tends in distribution to $\Mu$ 
  in the weak convergence topology, as claimed.

  Finally, it should be noted that, for any $\pi$, 
  $\Lambda_{\pi} \stackrel{(d)}{=} t(\pi,\Mu)$,
  using the notation $t$ from \cref{dfn:cv_Permutons}, 
  since both are the limit in distribution of $\occ(\pi,\Si_n)$.
  From Theorem~\ref{thm:main}.\ref{item:main_variance} $\Lambda_{12}$ is not deterministic,
  so neither is $\Mu$.
\end{proof}

\section{Some properties of $\Lambda_\pi$}\label{sec:properties_of_moments}
In this section, we give some additional results concerning the limit variables
$\Lambda_\pi$. These results are not needed in the proof of our main theorem (\cref{thm:main}.\ref{item:main_joint}).
However, the more information we have on $\Lambda_\pi$,
the more interesting our theorem is.
\medskip

The first result, given in  \cref{ssec:Exp_LPi}, is a way of computing (joint) moments of the variables
$\Lambda_\pi$.
This has been used in the introduction
to give explicit values for the limit of (joint) moments of $\occ(\pi,\Si_n)$.
Our second result presented in \cref{ssec:variance} is the fact 
that the variables $\Lambda_\pi$ are not deterministic when $\pi$ is separable. 
This corresponds to \cref{item:main_variance} in \cref{thm:main}.

\subsection{Computing expectation and other moments of $\Lambda_\pi$} \label{ssec:Exp_LPi}

Recall that $N_\pi$ denotes the number of 
signed binary trees associated with the permutation $\pi$. 
\begin{proposition}
For any permutation $\pi$ of size $k$, 
\[ \esper\left[ \occ(\pi,\Si_n) \right] \longrightarrow \esper\left[ \Lambda_{\pi} \right] = \frac{N_\pi }{2^{k-1} \cdot \Cat_{k-1}}. \]
\label{prop:expectation_Lamdba_pi}
\end{proposition}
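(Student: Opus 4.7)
The plan is to combine the ingredients already developed in the paper and reduce the computation to a symmetry property of the tree extracted from the Brownian excursion at uniform points.

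First, the convergence $\esper[\occ(\pi,\Si_n)] \to \esper[\Lambda_\pi]$ is just a restatement of \cref{thm:ConvExp}, so I only need to compute $\esper[\Lambda_\pi]$. Starting from $\Lambda_\pi = \ProbPerm(\pi;\Exc,S,F_U)$ (\cref{obs:lamba_from_PrPerm}), I apply \cref{lem:expectation_tree_to_perm_continuous} to get
\[
\esper[\Lambda_\pi] \;=\; \frac{1}{2^{k-1}} \sum_{(\patterntree,\eps)} \esper^\Exc\!\left[\ProbTree(\patterntree;\Exc,F_U)\right],
\]
where the sum runs over the signed binary trees of $\pi$. By definition $N_\pi$ is exactly the number of such signed binary trees, so to finish it suffices to show that for any binary tree $\patterntree$ with $k$ leaves,
\[
\esper^\Exc\!\left[\ProbTree(\patterntree;\Exc,F_U)\right] \;=\; \frac{1}{\Cat_{k-1}}.
\]

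To establish this, I use the identity $\esper^\Exc[\ProbTree(\patterntree;\Exc,F_U)] = \proba^{\Exc,\XX}(\Tree(\Exc,\XX)=\patterntree)$ (as in \cref{eq:ExpInProbForPrTree}), where $\XX$ is a set of $k$ i.i.d.\ uniform points in $[0,1]$, independent of $\Exc$. From \cref{Lemma:MinimumSameLevel} (combined with \cref{obs:distinctMinImpliesBinary}), almost surely the extracted tree $\Tree(\Exc,\XX)$ is binary with $k$ leaves. The key point is then that this random binary tree is \emph{uniformly distributed} on the set of binary trees with $k$ leaves — this is precisely \cref{Lemma:ArbreBinaireUniforme} in the appendix. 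Since the total number of such trees equals $\Cat_{k-1}$, each particular $\patterntree$ is hit with probability $1/\Cat_{k-1}$, yielding the required value.

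Putting the pieces together,
\[
\esper[\Lambda_\pi] \;=\; \frac{1}{2^{k-1}} \cdot N_\pi \cdot \frac{1}{\Cat_{k-1}} \;=\; \frac{N_\pi}{2^{k-1}\, \Cat_{k-1}},
\]
which is the announced formula. The only non-formal input is the uniformity of $\Tree(\Exc,\XX)$ on binary trees with $k$ leaves; this is a classical fact (essentially Aldous's description of the random tree coded by a Brownian excursion sampled at uniform marks), and it is the main conceptual step — but it has been isolated in the appendix, so the argument here is really just bookkeeping. No other obstacle arises.
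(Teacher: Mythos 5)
Your proof is correct and follows essentially the same route as the paper: the convergence comes from \cref{thm:ConvExp}, the reduction to unsigned tree probabilities via \cref{lem:expectation_tree_to_perm_continuous} contributes the factor $\tfrac{1}{2^{k-1}}$ and the count $N_\pi$ of signed binary trees, and \cref{Lemma:ArbreBinaireUniforme} supplies the value $\tfrac{1}{\Cat_{k-1}}$ for each extracted binary tree. Nothing is missing.
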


\begin{proof}
For any $\pi$, the convergence of $\esper\left[ \occ(\pi,\Si_n) \right]$
to $\esper\left[ \Lambda_{\pi} \right]$ is given by~\cref{thm:ConvExp} (p.\pageref{thm:ConvExp}). 
Let $\pi$ be a permutation of size $k$ 
and let us prove that $\esper\left[ \Lambda_{\pi} \right] = \frac{N_\pi}{2^{k-1} \cdot \Cat_{k-1}}$. 

We consider the signed Brownian excursion $(\Exc,S)$. 
We take $k$ points $X_1, \ldots, X_k$ in $[0,1]$ uniformly at random, independently from each other and from $(\Exc,S)$, 
and we let $\XX = (X_1, \ldots, X_k)$. 
From \cref{lem:expectation_tree_to_perm_continuous} (see also \cref{eq:ExpInProbForPrTree} in its proof),
we have
 \[\esper^{\Exc,S} \big[ \ProbPerm(\pi;e,S, F_{U}) \big] =
 \frac{1}{2^{k-1}} \sum_{(\patterntree,\eps)} 
    \proba^{\,\Exc,\XX}\big( \Tree(\Exc,\XX)=\patterntree ),\]
    where the sum runs over all signed binary trees $(\patterntree,\eps) $ of $\pi$.

    \cref{Lemma:ArbreBinaireUniforme}, a deep result on Brownian excursion,
    states that $\Tree(\Exc,\XX)$ is a uniform
    binary tree with $k$ leaves, \emph{i.e.}, for any binary tree $t_0$ with $k$ leaves,
    \[\proba^{\,\Exc,\XX}\big( \Tree(\Exc,\XX)=\patterntree ) = \frac{1}{\Cat_{k-1}}.\]
The proposition then follows immediately.
\end{proof}

In addition, $N_\pi$, the number of signed binary trees of a given permutation $\pi$, 
can be expressed combinatorially from the decomposition tree of $\pi$. More precisely:

\begin{observation}\label{obs:N_pi}
For any permutation $\pi$, denoting $t_\pi$ its (unique) decomposition tree, it holds that 
\[ N_\pi := \mathrm{card}\{(\patterntree,\eps),\ \patterntree \text{ binary}\, : \, \perm(\patterntree,\eps)=\pi \} =
\prod_{v \text{ internal vertex of }t_\pi} \Cat_{deg(v)-1}\]
where $deg(v)$ is the number of children of $v$.
\end{observation}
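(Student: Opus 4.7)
The plan is to exhibit a bijection between the set of signed binary trees $(t',\eps')$ satisfying $\perm(t',\eps')=\pi$ and the set of tuples indexed by internal vertices of the decomposition tree $t_\pi$ of $\pi$, where the entry at $v$ is a (plane) binary tree with $\deg(v)$ leaves. Since the number of binary trees with $k$ leaves is $\Cat_{k-1}$, the product formula will follow immediately.

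First, observe that once $\pi$ is fixed, the sign function on a signed binary tree $(t',\eps')$ with $\perm(t',\eps')=\pi$ is completely determined by $t'$ and $\pi$: indeed, by the observation following \cref{dfn:separating_tree}, the sign at the common ancestor of leaves $i<j$ equals $+$ iff $\pi_i<\pi_j$. So the enumeration reduces to counting unsigned binary tree shapes $t'$ that admit a (necessarily unique) sign function producing $\pi$.

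Next I would build the bijection. In the forward direction, given the decomposition tree $t_\pi$ (with its alternating sign function $\eps_\pi$) together with, for each internal vertex $v$ of $t_\pi$, a choice of binary tree $b_v$ with $\deg(v)$ leaves, construct $(t',\eps')$ by substituting $b_v$ in place of $v$ (with the subtrees of $v$ attached to the $\deg(v)$ leaves of $b_v$ in order) and by labeling all internal vertices of every $b_v$ with the sign $\eps_\pi(v)$. This is exactly the construction used in the proof of \cref{prop:BBL}, and associativity of $\oplus$ and $\ominus$ immediately gives $\perm(t',\eps')=\perm(t_\pi,\eps_\pi)=\pi$.

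For the reverse direction, start from any signed binary tree $(t',\eps')$ with $\perm(t',\eps')=\pi$. Contract every edge of $t'$ joining two internal vertices that carry the same sign: the resulting tree $t''$ is a signed Schröder tree (the contracted connected components inherit a well-defined sign), its signs alternate by construction, and $\perm(t'',\eps'')=\pi$ once again by associativity of $\oplus$ and $\ominus$. By the uniqueness of the decomposition tree (\cref{prop:bij_separables_trees}), $t''=t_\pi$, and each internal vertex $v$ of $t_\pi$ corresponds to the connected subgraph of $t'$ that was contracted onto it, which is a binary tree with $\deg(v)$ leaves (its leaves correspond to the children of $v$ in $t_\pi$). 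These two constructions are visibly inverse to each other, completing the bijection.

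The main (and really only) subtle point is verifying that contracting same-sign internal edges in the reverse direction genuinely yields the decomposition tree (as opposed to some other alternating-sign Schröder representation); this is where the uniqueness statement of \cref{prop:bij_separables_trees} is essential. Once the bijection is established, the product formula is immediate since the binary tree at each $v$ can be chosen independently, in $\Cat_{\deg(v)-1}$ ways.
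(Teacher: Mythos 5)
Your proof is correct and follows essentially the same route as the paper's: the paper also establishes the formula by expanding each internal vertex $v$ of $t_\pi$ into a binary tree with $\deg(v)$ leaves whose internal vertices all carry the sign of $v$, and noting that every signed binary tree of $\pi$ arises this way. Your write-up merely makes the inverse map (contraction of same-sign edges, followed by an appeal to the uniqueness of the decomposition tree) explicit, which the paper leaves implicit.
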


\begin{proof}
Given a signed tree of $\pi$, and a vertex $v$ with sign $\eps$ in this tree, 
the following transformation produces a tree which is still a signed tree of $\pi$: 
assuming that $k$ subtrees $T_1, \dots, T_k$ are attached to $v$, replace $v$ by 
a binary tree with $k$ leaves and all internal vertices labeled $\eps$
and where the $i$-th leaf is replaced by $T_i$. 
Conversely, each signed binary tree of $\pi$ can be obtained from the decomposition tree $t_\pi$ of $\pi$ 
applying this transformation on all internal vertices $v$ of $t_\pi$.
An example is given in \cref{fig:Form_100euros}.
\end{proof}

\begin{figure}[ht]
\begin{center}
\includegraphics[width=12cm,angle=180]{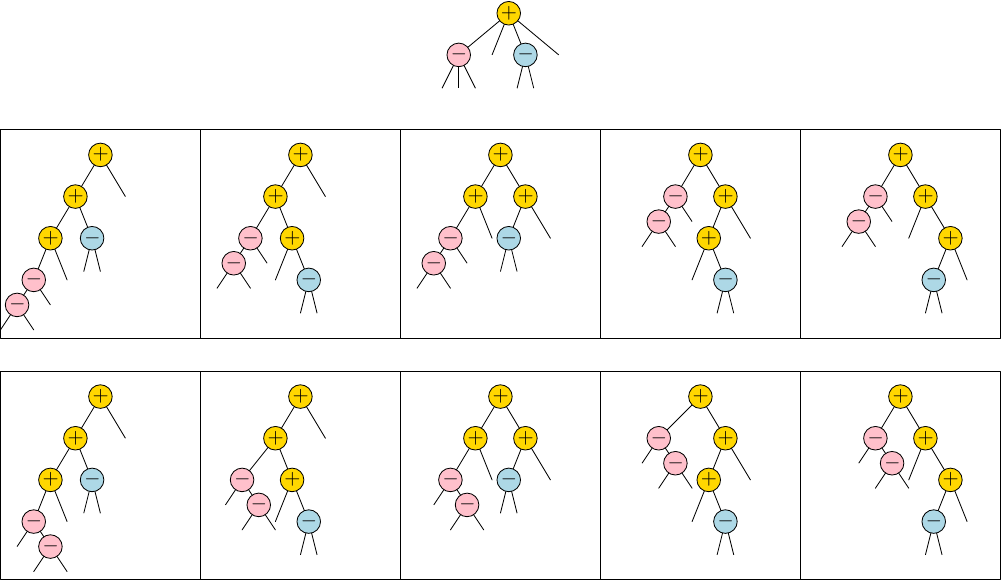}
\end{center}
\caption{The decomposition tree of 
$1324765$ 
and its $\Cat_{3}\times\Cat_2\times \Cat_1 = 10$ signed binary trees.}
\label{fig:Form_100euros}
\end{figure}

\begin{proposition}
\label{prop:computing_moments}
Let $\pi_1,\cdots,\pi_r$ be a list of patterns, and let $K=\sum_{i=1}^r |\pi_i|$. 
We have: 
\[
 \esper\left[ \prod_{i=1}^r \occ(\pi_i,\Si_n) \right] \longrightarrow \esper\left[ \prod_{i=1}^r \Lambda_{\pi_i} \right]= \sum_{\rho \in \Sn_K} c_{\pi_1,\ldots,\pi_r}^\rho \esper \left[ \Lambda_\rho \right],
\]
where for any $\rho \in \Sn_K$, $c_{\pi_1,\ldots,\pi_r}^\rho$ 
denotes the proportion of ordered set-partitions of $\{1, \dots K\}$ which are compatible with $\rho, \pi_1,\cdots,\pi_r$ -- see~\cref{dfn:ordered_set-partition} p.\pageref{dfn:ordered_set-partition}. 
\end{proposition}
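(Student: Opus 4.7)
The plan is to assemble this proposition from two earlier results, with no genuinely new content beyond recording the consequence. The first equality, that is
\[
\esper\left[ \prod_{i=1}^r \occ(\pi_i,\Si_n) \right] \longrightarrow \esper\left[ \prod_{i=1}^r \Lambda_{\pi_i} \right],
\]
is nothing but the statement of \cref{thm:ConvJointMoments}, which was established at the end of \cref{sec:main_proof} (as an equivalent reformulation of \cref{thm:main}.\ref{item:main_joint}). So the only thing left to prove is the explicit closed form for the limiting joint moment.

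For that, I would invoke \cref{prop:moments_in_terms_of_expectation_limit_case}, which provides the almost-sure identity
\[
\prod_{i=1}^r \Lambda_{\pi_i} \;=\; \sum_{\rho \in \Sn_K} c_{\pi_1,\ldots,\pi_r}^\rho\, \Lambda_\rho.
\]
The sum runs over the finite set $\Sn_K$ and each variable $\Lambda_\rho$ takes values in $[0,1]$; hence the right-hand side is integrable, the coefficients are deterministic constants, and one may take expectations termwise by linearity. This immediately yields the desired identity
\[
\esper\left[ \prod_{i=1}^r \Lambda_{\pi_i} \right] \;=\; \sum_{\rho \in \Sn_K} c_{\pi_1,\ldots,\pi_r}^\rho\, \esper[\Lambda_\rho].
\]

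There is no real obstacle: both ingredients are already in hand. The content that makes the formula useful for actual computation lies rather in \cref{prop:expectation_Lamdba_pi} (which evaluates $\esper[\Lambda_\rho]=N_\rho/(2^{K-1}\Cat_{K-1})$) and in \cref{obs:N_pi} (which reduces $N_\rho$ to a product of Catalan numbers over the decomposition tree of $\rho$). Combining these with the proposition gives the effective formula
\[
\esper\left[ \prod_{i=1}^r \Lambda_{\pi_i} \right] \;=\; \frac{1}{2^{K-1}\Cat_{K-1}} \sum_{\rho \in \Sn_K} c_{\pi_1,\ldots,\pi_r}^\rho\, N_\rho,
\]
so that the joint moments displayed in the introduction can be obtained by enumerating ordered set-partitions compatible with $\rho,\pi_1,\ldots,\pi_r$ and reading $N_\rho$ off the decomposition tree of $\rho$. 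This is the basis of the algorithmic computation alluded to at the end of the introduction.
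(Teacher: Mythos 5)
Your proposal is correct and matches the paper's own (very short) proof: the paper likewise obtains the convergence from the chain of results culminating in \cref{thm:ConvJointMoments} (via \cref{cor:expectation_is_enough} and \cref{thm:ConvExp}), and the closed form by taking expectations in the almost-sure identity of \cref{prop:moments_in_terms_of_expectation_limit_case}. The additional remarks on combining with \cref{prop:expectation_Lamdba_pi} and \cref{obs:N_pi} are consistent with the paper's discussion in \cref{ssec:Exp_LPi}.
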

\begin{proof}
See the proof of~\cref{cor:expectation_is_enough} for the convergence result, 
and \cref{prop:moments_in_terms_of_expectation_limit_case} for the expression of $\esper\left[ \prod \Lambda_{\pi_i} \right]$.
\end{proof}

The results of this section enable the automatic computation
of joint moments of $\Lambda_\pi$.
As mentioned in the introduction, we have implemented this in Sage.
Let us discuss quickly algorithmic questions behind this implementation.
\begin{itemize}
  \item From \cref{prop:expectation_Lamdba_pi}, computing the expectation essentially amounts to computing $N_\pi$.
    Finding the degree of the root of the decomposition tree of $\pi$
    is easy. To simplify the discussion (but without loss of generality), 
    suppose that this root has sign $+$.
    Then its degree is the number of integers $i \le k$ such that
    \[\{\pi(1),\ldots,\pi(i)\}=\{1,\cdots,i\}, \text{ or equivalently }\max_{j \le i} \pi(j)=i.\]
    Thanks to the second formulation, this can be computed in linear time
    reading the permutation from left-to-right,
    keeping only in memory the maximum of the values already read.
    
    Degrees of all vertices can then be computed recursively,
    and we find $N_\pi$ using \cref{obs:N_pi}, in quadratic time.

  \item Higher moments or joint moments are more complex to compute.
    Fix a list of patterns $\pi_1,\ldots,\pi_r$ of respective sizes $k_1,\cdots,k_r$
    and set $K=k_1+\cdots+k_r$. 
    While efficient for theoretical purpose, the definition of the coefficients 
    $c_{\pi_1,\ldots,\pi_r}^\rho$ (see \cref{dfn:ordered_set-partition} p.\pageref{dfn:ordered_set-partition}) is not optimal from a practical point of view.
    It is however possible to generate directly the multiset of permutations of $ \Sn_K$,
    where each $\rho$ appears with multiplicity $d_{\pi_1,\ldots,\pi_r}^\rho$.
    For each pair of ordered set-partitions $\PPos=(\Pos_1,\cdots,\Pos_r)$ and $\VVal=(\Val_1,\cdots,\Val_r)$ of $\{1,\dots,K\}$
    with $ \mathrm{card}(\Pos_i)=\mathrm{card}(\Val_i)=k_i$, we can define a permutation $\rho(\PPos,\VVal)$
    as follows: $\rho(\PPos,\VVal)$ assigns the $\pi_i(j)$-th smallest value in $\Val_i$
    to the $j$-th smallest value in $\Pos_i$.
    It is easy to check that any given $\rho$ is constructed this way from
    exactly $d_{\pi_1,\ldots,\pi_r}^\rho$ pairs $(\PPos,\VVal)$.
    We have used this construction in our implementation of the computation of joint moments.

    Even if this is more efficient than a naive implementation of \cref{dfn:ordered_set-partition},
    the complexity still grows very quickly.
    Indeed, the number of pairs $(\PPos,\VVal)$ as above is equal to $\binom{K}{k_1,\cdots,k_r}^2$. 
    In practice, we have only been able 
    to compute the first four moments of $\Lambda_{12}$ with this algorithm
    (the fifth moment given in the introduction has been inferred from the first four 
    using the symmetry $\Lambda_{12} \stackrel{(d)}{=} 1-\Lambda_{12}$).
    We believe that modifying slightly the program to enable parallel computing
    may allow to compute the fifth moment with the above algorithm in a reasonable time
    but that the sixth moment, which involves more
    than $5 \times 10^{13}$ pairs $(\PPos,\VVal)$, will remain out of reach.
\end{itemize}

\subsection{$\Lambda_\pi$ is not deterministic} \label{ssec:variance}

Finally we will prove Theorem~\ref{thm:main}.\ref{item:i}. 
We have already seen in \cref{rem:nonseparable} (p.\pageref{rem:nonseparable}) that, when $\pi$ is not separable,
$\Lambda_\pi$ is identically $0$ (in particular, deterministic). 
We have also seen in \cref{rem:motif1} (p.\pageref{rem:motif1}) that for the permutation $\pi$ of size $1$, 
$\Lambda_\pi$ is identically $1$ (and thus, again deterministic). 
On the contrary, we now prove that if $\pi$ is separable of size at least $2$, 
then $\Lambda_\pi$ is a true \emph{random} variable and not a deterministic constant. 

When $\pi$ is separable of size at least $2$, to prove that $\Lambda_\pi$ is not deterministic, it would be sufficient to check that $\mathrm{Var}(\Lambda_\pi)>0$. 
Although \cref{prop:computing_moments} provides an expression of the variance of $\Lambda_\pi$, 
this expression does not allow us to prove directly that $\mathrm{Var}(\Lambda_\pi)>0$.  
Instead, we prove in \cref{prop:not-equal} that $\Lambda_\pi \neq \mathbb{E}[\Lambda_\pi]$ with positive probablility. 
From the expression of $\mathbb{E}[\Lambda_\pi]$ given in \cref{prop:expectation_Lamdba_pi}, 
it follows immediately that $\mathbb{E}[\Lambda_\pi] >0$ if $\pi$ is separable. 
So the proof of Theorem~\ref{thm:main}.\ref{item:i} will be completed as soon as we prove that 
$\Lambda_\pi$ takes values as close to $0$ as wanted with positive probability. This is done in \cref{lem:IdentiteATousLesCoups}.

\medskip

\begin{lemma}\label{lem:IdentiteATousLesCoups}
  Let $\pi$ be a separable permutation of size $k \geq 2$,
  $(\Exc,S)$ be the signed Brownian excursion and $\N$ be an integer.
  There exists an event $E_\N=E_\N(\Exc,S)$ such that
  \begin{itemize}
    \item $E_\N$ occurs with positive probability;
    \item If $(\Exc,S)$ is such that $E_\N$ occurs, then $\mathbb{P}^{\XX}(\Perm(\Exc,S,\XX)= \pi )\leq \frac{k^2}{\N}$,
where $\XX=(X_1, \ldots, X_k)$ and the $X_i$'s are uniform independent points in $[0,1]$.
  \end{itemize}
\end{lemma}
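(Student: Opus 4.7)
The strategy is to construct an event $E_\N$ on $(\Exc,S)$ with positive probability under which the extracted permutation equals a specific $\pi^\star \neq \pi$ with $\XX$-probability at least $1-k^2/\N$. Since $k\ge 2$, the two monotone separable permutations $12\cdots k$ and $k(k-1)\cdots 1$ are distinct, so at least one of them differs from $\pi$; call it $\pi^\star$, and let $\eta\in\{+,-\}$ be the sign such that any binary signed tree with all internal vertices labelled $\eta$ produces $\pi^\star$ under $\perm$.

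To define $E_\N$, fix $\N$ disjoint ``valley windows'' $V_1<\cdots<V_\N$ inside $(0,1)$, each of length $(1-1/\N)/\N$, whose complement in $[0,1]$ has total measure $1/\N$. Let $E_\N$ be the joint event that: (i) for each $j$, $\Exc_{|V_j}$ attains its unique minimum at some $\tau_j$ lying in the right-most fraction $1/\N$ of $V_j$, with $m_j:=\Exc(\tau_j)<\epsilon_\N$ for a small fixed $\epsilon_\N>0$; (ii) $\Exc>\epsilon_\N$ on $[0,1]\setminus\bigcup_j V_j$ (bounded away from $\{0,1\}$); (iii) $m_1<m_2<\cdots<m_\N$; and (iv) $S(\tau_j)=\eta$ for every $j$. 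Conditions (i)--(iii) define an open set in $C([0,1])$ containing an explicit ``comb'' function with $\N$ sharp dips of increasing depth located toward the right of each cell, and therefore have positive probability by the full-support theorem for the Brownian excursion in $C([0,1])$ (see \cref{appendix}). Conditional on (i)--(iii), the points $\tau_1,\dots,\tau_\N$ are $\N$ distinct local minima of $\Exc$, so by \cref{dfn:Brownian_exc} their signs are i.i.d.\ fair coins and (iv) holds conditionally with probability $2^{-\N}$. Hence $\mathbb{P}(E_\N)>0$.

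We now assume $E_\N$ holds and sort $\XX=(X_1<\cdots<X_k)$. Let $G$ be the good event that each $X_i\in V_{j_i}$ with the $j_i$'s pairwise distinct, and $X_i<\tau_{j_i}$ for every $i<k$. A union bound, using $\mathbb{P}^\XX(X_i\notin\bigcup V_j)\le 1/\N$, the birthday estimate bounded by $\binom{k}{2}/\N$, and $\mathbb{P}^\XX(X_i\ge \tau_{j_i}\mid X_i\in V_{j_i})\le 1/\N$ from (i), yields $\mathbb{P}^\XX(G^c)\le Ck^2/\N$ for an absolute constant $C$; after replacing $\N$ by $C\N$ in the construction, we may assume $\mathbb{P}^\XX(G^c)\le k^2/\N$. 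On $E_\N\cap G$, for each $i\in\{1,\ldots,k-1\}$ the interval $[X_i,X_{i+1}]$ contains in its interior the point $\tau_{j_i}$ as well as each $\tau_l$ with $j_i<l<j_{i+1}$; by (iii) the smallest of the corresponding values is $m_{j_i}$, while (i) gives $m_{j_i}<\Exc(X_i)$ and (iii) gives $m_{j_i}<m_{j_{i+1}}\le \Exc(X_{i+1})$. The minimum of $\Exc$ on $[X_i,X_{i+1}]$ is therefore $m_{j_i}$, attained only at $\tau_{j_i}$ in the open interval. Condition~\eqref{condition:unique_signs_on_minima} thus holds, and the extracted signed tree $\Tree_\pm(\Exc,S,\XX)$ has all internal vertices labelled $\eta$ by (iv). Hence $\Perm(\Exc,S,\XX)=\pi^\star\neq\pi$, so on $E_\N$,
$$\mathbb{P}^\XX\big(\Perm(\Exc,S,\XX)=\pi\big)\le \mathbb{P}^\XX(G^c)\le k^2/\N,$$
as claimed.

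The only delicate step is the positive-probability claim for (i)--(iii), which reduces to a standard full-support statement for the Brownian excursion; once this is in hand, the rest of the argument is a straightforward union bound combined with an explicit analysis of how the $\XX$-points sit inside the prescribed comb-shaped excursion.
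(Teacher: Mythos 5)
Your proof is correct and follows essentially the same strategy as the paper's: carve $[0,1]$ into $\N$ deep, well-separated valleys whose designated minima all carry the sign $\eta$, invoke the tube lemma (\cref{lem:BrownienTubes}) together with the independence of the signs to get positive probability, and then bound by a union/birthday argument the probability that the $k$ sample points fail to land in distinct cells, on whose complement the extracted permutation is forced to be the monotone $\pi^\star\neq\pi$. The only differences are cosmetic: your conditions (iii) and the placement of $\tau_j$ in the rightmost fraction of $V_j$ are superfluous (any binary tree with all signs $\eta$ already yields $\pi^\star$, which is how the paper avoids controlling the ordering of the valley depths), and the claim that (i)--(iii) form an open set should be mildly repaired by dropping the uniqueness of the minimizer from the open condition and recovering it from the almost-sure distinctness of local minima (\cref{Lemma:MinimumSameLevel}).
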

\begin{proof}
Assume first that $\pi \neq 123\dots k$.
The key idea of the proof is that for some realizations of $(\Exc,S)$ (see a sketch in \cref{fig:evenement_chelou}), $\Perm(\Exc,S,\XX)=123\dots k$ with high probability.
\begin{figure}[ht]
  \begin{center}
\includegraphics[width=13cm]{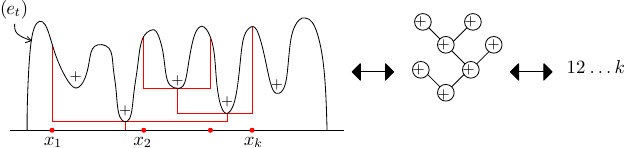}
  \end{center}
  \caption{A realization of $(\Exc,S)$ for which $\Perm(\Exc,S,\XX)=123\dots k$ with high probability.}
  \label{fig:evenement_chelou}
\end{figure}

We set $\eta = \tfrac{1}{4\N^2}$ and
define intervals as follows:
\begin{itemize}
  \item for integers $\ell$ between $0$ and $\N$,
    we set $J_\ell=[\tfrac{\ell}{\N} -\eta;\tfrac{\ell}{\N} + \eta]\cap[0,1]$;
  \item for integers $\ell$ between $1$ and $\N$,
    we set $I_\ell=[\tfrac{\ell-1}{\N} + \eta;\tfrac{\ell}{\N} - \eta]$.
\end{itemize}
These intervals are represented on \cref{fig:intervals}. 
We have $\mathrm{Length}(I_\ell)=1/\N-2\eta$ for each $\ell$. Moreover, $\mathrm{Length}(J_\ell)=2\eta$ for $\ell \neq 0$ and $\ell \neq \N$, while $\mathrm{Length}(J_0)=\mathrm{Length}(J_{\N})=\eta$.

\begin{figure}[ht]
  \begin{center}
    \includegraphics[width=12cm]{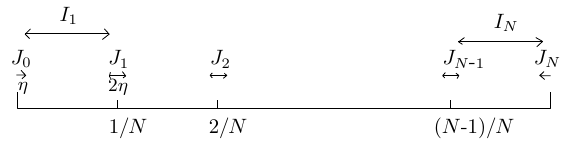}
  \end{center}\vspace{-8mm}
  \caption{Intervals $I_\ell$ and $J_\ell$.}
  \label{fig:intervals}
\end{figure}

Let $E_\N=E_\N(\Exc,S)$ be the event defined as follows:
\begin{enumerate}
  \item \label{item:i} For each $\ell$, $\min_{J_\ell} \Exc\leq 1/10$. 
  \item \label{item:ii} $\min_{I_1\cup I_2 \cup \dots \cup I_\N} \Exc\geq 2/10$.
\item The signs of the $\N-1$ local minima of $\Exc$ on $J_1,\dots J_{\N-1}$ are all $+$.
\end{enumerate}
Taking $f$ a piecewise linear excursion such that $f(\tfrac{\ell}{\N})=0$ for integers $\ell$ between $0$ and $\N$ 
and $f(x)=3/10$ for $x \in I_1\cup I_2 \cup \dots \cup I_\N$, 
and with $\delta=1/10$, 
it follows from \cref{lem:BrownienTubes} that 
$$
\mathbb{P}^{\Exc,S}(E_\N) =\frac{1}{2^{\N-1}} 
\mathbb{P}^\Exc(\Exc\text{ satisfies \cref{item:i,item:ii}})>0.
$$
Consider now a realization $(\Exc,S)$ of the signed Brownian excursion such that $E_\N$ is realized, and 
let $\XX=(X_1, \ldots, X_k)$ be $k$ uniform independent points in $[0,1]$.
The construction of $E_\N$ ensures that $\Perm(\Exc,S,\XX)= 123\dots k$ as soon as each $X_i$ is in an interval $I_\ell$ such that $I_\ell$ does not contain any other $X_j$. 
Equivalently, if $\Perm(\Exc,S,\XX)\neq 123\dots k$ then one of the two following events occurs:
\begin{itemize}
\item For some $i$, $X_i$ belongs to an interval $J_\ell$;
\item For some $i\neq j$, $X_i$ and $X_j$ belong to the same interval $I_\ell$.
\end{itemize}
Therefore we get
\begin{align*}
\mathbb{P}^{\XX}(\Perm(\Exc,S,\XX)= \pi)&\leq \mathbb{P}^{\XX}(\Perm(\Exc,S,\XX)\neq 123\dots k )\\
&\leq k \sum_{\ell=0}^\N\mathbb{P}^{\XX}(X_1\in J_\ell)  + \binom{k}{2}\sum_{\ell=1}^\N \mathbb{P}^{\XX}(X_1,X_2\in I_\ell)\\
&\leq k2\N\eta+ \binom{k}{2} \N \left(\frac{1}{\N}-2\eta\right)^2\\
&\leq \frac{k}{2\N}+ \binom{k}{2} \frac{1}{\N} \ = \frac{k^2}{\N}.\qquad 
\text{(recall that $\eta = \tfrac{1}{4\N^2}$)}.
\end{align*}
This finishes the proof of \cref{lem:IdentiteATousLesCoups} when $\pi\neq 123\dots k$. 
By symmetry, the same result holds for $\pi\neq k\dots 321$. 
Therefore, the statement of \cref{lem:IdentiteATousLesCoups} holds for all separable permutations of size at least $2$. 
\end{proof}

We can now establish  the announced proposition.
\begin{proposition}\label{prop:not-equal}
For any separable pattern $\pi$ of size at least $2$, 
$$
\mathbb{P}^{\Exc,S}(\Lambda_\pi< \mathbb{E}[\Lambda_\pi]) >0.
$$
In particular, $\Lambda_\pi$ is not almost surely constant.
\end{proposition}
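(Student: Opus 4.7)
The plan is to combine the two ingredients already established just before the statement: on the one hand, Lemma \ref{lem:IdentiteATousLesCoups} tells us that $\Lambda_\pi$ can be made arbitrarily small on an event of positive probability, while on the other hand, Proposition \ref{prop:expectation_Lamdba_pi} (together with Observation \ref{obs:N_pi}) gives $\mathbb{E}[\Lambda_\pi] = N_\pi/(2^{k-1}\Cat_{k-1}) > 0$ whenever $\pi$ is separable (since then $N_\pi \geq 1$).

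More precisely, first I would fix the separable pattern $\pi$ of size $k \geq 2$ and compute the threshold $\mathbb{E}[\Lambda_\pi]$, which is a strictly positive real number depending only on $\pi$. Then I would choose an integer $\N$ large enough that $k^2/\N < \mathbb{E}[\Lambda_\pi]$; this is possible since $\mathbb{E}[\Lambda_\pi] > 0$. Applying Lemma \ref{lem:IdentiteATousLesCoups} with this choice of $\N$, we obtain an event $E_\N$ of positive probability on which
\[\Lambda_\pi = \mathbb{P}^{\XX}(\Perm(\Exc,S,\XX) = \pi) \leq \frac{k^2}{\N} < \mathbb{E}[\Lambda_\pi].\]
Hence $\{\Lambda_\pi < \mathbb{E}[\Lambda_\pi]\} \supset E_\N$, which proves the first assertion.

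For the second assertion, if $\Lambda_\pi$ were almost surely equal to some constant $c$, then necessarily $c = \mathbb{E}[\Lambda_\pi]$, contradicting $\mathbb{P}(\Lambda_\pi < \mathbb{E}[\Lambda_\pi]) > 0$. There is no real obstacle here: all the work has been done in Lemma \ref{lem:IdentiteATousLesCoups}, whose proof already absorbed the main difficulty of exhibiting a concrete ``tube'' event for the Brownian excursion on which the extracted permutation is (with high probability) forced to be the identity and hence differs from $\pi$. The only thing to be careful about is that $\mathbb{E}[\Lambda_\pi]$ is strictly positive, which is ensured by separability of $\pi$ via the explicit formula of Proposition \ref{prop:expectation_Lamdba_pi}.
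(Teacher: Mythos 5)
Your proposal is correct and follows essentially the same route as the paper: the paper's proof also chooses $\N$ with $k^2/\N < \mathbb{E}[\Lambda_\pi]$ (which is positive by \cref{prop:expectation_Lamdba_pi}) and then applies \cref{lem:IdentiteATousLesCoups} to get an event of positive probability on which $\Lambda_\pi < \mathbb{E}[\Lambda_\pi]$. Nothing is missing.
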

\begin{proof} 
Let $k \geq 2$ be the size of $\pi$. Since $\mathbb{E}[\Lambda_\pi]>0$ (from \cref{prop:expectation_Lamdba_pi}) and $k$ is fixed,
one can choose $\N$ big enough such that $k^2/\N<\mathbb{E}[\Lambda_\pi]$.
For such a value of $\N$, let $E_\N$ be the event given by \cref{lem:IdentiteATousLesCoups}.
If $(\Exc,S)$ is such that $E_\N$ is realized, then 
(with $\XX=(X_1, \ldots, X_k)$ a tuple of $k$ uniform independent points in $[0,1]$)
\[\Lambda_\pi = 
\mathbb{P}^{\XX}(\Perm(\Exc,S,\XX)= \pi )\leq \frac{k^2}{\N} 
< \mathbb{E}[\Lambda_\pi].\]
Since $E_\N$ has positive probability,
the event $\Lambda_\pi< \mathbb{E}[\Lambda_\pi]$ also occurs with positive probability.
\end{proof}

\appendix
\section{Useful facts regarding the Brownian excursion} \label{appendix}
For the convenience of the reader, we now record several useful properties of a typical realization of the Brownian excursion $\Exc$.

There are several ways to define the Brownian excursion, the most convenient for us is to draw a realization of $\Exc$ from a realization of the Brownian motion $(B_t)_{t\geq 0}$, as follows (see \cite[Section 0.2]{Pitman}). Consider
\begin{equation}
a=\sup\{t\leq 1 : B_t=0\}, \qquad b=\inf\{t\geq 1 : B_t=0\} 
\label{Eq:Bornes_dilatation}
\end{equation}
(almost surely $a<1<b$), and set 
$$
\left(\Exc(s)\right)_{0\leq s\leq 1}:=\left(\frac{1}{\sqrt{b-a}}\big|B_{a+s(b-a)}\big|\right)_{0\leq s\leq 1}.
$$
Thus the Brownian excursion $\Exc$ is seen as a dilatation of a piece of $B$. 
It follows that some almost-sure properties of the set of local extrema of $B$ remain true for $\Exc$.

Recall that by definition,
$x$ is a one-sided local minimum for $f$ if, for some $\eps>0$
$$
f(x)=\min_{[x-\eps,x]}f\quad \text{ or }\quad f(x)=\min_{[x,x+\eps]}f. 
$$
\begin{lemma}
\label{Cor:OneSided}
  The set of one-sided local minima of the Brownian excursion $\Exc$  
  has Lebesgue measure $0$, almost surely.
\end{lemma}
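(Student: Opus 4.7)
The plan is to transfer the statement to standard Brownian motion via the construction recalled just before the lemma. The affine change of variables $s \mapsto a + s(b-a)$ is a bijection, so a one-sided local minimum of $\Exc$ corresponds to a one-sided local minimum of $|B|$ on $[a,b]$. It is classical that the zero set $\{t \in [0,b] : B_t = 0\}$ has Lebesgue measure zero almost surely (it has Hausdorff dimension $1/2$). Off this null set, $|B|$ is locally equal to either $B$ or $-B$, so a one-sided local minimum of $|B|$ at such a point is a one-sided local minimum of $B$ or a one-sided local maximum of $B$. Thus it suffices to prove that the set $A$ of points $x \in [0,b]$ such that $B_x = \min_{[x,x+\eps]} B$ for some $\eps > 0$, together with its three analogues (right/left, min/max), has Lebesgue measure $0$ almost surely.

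I would then rewrite $A$ in terms of rational endpoints. If $x \in A$ with witness $\eps > 0$, then any rational $q \in (x, x+\eps)$ satisfies $B_x = \min_{[x,q]} B$, hence
\[
A \subseteq \bigcup_{q \in \mathbb{Q} \cap (0,\infty)} A_q, \qquad A_q := \{x \in [0,q) : B_x = \textstyle\min_{[x,q]} B\}.
\]
For fixed $q$, introduce the time-reversed Brownian motion $W_s := B_{q-s} - B_q$, $s \in [0,q]$. Then $x \in A_q$ if and only if $q - x$ is a time at which $W$ attains its running maximum, i.e. $W_{q-x} = \max_{[0,q-x]} W$. Therefore $|A_q| = |\{s \in [0,q] : W_s = \max_{[0,s]} W\}|$.

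The final step is the classical fact that for a Brownian motion $W$, the set of times at which $W$ attains its running maximum has Lebesgue measure zero almost surely. This follows from Lévy's theorem: the reflected process $\max_{[0,\cdot]} W - W$ has the same law as $|\tilde W|$ for another Brownian motion $\tilde W$, and the zero set of $|\tilde W|$ has Hausdorff dimension $1/2$ (and in particular Lebesgue measure zero). Hence each $A_q$ is null a.s., and a countable union of null sets is null, giving $|A| = 0$ a.s. The three symmetric cases (reversing left/right and passing to $-B$) are handled identically, which finishes the proof.

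No serious obstacle is anticipated: the argument is a rational-exhaustion reduction to the standard measure-zero property of the running-maximum set of Brownian motion. The only care needed is the initial reduction from $\Exc$ to $B$, which uses nothing more than the measure-zero property of the zero set of $B$ and the fact that the dilatation preserves the property of being a one-sided local minimum.
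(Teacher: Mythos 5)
Your proof is correct, but it follows a genuinely different route from the paper's. The paper works directly with the Brownian motion $B$ and applies Fubini's theorem: writing $\mathrm{Leb}(\Omin(B)) = \int_0^{+\infty} \One[u \in \Omin(B)]\,du$ and taking expectations, everything reduces to the pointwise fact that a \emph{fixed} time $u$ is almost surely not a one-sided local minimum of $B$ (an adaptation of \cite[Theorem 1.27]{Peres}); the nonnegative random variable $\mathrm{Leb}(\Omin(B))$ then has zero expectation, hence vanishes almost surely, and the statement for $\Exc$ follows by dilatation. You instead cover the set of one-sided minima by countably many sets $A_q$ indexed by rational right endpoints, convert each $A_q$ by time reversal into the set of times at which a Brownian motion attains its running extremum, and invoke L\'evy's identity together with the measure-zero property of the Brownian zero set. (One trivial slip: with your definition $W_s = B_{q-s}-B_q$, the set $A_q$ corresponds to times where $W$ attains its running \emph{minimum}, not maximum; this is immaterial by the symmetry $W \mapsto -W$.) Both arguments are sound. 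The paper's is shorter and needs only the elementary single-time statement; yours is more hands-on, requires L\'evy's theorem as input, and has the small advantage of treating explicitly the passage from one-sided minima of $|B|$ to one-sided extrema of $B$, a point that the paper's appeal to ``dilatation'' leaves implicit.
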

\begin{proof}
We first prove an analogous statement for the Brownian motion $(B_t)_{t\geq 0}$.
We denote $\Omin(B)$ the set of one-sided local minima of $B$, and $\mathrm{Leb}(\Omin(B))$ its Lebesgue measure.
We have
$$
\mathrm{Leb}(\Omin(B))=\int_0^{+\infty} \mathbf{1}_{u\in \Omin(B)} du,
$$
so that, taking the expectation with respect to $B$, 
$$
\mathbb{E}[\mathrm{Leb}(\Omin(B))]=\int_0^{+\infty} \mathbb{E}[\mathbf{1}_{u\in \Omin(B)}] du
= \int_0^{+\infty} \mathbb{P}(u\in \Omin(B)) du =0.
$$
In the last equality we used the fact that for every fixed $u$, $\mathbb{P}(u\in \Omin(B))=0$: 
Theorem 1.27 in \cite{Peres} gives a similar result for local minima (\emph{i.e.}, two-sided minima),
but the proof is easily adapted to the case of one-sided minima.
Thus $\mathrm{Leb}(\Omin(B))$ is a nonnegative random variable with expectation $0$, and therefore is equal to $0$ almost surely.

The statement then follows for $\mathrm{Leb}(\Omin(\Exc))$ by dilatation,
since the dilatation of a set of measure zero has measure zero as well. 
\end{proof}

We now discuss values of local minima.
\begin{lemma}\label{Lemma:MinimumSameLevel}
With probability one  the Brownian excursion $\Exc$ has no two local minima with the same value.
\end{lemma}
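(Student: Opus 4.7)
The plan is to first establish the analogous statement for standard Brownian motion $(B_t)_{t\ge 0}$ and then transfer it to $\Exc$ via the dilatation formula from \eqref{Eq:Bornes_dilatation}, exactly as was done for \cref{Cor:OneSided}. Since the map that sends $B$ to $\Exc$ rescales both the time axis and the values by (random) positive constants and takes an absolute value on an interval where $B$ is of constant sign (the excursion straddling time $1$), it sends distinct local minima to distinct local minima. So it is enough to prove: almost surely, no two local minima of $B$ have the same value.

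To prove this for $B$, I would use a standard countable covering argument. Every local minimum of $B$ is attained in the interior of some interval $[q_1,q_2]$ with $q_1<q_2$ rational; moreover the value of this local minimum equals $\min_{[q_1,q_2]} B$ for some such pair (pick $q_1,q_2$ close enough to the local min). Hence the set of values taken at local minima of $B$ is contained in the countable set
\[
\mathcal{V} := \bigl\{ \min_{[q_1,q_2]} B : (q_1,q_2) \in \Q^2, \ q_1<q_2 \bigr\}.
\]
Therefore it suffices to show that for any two pairs $(q_1,q_2)$ and $(q_3,q_4)$ of rationals with $q_1<q_2$, $q_3<q_4$ and $(q_1,q_2)\cap(q_3,q_4)=\emptyset$ (and a similar easy argument when the intervals overlap, reducing to a single interval using the strong Markov property), the random variables $M_{12}:=\min_{[q_1,q_2]} B$ and $M_{34}:=\min_{[q_3,q_4]} B$ satisfy $\proba(M_{12}=M_{34})=0$, and then take a countable union.

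For two disjoint rational intervals (assume $q_2\le q_3$), I would condition on the $\sigma$-algebra generated by $(B_t)_{t \in [0,q_3]}$. Under this conditioning $M_{12}$ is deterministic while $M_{34}$ is distributed as $B_{q_3}+\widetilde{M}$, where $\widetilde{M}:=\min_{[0,q_4-q_3]} \widetilde B$ is the minimum of an independent Brownian motion $\widetilde B$ started at $0$. The random variable $\widetilde{M}$ has a continuous (in fact absolutely continuous) distribution on $(-\infty,0]$ by the reflection principle, so $\proba(M_{34}=c\mid (B_t)_{t\le q_3})=0$ for every constant $c$; integrating gives $\proba(M_{12}=M_{34})=0$. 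The case of two overlapping (but distinct) intervals reduces, after subdividing at a rational point of their symmetric difference, to the disjoint case. Summing over the countably many pairs of rational intervals finishes the proof for $B$, and the dilatation transfer then yields the claim for $\Exc$.

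The main obstacle is purely a careful bookkeeping one: making sure the countable covering by rational intervals really captures \emph{all} local minima and that when two intervals overlap, the argument still produces a pair of conditionally independent continuous variables. Once that is sorted out, the absolute-continuity input from the reflection principle is standard and the transfer from $B$ to $\Exc$ is immediate (rescaling and absolute value on an interval where $B$ does not vanish preserve strict inequalities between values of local minima).
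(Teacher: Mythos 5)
Your proposal is correct and follows essentially the same route as the paper: the paper's proof simply cites Kallenberg's Lemma 11.15 for the statement that Brownian motion a.s.\ has no two local minima at the same level, and then transfers it to $\Exc$ via the dilatation \eqref{Eq:Bornes_dilatation}, exactly as you do; your rational-interval covering plus conditioning argument is just the standard proof of that cited lemma written out. (Two trivial points you could tighten: distinct local minima can always be separated by \emph{disjoint} rational intervals, so the overlapping case is not needed; and when $B<0$ on the excursion interval the local minima of $\Exc$ come from local \emph{maxima} of $B$, handled by the symmetry $B\mapsto -B$.)
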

\begin{proof}
For the Brownian motion it is the statement of \cite[Lemma 11.15]{Kallenberg}. This is also true for $\Exc$ since it is a dilatation of the Brownian motion.
\end{proof}
An important consequence of \cref{Lemma:MinimumSameLevel} in the present paper is that, for all set of $k$ distinct points $\xx$ of $[0,1]$ 
and for almost all realizations of $\Exc$, the tree $\Tree(\Exc,\xx)$ obtained in \cref{Section:Extracting} is a binary tree (because of \cref{obs:distinctMinImpliesBinary}).

A remarkable fact is that if $\xx$ is uniformly distributed this random binary tree is uniform (see \cite[Section 2.6]{LeGall}):
\begin{lemma}\label{Lemma:ArbreBinaireUniforme}
Fix $k\geq 2$ and a binary tree $t_0$ with $k$ leaves.
Let $U_1,\dots, U_{k}$ be $k$ uniform and independent random variables in $[0,1]$, independent from $\Exc$. Then
$$
\proba(\Tree(\Exc,\{U_1,\dots,U_k\})=t_0)=\frac{1}{\Cat_{k-1}}.
$$
\end{lemma}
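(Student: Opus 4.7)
The plan is to derive this classical fact from Aldous's theorem on the marginal law of the Brownian Continuum Random Tree (CRT), in the planar formulation given in \cite[Section 2.6]{LeGall}. The Brownian excursion $\Exc$ encodes a random planar real tree (the CRT), and for $k$ uniform random points $U_1,\dots,U_k$ in $[0,1]$, the tree $\Tree(\Exc,\{U_1,\dots,U_k\})$ of \cref{dfn:Tree} is, by construction, the combinatorial planar shape of the ``reduced tree'' spanned by these $k$ points in the CRT, with leaves listed in left-to-right order $U_{(1)}<\cdots<U_{(k)}$. By \cref{obs:distinctMinImpliesBinary} together with \cref{Lemma:MinimumSameLevel}, this shape is almost surely a planar binary tree with $k$ leaves, and Aldous's theorem asserts that it is uniformly distributed over the $\Cat_{k-1}$ planar rooted binary trees with $k$ leaves.

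If one prefers an argument internal to the framework of the paper, I would proceed by discretization. The Brownian excursion arises, up to a scaling constant, as the limit of the normalized contour of a uniform Schröder tree $T_n$ with $n$ leaves (\cref{prop:ConvergenceContour}). Combined with the asymptotic uniformity of leaf positions (\cref{PropLeavesUniform}) and the continuity of $\ProbTree(t_0;\cdot)$ (\cref{lem:continuityProbTree}), one would write the target probability as the limit of $\esper[\ProbTree(t_0;\widetilde{C_{T_n}},F_{T_n})]$. By an argument analogous to \cref{Lem:OurProcess=PropPatterns}, this quantity differs only by a vanishing amount from the probability that a uniformly chosen $k$-subset of leaves of $T_n$ induces the subtree $t_0$ (in the sense of \cref{dfn:induced_subtree}).

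The main obstacle will then be a purely combinatorial estimate: showing that this discrete probability tends to $1/\Cat_{k-1}$ as $n\to\infty$, uniformly in the binary tree $t_0$. A natural strategy is to enumerate pairs $(T,\text{marked }k\text{-subset of leaves})$ whose induced subtree equals $t_0$, by expanding each internal vertex and each edge of $t_0$ into arbitrary Schröder structures (subject to the constraints that the expansion preserves the ``common ancestor'' relations). The key observation would be that the resulting generating function depends on $t_0$ only through its number of internal vertices ($k-1$) and its number of edges ($2k-2$), which are the same for every binary tree with $k$ leaves; hence the leading-order count in $n$ is independent of the shape $t_0$, yielding the uniform distribution. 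A slightly more delicate but similar estimate would show that the contribution of non-binary induced subtrees is of lower order in $n$, completing the argument.
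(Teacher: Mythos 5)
Your first paragraph is exactly the paper's treatment: the paper states this lemma without proof, citing Aldous's theorem in the planar formulation of \cite[Section 2.6]{LeGall}, which is precisely the reduction you describe (the extracted tree is the combinatorial shape of the reduced subtree of the CRT spanned by $k$ uniform points, a.s. binary by \cref{obs:distinctMinImpliesBinary} and \cref{Lemma:MinimumSameLevel}, and uniform by Aldous). That suffices, and matches the paper. Your alternative discretization route is a genuinely different (and not circular) strategy, but as you yourself note, its key step --- the shape-independence of the count of Schröder trees with a marked $k$-subset of leaves inducing $t_0$, together with the negligibility of non-binary induced shapes --- is only asserted, not proved, so it should not be presented as a complete argument.
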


It is in fact even possible to describe the law of the {\em geometric tree}
extracted from $\Exc$ and $U_1,\cdots,U_k$, \emph{i.e.}, a tree with edge-lengths that are nonnegative real numbers
(see \cite[Th. 2.11]{LeGall}).
In this paper, we use a rather weak consequence of this result.
\begin{lemma}\label{lem:tout_a_une_densite}
  Take $k$ i.i.d. uniform random variables in $[0,1]$ independently from $\Exc$
  and call them $U_1 < \cdots<U_k$.
Set $M_i=\min_{[U_i,U_{i+1}]}\Exc$. Then the random vector
\begin{equation*}
\vv= \Big(\Exc(U_1),\dots,\Exc(U_k),M_1,\dots,M_{k-1}\Big)
  \label{Eq:RandomVectorEq:RandomVector}
\end{equation*}
has distinct coordinates with probability $1$.
\end{lemma}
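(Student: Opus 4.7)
The plan is to derive this lemma as the announced ``weak consequence'' of \cite[Theorem 2.11]{LeGall}, which describes the joint law of the geometric tree extracted from $\Exc$ at $k$ uniform marked points. The argument will proceed by conditioning on the combinatorial tree structure, invoking Le Gall's result to get absolute continuity of $\vv$, and then observing that the bad event sits inside a finite union of hyperplanes.

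First, I would use \cref{Lemma:ArbreBinaireUniforme} (or directly \cref{Lemma:MinimumSameLevel} together with \cref{obs:distinctMinImpliesBinary}) to reduce to the case where the combinatorial tree $t = \Tree(\Exc, \{U_1, \dots, U_k\})$ is a fixed binary tree $t_0$ with $k$ leaves: there are only $\Cat_{k-1}$ such trees, each occurring with positive probability $1/\Cat_{k-1}$, so it is enough to show that conditionally on $\{t = t_0\}$, two coordinates of $\vv$ coincide with probability zero. Note that, under this conditioning, the $2k-1$ coordinates of $\vv$ are precisely the heights (above the ground level $0$) of the $2k-1$ vertices of $t_0$: the $\Exc(U_i)$ encode the leaves, while each $M_i = \min_{[U_i,U_{i+1}]} \Exc$ equals the height of the common ancestor in $t_0$ of the $i$-th and $(i+1)$-th leaves, which by binarity ranges over the $k-1$ internal vertices of $t_0$ as $i$ varies.

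Next, \cite[Theorem 2.11]{LeGall} asserts that, conditionally on $\{t = t_0\}$, the geometric tree realizing $t_0$ inside $\Exc$ admits an explicit density with respect to Lebesgue measure on an open subset of $\R_+^{2k-1}$ (parameterizing the height of the root of $t_0$ together with the $2k-2$ edge-lengths, or equivalently, the heights of all $2k-1$ vertices). Since $\vv$ is obtained from this parameterization by an invertible linear map — each $\Exc(U_i)$ and each $M_j$ is the sum of edge-lengths along the path from the ground to the corresponding vertex — the random vector $\vv$ itself admits a density with respect to Lebesgue measure on an open subset of $\R^{2k-1}$ conditionally on $\{t = t_0\}$.

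Finally, the event that some two coordinates of $\vv$ coincide is contained in the finite union of affine hyperplanes
\[\bigcup_{1 \le \alpha < \beta \le 2k-1} \{\, v \in \R^{2k-1} : v_\alpha = v_\beta \,\},\]
which has Lebesgue measure zero in $\R^{2k-1}$ and therefore conditional probability zero on $\{t = t_0\}$. Summing over the finitely many possible binary trees $t_0$ yields the lemma. The only real step — and the one where the work lies — is the translation of Le Gall's theorem into the absolute-continuity statement for $\vv$ above; once this is granted, the conclusion is purely a measure-theoretic triviality.
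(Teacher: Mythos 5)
Your proposal is correct and follows essentially the same route as the paper: condition on the combinatorial shape $t_0$, invoke \cite[Th. 2.11]{LeGall} to get a density for the geometric tree, observe that the coordinates of $\vv$ are sums of edge-lengths so that $\vv$ itself has a density on $\R^{2k-1}$, and conclude since the diagonal hyperplanes are Lebesgue-null. Your explicit remark that the passage from edge-lengths to vertex heights is an invertible linear map is a welcome (slightly more careful) justification of the step the paper states without comment.
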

\begin{proof}
  As said above, the law of the geometric tree extracted from $\Exc$ and $U_1,\cdots,U_k$ 
  is given in \cite[Th. 2.11]{LeGall}; this law has a density with respect
  to the uniform distribution on geometric trees.
  Conditioning on the fact that $\Tree(\Exc,\{U_1,\dots,U_k\})$ is a given $t_0$ with $k$ leaves,
  the coordinates of $\vv$ 
  are sums of edge-lengths of the geometric tree.
  Hence the vector $\vv$ has a density with respect to the Lebesgue measure
  on $\R^{2k-1}$.
  Without conditioning, $\vv$  has also a density,
  which is simply the mean of the conditional densities.
  This implies the lemma.
\end{proof}
\smallskip

Finally we need the fact that the Brownian excursion $\Exc$ is arbitrary close to any fixed Lipschitz excursion
with positive probability. 
(A Lipschitz excursion
is simply an excursion that is also a Lipschitz function, \emph{i.e.}, 
there exists $C>0$ such that $|f(x)-f(y)| \le C|x-y|$ for all $x,y$ in $[0,1]$.)
\begin{lemma}\label{lem:BrownienTubes}
For every Lipschitz excursion $f$ and $\delta>0$,
$$
\proba\left( \sup_{0\leq s\leq 1}|\Exc(s)-f(s)|\leq \delta\right) >0.
$$
\end{lemma}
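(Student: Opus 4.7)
The plan is to exploit the representation \eqref{Eq:Bornes_dilatation} of $\Exc$ as a rescaled piece of Brownian motion $B$, combined with the classical support theorem for $B$: for any Lipschitz $g:[0,T]\to\R$ with $g(0)=0$ and any $\eps>0$, one has $\proba(\sup_{[0,T]}|B-g|\le\eps)>0$ (by Cameron--Martin together with $\proba(\sup_{[0,T]}|B|\le\eps)>0$). As a preliminary reduction: given $f$ and $\delta>0$, the function $f_\delta(s):=f(s)+(\delta/4)\cdot 4s(1-s)$ is a Lipschitz excursion strictly positive on $(0,1)$ with $\|f-f_\delta\|_\infty\le\delta/4$, so it suffices to prove the lemma for $f_\delta$ with precision $\delta/2$. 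Hence, we may assume $f>0$ on $(0,1)$.

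I then construct a target $F:[0,2]\to\R$ whose shape will force the excursion of $B$ straddling $t=1$ to lie close to $[1/2,3/2]$ and to match $f(\cdot-1/2)$ on that interval. For small parameters $\eps,\eta>0$, let $F$ be the continuous function equal to $-1$ on the ``plateaux'' $[\eta,1/2-\eta]\cup[3/2+\eta,2-\eta]$, equal to $f(t-1/2)$ on $[1/2,3/2]$, and affine on each remaining subinterval of $[0,2]$, interpolating so that $F$ takes the value $0$ at $0$, $1/2$, $3/2$, and $2$ (note $f(0)=f(1)=0$, so $F$ is continuous at the junction points $1/2$ and $3/2$). Then $F$ is Lipschitz, so the support theorem yields $\proba(E_\eps)>0$ for $E_\eps:=\{\sup_{[0,2]}|B-F|\le\eps\}$. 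On $E_\eps$ with $\eps<1/4$, $B$ is strictly negative on both plateaux; and since $f>0$ on $(0,1)$, one can pick $\beta=\beta(\eps,f)\in(0,1/2)$, with $\beta\to 0$ as $\eps\to 0$, satisfying $f\ge 2\eps$ on $[\beta,1-\beta]$, which forces $B>0$ on $[1/2+\beta,3/2-\beta]\ni 1$. Intermediate-value arguments then give
$$a:=\sup\{t\le 1:B_t=0\}\in[1/2-\eta,1/2+\beta],\quad b:=\inf\{t\ge 1:B_t=0\}\in[3/2-\beta,3/2+\eta],$$
with $B>0$ on the entire interval $(a,b)$.

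It remains to estimate $|\Exc(s)-f(s)|=|B_{a+s(b-a)}/\sqrt{b-a}-f(s)|$ uniformly for $s\in[0,1]$, setting $t:=a+s(b-a)\in[a,b]$. Two regimes arise: for $t\in[1/2,3/2]$, the identity $F(t)=f(t-1/2)$ combined with $|B_t-F(t)|\le\eps$, Lipschitz continuity of $f$, $|t-1/2-s|\le 3\max(\eta,\beta)$, and $|\sqrt{b-a}-1|\le\max(\eta,\beta)$ yields $|\Exc(s)-f(s)|=O_f(\eps+\max(\eta,\beta))$; for $t\in[a,1/2)\cup(3/2,b]$, the relations $B_a=B_b=0$ and $|B-F|\le\eps$ at the endpoints force $|F|\le\eps$ on those intervals, so $|B_t|\le 2\eps$, while Lipschitz continuity of $f$ near its endpoints gives $f(s)=O(\max(\eta,\beta))$, again leading to $O_f(\eps+\max(\eta,\beta))$. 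Choosing $\eps,\eta$ small enough so this bound is $\le\delta/2$, we get $E_\eps\subset\{\sup|\Exc-f_\delta|\le\delta/2\}\subset\{\sup|\Exc-f|\le\delta\}$, and positivity of $\proba(E_\eps)$ concludes the proof.

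The main obstacle is the localization of the excursion $(a,b)$ of $B$ around $t=1$: uniform proximity of $B$ to $F$ does not directly constrain the zero-crossings of $B$, which is why $F$ must be designed with deep negative plateaux flanking $[1/2,3/2]$. These plateaux force $B$ to be strictly negative well away from the target window, and then intermediate-value arguments on the affine transitions of $F$ pin down $a$ and $b$ in the required small neighborhoods of $1/2$ and $3/2$.
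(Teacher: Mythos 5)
Your proof is correct and follows essentially the same strategy as the paper's: reduce to an $f$ strictly positive on $(0,1)$, apply the Brownian support theorem to a target that equals $f(\cdot-1/2)$ on $[1/2,3/2]$ and is negative on flanking regions so as to pin the excursion of $B$ straddling $t=1$ near $[1/2,3/2]$, and conclude via the dilatation formula and a triangle-inequality estimate. The only (cosmetic) differences are your parabolic bump in place of the paper's $\min(\delta,t,1-t)$ and your target $F$ built on all of $[0,2]$ with plateaux at $-1$, where the paper instead extends $f$ by small negative linear pieces and works on the window $[1/2-\delta,3/2+\delta]$.
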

\begin{proof}
  The proof relies on a similar result for Brownian motion \cite[Sec.1.4]{Freedman}. 
  Let us give some details.

  Fix a Lipschitz excursion $f$ and $\delta>0$ as in the statement of the lemma. 
  Without loss of generality, assume that $\delta < 1/2$. 
  Define $g_\delta(t)=\min(\delta,t,1-t)$, so that $||g_\delta||_\infty=\delta$.
  Then 
  \[ |\Exc(s)-f(s)-g_{\delta/2}(s)| \leq \delta/2 \ \Rightarrow |\Exc(s)-f(s)|\leq \delta. \]
  Therefore replacing if necessary $f$ by $f+g_{\delta/2}$ and $\delta$ by $\delta/2$,
  we can assume without loss of generality that the following holds:
  \begin{equation}
    \text{for any }\eta \le \delta/2\text{ and }s \in [\eta,1-\eta],
    \text{ one has } f(s) \geq \eta.
    \label{Eq:Exc_loin_zero}
  \end{equation}
    
  We extend $f$ to the interval $[-\delta,1+\delta]$ by setting $f(t)=t$ if $t \le 0$ 
  and $f(t)=1-t$ if $t \ge 1$.
  Let $(B_t)_{t \ge 0}$ be a realization of the Brownian motion.
  Let $\eta \in (0,\delta/2]$.
  Using the results of \cite[Sec.1.4]{Freedman}, we know that with positive probability we have
  \begin{equation}
  \sup_{s \in [1/2 - \delta,3/2+\delta]} |B_s - f(s-\tfrac12)| < \eta.
    \label{eq:tube_Brownien}
  \end{equation}
  Together with \eqref{Eq:Exc_loin_zero},
  this implies in particular that 
  \[\begin{cases}
    B_s>0 &\text{ if }s \in [\frac12+\eta,\frac32-\eta];\\
    B_s<0 &\text{ if }s \in[\frac12-\delta,\frac12-\eta] \cup [\frac32+\eta,\frac32+ \delta].
  \end{cases}\]
  Therefore, if we define $a$ and $b$ as in \eqref{Eq:Bornes_dilatation},
  we have
  \begin{equation}
    \tfrac12-\eta < a < \tfrac12+\eta, \qquad \tfrac32-\eta < b < \tfrac32+\eta.
    \label{Eq:AB_Controlled}
  \end{equation}
  In particular $|b-a-1| < 2 \eta$. 
  For $s$ in $[0,1]$, we can write 
  \begin{align*}
    |e(s)-f(s)|=&\left| \frac{1}{\sqrt{b-a}} B_{a+s(b-a)} -f(s)\right| \\
    \le &\frac{1}{\sqrt{b-a}} \left|  B_{a+s(b-a)} -f(a-\tfrac12+s(b-a)) \right|\\
    &+\left| \frac{1}{\sqrt{b-a}} -1 \right| \cdot \left| f(a-\tfrac12+s(b-a)) \right|+ \big| f(a-\tfrac12+s(b-a)) -f(s)\big|.
  \end{align*}
  Using \cref{eq:tube_Brownien,Eq:AB_Controlled}, the inequality $|b-a-1| < 2 \eta$ and 
  the fact that $f$ is a bounded Lipschitz function, 
  it is not hard to see that this upper bound is smaller than $C \eta$ for some constant $C$.   
  Since $\eta$ can be chosen as small as wanted, 
  we may assume $C \eta \le \delta$ 
  and we get $|e(s)-f(s)| \le \delta$ (for all $s$ in $[0,1]$).
  
  In summary, for $\eta$ sufficiently small, 
  \eqref{eq:tube_Brownien} implies $\sup_{0 \le s \le 1} |e(s)-f(s)| \le \delta$,
  so that the latter occurs with positive probability, as wanted.
\end{proof}
\section*{Acknowledgements}
Many thanks to Carine Pivoteau for providing Boltzmann samplers of permutations in classes, for running experiments with us using those, 
and for producing the simulations shown in the introduction. 
In addition, we thank Dominique Rossin for sharing the results of early experiments he did on random permutations in classes. 

We are also very grateful to Grégory Miermont for discussions at several stages of the project;
he suggested in particular the exchangeability argument at the core of the proof of Proposition~\ref{prop:balanced_signs_discrete}. 
We also thank warmly Igor Kortchemski for helping us find our way in the literature on the Brownian motion and excursion. 
Finally, we thank Douglas Rizzolo for pointing out that the value of the variance in \cref{prop:Schroeder_are_Galton-Watson} was wrong in the first version of the present paper.

Our work was supported by a Swiss-French PHC Germaine de Stael grant (project 2015-09). FB, LG and AP also benefited from the hospitality of the Institut f\"ur Mathematik of Z\"urich in May and December 2015. 
MB is supported by a Marie Heim-V\"ogtlin grant of the Swiss National Science Foundation.

Finally, we thank an anonymous referee for his/her constructive comments,
in particular for pointing out the measurability issue
discussed in \cref{sec:measurability} and several remarks and bibliographical pointers that allowed us to shorten some proofs.


\begin{thebibliography}{99}

\bibitem{AA05}
M. H. Albert, M. D. Atkinson.
Simple permutations and pattern restricted permutations.
\emph{Discrete Mathematics}, vol. 300 (2005) n.1, p.1--15. 

\bibitem{equipop}
M. H. Albert, C. Homberger, J. Pantone.
Equipopularity Classes in the Separable Permutations. 
\emph{The Electronic Journal of Combinatorics}, vol. 22 (2015) n. 2, Paper P2.2.

\bibitem{AldousCRT3}
    D. Aldous. The Continuum Random Tree III.
    {\em Annals of Probability}, vol. 21 (1993) n.1, p.248--289.

  \bibitem{AtapourMadras}
M. Atapour, N. Madras. Large deviations and ratio limit theorems for pattern-avoiding
permutations. {\em Combinatorics, Probability and Computing},
vol. 23 (2014) n.2, p.160--200.

\bibitem{Avis}
D. Avis, M. Newborn. 
On pop-stacks in series. 
\emph{Utilitas Mathematica}, vol. 19 (1981), p.129--140.

  \bibitem{BevanPhD}
    D. Bevan. {\em On the growth of permutation classes}.
    PhD thesis (2015), Open University, \verb|arXiv:1506.06688|.

  \bibitem{Bogachev}
    V.I. Bogachev. 
    \newblock {\em Measure Theory} (Volume 2).
    Springer (2007).

 \bibitem{BillingsleyProbMeasure}
   P.~Billingsley.
   \newblock {\em Probability and measure} (3rd edition).
John Wiley \& Sons (1995).

\bibitem{Billingsley}
P. Billingsley. \emph{Convergence of probability measures} (2d edition). John Wiley \& Sons (1999).

\bibitem{Bona1} M. B\'ona. The absence of a pattern and the occurrences of another. \emph{Discrete Mathematics \& Theoretical Computer Science},  vol. 12 (2010) n.2, p.89--102.

\bibitem{Bona2} M. B\'ona. Surprising Symmetries in Objects Counted by Catalan Numbers. {\em The Electronic Journal of Combinatorics}, vol. 19 (2012) n.1, Paper 62.

\bibitem{BonaBook} M. B\'ona. \emph{Combinatorics of permutations} (2d edition). Chapman-Hall and CRC Press (2012). 

\bibitem{BBL98}
P. Bose, J. Buss, A. Lubiw.
Pattern matching for permutations. 
\emph{Information Processing Letters}, vol. 65 (1998) n.5, p.277--283.

\bibitem{ChenEuFu}
S.-E. Cheng, S.-P. Eu, T.-S. Fu, Area of Catalan paths
on a checkerboard. {\em European Journal of Combinatorics}, vol. 28 (2007) n.4, p.1331--1344.

\bibitem{DokosPak}
  T. Dokos, I. Pak.
  The expected shape of random doubly alternating Baxter permutations.
  {\em Online Journal of Analytic Combinatorics}, vol. 9 (2014), Article 5 (12 pp).

\bibitem{Freedman}
D. Freedman. \emph{Brownian motion and diffusion} (3rd edition). Springer (2012).

\bibitem{Ghys}
  E. Ghys. \emph{A singular mathematical promenade}.
  Book in preparation. \verb|arXiv:1612.06373|.

\bibitem{GraphonPermuton}
  R. Glebov, A. Grzesik, T. Klimosov\'a, D. Kr\'al', Finitely forcible
  graphons and permutons, {\em Journal of Combinatorial Theory, Series B} vol. 110 (2015), p.112--135.

\bibitem{HoffmanBrownian1}
C. Hoffman, D. Rizzolo, E. Slivken.
Pattern Avoiding Permutations and Brownian Excursion Part I: Shapes and Fluctuations (2014). \verb|arXiv:1406.5156|.

\bibitem{HoffmanBrownian2}
C. Hoffman, D. Rizzolo, E. Slivken. Pattern Avoiding Permutations and Brownian Excursion Part II: Fixed Points (2015). \verb|arXiv:1506.04174|.

\bibitem{Remco}
R. van der Hofstad (2015). \emph{Random graphs and complex networks}. Lecture notes, available on \verb|http://www.win.tue.nl/~rhofstad/|.

\bibitem{Homberger}
  C. Homberger, Expected patterns in permutation classes. 
  {\em The Electronic Journal of Combinatorics}, vol. 19 (2012) n.3, Paper 43 (12 pp).

\bibitem{Permutons}
C. Hoppen, Y. Kohayakawa, C. G. Moreira, B. Rath, R. M. Sampaio.
Limits of permutation sequences.
\emph{Journal of Combinatorial Theory, Series B}, vol. 103 (2013) n.1, p.93--113.

\bibitem{JansonPermutations}
S. Janson. Patterns in random permutations avoiding the pattern $132$ (2014). \verb|arXiv:1401.5679|.
To appear in \emph{Combinatorics Probability and Computing}. 

\bibitem{JansonNakamuraZeilberger}
  S. Janson, B. Nakamura, D. Zeilberger. On the Asymptotic Statistics of the Number of Occurrences of Multiple Permutation Patterns. 
  {\em Journal of Combinatorics}, vol. 6 (2015) n.1-2, p.117--143.

\bibitem{Kallenberg}
O. Kallenberg. \emph{Foundations of modern probability}. Springer (2006).

\bibitem{EntropyPermutation}
R. Kenyon, D. Král', Ch. Radin, P. Winkler.
Permutations with fixed pattern densities (previous title: A variational principle for permutations) (2015). \verb|arXiv:1506.02340|.

\bibitem{Kitaev}
S. Kitaev. \emph{Patterns in permutations and words}. Springer (2011). 

\bibitem{Igor}
I. Kortchemski. Invariance principles for Galton-Watson trees conditioned on the number of leaves. \emph{Stochastic Processes and Applications}, vol. 122 (2012) n.9, p.3126--3172. 

\bibitem{LeGall}
J.-F. Le Gall. Random trees and applications. \emph{Probability Surveys}, vol. 2 (2005), p.245--311.

\bibitem{MadrasLiu}
  N. Madras, H. Liu.
  Random pattern-avoiding permutations.
  In {\em Algorithmic, Probability and Combinatorics},
  vol. 520 of Contemp. Math., p.173--194. Amer. Math. Soc., 2010.

\bibitem{MadrasPehlivan}
  N. Madras, L. Pehlivan.
  Structure of Random 312-Avoiding Permutations (2014). \verb|arXiv:1401.6230|.
 To appear in {\em Random Structures and Algorithms}.

\bibitem{MarckertMokkadem}
    J.-F. Marckert, A. Mokkadem.
    The depth first processes of Galton-Watson trees converge to the same Brownian excursion.
    \emph{Annals of Probability}, vol. 31 (2003) n.3, p.1655--1678.

\bibitem{MinerPak}
S. Miner, I. Pak. The shape of random pattern-avoiding permutations. \emph{Advances in Applied Mathematics}, vol. 55 (2014), p.86--130.

\bibitem{Peres}
P. Mörters, Y. Peres. \emph{Brownian Motion}. Cambridge University Press (2010). 

\bibitem{Petrov}
V. Petrov. \emph{Sums of independent random variables}. Springer (1975).

\bibitem{Pitman}
J. Pitman. Combinatorial Stochastic Processes, 2002 Saint-Flour Lecture Notes. \emph{Lecture Notes in Mathematics}, Springer, vol. 1875 (2006).

\bibitem{PitmanRizzolo}
J. Pitman, D. Rizzolo. Schr\"oder's problems and scaling limits of random trees. \emph{Transactions of the American Mathematical Society}, vol. 367 (2015) n.10, p.6943--6969.

\bibitem{Rudolph}
  K. Rudolph. Pattern popularity in 132-avoiding permutations.
  {\em The Electronic Journal of Combinatorics}, vol. 20 (2013) n.1, Paper 8.

\bibitem{sage}
  The Sage Developers. {\em Sage Mathematics Software} (Version 7.1).
  http://www.sagemath.org (2016).

\bibitem{Bootstrap}
  L. Shapiro and A. B. Stephens.
  Bootstrap percolation, the Schr\"oder numbers, and the $n$-kings problem.
  \emph{SIAM Journal on Discrete Mathematics}, vol. 4 (1991), n.2, p.275--280.

\bibitem{Vatter} 
V. Vatter. Permutation classes. Chapter 12 of the \emph{Handbook of Enumerative Combinatorics} edited by M. Bóna. Chapman-Hall and CRC Press (2015). 

\bibitem{Wikipedia}
Wikipedia. Enumerations of specific permutation classes, \url{https://en.wikipedia.org/wiki/Enumerations_of_specific_permutation_classes}. Accessed on Jan. 5th, 2016.

\end{thebibliography}
\end{document}